\newcommand*\tasklabelformat[1]{#1)}
\numberwithin{equation}{section}
\newtheorem*{rep@theorem}{\rep@title}
\newcommand{\newreptheorem}[2]{%
\newenvironment{rep#1}[1]{%
 \def\rep@title{#2 \ref{##1}}%
 \begin{rep@theorem}}%
 {\end{rep@theorem}}}
\theoremstyle{theorem}
\newtheorem{thm}{Theorem}[section]
\newtheorem*{thm*}{Theorem}
\theoremstyle{definition}
\newtheorem{prop}[thm]{Proposition}
\newtheorem*{prop*}{Proposition}
\newtheorem{lem}[thm]{Lemma}
\newtheorem{cor}[thm]{Corollary}
\newtheorem*{cor*}{Corollary}
\theoremstyle{remark}
\newtheorem{rem}[thm]{Remark}
\title{\vspace*{-1.5cm} About Wess-Zumino-Witten equation \\
and Harder-Narasimhan potentials
}
\author
{Siarhei Finski
}
\date{}
\newcommand{\imun} {\sqrt{-1}}
\newcommand{\res}{{\rm{Res}}}
\newcommand{\ext}{{\rm{Ext}}}
\newcommand{\comp}{\mathbb{C}}
\newcommand{\real}{\mathbb{R}}
\newcommand{\nat}{\mathbb{N}}
\newcommand{\integ}{\mathbb{Z}}
\newcommand{\enmr}[1]{\text{End}{(#1)}}
\newcommand{\ccal}{\mathscr{C}}
\newcommand{\dbar}{ \overline{\partial} }
\newcommand{\rk}[1]{{\rm{rk}} ( #1 )}
\newcommand{\tr}[1]{{\rm{Tr}} \big[ #1 \big]}
\newcommand{\scal}[2]{\langle #1, #2 \rangle}
\DeclareFontFamily{OMX}{MnSymbolE}{}
\DeclareSymbolFont{MnLargeSymbols}{OMX}{MnSymbolE}{m}{n}
\DeclareFontShape{OMX}{MnSymbolE}{m}{n}{
    <-6>  MnSymbolE5
   <6-7>  MnSymbolE6
   <7-8>  MnSymbolE7
   <8-9>  MnSymbolE8
   <9-10> MnSymbolE9
  <10-12> MnSymbolE10
  <12->   MnSymbolE12
}{}
\DeclareFontShape{OMX}{MnSymbolE}{b}{n}{
    <-6>  MnSymbolE-Bold5
   <6-7>  MnSymbolE-Bold6
   <7-8>  MnSymbolE-Bold7
   <8-9>  MnSymbolE-Bold8
   <9-10> MnSymbolE-Bold9
  <10-12> MnSymbolE-Bold10
  <12->   MnSymbolE-Bold12
}{}
\let\llangle\@undefined
\let\rrangle\@undefined
\DeclareMathDelimiter{\llangle}{\mathopen}%
                     {MnLargeSymbols}{'164}{MnLargeSymbols}{'164}
\DeclareMathDelimiter{\rrangle}{\mathclose}%
                     {MnLargeSymbols}{'171}{MnLargeSymbols}{'171}
\DeclareMathOperator*{\esssup}{ess\,sup}
\DeclareMathOperator*{\essinf}{ess\,inf}
\newenvironment{sciabstract}{}
\begin{document}

\maketitle

\begin{sciabstract}
  \textbf{Abstract.}
	For a polarized family of complex projective manifolds, we identify the algebraic obstructions that govern the existence of approximate solutions to the Wess-Zumino-Witten equation.
	When this is specialized to the fibration associated with a projectivization of a vector bundle, we recover a version of Kobayashi-Hitchin correspondence.
	\par 
	More broadly, we demonstrate that a certain auxiliary Monge-Ampère type equation, generalizing the Wess-Zumino-Witten equation by taking into account the weighted Bergman kernel associated with the Harder-Narasimhan filtrations of direct image sheaves, admits approximate solutions over any polarized family.
	These approximate solutions are shown to be the closest counterparts to true solutions of the Wess-Zumino-Witten equation whenever the latter do not exist, as they minimize the associated Yang-Mills functional.
	\par 
	As an application, in a fibered setting, we prove an asymptotic converse to the Andreotti-Grauert theorem conjectured by Demailly.
\end{sciabstract}

\pagestyle{fancy}
\lhead{}
\chead{Wess-Zumino-Witten equation and Harder-Narasimhan potentials}
\rhead{\thepage}
\cfoot{}


\newcommand{\Addresses}{{
  \bigskip
  \footnotesize
  \noindent \textsc{Siarhei Finski, CNRS-CMLS, École Polytechnique F-91128 Palaiseau Cedex, France.}\par\nopagebreak
  \noindent  \textit{E-mails }: \texttt{finski.siarhei@gmail.com} $\quad$ or  $\quad$  \texttt{siarhei.finski@polytechnique.edu}.
}} 

\vspace*{0.25cm}

\par\noindent\rule{1.25em}{0.4pt} \textbf{Table of contents} \hrulefill

\vspace*{-1.5cm}

\tableofcontents

\vspace*{-0.2cm}

\noindent \hrulefill


\section{Introduction}\label{sect_intro}
	Consider a holomorphic submersion $\pi : X \to B$ between compact Kähler manifolds $X$ and $B$ of dimensions $n + m$ and $m$ respectively.
	We fix a Kähler form $\omega_B$ on $B$.
	We say that a smooth closed relatively Kähler $(1, 1)$-form $\alpha$ on $X$ satisfies the Wess-Zumino-Witten equation if
	\begin{equation}\label{eq_wzw}
		\alpha^{n + 1} \wedge \pi^* \omega_B^{m - 1} = 0.
	\end{equation}
	This terminology was introduced by Donaldson in \cite{DonaldSymSp} when $B$ is an annulus in $\mathbb{C}$, due to its resemblance to certain equations from mathematical physics \cite{WittenWZW}. 
	This particular instance of (\ref{eq_wzw}) is significant in Kähler geometry, as its solutions correspond to the Mabuchi geodesics \cite{Mabuchi}, \cite{Semmes}, \cite{DonaldSymSp}, \cite{ChenGeodMab}, which are essential in the study of constant scalar curvature metrics.
	\par 
	The main focus of this paper, however, is the case where $B$ has no boundary, leading to a markedly different theory. 
	Specifically, we address the following two questions:
	\par 
	\vspace*{0.3cm}
	\noindent
	A)
	What are the algebraic obstructions for the existence of approximate solutions to (\ref{eq_wzw})?
	\vspace*{0.3cm}
	\par 
	\noindent
	B)
	When approximate solutions to (\ref{eq_wzw}) do not exist, what are the “best" alternatives to them?
	\vspace*{0.3cm}
	\par 
	\par  
	To clarify both questions, we define the Wess-Zumino-Witten functional as follows
	\begin{equation}\label{defn_wzwfunc}
		{\rm{WZW}}(\alpha, \omega_B) := \int_X \big| \alpha^{n + 1} \wedge \pi^* \omega_B^{m - 1} \big|, \qquad {\rm{WZW}}([\alpha], \omega_B) = \inf {\rm{WZW}}(\alpha, \omega_B),
 	\end{equation}
 	where the infimum is taken over all smooth closed relatively Kähler $(1, 1)$-forms $\alpha$ in the class $[\alpha]$, and $| \cdot |$ is the absolute value of a volume form, evaluated with respect to the orientation given by the complex structure.
 	By Question A) we mean that we would like to give an algebraic description of classes $[\alpha]$, verifying ${\rm{WZW}}([\alpha], \omega_B) = 0$.
 	By Question B) we mean that we would like to have an explicit construction of a sequence of the forms $\alpha_{\epsilon} \in [\alpha]$, for any $\epsilon > 0$, verifying ${\rm{WZW}}(\alpha_{\epsilon}, \omega_B) \leq {\rm{WZW}}([\alpha], \omega_B) + \epsilon$.
 	We answer both of these questions for $[\alpha] \in H^2(X, \integ)$, i.e. such that there is a relatively ample line bundle $L$ over $X$, verifying $[\alpha] = c_1(L)$.
 	\par 
 	Our motivation for studying (\ref{eq_wzw}) comes from the fact that it generalizes the Hermite-Einstein equation on vector bundles.
 	Indeed, let $F$ be a holomorphic vector bundle over $B$. 
	Let $L := \mathcal{O}(1)$ be the hyperplane bundle over $X := \mathbb{P}(F^*)$, and let $\pi : \mathbb{P}(F^*) \to B$ be the natural projection.
	For a coherent sheaf $\mathscr{E}$ over $B$, we define its degree by $\deg(\mathscr{E}) := \int_B c_1(\det (\mathscr{E})) \cdot [\omega_B]^{m - 1}$, where $\det \mathscr{E}$ is Knudsen-Mumford determinant of $\mathscr{E}$, see \cite{Knudsen1976}.
	We assume for simplicity that $F$ is normalized in the sense that $\deg(F) = 0$.
	A Hermitian metric $h^F$ on $F$ solves the \textit{Hermite-Einstein equation} if the curvature $R^{h^F}$ of its Chern connection satisfies $\frac{\imun}{2 \pi} R^{h^F} \wedge \omega_B^{m-1} = 0$. 
	\par 
	It is then a classical calculation, originally due to Kobayashi \cite{KobFinEins}, which says that $h^F$ solves the Hermite-Einstein equation if and only if for the associated metric $h^L$ on $L$, the relatively Kähler $(1, 1)$-form $\omega := c_1(L, h^L)$ solves (\ref{eq_wzw}).
	Moreover, following a question raised by Kobayashi \cite{KobFinEins}, it was established by Feng-Liu-Wan in \cite[Proposition 3.5]{FengLiuWang} that if there is a relatively positive metric $h^L$ on $L$ such that $\omega := c_1(L, h^L)$ solves (\ref{eq_wzw}) over $\mathbb{P}(F^*)$, then one can cook up from it a Hermitian metric on $F$, solving the Hermite-Einstein equation.
	\par 
	Hence the Kobayashi-Hitchin correspondence (or Donaldson-Uhlenbeck-Yau theorem), see \cite{DonaldASD}, \cite{UhlYau}, \cite[Theorems 6.10.13]{KobaVB}, which relates the existence of solutions to the Hermite-Einstein equation on $F$ with slope-stability, can be formulated purely in terms of solvability of (\ref{eq_wzw}) over $\mathbb{P}(F^*)$.
	Questions A) and B) serve to generalize this correspondence for fibrations not necessarily associated with vector bundles.
 	\par
	In order to state our results, recall that a \textit{slope} (or $[\omega_B]$-slope) of a coherent sheaf $\mathscr{E}$ over $B$ is defined as $\mu(\mathscr{E}) := \deg(\mathscr{E}) / (\rk{\mathscr{E}} \cdot \int_B [\omega_B]^m)$.
	A torsion-free coherent sheaf $\mathscr{E}$ is called \textit{semistable} (or $[\omega_B]$-\textit{semistable}) if for every coherent subsheaf $\mathcal{F}$ of $\mathscr{E}$, verifying $\rk{\mathcal{F}} > 0$, we have $\mu(\mathcal{F}) \leq \mu(\mathscr{E})$.
	Remark the unusual normalization of the slope by $\int_B [\omega_B]^m$.
	\par 
	Recall that any vector bundle $E$ on $(B, [\omega_B])$ admits a unique filtration by subsheaves
	\begin{equation}\label{eq_HN_filt}
		E = \mathcal{F}^{HN}_{\lambda_1} \supset \mathcal{F}^{HN}_{\lambda_2} \supset \cdots \supset \mathcal{F}^{HN}_{\lambda_q} \supset \{0\} =: \mathcal{F}^{HN}_{\lambda_{q + 1}},
	\end{equation}
	also called the \textit{Harder-Narasimhan filtration} and defined so that for any $1 \leq i \leq q$, the quotient sheaf $\mathcal{F}^{HN}_{\lambda_i} / \mathcal{F}^{HN}_{\lambda_{i + 1}}$ is the maximal semistable (torsion-free) subsheaf of $E / \mathcal{F}^{HN}_{\lambda_{i + 1}}$, i.e. for any subsheaf of $\mathcal{F}$ of a (torsion-free) sheaf $E / \mathcal{F}^{HN}_{\lambda_{i + 1}}$, we have $\mu(\mathcal{F}) \leq \mu(\mathcal{F}^{HN}_{\lambda_i} / \mathcal{F}^{HN}_{\lambda_{i + 1}})$ and $\rk{\mathcal{F}} \leq \rk{\mathcal{F}^{HN}_{\lambda_i} / \mathcal{F}^{HN}_{\lambda_{i + 1}}}$ if $\mu(\mathcal{F}) = \mu(\mathcal{F}^{HN}_{\lambda_i} / \mathcal{F}^{HN}_{\lambda_{i + 1}})$, and $\lambda_i = \mu(\mathcal{F}^{HN}_{\lambda_i} / \mathcal{F}^{HN}_{\lambda_{i + 1}})$.
	\par 
	\begin{sloppypar}
	We define the \textit{Harder-Narasimhan slopes}, $\mu_1, \ldots, \mu_{\rk{E}}$, of $E$, so that $\lambda_i$ appears among $\mu_1, \ldots, \mu_{\rk{E}}$ exactly $\rk{\mathcal{F}^{HN}_{\lambda_i} / \mathcal{F}^{HN}_{\lambda_{i + 1}}}$ times, and the sequence $\mu_1, \ldots, \mu_{\rk{E}}$ is non-decreasing.
	We call $\mu_{\min} := \mu_1$ and $\mu_{\max} := \mu_{\rk{E}}$, the minimal and the maximal slopes respectively. 
	\end{sloppypar}
	\par
	Now, in our fibered setting, for a relatively ample line bundle $L$ on $X$ and $k \in \nat^*$, we denote the direct image sheaves by $E_k := R^0 \pi_* L^{\otimes k}$.
	For $k$ big enough, a standard argument shows that $E_k$ are locally free.
	We let $N_k := \rk{E_k}$, and denote by $\mu_1^k, \ldots, \mu_{N_k}^k$ the Harder-Narasimhan slopes of $E_k$.
	Define the probability measure $\eta_k^{HN}$ on $\real$ as
	\begin{equation}\label{eq_eta_defn}
		\eta_k^{HN} := \frac{1}{N_k} \sum_{i = 1}^{N_k} \delta \Big[ \frac{\mu_i^k}{k} \Big],
	\end{equation}
	where $\delta[x]$ is the Dirac mass at $x \in \real$. 
	\par 
	It was established by Chen in \cite[Theorem 4.3.6]{ChenHNolyg} (for Riemann surfaces $B$) and by the author \cite[Theorem 1.5]{FinHNII} (for general Kähler manifolds $B$) that the sequence of probability measures $\eta_k^{HN}$ converges weakly, as $k \to \infty$, to a probability measure of compact support $\eta^{HN}$ on $\real$. 
	Moreover, the support of $\eta^{HN}$ equals $[\essinf \eta^{HN}, \esssup \eta^{HN}]$, and $\eta^{HN}$ is absolutely continuous with respect to the Lebesgue measure, except probably for a point mass at $\esssup \eta^{HN}$.
	\par 
	We can now state the first result of this article.
	\begin{thm}\label{thm_main}
		For any $t \in \real$, we have
		\begin{equation}\label{eq_main}
			{\rm{WZW}}(c_1(L) - t \pi^* [\omega_B], \omega_B) 
			= 
			\int_{x \in \real} |x - t| d \eta^{HN}(x)
			\cdot
			\int_X c_1(L)^n \pi^* [\omega_B]^m
			\cdot
			(n + 1).
		\end{equation}
	\end{thm}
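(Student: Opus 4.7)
The plan is to establish matching upper and lower bounds in \eqref{eq_main}. Both arguments rely on the asymptotic embedding $X\hookrightarrow\mathbb{P}(E_k^*)$ for large $k$, induced by evaluation of sections of $L^{\otimes k}$: a Hermitian metric $h_k$ on $E_k$ thereby induces a Fubini--Study type metric on $L^{\otimes k}$, whose normalized curvature $\alpha_{h_k}:=\frac{1}{k}c_1(L^{\otimes k},\|\cdot\|_{h_k})$ lies in $c_1(L)$ and is positive along the fibers. A useful reduction, based on the vanishing $\pi^*\omega_B^{m+1}=0$, is the identity
\begin{equation*}
(\alpha-t\pi^*\omega_B)^{n+1}\wedge\pi^*\omega_B^{m-1}=\alpha^{n+1}\wedge\pi^*\omega_B^{m-1}-(n+1)\,t\,\alpha^n\wedge\pi^*\omega_B^m,
\end{equation*}
where the second term is fiberwise positive of total mass $\int_X c_1(L)^n\pi^*[\omega_B]^m$. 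This identity already explains the factor $(n+1)$ and the volume factor on the right-hand side of \eqref{eq_main}, and reduces the problem to an asymptotic analysis of the first term for well-chosen $\alpha$.

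For the upper bound, I would choose $h_k$ adapted to the Harder--Narasimhan filtration \eqref{eq_HN_filt}: block-diagonal with respect to a $C^\infty$ splitting, with each graded piece scaled so that its block of curvature asymptotically realizes the slope $\mu_i^k$. Bergman kernel asymptotics then show that the ratio of the two top-degree forms $\alpha_{h_k}^{n+1}\wedge\pi^*\omega_B^{m-1}$ and $\alpha_{h_k}^n\wedge\pi^*\omega_B^m$ converges, in a suitable averaged sense over strata of $X$, to the slope $\mu_i^k/k$ on the region where the $i$-th HN piece is Fubini--Study dominant. Taking absolute values in the above reduction and summing over strata gives an integrand whose integral converges to the right-hand side of \eqref{eq_main} via the Chen--Finski convergence $\eta_k^{HN}\to\eta^{HN}$.

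For the lower bound, given any admissible $\alpha$, each subsheaf $\mathscr{F}^{HN}_{\lambda_i}\subset E_k$ provides a geometric probe: the evaluation map $\pi^*\mathscr{F}^{HN}_{\lambda_i}\to L^{\otimes k}$ is generically surjective for large $k$, and comparison of the associated Fubini--Study form on $\mathbb{P}((\mathscr{F}^{HN}_{\lambda_i})^*)$ with $k\alpha$ yields a Kobayashi--Hitchin style inequality bounding $\lambda_i$ by a fiberwise integral of $\alpha^{n+1}\wedge\pi^*\omega_B^{m-1}$ over the subvariety swept by sections of $\mathscr{F}^{HN}_{\lambda_i}$. A layer-cake decomposition of $|x-t|$ as an integral of indicator functions over thresholds then aggregates these per-filtration estimates, summed over all $i$ and all $k$, into the desired lower bound on ${\rm{WZW}}(\alpha,\omega_B)$, which converges to the right-hand side as $k\to\infty$.

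The hardest step is the lower bound, since $\alpha$ is arbitrary and need not respect any HN filtration for finite $k$. The remedy is to replace $\alpha$ by the Fubini--Study curvature $\alpha_k^{FS}$ of the metric on $L^{\otimes k}$ induced by the $L^2$-pairing associated with $k\alpha$, which is adapted to the HN filtration of $E_k$ and converges to $\alpha$ in $C^0$ by Bergman kernel asymptotics. Controlling the error in this substitution, while preserving sharp constants, relies on the uniform compactness of $\mathrm{supp}\,\eta_k^{HN}$ (upgrading weak convergence $\eta_k^{HN}\to\eta^{HN}$ to convergence of $\int|x-t|\,d\eta_k^{HN}$) and on the absolute continuity of $\eta^{HN}$ away from the possible atom at $\esssup\eta^{HN}$, established in \cite{FinHNII}, which prevents mass concentration in the limit.
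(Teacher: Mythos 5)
Your high-level picture (quantization, upper+lower bounds, role of HN slopes) matches the paper's, but you have misidentified which direction is hard, and both of your arguments have concrete gaps. The lower bound is in fact the \emph{easy} direction: one writes ${\rm WZW}(\alpha,\omega_B) = (n+1)\int_X |\wedge_{\omega_B}\omega_H - t|\,\omega^n\wedge\pi^*\omega_B^m$, observes via Ma--Zhang's curvature formula for direct images (Toeplitz operators) that the left side equals $\lim_k \frac{1}{kN_k}{\rm HYM}_{tk}(E_k,{\rm Hilb}_k^\pi(h^L),\omega_B)$ up to the volume normalization, and then applies the Atiyah--Bott/Daskalopoulos--Wentworth lower bound on ${\rm HYM}$ for \emph{arbitrary} metrics on $E_k$. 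No replacement of $\alpha$ is needed, and your proposed ``remedy'' rests on a false premise: the $L^2$-metric ${\rm Hilb}_k^\pi(k\alpha)$ is \emph{not} adapted to the Harder--Narasimhan filtration of $E_k$ in any sense. Also, restricting the evaluation map to a subsheaf $\mathscr{F}^{HN}_{\lambda_i}\subset E_k$ does not give you a surjection onto $L^{\otimes k}$, so the ``geometric probe'' comparison is not well-founded.

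The real difficulty is the upper bound, and your construction is too naive to get it. First, the HN filtration of $E_k$ is a filtration by \emph{subsheaves} (not subbundles) in general when $\dim B>1$, so there is no global smooth block decomposition; one must resolve the indeterminacy via a modification $\mu_k:B_k\to B$ and pull the whole problem back (the paper's Proposition~\ref{prop_bir_mod}). Second, even on a locus where the HN filtration is a subbundle filtration, a block-diagonal metric does \emph{not} have block-diagonal curvature: the second fundamental forms contribute off-diagonal $\dbar\beta$ and $\beta\wedge\beta^*$ terms, and scaling the blocks does not kill them. The paper handles this by flowing along a \emph{geodesic ray} $H_{\delta,k,s}$ departing from a $\delta$-approximate critical Hermitian structure and sending $s\to\infty$ (Theorem~\ref{thm_ray_apprx}, Lemma~\ref{prop_beta_form}), which exponentially damps the off-diagonal curvature. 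Third, and most subtly, even once you have a good metric on $E_k$, showing that its dequantization $FS(H_{\delta,k,s})^{1/k}$ has small WZW value is not ``just Bergman asymptotics.'' One must compare the $L^2$-metrics ${\rm Hilb}_{kl}^\pi(FS(H_{\delta,k,s})^{1/k})$ on $E_{kl}$ with the quotient metrics $[{\rm Sym}^l H_{\delta,k,s}]$ (semiclassical Ohsawa--Takegoshi, Theorems~\ref{thm_appl_ot},~\ref{thm_appl_ot2}), and then control the weight operators of the quotient filtration $[{\rm Sym}^l\tilde\mu_k^*\mathscr{F}^{HN,k}]$ versus $\mathscr{F}^{HN,kl}$ via the \emph{submultiplicativity} of the HN filtration (Propositions~\ref{prop_last_est_pf21},~\ref{thm_hn_appr_fin}) and the interplay of geodesic rays with symmetric-power quotients (Theorem~\ref{thm_compar_number}). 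These algebraic and analytic ingredients are absent from your outline, and without them the upper bound cannot be closed.
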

	\begin{rem}
		\begin{sloppypar} 
		a) Remark that while the left-hand side of (\ref{eq_main}) is a differential-geometric quantity, the right-hand side is purely an algebraic one.
		Also, as $\eta^{HN}$ depends only on the cohomological class $[\omega_B]$, the quantity ${\rm{WZW}}(c_1(L) - t \pi^* [\omega_B], \omega_B)$ ultimately also depends only on $[\omega_B]$.
		\end{sloppypar}
		\par  
		b) It is easy to check, cf. \cite[Proposition 5.1]{ChenMaclean}, that if $\int_{x \in \real} |x - t| d \eta_1(x) = \int_{x \in \real} |x - t| d \eta_2(x)$, for any $t \in \real$ and some Radon measures $\eta_1$, $\eta_2$ of compact support on $\real$, then $\eta_1 = \eta_2$.
		In particular, Theorem \ref{thm_main} gives a differential-geometric characterization of $\eta^{HN}$.
		\par 
		c) Theorem \ref{thm_main} establishes for $p = 1$ the conjecture of the author from \cite{FinHNII} about the optimality of the lower bound on the Fibered Yang-Mills functional.
		If we denote by
	\begin{equation}\label{eq_resol_hn_filt00}
		E_k = \mathcal{F}^{HN, k}_{\lambda_{1, k}} \supset \mathcal{F}^{HN, k}_{\lambda_{2, k}} \supset \cdots \supset \mathcal{F}^{HN, k}_{\lambda_{q_k, k}} \supset \{0\} =: \mathcal{F}^{HN, k}_{\lambda_{q_k + 1, k}}.
	\end{equation}
		the Harder-Narasimhan filtration of $E_k$, and note $\mathcal{Q}_{i}^{HN, k} := \mathcal{F}^{HN, k}_{\lambda_{i, k}} / \mathcal{F}^{HN, k}_{\lambda_{i + 1, k}}$, $i = 1, \ldots, q_k$, then from the weak convergence of $\eta_k^{HN}$ towards $\eta^{HN}$, we can reformulate (\ref{eq_main}) as
		\begin{multline}\label{eq_main_ref}
			{\rm{WZW}}(c_1(L) - t \pi^* [\omega_B], \omega_B) 
			= 
			\int_X c_1(L)^n \pi^* [\omega_B]^m
			\cdot
			(n + 1)
			\cdot
			\\
			\cdot
			\lim_{k \to \infty} \frac{1}{k \cdot N_k} \sum_{i = 1}^{q_k} \Big| \deg (\mathcal{Q}_{i}^{HN, k}) - tk \cdot \rk{\mathcal{Q}_{i}^{HN, k}} \Big|.
		\end{multline}
	\end{rem}
	\par 
	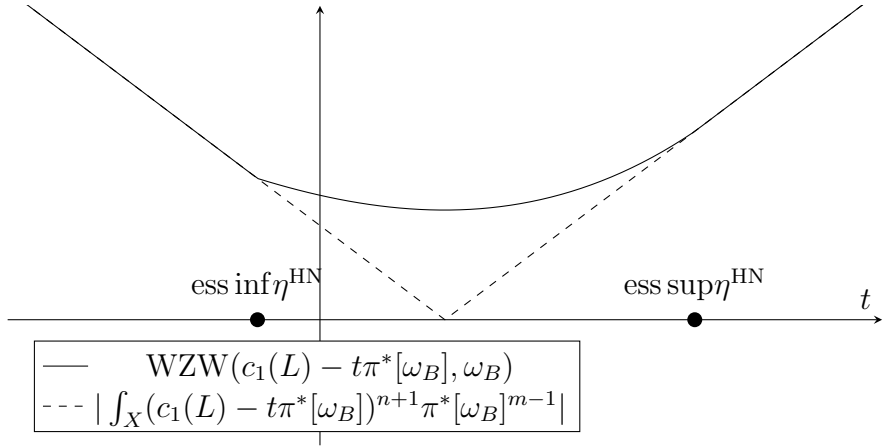
\begin{figure}[!h]\label{figure1}
\centering
\begin{tikzpicture}[
		  declare function={
		    func1(\x)= (\x<=-0.5) * (-(3/2 * \x) + 3/2)   +
		     and(\x>-0.5, \x<=3) * (- (x + 10) * (\x + 0.5) * (3 - \x)/40  + 9/4 * 2/7 * (3 - \x) + (\x + 0.5) * 3 * 2/7)     +
		                (\x>3) * ((3/2 * \x) - 3/2);
		  }
		]
		\begin{axis}[%
		  samples=100,
		  domain=-3:5,
		  xmin=-2.5, xmax=4.5,
		  ymin=-2, ymax=5,
		  axis lines=middle,
		  ticks=none,
		  xlabel={$t$},
		  ylabel={},
		  legend pos=south west,
		  width=0.8*\textwidth,
		  height=0.45*\textwidth]
		\addplot [mark options={scale=2, solid}] {func1(x)};
		\addplot [dashed,mark options={scale=2, solid}] coordinates {
		(-3,6) (1,0)  (5,6)
		};
		\node[label={${\rm{ess \, sup} \eta^{HN}}$},circle,fill,inner sep=2pt] at (axis cs:3,0) {};
\node[label={${\rm{ess \, inf} \eta^{HN}}$},circle,fill,inner sep=2pt] at (axis cs:-0.5,0) {};
		\legend{{${\rm{WZW}}(c_1(L) -t \pi^* [\omega_B], \omega_B)$}, $|\int_X (c_1(L) -t \pi^* [\omega_B])^{n + 1} \pi^* [\omega_B]^{m - 1}|$}
		\end{axis};
	\end{tikzpicture}
	\caption{Sharp and trivial lower bounds on the Wess-Zumino-Witten functional.}
\end{figure}
	\par 
	As an application of Theorem \ref{thm_main}, we obtain the following result, which precisely characterizes the link between the existence of approximate solutions to (\ref{eq_wzw}) and the associated algebraic obstructions. This result provides a comprehensive answer to Question A) stated above.
	\begin{cor}\label{cor_wzw}
		The following statements are equivalent:
		\par 
		a) The measure $\eta^{HN}$ is the Dirac mass at $t$.
		\par 
		b) We have $\mu(E_k) \sim t k$, as $k \to \infty$, and for any $\epsilon > 0$, there is $k_0 \in \nat$, such that for any $k \geq k_0$ and any coherent subsheaf $\mathcal{F}_k \subset E_k$, $\rk{\mathcal{F}_k} > 0$, we have $\mu(\mathcal{F}_k) \leq \mu(E_k) + \epsilon k$.
		\par 
		c) For any $\epsilon > 0$, there is a relatively positive smooth closed $(1, 1)$-form $\omega_{\epsilon}$ in $c_1(L)$, verifying
		\begin{equation}\label{eq_cor_wzw}
			\int_X \big| \omega_{\epsilon}^{n + 1} \wedge \pi^* \omega_B^{m - 1} - t \cdot \omega_{\epsilon}^{n} \wedge \pi^* \omega_B^{m} \cdot (n + 1) \big| < \epsilon.
		\end{equation}
		\par 
		d) 	If $\dim B = 1$, the above conditions are equivalent to the following: for any irreducible complex analytic subspace $Y \subset X$ of dimension $k + 1$, $k \in \nat$, we have 
	$\int_Y c_1(L)^{k + 1} \geq t \cdot \int_Y c_1(L)^k \pi^* [\omega_B] \cdot (k + 1)$, with an equality if $Y = X$.
	\end{cor}
	\begin{rem}
		As we explain in Section \ref{sect_j_eq}, when Corollary \ref{cor_wzw} is specialized to $\pi : \mathbb{P}(F^*) \to B$, $L := \mathscr{O}(1)$, for a holomorphic vector bundle $F$ over $B$, it gives a version of Kobayashi-Hitchin correspondence, as the semistability of $F$ is equivalent to the condition that $\eta^{HN}$ is the Dirac mass.
	\end{rem}
	\par 
	Let us now explain an application of Theorem \ref{thm_main} towards asymptotic cohomology.
	Recall that on a compact complex manifold $Y$ of dimension $n$ with a holomorphic line bundle $F$, the \textit{$q$-th asymptotic cohomology} is defined as
	\begin{equation}\label{defn_as_coh}
		\hat{h}^q(Y, F) := \limsup_{k \to \infty} \frac{n!}{k^n}	\dim H^q(Y, F^{\otimes k}).
	\end{equation}
	Refer to \cite{KuronyaAsymptCoh} and \cite{DemaillyRiemannHOLM} for the proof of some fundamental properties of $\hat{h}^q(Y, F)$. 
	Holomorphic Morse inequalities of Demailly \cite{Dem2} give the following upper bounds
	\begin{equation}\label{eq_holmi}
		\hat{h}^q(Y, F) \leq \int_{Y(\alpha, q)} (-1)^q \alpha^n,
	\end{equation}
	where $\alpha$ is an arbitrary smooth form in the class $c_1(F)$, and $Y(\alpha, q)$ is the open set of points $x \in Y$, so that $\alpha(x)$ has signature $(n - q, q)$.
	\vspace*{0.3cm}
	\par 
	\noindent \textbf{Conjecture 1.} (Demailly \cite[Question 1.13]{DemaillyRiemannHOLM})
	Do we have $\hat{h}^q(Y, F) = \inf \int_{Y(\alpha, q)} (-1)^q \alpha^n$, where the infimum is taken over all smooth closed $(1, 1)$-forms $\alpha$ in the class $c_1(F)$?
	\vspace*{0.3cm}
	\par 
	As explained in \cite[p. 3]{DemAndrGrau}, cf. also the end of Section \ref{sect_dem_conj}, the conjecture above is related to the Andreotti-Grauert vanishing theorem \cite{AndrGrauert}, and in a way it should be seen as an asymptotic converse of it.
	Besides the cases $q = 0$ and $n \leq 2$, proved in \cite[Theorems 1.3, 1.4]{DemAndrGrau}, the conjecture remains largely open.
	The major difficulty is, of course, to construct a differential form from the algebraic datum of asymptotic cohomology.
	As an application of Theorem \ref{thm_main}, we have the following result.
	\begin{cor}\label{thm_andgr}
		For a holomorphic submersion $p : Y \to C$ between a complex projective manifold $Y$ and a compact Riemann surface $C$, Conjecture 1 holds for any holomorphic line bundle $F$ on $Y$ which is relatively ample with respect to $p$.
	\end{cor}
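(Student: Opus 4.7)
The strategy is to reduce Conjecture 1 for the submersion $p : Y \to C$ to an application of Theorem \ref{thm_main} with $m = 1$. Let $n = \dim Y - 1$ denote the fiber dimension, and fix any Kähler form $\omega_C$ on $C$.

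First, reduce to $q \in \{0, 1\}$. Since $F$ is relatively ample over the curve $C$ and the fibers have dimension $n$, for $k \gg 0$ the Leray spectral sequence combined with fiberwise Kodaira vanishing yields $H^q(Y, F^{\otimes k}) \cong H^q(C, E_k)$, where $E_k := p_* F^{\otimes k}$. As $\dim C = 1$, this group vanishes for $q \geq 2$, so $\hat{h}^q(Y, F) = 0$ for such $q$. On the differential-geometric side, any $(1, 1)$-form $\alpha$ positive along the fibers has at each point at most one non-positive eigenvalue: a $2$-dimensional subspace on which $\alpha \leq 0$ would intersect the $n$-dimensional fiber tangent subspace non-trivially, contradicting fiber positivity. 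Thus $Y(\alpha, q) = \emptyset$ for $q \geq 2$ whenever $\alpha$ is relatively Kähler, and Conjecture 1 holds trivially for these $q$.

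For $q \in \{0, 1\}$ and a relatively Kähler $\alpha \in c_1(F)$, set $A_q(\alpha) := \int_{Y(\alpha, q)} (-1)^q \alpha^{n+1}$. The signature constraint above gives
\begin{equation*}
	\int_Y |\alpha^{n+1}| = A_0(\alpha) + A_1(\alpha), \qquad A_0(\alpha) - A_1(\alpha) = \int_Y \alpha^{n+1} = \int_Y c_1(F)^{n+1}.
\end{equation*}
So $A_0$ and $A_1$ are affine functions of the single quantity $\int_Y |\alpha^{n+1}|$, and minimizing either over relatively Kähler representatives of $c_1(F)$ is equivalent to minimizing this quantity. Theorem \ref{thm_main} (with $m = 1$, $t = 0$) evaluates the common infimum as $(n+1) \int_{\real} |x| \, d\eta^{HN}(x) \cdot \int_Y c_1(F)^n \cdot [\omega_C]$. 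Combining with the fibered Riemann-Roch identity $\int_Y c_1(F)^{n+1} = (n+1) \int_{\real} x \, d\eta^{HN}(x) \cdot \int_Y c_1(F)^n \cdot [\omega_C]$, which follows by passing to the limit in $\chi(C, E_k) = \chi(Y, F^{\otimes k})$, and using $|x| \pm x = 2 x^{\pm}$, one obtains that the infimum of $A_q(\alpha)$ over relatively Kähler $\alpha \in c_1(F)$ equals
\begin{equation*}
	(n+1) \int_{\real} \max\bigl(0, (-1)^q x\bigr) \, d\eta^{HN}(x) \cdot \int_Y c_1(F)^n \cdot [\omega_C], \qquad q \in \{0, 1\}.
\end{equation*}

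The main remaining step, and the principal technical difficulty, is to show this quantity equals $\hat{h}^q(Y, F)$. For this, I would analyze the Harder-Narasimhan filtration of $E_k$ on the curve $C$: a semistable bundle on $C$ whose slope grows linearly and positively (resp.\ negatively) in $k$ has $H^1 = 0$ (resp.\ $H^0 = 0$) for $k$ large, and Riemann-Roch controls the remaining cohomology group to leading order. Summing over HN graded pieces and invoking the weak convergence $\eta_k^{HN} \to \eta^{HN}$ of \cite[Theorem 1.5]{FinHNII} then yields the required asymptotic expressions for $\hat{h}^0$ and $\hat{h}^1$. Combined with Demailly's holomorphic Morse inequalities (\ref{eq_holmi}), which provide the opposite inequality $\hat{h}^q \leq \inf \int_{Y(\alpha, q)} (-1)^q \alpha^{n+1}$ over \emph{all} smooth closed $(1, 1)$-forms in $c_1(F)$, and with the fact that relatively Kähler forms (which are admissible in this infimum) already realize the lower bound, equality is forced and Conjecture 1 follows for $p : Y \to C$. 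I expect the HN-based curve cohomology calculation to be the main obstacle; the remaining steps are essentially formal once those asymptotic identities are in hand.
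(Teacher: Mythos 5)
Your approach is essentially the same as the paper's. You reduce, via the Leray spectral sequence and the signature constraint coming from fiberwise positivity, to the two nontrivial cohomology degrees $q \in \{0,1\}$, observe via Riemann--Roch that $A_0(\alpha) - A_1(\alpha)$ is a fixed topological constant (so that minimizing either $A_q$ is equivalent to minimizing $\int_Y |\alpha^{n+1}|$), and then invoke Theorem~\ref{thm_main} with $m=1$ to express the infimum in terms of $\eta^{HN}$. The paper packages this slightly differently — it introduces ``Conjecture 2'' ($|{\rm{MA}}|(c_1(F)) = \sum_q \hat{h}^q$) as a clean intermediate statement that implies Conjecture~1 for all $q$ at once — but the underlying computation is identical.

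The one place where your proposal stops short is exactly where you flag it: the identity
\[
	\hat{h}^0(Y, F) = (n+1) \int_{x \geq 0} x \, d\eta^{HN}(x) \cdot \int_Y c_1(F)^n [\omega_C].
\]
You sketch the natural HN-graded-piece-by-graded-piece analysis of $h^0(C, E_k)$ and correctly identify it as the main technical hurdle; but you present it as something still to be done. In fact this is precisely Chen's theorem \cite[Theorem~1.1]{ChenVolume}, which the paper cites directly. Once you substitute that reference (together with $\hat{h}^0 - \hat{h}^1 = \int_Y c_1(F)^{n+1}$ and the identity $\int_{\real} x\, d\eta^{HN}(x) \cdot (n+1) \int_Y c_1(F)^n[\omega_C] = \int_Y c_1(F)^{n+1}$, both of which you already have), the argument closes. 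So the proposal is correct in structure; it just should cite Chen rather than leave that step as a sketch.
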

	\begin{rem}
		It seems unlikely that the results of the present paper can be used to resolve other cases of the above conjecture. 
		This is because the relationship between the Wess-Zumino-Witten functional (\ref{defn_wzwfunc}) and the absolute Monge-Ampère functional (\ref{defn_ama_func}) appears to exist only when the base is a curve; also, beyond the relatively ample setting, most of our techniques for the study of the Wess-Zumino-Witten functional break down.
	\end{rem}
	We will in fact show that in the setting of Corollary \ref{thm_andgr}, the sequence of $(1, 1)$-forms, minimizing the Wess-Zumino-Witten functional, saturates the lower bound (\ref{eq_holmi}).
	\par 
	A natural question arises as to whether one can construct forms that attain the lower bound for ${\rm{WZW}}(c_1(L) - t \pi^* [\omega_B], \omega_B)$ as solutions to a specific auxiliary equation.
	This brings us to the main contribution of this article, where we show that a certain auxiliary Monge-Ampère type equation, generalizing the Wess-Zumino-Witten equation, always admits approximate solutions on arbitrary fibrations, and that these solutions saturate the lower bound.
	\par 
	The auxiliary Monge-Ampère equation is based on the \textit{weighted Bergman kernel} associated with the Harder-Narasimhan filtrations of direct image sheaves.
	In order to define it, for a relatively positive Hermitian metric $h^L$ on $L$, we denote $\omega := c_1(L, h^L)$, and associate a Hermitian metric ${\rm{Hilb}}_k^{\pi}(h^L)$ on $E_k$, $k \in \nat$, defined for $f, f' \in E_{k, b}$, $b \in B$, as follows
	\begin{equation}\label{eq_l2_prod}
		\scal{f}{f'}_{{\rm{Hilb}}_k^{\pi}(h^L)_b} := \frac{1}{n!} \int_{X_b} \scal{f(x)}{f'(x)}_{h^{L^{\otimes k}}} \cdot \omega^n(x).
	\end{equation}
	\par 
	Now, the restriction of the filtration (\ref{eq_resol_hn_filt00}) to $b \in B$ provides a filtration of $E_{k, b}$.
	We denote $n_{i, k} := \dim \mathcal{F}^{HN, k}_{\lambda_{i, k}, b}$, $i = 1, \ldots, q_k + 1$, and consider the orhonormal basis $s_{1, k}, \ldots, s_{N_k, k} \in E_{k, b}$, of $(E_{k, b}, {\rm{Hilb}}_k^{\pi}(h^L)_b)$ adapted to (\ref{eq_resol_hn_filt00}) in the sense that $s_{1, k}, \ldots, s_{n_{q_k, k}, k}$, form an orthonormal basis of $\mathcal{F}^{HN, k}_{\lambda_{q_k, k}, b}$, the elements $s_{1, k}, \ldots, s_{n_{q_k - 1, k}, k}$, form an orthonormal basis of $\mathcal{F}^{HN, k}_{\lambda_{q_k - 1, k}}$, etc. 
	We define the \textit{Harder-Narasimhan potential}, ${\rm{HN}}_k(\omega) : X \to \real$ of $\omega := c_1(L, h^L)$ as
	\begin{equation}\label{eq_hn_potent}
		{\rm{HN}}_k(\omega)(x)
		=
		\sum_{i = 1}^{q_k} \lambda_{i, k} \cdot \sum_{j = n_1 - n_i + 1}^{n_1 - n_{i + 1}} \big| s_{i, k}(x) \big|^2_{h^{L^{\otimes k}}}.
	\end{equation}
	The reader will verify that for connected manifolds $X$, ${\rm{HN}}_k(\omega)$ depends only on $\omega$ and not on $h^L$, as the notation suggests. 
	It is also independent of the choice of the basis $s_{1, k}, \ldots, s_{N_k, k}$ as long as the latter stays orthonormal and adapted to the filtration in the sense described above.
	The definition in (\ref{eq_hn_potent}) closely resembles the \textit{Bergman kernel}, cf. \cite{TianBerg}, \cite{MaHol}, with the only difference being that we use a weighted version of it, where the weights are prompted by (\ref{eq_resol_hn_filt00}).
	\par 
	Relying on our previous work \cite{FinSubmToepl}, we shall establish in Corollary \ref{prop_conv_hn_pot} that the sequence of functions $x \mapsto \frac{1}{k^{n + 1}} {\rm{HN}}_k(\omega)(x)$, $x \in X$, converges almost everywhere, as $k \to \infty$, to a bounded function, which we denote by ${\rm{HN}}(\omega) \in L^{\infty}(X)$, and call the \textit{Harder-Narasimhan potential} of $\omega$.
	Moreover, ${\rm{HN}}(\omega)$ coincides almost everywhere with the speed of the geodesic ray associated with the Harder-Narasimhan filtration, cf. Section \ref{sect_subm_hn_filt} for details.
	\par 
	Let us consider the following auxiliary Monge-Ampère type equation
	\begin{equation}\label{eq_aux_ma}
		\omega^{n + 1} \wedge \pi^* \omega_B^{m - 1}
		=
		{\rm{HN}}(\omega) \cdot \omega^n \wedge \pi^* \omega_B^m \cdot (n + 1).
	\end{equation}
	We draw the reader's attention to the fact that (\ref{eq_aux_ma}) is not a PDE, since the dependence of ${\rm{HN}}(\omega)$ on $\omega$ is quite intricate. 
	Moreover, the optimal regularity of ${\rm{HN}}(\omega)$ remains an interesting open question.
	As explained in (\ref{eq_geod_ray_bnd}), ${\rm{HN}}(\omega)$ can alternatively be interpreted as the speed of the Mabuchi geodesic associated with the Harder-Narasimhan filtration. 
	At present, it is not known whether this filtration is finitely generated; see \cite[\S 7]{FinHNI} for a discussion. 
	If finite generation were known, the results of Chu-Tosatti-Weinkove \cite{ChuTossVeinC11} would imply that ${\rm{HN}}(\omega)$ is Lipschitz continuous. 
	For now, the only property that is known with certainty is that ${\rm{HN}}(\omega)$ is bounded.
	\par 
	It is, hence, quite premature to inquire whether (\ref{eq_aux_ma}) has explicit solutions.
	The main contribution of this article, however, says that the existence of approximate solutions to (\ref{eq_aux_ma}) is -- quite surprisingly -- unobstructed. 
	\begin{thm}\label{thm_main2}
		For any $\epsilon > 0$, there is a relatively Kähler $(1,1)$-form $\omega_{\epsilon} \in c_1(L)$ over $X$, verifying 
		\begin{equation}\label{eq_thm_main2}
			\int_X
			\Big|
			\omega_{\epsilon}^{n + 1} \wedge \pi^* \omega_B^{m - 1}
			-
			{\rm{HN}}(\omega_{\epsilon}) \cdot \omega_{\epsilon}^n \wedge \pi^* \omega_B^m \cdot (n + 1)
			\Big|
			\leq
			\epsilon.
		\end{equation}
	\end{thm}
	\begin{rem}
		The approximate solutions are, of course, highly non-unique. 
		Indeed, one can easily obtain new approximate solutions by locally perturbing a given one. 
		It is therefore natural to ask whether such approximate solutions can be constructed in a more canonical way -- for instance, as solutions of a suitable geometric flow. 
		We refer the reader to the discussion at the end of Section \ref{sect_j_eq} for further details.
	\end{rem}
	\par 
	We now outline the relationship between Theorems \ref{thm_main} and \ref{thm_main2}. 
	The proof of (\ref{eq_main}) proceeds by establishing both lower and upper bounds on ${\rm{WZW}}(c_1(L) - t \pi^* [\omega_B], \omega_B)$. 
	The lower bound, which is significantly simpler to establish, follows directly from our previous work \cite{FinHNII}.
	To obtain the upper bound, we need to construct some explicit relatively Kähler $(1,1)$-forms in the class $c_1(L) - t \pi^* [\omega_B]$ saturating the established lower bound.
	Let us demonstrate that, for any $\epsilon > 0$, $t \in \real$, the forms $\omega_{\epsilon}$ from Theorem \ref{thm_main2}, verify
	\begin{equation}\label{eq_main_cor}
		{\rm{WZW}}(\omega_{\epsilon} - t \pi^* \omega_B, \omega_B) 
		\leq 
		\int_{x \in \real} |x - t| d \eta^{HN}(x)
		\cdot
		\int_X c_1(L)^n \pi^* [\omega_B]^m
		\cdot
		(n + 1)
		+
		\epsilon.
	\end{equation}
	Once we establish (\ref{eq_main_cor}), we will obtain the upper bound on ${\rm{WZW}}(c_1(L) - t \pi^* [\omega_B], \omega_B)$ from Theorem \ref{thm_main}.
	The verification of (\ref{eq_main_cor}) follows straightforwardly from a result we establish in Proposition \ref{prop_conv_hn_pot}, related with previous works \cite{ChenHNolyg}, \cite{BouckChen}, \cite{NystOkounTest}, \cite{HisamSpecMeas} and \cite{FinSubmToepl}. 
	This result states that for any continuous function $g : \real \to \real$ and any relatively Kähler $(1,1)$-form $\omega \in c_1(L)$, we have
	\begin{equation}\label{eq_hn_pt_func}
		\int g({\rm{HN}}(\omega)) \cdot \omega^n \wedge \pi^* \omega_B^m
		=
		\int_{x \in \real} g(x) d \eta^{HN}(x)
		\cdot
		\int_X c_1(L)^n \pi^* [\omega_B]^m.
	\end{equation}
	The reader will immediately check that (\ref{eq_main_cor}) is a direct consequence of Theorem \ref{thm_main2} and (\ref{eq_hn_pt_func}), applied for the continuous function $g(x) := |x - t|$, $x \in \real$.
	\par 
	We further assert that Theorem \ref{thm_main2} provides a generalization of a part of Corollary \ref{cor_wzw}. Specifically, by setting $g(x) := |x - t|$, $x \in \real$, in (\ref{eq_hn_pt_func}), it becomes apparent that $\eta^{HN}$ is a Dirac mass at $t$ if and only if ${\rm{HN}}(\omega)$ coincides with $t$ almost everywhere. 
	Consequently, (\ref{eq_thm_main2}) specializes to (\ref{eq_cor_wzw}). 
	These observations collectively suggest that the forms in Theorem \ref{thm_main2} serve as the closest analogs to actual solutions of the Wess-Zumino-Witten equation in cases where true solutions are absent. 
	Thus, Theorem \ref{thm_main2} offers a resolution to Question B).
	\par 
	To conclude, we briefly outline our approach to proving Theorem \ref{thm_main2} and situate it within the context of earlier research. 
	We construct the forms $\omega_{\epsilon}$, $\epsilon > 0$, from Theorem \ref{thm_main2} by geometric quantization, i.e. from certain Hermitian metrics on $E_k$, for $k \in \nat$ big enough.
	\par 
	The use of geometric quantization for investigating PDEs was introduced by Donaldson \cite{DonaldScalBalan}, but our approach diverges from his in a significant way. 
	We establish the existence of a solution through geometric quantization, whereas in \cite{DonaldScalBalan}, the existence of a solution was assumed to obtain convergence in the quantization procedure. 
	\par 
	Our approach builds upon several results.
	First, we make extensive use of the fact initially established by Chen \cite{ChenHNolyg} and further developed by the author \cite{FinHNI}, saying that Harder-Narasimhan filtrations constitute a bounded, submultiplicative filtration on $\oplus_{k = 0}^{+\infty} E_k$ away from a negligible subset of $B$. 
	This result allows us to integrate the developments concerning submultiplicative filtrations from Boucksom-Chen \cite{BouckChen}, Witt Nystr{\"o}m \cite{NystOkounTest}, Hisamoto \cite{HisamSpecMeas}, and the author \cite{FinTits}, \cite{FinSubmToepl} into the study of the Wess-Zumino-Witten equation.
	\par 
	Second, our approach is deeply rooted on the result due to Atiyah-Bott \cite{AtiyahBott}, Daskalopoulos-Wentworth, \cite{DaskWent} and Sibley \cite{SibleyYangMills}, establishing the existence of \textit{$L^1$-approximate critical Hermitian structures} on holomorphic vector bundles, cf. (\ref{eq_appr_crhs}).
	These metrics serve us to construct the Hermitian metrics on $E_k$ leading to $\omega_{\epsilon}$.
	Indeed, our main technical contribution consists in establishing that these $L^1$-approximate critical Hermitian structures can be constructed on $E_k$, $k \in \nat$, while respecting the algebraic structure of the ring bundle $\oplus_{k = 0}^{+ \infty} E_k$ in a certain sense.
	This hinges on the well-known fact that $\oplus_{k = 0}^{+ \infty} E_k$ is a finitely generated ring bundle, and on the already mentioned result concerning submultiplicativity of Harder-Narasimhan filtrations.
	Our forms $\omega_{\epsilon}$ arise from the dequantization of geodesic rays associated with Harder-Narasimhan filtrations and emanating from $L^1$-approximate critical Hermitian structures, and submultiplicativity of Harder-Narasimhan filtrations is crucial here because it enables the application of certain estimates on these geodesic rays from \cite{FinTits}, which are essential for showing the compatibility with algebraic structure of $\oplus_{k = 0}^{+ \infty} E_k$.
	\par 
	The importance of this compatibility of $L^1$-approximate critical Hermitian structures with the algebraic structure of the ring bundle $\oplus_{k = 0}^{+ \infty} E_k$ in our analysis stems from our previous work \cite{FinSecRing}, where we show that such compatible metrics are necessarily given as the $L^2$-metrics of some positive Hermitian metric on the polarization.
	This connection with $L^2$-metrics allows us to draw on several results from geometric quantization, which form the technical backbone of this article.
	In addition to the now-classical result of Tian \cite{TianBerg} and Dai-Liu-Ma \cite{DaiLiuMa}, we rely significantly on the asymptotic formula for the curvature of direct images by Ma-Zhang \cite{MaZhangSuperconnBKPubl}, as well as on the semiclassical Ohsawa-Takegoshi extension theorem established by the author in \cite{FinOTAs},  \cite{FinOTRed} refining previous results due to Zhang \cite{ZhangPosLinBun} and Bost \cite{BostDwork}.
	\par 
	Let us briefly now explain the structure of this paper.
	In Section \ref{sect_subm_hn_filt}, we review the submultiplicativity property of Harder-Narasimhan filtrations and establish the convergence of Harder-Narasimhan potentials from (\ref{eq_hn_potent}).
	In Section \ref{sect_min_seq}, we establish the lower bound from (\ref{eq_main}) on the Wess-Zumino-Witten functional.
	We do so by studying asymptotically, as $k \to \infty$, the lower bound on the Hermitian Yang-Mills functionals of $E_k$.
	We describe a precise relation between the minimization of the Wess-Zumino-Witten functional and Hermitian Yang-Mills functionals.
	We describe a construction of the sequence of $(1, 1)$-forms $\omega_{\epsilon}$, which provide solutions to Theorem \ref{thm_main2}.
	We prove that this sequence solves Theorem \ref{thm_main2} in Section \ref{sect_dequant}, modulo a number of technical results which are treated in Sections \ref{sect_curv}-\ref{sect_geod_appr}.
	In Section \ref{sect_mr}, we describe an application of Theorem \ref{thm_main} which gives a Mehta-Ramanathan type formula for the Wess-Zumino-Witten functional.
	In Section \ref{sect_dem_conj}, we establish Corollary \ref{thm_andgr}. 
	Finally, in Section \ref{sect_j_eq}, we describe a connection between Corollary \ref{cor_wzw}, Kobayashi-Hitchin correspondence and Hessian quotient equations.
	\par 
	\textbf{Notations}. 
	We use the notation $N_k := \rk{E_k}$ throughout the text. 
	For $b \in B$, we denote by $X_b$, $E_{k, b}$, etc., the fibers of $X, E_k$, etc., at $b$. 
	For a Hermitian vector bundle $(E, h^E)$ on $B$, a bounded section $A$ of $\enmr{E}$ and a positive volume form $\eta$ on $B$, we define
	\begin{equation}
		\| A \cdot \eta \|_{L^1(B, h^E)} = \int_{b \in B} \| A(b) \| \cdot \eta(b), 
		\qquad 
		\| A \cdot \eta \|_{L^1(B, h^E)}^{{\rm{tr}}} = \int_{b \in B} \tr{|A(b)|} \cdot \eta(b),
	\end{equation}
	where $\| \cdot \|$ is the subordinate operator norm, calculated with respect to $h^E$, and $|A|$ is the absolute value of an operator, defined as $\sqrt{A A^*}$.
	Clearly, $\| \cdot \|_{L^1(B, h^E)}$ is a norm. 
	In fact, $\| \cdot \|_{L^1(B, h^E)}^{{\rm{tr}}}$ is also a norm; the triangle inequality is satisfied by Ky Fan inequalities, cf. \cite[Exercise II.1.15]{BhatiaMatr}.
	\par 
	Let $(V, H)$ be a Hermitian vector space.
	For Hermitian $A_0, A_1 \in \enmr{V}$, we note $A_0 \geq A_1$ if the difference $A_0 - A_1$ is positive semi-definite.
	When the choice of the Hermitian structure is not clear from the context, we use notation $A_0 \geq_H A_1$. 
	\par 
	We endow ${\rm{Sym}}^l V$ with a Hermitian metric ${\rm{Sym}}^l H$ induced by the induced metric on $V^{\otimes l}$ and the inclusion ${\rm{Sym}}^l V \to V^{\otimes l}$, defined as
	\begin{equation}\label{eq_sym_emb_tens}
		v_1 \odot \ldots \odot v_l \mapsto \frac{1}{l!} \sum v_{\sigma(1)} \otimes \ldots \otimes v_{\sigma(l)},
	\end{equation}
	where the sum runs over all permutations $\sigma$ on $l$ indices.
	Clearly, if $v_1, \cdots, v_r$ form an orthonormal basis of $V$, then $\sqrt{l! / \alpha!} \cdot v^{\odot \alpha}$, $\alpha \in \nat^l$, $|\alpha| = l$, forms an orthonormal basis of ${\rm{Sym}}^l V$ with respect to ${\rm{Sym}}^l H$.
	Similarly, for an arbitrary filtration $\mathcal{F}$ of $V$, we define the filtration ${\rm{Sym}}^l \mathcal{F}$ on ${\rm{Sym}}^l V$.
	\par 
	For any $A \in \enmr{V}$, we define ${\rm{Sym}}^l A \in \enmr{{\rm{Sym}}^l V}$ as the symmetrization of the map $l \cdot A \otimes {\rm{Id}}_V \otimes \cdots \otimes {\rm{Id}}_V$.
	In other words, if $A$ is self-adjoint and $(v_1, \cdots, v_r)$ form a basis of $V$, consisting of eigenvectors of $A$ corresponding to the eigenvalues $\lambda := (\lambda_1, \cdots, \lambda_r)$, then $v^{\odot \alpha}$, $\alpha = (\alpha_1, \ldots, \alpha_r) \in \nat^l$, $|\alpha| = l$, forms a basis of eigenvectors of ${\rm{Sym}}^l A$ corresponding to the eigenvalues $\alpha \cdot \lambda := \alpha_1 \lambda_1 + \cdots + \alpha_r \lambda_r$.
	\par 
	Consider now a surjection $p : V \to Q$ between two complex vector bundles.
	Once we fix a Hermitian metric $H$ on $V$, one can naturally identify $V$ with $Q \oplus \ker p$ using the dual to $p$ map $p^* : Q \to V$.
	Using this identification, for any $A \in \enmr{V}$, we then can define the operator $A|_Q \in \enmr{Q}$ by $A|_Q (q) = p(A( p^* (q)))$.
	\par 
	A filtration $\mathcal{F}$ on $V$ is a map from $\real$ to vector subspaces of $V$, $t \mapsto \mathcal{F}_t V$, verifying $\mathcal{F}_t V \subset \mathcal{F}_s V$ for $t > s$, and such that $\mathcal{F}_t V  = V$ for sufficiently small $t$ and $\mathcal{F}_t V = \{0\}$ for sufficiently big $t$.
	We always assume that it left-continuous, i.e. for any $t \in \real$, there is $\epsilon_0 > 0$, such that $\mathcal{F}_t V = \mathcal{F}_{t - \epsilon} V $ for any $0 < \epsilon < \epsilon_0$.
	Sometimes, we define filtrations by prescribing their jumping numbers and respective vector subbundles. 
	In this way, the corresponding map from $\real$ is defined as the only left-continuous map, which is constant between the jumping numbers.
	\par 
	A norm $N_V = \| \cdot \|_V$ on $V$ naturally induces the quotient norm $\| \cdot \|_Q := [N_V]$ on $Q$ as follows
	\begin{equation}\label{eq_defn_quot_norm}
		\| f \|_Q
		:=
		\inf \big \{
		 \| g \|_V
		 :
		 \quad
		 g \in V, 
		 p(g) = f
		\big\},
		\qquad f \in Q.
	\end{equation}
	Similarly, for any filtration $\mathcal{F}$ on $V$, we can form a quotient filtration $[\mathcal{F}]$ on $Q$.
	More precisely, recall that a filtration $\mathcal{F}$ on $V$ defines the norm and weigh functions $\chi_{\mathcal{F}} : V \to [0, +\infty[$,  $\chi_{\mathcal{F}} : V \to ]- \infty, +\infty]$, as follows
	\begin{equation}\label{eq_na_norm}
		w_{\mathcal{F}}(s) := \sup \{ \lambda \in \real : s \in \mathcal{F}_{\lambda} V \},
		\qquad
		\chi_{\mathcal{F}}(s) := \exp(- w_{\mathcal{F}}(s)).
	\end{equation}
	Clearly, $\chi_{\mathcal{F}}$ is a non-Archimedean norm on $V$ with respect to the trivial absolute value on $\comp$, i.e. it satisfies the following axioms: a) $\chi_{\mathcal{F}}(f) = 0$ if and only if $f = 0$,
	b) $\chi_{\mathcal{F}}(\lambda f) = \chi_{\mathcal{F}}(f)$, for any $\lambda \in \comp^*$, $k \in \nat^*$, $f \in V$,
	c) $\chi_{\mathcal{F}}(f + g) \leq \max \{ \chi_{\mathcal{F}}(f), \chi_{\mathcal{F}}(g) \}$, for any $k \in \nat^*$, $f, g \in V$.
	Moreover, any function verifying the above properties is associated with a filtration. 
	If we now use the definition (\ref{eq_defn_quot_norm}) to define the quotient norm $[\chi_{\mathcal{F}}]$ from $\chi_{\mathcal{F}}$, it will satisfy the same properties of a non-Archimedean norm and, hence, defines a filtration, which we denote by $[\mathcal{F}]$.
	\par 
	For a coherent sheaf $\mathscr{E}$ on $B$, we denote by ${\rm{Sat}}(\mathscr{E})$ the saturation of $\mathscr{E}$, defined as the minimal subsheaf with torsion free quotient containing $\mathscr{E}$. 
	A sheaf $\mathscr{E}$ is saturated if ${\rm{Sat}}(\mathscr{E}) = \mathscr{E}$.
	\par 
	\textbf{Acknowledgement}. 
	I am grateful to Sébastien Boucksom, Paul Gauduchon, Duong H. Phong, Lars M. Sektnan, Jacob Sturm, and Richard Wentworth for insightful discussions related to this work. 
	I also acknowledge the support of CNRS and École Polytechnique, and thank the anonymous referee for the helpful suggestions.
	Above all, I deeply wish I could have expressed my gratitude in person to Jean-Pierre Demailly, with whom I first discussed the subject of this paper in late 2021 and who, to my great sorrow, passed away in March 2022. 
	His encouragement and insights were invaluable in bringing this project to completion.
	
	\section{Harder-Narasimhan potentials and submultiplicativity}\label{sect_subm_hn_filt}
	The primary aim of this section is to review the submultiplicativity property of Harder-Narasimhan filtrations, which plays a central role in this article, and to establish, based on this property, the convergence of Harder-Narasimhan potentials from (\ref{eq_hn_potent}).
	\par 
	We fix some definitions first.	
	Consider a graded filtration $\mathcal{F} := \oplus_{k = 0}^{\infty} \mathcal{F}^k$ on the section ring $R(Y, F) := \oplus_{k = 0}^{\infty} H^0(Y, F^{\otimes k})$ of a complex projective manifold $Y$ polarized by an ample line bundle $F$.
	We say that $\mathcal{F}$ is \textit{submultiplicative} if for any $t, s \in \real$, $k, l \in \nat$, we have 
	\begin{equation}\label{eq_sm_cond}
		\mathcal{F}^k_t H^0(Y, F^{\otimes k}) \cdot \mathcal{F}^l_s H^0(Y, F^{\otimes l}) \subset \mathcal{F}^{k + l}_{t + s} H^0(Y, F^{\otimes (k + l)}).
	\end{equation}
	We say that $\mathcal{F}$ is \textit{bounded} if there is $C > 0$, such that for any $k \in \nat^*$, $\mathcal{F}^k_{C k} H^0(Y, F^{\otimes k}) = \{0\}$.
	It is an immediate consequence of the submultiplicativity and the fact that $R(Y, F)$ is a finitely generated ring, cf. \cite[Example 2.1.30]{LazarBookI}, that there is $C > 0$, such that $\mathcal{F}^k_{ - C k} H^0(Y, F^{\otimes k}) = H^0(Y, F^{\otimes k})$.
	For a bounded submultiplicative filtration $\mathcal{F}$, we denote by $\| \mathcal{F} \|$ the minimal constant $C > 0$, such that $\mathcal{F}^k_{ - C k} H^0(Y, F^{\otimes k}) = H^0(Y, F^{\otimes k})$ and $\mathcal{F}^k_{ C k} H^0(Y, F^{\otimes k}) = \{0\}$ for any $k \in \nat^*$.
	\par 
	\begin{sloppypar}
	We fix a positive Hermitian metric $h^F$ on $F$, and denote by ${\textrm{Hilb}}_k(h^F)$ the $L^2$-metric on $H^0(Y, F^{\otimes k})$ induced by $h^F$, see (\ref{eq_l2_prod}).
	For a continuous function $g : \real \to \real$, we consider the \textit{weighted Bergman kernel}, $B_k^{\mathcal{F}, g}(x) \in \real$, $k \in \mathbb{N}$, $x \in Y$, defined as 
	\begin{equation}\label{eq_weight_berg}
		B_k^{\mathcal{F}, g}(x) 
		= 
		\sum_{i = 1}^{N_k} g \Big( \frac{w_{\mathcal{F}^k}(s_{i, k})}{k} \Big) \cdot \big| s_{i, k}(x) \big|_{h^{L^{\otimes k}}}^2
	\end{equation}
	where $N_k := \dim H^0(Y, F^{\otimes k})$ and $s_{i, k}$, $i = 1, \ldots, N_k$, is an orthonormal basis of $(H^0(Y, F^{\otimes k}), {\textrm{Hilb}}_k(h^F))$ adapted to $\mathcal{F}^k$.
	\end{sloppypar}
	\par 
	\begin{sloppypar}
	This definition is modeled after the Bergman kernel, $B_k(x)$, defined as $B_k(x) = \sum_{i = 1}^{N_k} |s_{i, k}(x)|_{h^{F^{\otimes k}}}^2$.
	Recall that a well-known result of Tian, \cite{TianBerg}, says that 
	\begin{equation}\label{eq_thm_tian}
		\frac{1}{k^n} B_k(x) \quad \text{converges uniformly to 1, as } k \to \infty,
	\end{equation}
	see also \cite{ZeldBerg}, \cite{Caltin}, \cite{Bouche}, \cite{MaHol} for more refined convergence statements.
	One of the main results of \cite{FinSubmToepl} generalizes (\ref{eq_thm_tian}) in realms of weighted Bergman kernels, (\ref{eq_weight_berg}).
	To state it, recall that Phong-Sturm \cite[Theorem 3]{PhongSturmDirMA} and Ross-Witt Nystr{\"o}m \cite{RossNystAnalTConf} constructed a \textit{geodesic ray} $h^{\mathcal{F}}_t$, $t \in [0, +\infty[$, of Hermitian metrics on $F$, emanating from $h^F$.
	The term geodesic here stands for the fact that the resulting ray of metrics is a metric geodesic in the space of positive metrics on $F$, endowed with the so-called Mabuchi distance, \cite{Mabuchi}.
	\end{sloppypar}
	\par 
	To recall the definition of the geodesic ray, we first recall a much simpler construction of geodesic rays on a complex vector space $V$, $\dim V = r$. 
	We fix a filtration $\mathcal{F}$ and say that the Hermitian products $H_s$, $s \in [0, +\infty[$, on $V$ form a \textit{geodesic ray} departing from $H$ associated with $\mathcal{F}$, if $e_i \cdot \exp(s w_{\mathcal{F}}(e_i) / 2)$, $i = 1, \ldots, r$, form an orthonormal basis for $H_s$, where $e_1, \ldots, e_r$ is an orthonormal basis on $(V, H)$ adapted to the filtration $\mathcal{F}$ in the sense as described in (\ref{eq_hn_potent}).
	\par 
	The construction of the geodesic rays also makes sense in the family setting: when a vector space is replaced by a vector bundle over a manifold, a Hermitian product is replaced by a Hermitian metric, and the filtration is replaced by filtrations by subsheaves over the manifold. 
	When the filtration is given by subbundles (and not by subsheaves), and the Hermitian metric is smooth, it is immediate to see that the the resulting ray is a ray of smooth metrics.
	\par 
	The terminology “geodesic ray" comes from the fact that the above rays are metric geodesics in the space of all Hermitian products on $V$ for the invariant metric coming from the $SL(V) / SU(V)$-homogeneous structure.
	Finite segments of these rays will be called geodesics.
	\par 
	Now, recall that an arbitrary Hermitian norm $H_k$ on $H^0(Y, F^{\otimes k})$, for $k \in \nat$ so that $F^{\otimes k}$ is very ample, induces a positive metric $FS(H_k)$ on $F^{\otimes k}$, constructed as follows.
	Consider the Kodaira embedding ${\rm{Kod}}_k : Y \hookrightarrow \mathbb{P}(H^0(Y, F^{\otimes k})^*)$.
	We denote by $\mathscr{O}(1)$ the hyperplane bundle on $\mathbb{P}(H^0(Y, F^{\otimes k})^*)$, and define the metric $FS(H_k)$ on $F^{\otimes k}$ as the pull-back of the Fubini-Study metric on $\mathscr{O}(1)$ induced by $H_k$ through the isomorphism ${\rm{Kod}}_k^* \mathscr{O}(1) \to F^{\otimes k}$.
	Alternatively, $FS(H_k)$ is the only metric on $F^{\otimes k}$, which for any $x \in Y$, and for an orthonormal basis $s_1, \ldots, s_{N_k}$ of $(H^0(Y, F^{\otimes k}), H_k)$ satisfies the following equation
	\begin{equation}\label{eq_fs_alt_defn}
		\sum_{i = 1}^{N_k} \big| s_i(x) \big|^2_{FS(H_k)} = 1.
	\end{equation}
	\par 
	Now, for any $t \in [0, +\infty[$, $k \in \nat$, we define, following Phong-Sturm \cite{PhongSturmDirMA} and Ross-Witt Nystr{\"o}m \cite{RossNystAnalTConf}, $H^{\mathcal{F}}_{t, k}$ as the (geodesic) ray of Hermitian norms on $H^0(Y, F^{\otimes k})$ emanating from ${\rm{Hilb}}_k(h^F)$ and associated with the restriction $\mathcal{F}^k$ of $\mathcal{F}$ to $H^0(Y, F^{\otimes k})$.
	We denote by $h^{\mathcal{F}}_t$, $t \in [0, +\infty[$, the ray of metrics on $L$, constructed as follows
	\begin{equation}\label{eq_geod_ray_filt}
		h^{\mathcal{F}}_t := \Big( \lim_{k \to \infty} \inf_{l \geq k} \big( FS(H^{\mathcal{F}}_{t, l})^{\frac{1}{l}} \big) \Big)_{*}.
	\end{equation}
	\par 
	In general $h^{\mathcal{F}}_t$ is not smooth. But it is bounded for any $t \in [0, + \infty[$, and one can always define its derivative at $t = 0$, $\dot{h}^{\mathcal{F}}_0 := (h^{\mathcal{F}}_0)^{-1} \frac{d}{dt} h^{\mathcal{F}}_t|_{t = 0}: X \to \real$, which is also bounded.
	More specifically, in \cite[\S 2.2]{BernBrunnMink}, \cite[Theorem 9.2]{RossNystAnalTConf}, authors established that $h^{\mathcal{F}}_t$ converges to $h^{\mathcal{F}}_0$, as $t \to 0$, uniformly on $Y$. 
	Due to convexity in $t$-variable of the potential of these metrics, cf. \cite[Theorem I.5.13]{DemCompl}, the one-sided derivative at $t := 0$ is well-defined.
	We denote $\phi(h^L, \mathcal{F}) = - \dot{h}^{\mathcal{F}}_0$ for brevity.
	It is easy to establish, cf. \cite[Lemma 2.4]{FinTits}, that the following bound holds
	\begin{equation}\label{eq_geod_ray_bnd}
		\sup_{x \in X} \big| \phi(h^L, \mathcal{F})(x) \big|
		\leq
		\| \mathcal{F} \|.
	\end{equation}
	\par 
	\begin{thm}[{\cite[Theorem 1.1]{FinSubmToepl}}]\label{thm_berg_conv}
		For a bounded submultiplicative filtration $\mathcal{F}$ on $R(Y, F)$, the sequence of functions $x \mapsto \frac{1}{k^n} B_k^{\mathcal{F}, g}(x)$, $x \in Y$, $k \in \nat$, is uniformly bounded and converges pointwise to a function which equals $g(\phi(h^L, \mathcal{F}))$ almost everywhere.
	\end{thm}
	\par 
	Let us explain the relation between Theorem \ref{thm_berg_conv} and the respective convergence for jumping measures.
	Remark the following basic identity 
	\begin{equation}\label{eq_basic_id}
		\frac{1}{n!} \int B_k^{\mathcal{F}, g}(x) c_1(L, h^L)^n = {\rm{Tr}} \Big[ g \Big( \frac{A({\textrm{Hilb}}_k(h^L), \mathcal{F}^k)}{k} \Big) \Big].
	\end{equation}
	We define the jumping measures, $\mu_{\mathcal{F}, k}$, on $\real$ associated with $\mathcal{F}^k$ as 
	\begin{equation}\label{eq_jump_meas_d}
		\mu_{\mathcal{F}, k} := \frac{1}{N_k} \sum_{j = 1}^{N_k} \delta \Big[ \frac{e_{\mathcal{F}}(j, k)}{k} \Big], 
	\end{equation}
	where $\delta[x]$ is the Dirac mass at $x \in \real$ and $e_{\mathcal{F}}(j, k)$ are the \textit{jumping numbers}, defined as follows
	\begin{equation}\label{eq_defn_jump_numb}
		e_{\mathcal{F}}(j, k) := \sup \Big\{ t \in \real : \dim \mathcal{F}_t H^0(X, L^{\otimes k}) \geq j \Big\}.
	\end{equation}
	\par 
	Directly from Theorem \ref{thm_berg_conv}, Lebesgue dominated convergence theorem and the asymptotic Riemann-Roch-Hirzebruch theorem saying that $N_k \sim k^n \cdot \int c_1(F)^n / n!$, we see that the sequence of measures $\mu_{\mathcal{F}, k}$ converge weakly, as $k \to \infty$, to a measure $\mu_{\mathcal{F}}$ on $\real$, which satisfies
	\begin{equation}\label{eq_weak_lim_geod_r}
		\int_{\real} g(x) d \mu_{\mathcal{F}}(x)
		=
		\frac{\int_Y g(\phi(h^L, \mathcal{F})) c_1(F, h^F)^n}{\int_Y c_1(F)^n}.
	\end{equation}
	Weak convergence of jumping measures was first established by Chen \cite{ChenHNolyg} and Boucksom-Chen \cite{BouckChen}. 
	Subsequently, Witt Nyström \cite{NystOkounTest} proved (\ref{eq_weak_lim_geod_r}) for filtrations associated with a $\mathbb{C}^*$-action, Hisamoto extended it in \cite{HisamSpecMeas} for finitely generated filtrations, and the author \cite[Theorem 5.4]{FinSecRing} further extended it for bounded submultiplicative filtrations, as stated above.
	\par 
	We need to consider the family version of Theorem \ref{thm_berg_conv}.
	Consider a holomorphic submersion $\pi : X \to B$ between compact Kähler manifolds $X$ and $B$ of dimensions $n + m$ and $m$ respectively.
	For a relatively ample line bundle $L$ over $X$, we denote by $E_k := R^0 \pi_* L^{\otimes k}$.
	For $k \in \nat$, we consider a filtration
	\begin{equation}\label{eq_resol_hn_filt001}
		E_k = \mathcal{F}^{k}_{\lambda_{1, k}} \supset \mathcal{F}^{k}_{\lambda_{2, k}} \supset \cdots \supset \mathcal{F}^{k}_{\lambda_{q_k, k}} \supset \{0\} =: \mathcal{F}^{k}_{\lambda_{q_k + 1, k}},
	\end{equation}
	by coherent subsheaves.
	We assume that there is a subset $S \subset B$, negligible with respect to Lebesgue measure on $B$, such that for any $b \notin S$, the restriction of (\ref{eq_resol_hn_filt001}) to $b$ induces a bounded submultiplicative filtration of $\oplus_{k = 0}^{+\infty} E_{k, b} = R(X_b, L|_{X_b})$.
	Later on, to simplify the notations, we simply call the induced filtration on $\oplus_{k = 0}^{\infty} E_k$ \textit{bounded and submultiplicative away from $S$}.
	\par 
	Remark that the jumping numbers of the restriction of the filtration (\ref{eq_resol_hn_filt001}) at a generic point over the base $B$ do not depend on the choice of the point.
	Due to this, we can define the jumping measures, $\mu_{\mathcal{F}, k}$, just as in (\ref{eq_jump_meas_d}), and by (\ref{eq_weak_lim_geod_r}), they will converge weakly, as $k \to \infty$, to a measure on $\real$ that we denote by $\mu_{\mathcal{F}}$. 
	\par 
	To make a connection between the theory of submultiplicative filtrations and the Harder-Narasimhan filtrations, we need the following result, which for $\dim B = 1$ is due to Chen \cite{ChenHNolyg}, and for $\dim B \geq 2$ is due to the author, see \cite{FinHNII}, cf. also \cite{FinHNI} for the projective setting. 
	Below, we use the notations from (\ref{eq_resol_hn_filt00}) for the Harder-Narasimhan filtration of $E_k$.
	We introduce the following proper analytic subsets of $B$:
	\begin{equation}\label{eq_sk0_defn}
		S_k^0 := \cup_{i = 1}^{q_k} {\rm{Singsupp}}(\mathcal{F}^{HN, k}_{\lambda_i}).
	\end{equation}
	Remark that when $\dim B = 1$, the sets $S_k^0$ are empty.
	\begin{thm}\label{thm_HN_subm}
		The Harder-Narasimhan filtrations (\ref{eq_resol_hn_filt00}) on $E_k$, $k \in \nat$, induce on $\oplus_{k = 0}^{\infty} E_k$ the filtration which is bounded and submultiplicative away from $\cup_{k = 0}^{+ \infty} S_k^0$.
	\end{thm}
	\par 
	Directly from Theorem \ref{thm_HN_subm} and the result described after (\ref{eq_resol_hn_filt001}), we deduce the weak convergence of the probability measures $\eta_k^{HN}$ from (\ref{eq_eta_defn}).
	To discuss the convergence of the Harder-Narasimhan potentials, we fix a relatively positive Hermitian metric $h^L$ on $L$, $\omega := c_1(L, h^L)$, a continuous function $g : \real \to \real$, and define the \textit{fiberwise weighted Bergman kernel}, $B_k^{\mathcal{F}, g, \pi}(x) \in \real$, $k \in \mathbb{N}$, $x \in X$, by gluing (\ref{eq_weight_berg}).
	\par 
	We denote by $\phi^{\pi}(h^L, \mathcal{F})$ the fiberwise geodesic ray associated with the restriction of (\ref{eq_resol_hn_filt001}) to the fibers. 
	Remark that $\phi^{\pi}(h^L, \mathcal{F})$  is only well defined for $x \in X$, so that $\pi(x) \notin S$, but since $S$ is a negligible set, and the bound (\ref{eq_geod_ray_bnd}) holds, $\phi^{\pi}(h^L, \mathcal{F})$ makes sense as an element of $L^{\infty}(X)$. 
	\begin{prop}\label{prop_conv_berg_family}
		The sequence of functions $x \mapsto \frac{1}{k^n} B_k^{\mathcal{F}, g, \pi}(x)$, $x \in X$, $k \in \nat$, is uniformly bounded over $X$, and converges almost everywhere to $g(\phi^{\pi}(h^L, \mathcal{F}))$.
		Moreover, for any relatively Kähler $(1,1)$-form $\omega \in c_1(L)$, we have
		\begin{equation}\label{eq_hn_pt_func_gen}
			\int g(\phi^{\pi}(h^L, \mathcal{F})) \cdot \omega^n \wedge \pi^* \omega_B^m
			=
			\int_{x \in \real} g(x) d \mu_{\mathcal{F}}(x)
			\cdot
			\int_X c_1(L)^n \pi^* [\omega_B]^m.
		\end{equation}
	\end{prop}
	\begin{rem}\label{rem_conv_berg_family}
		Note that the function $B_k^{\mathcal{F}, g, \pi}(x)$ is continuous outside the singular locus of the filtration.
		Proposition \ref{prop_conv_berg_family} then implies that $\phi^{\pi}(h^L, \mathcal{F})$ is measurable, since it arises as the pointwise limit of functions that are continuous almost everywhere.
	\end{rem}
	\begin{proof}
		Directly from the definition of $B_k^{\mathcal{F}, g, \pi}(x)$, we deduce that 
		\begin{equation}\label{eq_bnd_hn1}
			|B_k^{\mathcal{F}, g, \pi}(x)| \leq B^{\pi}_k(x) \cdot \sup_{|x| \leq  \| \mathcal{F} \|} |g(x)|,
		\end{equation}
		where $B^{\pi}_k(x)$ is the fiberwise Bergman kernel.
		Recall the following family version of (\ref{eq_thm_tian}):
		\begin{equation}\label{eq_thm_tian_fam}
			\frac{1}{k^n} B^{\pi}_k(x) \quad \text{converges uniformly to 1, as } k \to \infty,
		\end{equation}
		which follows directly from the proof \cite{DaiLiuMa}, cf. also \cite{MaZhangSuperconnBKPubl}.
		The boundness statement now follows directly from (\ref{eq_bnd_hn1}) and (\ref{eq_thm_tian_fam}).
		\par 
		By applying Theorem \ref{thm_berg_conv} to each fiber away from $S$, we deduce that for any $b \notin S$, the function $B_k^{\mathcal{F}, g, \pi}$ converges pointwise over $X_b$, as $k \to \infty$.
		Moreover, the limit coincides almost everywhere with $g(\phi^{\pi}(h^L, \mathcal{F}))$.
		Since the set $S$ is negligible, by the measurability from Remark \ref{rem_conv_berg_family} and Fubini's theorem, we deduce that the sequence of functions $x \mapsto \frac{1}{k^n} B_k^{\mathcal{F}, g, \pi}(x)$, $x \in X$, $k \in \nat$, converges almost everywhere to $g(\phi^{\pi}(h^L, \mathcal{F}))$.
		The identity (\ref{eq_hn_pt_func_gen}) is then established exactly as (\ref{eq_weak_lim_geod_r}), the only difference is that we add the integration along $B$ as well.
	\end{proof}
	\par
	Directly from Theorems \ref{thm_berg_conv}, \ref{thm_HN_subm} and Proposition \ref{prop_conv_berg_family}, we conclude the following.
	\begin{cor}\label{prop_conv_hn_pot}
		The sequence of functions $x \mapsto \frac{1}{k^{n + 1}} {\rm{HN}}_k(\omega)(x)$, $x \in X$, $k \in \nat$, is uniformly bounded over $X$, and converges almost everywhere to ${\rm{HN}}(\omega)$, which is a measurable function.
		Moreover, the identity (\ref{eq_hn_pt_func}) holds.
	\end{cor}

	\section{A minimizing sequence for the Wess-Zumino-Witten functional}\label{sect_min_seq}
	The main goal of this section is to give an outline of the proofs of Theorems \ref{thm_main}, \ref{thm_main2}.
	More precisely, we first show that the lower bound on the Wess-Zumino-Witten functional follows directly from the previous work \cite{FinHNII} of the author.
	We then describe a relation between the fact that this lower bound is sharp and the fact that, asymptotically, the infimum of the Hermitian Yang-Mills functional on direct images is saturated by the $L^2$-metrics.
	Finally, we describe how to construct the forms $\omega_{\epsilon}$ from Theorem \ref{thm_main2}.
	\par 
	We follow the notations introduced before Theorem \ref{thm_main}.
	Fix an arbitrary smooth closed relatively positive $(1, 1)$-form $\alpha$ in the class $c_1(L) - t \pi^* [\omega_B]$, $t \in \real$. 
	\begin{prop}\label{prop_low_bnd}
		For any $\alpha$ as above, we have
		\begin{equation}\label{eq_low_bnd}
			{\rm{WZW}}(\alpha, \omega_B) \geq \int_{x \in \real} |x - t| d \eta^{HN}(x)
			\cdot
			\int_X c_1(L)^n \pi^* [\omega_B]^m
			\cdot
			(n + 1).
		\end{equation}
	\end{prop}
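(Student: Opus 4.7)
The plan is to derive (\ref{eq_low_bnd}) as a direct consequence of the lower bound on the Fibered Yang-Mills functional established in \cite{FinHNII}, specialized to $p = 1$. I will identify the WZW functional with this $p = 1$ Fibered Yang-Mills functional and translate the lower bound of loc.\ cit., phrased in terms of Harder-Narasimhan slopes, into the closed form involving $\eta^{HN}$ on the right-hand side of (\ref{eq_low_bnd}).

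Concretely, I will first fix, via the $\partial \overline{\partial}$-lemma, a smooth Hermitian metric $h^L$ on $L$ whose Chern curvature equals $\alpha + t \pi^* \omega_B$; the relative positivity of $\alpha$ ensures that $h^L$ is positively curved along the fibers of $\pi$, so that the associated $L^2$-metric $h_k^{L^2}$ on $E_k = R^0 \pi_* L^{\otimes k}$ is well-defined for $k$ large. The main analytic input from \cite{FinHNII} is a pointwise-in-$b$ Berndtsson-type comparison which, after taking absolute values and integrating over $X$, yields
\[
\int_X \big| \alpha^{n+1} \wedge \pi^* \omega_B^{m-1} \big| \geq \frac{(n+1) \int_X c_1(L)^n \pi^* [\omega_B]^m}{k \, N_k} \int_B \tr{\big| R^{E_k, L^2} \wedge \omega_B^{m-1} - t k \cdot {\rm Id} \cdot \omega_B^m \big|} + o(1),
\]
as $k \to \infty$. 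The shift by $tk \cdot {\rm Id}$ accounts for the difference between $c_1(L)$ and $c_1(L) - t \pi^* [\omega_B]$, while the one-sided inequality is forced by extracting the absolute value from a fiberwise positive integrand.

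Next, I will apply a Hermitian Yang-Mills trace-norm lower bound to $E_k$: for any Hermitian metric on a holomorphic vector bundle $E$ over $(B, [\omega_B])$, the integrand on the right above is bounded below by $\sum_{i=1}^{\rk{E}} |\mu_i - \mu| \cdot \int_B \omega_B^m$, where $\mu_1, \ldots, \mu_{\rk{E}}$ are the Harder-Narasimhan slopes of $E$ and $\mu \in \real$ is arbitrary. This is a Chern-Weil estimate combining Ky Fan trace inequalities with the filtration inequalities coming from the Harder-Narasimhan filtration; it appears in \cite{FinHNII} (building on \cite[Theorem 6.10.13]{KobaVB}), and I will apply it with $\mu = tk$.

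Combining the two steps, dividing by $N_k$, and recognising the rescaled HN-sum as $\int_{\real} |x - t| \, d \eta_k^{HN}(x)$, the inequality becomes
\[
{\rm{WZW}}(\alpha, \omega_B) \geq (n+1) \int_X c_1(L)^n \pi^* [\omega_B]^m \cdot \int_{\real} |x - t| \, d \eta_k^{HN}(x) + o(1).
\]
The weak convergence $\eta_k^{HN} \to \eta^{HN}$ from \cite[Theorem 1.5]{FinHNII}, together with the uniform compactness of the supports of $\eta_k^{HN}$, lets me pass to the limit $k \to \infty$ in the continuous function $x \mapsto |x - t|$ and conclude (\ref{eq_low_bnd}). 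The main obstacle is the first step: the one-sided, pointwise Berndtsson comparison requires Bergman kernel expansions uniform in $b \in B$ with errors negligible after division by $k^{n+1}$, and this is precisely the technical machinery developed in \cite{FinHNII}.
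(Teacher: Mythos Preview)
Your proposal is correct and follows essentially the same route as the paper: identify ${\rm WZW}(\alpha,\omega_B)$ with the $p=1$ Fibered Yang-Mills functional via the metric $h^L$ with $c_1(L,h^L)=\alpha+t\pi^*\omega_B$, relate this asymptotically to the Hermitian Yang-Mills functional of the $L^2$-metric on $E_k$, invoke the Atiyah--Bott / Daskalopoulos--Wentworth lower bound in terms of Harder--Narasimhan slopes, and pass to the limit using the weak convergence $\eta_k^{HN}\to\eta^{HN}$.

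Two small remarks. First, the comparison between ${\rm WZW}$ and ${\rm HYM}_{tk}$ in the paper (equation (\ref{eq_wzw_hym})) is actually an \emph{equality} in the limit, not merely a one-sided inequality; this comes from the full Boutet de Monvel--Guillemin convergence of spectral measures, not just a positivity argument. Your one-sided version suffices for the lower bound, but you should not expect to lose anything at this step. Second, the precise reference for the ${\rm HYM}$ lower bound is Atiyah--Bott \cite{AtiyahBott} and Daskalopoulos--Wentworth \cite{DaskWent}, rather than \cite[Theorem 6.10.13]{KobaVB} (which concerns existence of Hermite--Einstein metrics, a different statement).
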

	Let us introduce some notations which will be useful in the proof of Proposition \ref{prop_low_bnd} and later on.
	We define the $(1, 1)$-form $\omega := \alpha + t \pi^* \omega_B$.
	As $\omega$ is positive along the fibers, it provides a (smooth) decomposition of the tangent space $TX$ of $X$ into the vertical component $T^V X$, corresponding to the tangent space of the fibers, and the horizontal component $T^H X$, corresponding to the orthogonal complement of $T^V X$ with respect to $\omega$.
	The form $\omega$ then decomposes as $\omega = \omega_V + \omega_H$, $\omega_V \in \ccal^{\infty}(X, \wedge^{1, 1} T^{V*} X)$, $\omega_H \in \ccal^{\infty}(X, \wedge^{1, 1} T^{H*} X)$. 
	Upon the natural identification of $T^H X$ with $\pi^* TB$, we may view $\omega_H$ as an element from $\ccal^{\infty}(X, \wedge^{1, 1} \pi^* T^* B)$.
	We define $\wedge_{\omega_B} \omega_H \in \ccal^{\infty}(X)$, as $\wedge_{\omega_B} \omega_H := \omega_H \wedge \omega_B^{m - 1} / \omega_B^m$.
	We also fix a relatively positive Hermitian metric $h^L$ on $L$ verifying $c_1(L, h^L) = \omega$.
	\par 
	Recall now that for a Hermitian metric $h^E$ on a holomorphic vector bundle $E$ over $B$, for any $t \in \real$, the \textit{Hermitian Yang-Mills functional} is defined as
	\begin{equation}\label{eq_HYM}
		{\rm{HYM}}_t(E, h^E, \omega_B) 
		:=
		\Big\| \frac{\imun}{2 \pi} R^{h^E} \wedge \omega_B^{m - 1} - t {\rm{Id}}_E \cdot \omega_B^m \Big\|_{L^1(B, h^E)}^{{\rm{tr}}},
	\end{equation}
	where here and after $R^{h^E}$ is the curvature of the Chern connection of $(E, h^E)$.
	Remark that our terminology is slightly different from the generally accepted one, where the Hermitian Yang-Mills functional is related to the $L^2$-norm instead of the $L^1$-norm. 
	\begin{proof}[Proof of Proposition \ref{prop_low_bnd}]
		We will show that Proposition \ref{prop_low_bnd} is an easy consequence of a more general result giving lower bounds on the Fibered Yang-Mills functionals introduced in \cite[Theorem 1.7]{FinHNII}.
		Directly from the definition of $\wedge_{\omega_B} \omega_H$, we have
		\begin{equation}\label{eq_omega_h_wed_defn}
			\alpha^{n + 1} = ( \wedge_{\omega_B} \omega_H - t ) \cdot \omega^n \wedge \pi^* \omega_B^m  \cdot (n + 1).
		\end{equation}
		Hence, by the relative positivity of $\omega$, we have
		\begin{equation}\label{eq_wzw_fym}
			{\rm{WZW}}(\alpha, \omega_B) = \int_X |\wedge_{\omega_B} \omega_H - t| \cdot \omega^n \wedge \pi^* \omega_B^m  \cdot (n + 1).
		\end{equation}
		But for a Hermitian metric $h^L$ on $L$, such that $\omega$ coincides with the first Chern form, $c_1(L, h^L)$, of $L$, the right-hand side of (\ref{eq_wzw_fym}) corresponds (up to a multiplication by $(n + 1)$) to the Fibered Yang-Mills functional, ${\rm{FYM}}_{1, t}(\pi, h^L)$, introduced by the author in \cite[(1.5)]{FinHNII}.
		The result now 	follows directly from (\ref{eq_wzw_fym}) and \cite[Theorem 1.7]{FinHNII}.
		For further purposes, let us recall the crucial steps from the argument.
		First, by \cite[(2.15)]{FinHNII} and (\ref{eq_wzw_fym}), we obtain
		\begin{equation}\label{eq_wzw_hym}
			\lim_{k \to \infty} \frac{1}{k N_k} {\rm{HYM}}_{t k}(E_k, {\rm{Hilb}}_k^{\pi}(h^L), \omega_B)
			=
			\frac{1}{\pi_* c_1(L)^n \cdot (n + 1)} {\rm{WZW}}(\alpha, \omega_B),
		\end{equation}
		where the $L^2$-norm ${\rm{Hilb}}_k^{\pi}(h^L)$ was defined in (\ref{eq_l2_prod}).
		From the lower bounds on the Hermitian Yang-Mills functional due to Atiyah-Bott \cite[Proposition 8.20]{AtiyahBott} (for $\dim B = 1$) and Daskalopoulos-Wentworth \cite[Lemma 2.17, Corollary 2.22, Proposition 2.25]{DaskWent} (for general Kähler $B$), applied for $E_k$ for $k$ big enough, so that $E_k$ is locally free, for any Hermitian metric $H_k$ on $E_k$, we have
		\begin{equation}\label{eq_low_bnd_he}
			\frac{1}{k N_k} {\rm{HYM}}_{t k}(E_k, H_k, \omega_B)
			\geq
			\int_{x \in \real} |x - t| d \eta_k^{HN}(x)
			\cdot
			\int_B [\omega_B]^m.
		\end{equation}
		From the weak convergence, as $k \to \infty$, of the measures $\eta_k^{HN}$, (\ref{eq_eta_defn}), established by Chen in \cite[Theorem 4.3.6]{ChenHNolyg} (in the case $\dim B = 1$) and then by the author \cite[Theorem 1.5]{FinHNII} (for general Kähler $B$), we conclude that 
		\begin{equation}\label{eq_low_bnd_he2}
			\lim_{k \to \infty}
			\int_{x \in \real} |x - t| d \eta_k^{HN}(x)
			=
			\int_{x \in \real} |x - t| d \eta^{HN}(x).
		\end{equation}
		The proof now follows from (\ref{eq_wzw_hym}) and (\ref{eq_low_bnd_he}), (\ref{eq_low_bnd_he2}).
	\end{proof}
	\par 
	Now, Theorem \ref{thm_main} claims that the lower bound established in Proposition \ref{prop_low_bnd} is sharp.
	As described after Theorem \ref{thm_main2}, it suffices to establish Theorem \ref{thm_main2} to show this.
	We will concentrate on this from now on, but before this, let us explain a connection between Theorem \ref{thm_main} and the asymptotic minimization of the Hermitian Yang-Mills functionals.
	\begin{thm}\label{thm_expl_const10001}
		For any $\epsilon > 0$, there is a relatively positive Hermitian metric $h^L_{\epsilon}$ on $L$, and $k_0 \in \nat$, such that for any $k \geq k_0$, $t \in \real$, we have
		\begin{equation}\label{eq_expl_const10001}
			{\rm{HYM}}_{t k}(E_k, {\rm{Hilb}}_k^{\pi}(h^L_{\epsilon}), \omega_B)
			\leq
			\inf_{H_k} {\rm{HYM}}_{t k}(E_k, H_k, \omega_B)
			+
			\epsilon k N_k,
		\end{equation}
		where the infimum is taken over all Hermitian metrics $H_k$ on $E_k$.
	\end{thm}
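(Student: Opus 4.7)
The plan is to deduce Theorem \ref{thm_expl_const10001} from the lower bound (\ref{eq_low_bnd_he}) together with the weak convergence (\ref{eq_low_bnd_he2}), by constructing $h^L_{\epsilon}$ whose quantization $L^2$-metric asymptotically saturates that lower bound. First I would reformulate the desired inequality: combining (\ref{eq_low_bnd_he}) and (\ref{eq_low_bnd_he2}), it suffices to construct $h^L_{\epsilon}$ such that
\begin{equation*}
\limsup_{k \to \infty} \, \sup_{t \in \real} \left( \frac{1}{k N_k} {\rm{HYM}}_{t k}(E_k, {\rm{Hilb}}_k^{\pi}(h^L_{\epsilon}), \omega_B) - \int_{\real} |x - t| \, d \eta^{HN}(x) \cdot \int_B [\omega_B]^m \right) \leq \epsilon / 2.
\end{equation*}
Moreover, both quantities in the bracket are $1$-Lipschitz in $t$ after division by $kN_k$, and for $t$ outside a fixed compact interval $I$ containing the support of $\eta^{HN}$ the trivial cohomological lower bound coincides with the right-hand side; so we may restrict attention to $t \in I$ and obtain the general case by approximation.

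The construction proceeds as follows. Fix $k_1 = k_1(\epsilon)$ large. By Atiyah-Bott \cite{AtiyahBott} and Daskalopoulos-Wentworth \cite{DaskWent}, combined with the weak convergence of $\eta_k^{HN}$, for $k_1$ large enough one can produce a smooth Hermitian metric $\bar H_{k_1}$ on $E_{k_1}$ that is a \emph{critical metric} adapted to the Harder-Narasimhan filtration, and whose rescaled HYM functional $\frac{1}{k_1 N_{k_1}} {\rm{HYM}}_{t k_1}(E_{k_1}, \bar H_{k_1}, \omega_B)$ lies within $\epsilon/4$ of the target right-hand side uniformly in $t \in I$. The fiberwise Kodaira embedding $X \hookrightarrow \mathbb{P}(E_{k_1}^*)$, available for $k_1$ large since $L$ is relatively ample, equips $L^{\otimes k_1}$ with a smooth relatively positive Fubini-Study type metric determined by $\bar H_{k_1}$; I let $h^L_{\epsilon}$ be its $k_1$-th root. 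Following the introduction, $\bar H_{k_1}$ itself should in fact be chosen along a geodesic ray in the direction of the Harder-Narasimhan filtration, evaluated at a sufficiently large time parameter $s = s(\epsilon)$, so that the endpoint metric concentrates on the HN graded pieces.

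The remaining task is to control, as $k \to \infty$ with $k_1$ fixed, the curvature of ${\rm{Hilb}}_k^{\pi}(h^L_{\epsilon})$ and show that its $L^1$-spectral asymptotics match the target. For $k_1 \mid k$, the multiplication map ${\rm{Sym}}^{k/k_1} E_{k_1} \to E_k$ is, up to $o(k^n)$-dimensional corrections controlled by Bergman kernel asymptotics on the fibers, an isometric surjection from $({\rm{Sym}}^{k/k_1} E_{k_1}, {\rm{Sym}}^{k/k_1} \bar H_{k_1})$ onto $(E_k, {\rm{Hilb}}_k^{\pi}(h^L_{\epsilon}))$. The curvature of ${\rm{Sym}}^{k/k_1} \bar H_{k_1}$ is combinatorially expressible through that of $\bar H_{k_1}$, and the quotient projection onto $E_k$ contributes second fundamental form terms which, thanks to the critical property of $\bar H_{k_1}$ and the compatibility of symmetric powers with the HN filtration, do not disturb the diagonal blocks along the HN grading and drop out of the leading $L^1$ asymptotics. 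Combined with weak convergence of $\eta_k^{HN}$ to $\eta^{HN}$, this yields the desired $\epsilon$-saturation; the case $k_1 \nmid k$ is handled by interpolating with ${\rm{Sym}}^{\lfloor k/k_1 \rfloor} E_{k_1} \otimes L^{k - k_1 \lfloor k/k_1 \rfloor}$.

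I expect the hardest part to be the fiberwise Bergman kernel analysis needed to control the trace-norm $\| \cdot \|_{L^1(B, h^E)}$, not merely the trace, of the curvature of ${\rm{Hilb}}_k^{\pi}(h^L_{\epsilon})$: the lower bound (\ref{eq_low_bnd_he}) is saturated only when the eigenvectors of the curvature align with the Harder-Narasimhan filtration of $E_k$, not just when the eigenvalues match the slopes. This alignment is supplied by the critical metric, block-diagonalizing along the HN filtration and then propagated through the symmetric power construction, but requires a refined Bergman expansion for metrics of Fubini-Study type that tracks projections onto HN pieces; this is presumably the content of the technical Sections \ref{sect_curv}--\ref{sect_geod_appr}, and explains why the geodesic ray parameter $s$ must be taken large.
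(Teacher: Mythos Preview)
Your proposal attempts to prove Theorem~\ref{thm_expl_const10001} directly, by constructing $h^L_\epsilon$ via the Fubini--Study map applied to an (approximate) critical Hermitian structure on some $E_{k_1}$ and then analysing the curvature of ${\rm Hilb}_k^\pi(h^L_\epsilon)$ through symmetric powers and Bergman asymptotics. This is not the paper's approach at all: in the paper, Theorem~\ref{thm_expl_const10001} is derived in three lines \emph{assuming} Theorem~\ref{thm_main}. One simply takes $h^L_\epsilon$ to be any relatively positive metric with ${\rm WZW}(c_1(L,h^L_\epsilon)-t\pi^*\omega_B,\omega_B)$ within $\epsilon/2$ of the optimal value (such a metric exists by Theorem~\ref{thm_main}), and then the relation (\ref{eq_wzw_hym}) between the WZW functional and the asymptotic HYM functional, together with the lower bound (\ref{eq_low_bnd_he}) and the convergence (\ref{eq_low_bnd_he2}), immediately gives (\ref{eq_expl_const10001}).

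What you have sketched is, in effect, a rough outline of the proof of Theorem~\ref{thm_main} itself (more precisely, of Theorem~\ref{thm_expl_const1}), which is the hard content of the paper. So your route is not wrong in spirit, but it inverts the logical order: in the paper, Theorem~\ref{thm_expl_const10001} is a \emph{consequence} of the main theorem, not a stepping stone toward it. If one were to carry out your direct argument, several points would need sharpening: genuine critical Hermitian structures need not exist, so one must work with $\delta$-approximate ones throughout; for $\dim B\geq 2$ the Harder--Narasimhan filtration is by subsheaves rather than subbundles, forcing a passage to a birational modification $\mu_k:B_k\to B$ (and then Proposition~\ref{prop_bir_mod} to descend back to $X$); and the claim that ${\rm Sym}^{k/k_1}\bar H_{k_1}\to {\rm Hilb}_k^\pi(h^L_\epsilon)$ is ``up to $o(k^n)$ an isometric surjection'' is exactly the content of Theorems~\ref{thm_appl_ot} and~\ref{thm_compar_number}, which require substantial work. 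Your final paragraph correctly identifies that the eigenvector alignment (not just eigenvalue matching) is the crux, and this is indeed why the geodesic ray with large parameter $s$ enters.
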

	\begin{rem}
		a) By the sharpness of the bound (\ref{eq_low_bnd_he}), established in \cite{AtiyahBott}, \cite{DaskWent}, \cite{SibleyYangMills}, and (\ref{eq_low_bnd_he2}), $\inf_{H_k} {\rm{HYM}}_{t k}(E_k, H_k, \omega_B)$ is comparable with $k N_k$.
		\par 
		b)
		Not every Hermitian metric on $E_k$ is the $L^2$-metric of some metric on the line bundle, cf. \cite{JingSunImage}. 
		Hence, even the existence of $h^L_{\epsilon}$, verifying (\ref{eq_expl_const10001}) for one fixed $k \in \nat^*$ seems to be non trivial.
		Theorem \ref{thm_expl_const10001}, claims much more: such $L^2$-metrics can be chosen in a related manner.
	\end{rem}
	\begin{proof}[Proof of Theorem \ref{thm_expl_const10001} assuming Theorem \ref{thm_main}]
		First of all, for any $\epsilon > 0$, Theorem \ref{thm_main} assures the existence of a relatively positive metric $h^L_{\epsilon}$ on $L$, verifying
		\begin{equation}\label{eq_expl_const10001101}
			{\rm{WZW}}(c_1(L, h^L_{\epsilon}) - t \pi^* \omega_B, \omega_B) 
			\leq 
			\int_{x \in \real} |x - t| d \eta^{HN}(x)
			\cdot
			\int_X c_1(L)^n \pi^* [\omega_B]^m
			\cdot
			(n + 1) + \frac{\epsilon}{2}.
		\end{equation}
		It then follows by (\ref{eq_wzw_hym}), (\ref{eq_low_bnd_he}) and (\ref{eq_low_bnd_he2})  that (\ref{eq_expl_const10001}) holds for any $h^L_{\epsilon}$ verifying (\ref{eq_expl_const10001101}).
	\end{proof}
	\par 
	To explain our construction of $\omega_{\epsilon}$ from Theorem \ref{thm_main2}, remark that the Fubini-Study operator from (\ref{eq_fs_alt_defn}) can be considered in the relative setting, meaning that for any $k$ sufficiently large so that $L^{\otimes k}$ is relatively very ample, we can associate for any Hermitian metric $H_k$ on $E_k$ a relatively positive Hermitian metric $FS(H_k)$ on $L^{\otimes k}$ using the relative Kodaira embedding ${\rm{Kod}}_k : X \hookrightarrow \mathbb{P}(E_k^*)$, which can be put into the following commutative diagram
	\begin{equation}\label{eq_kod}
	\begin{tikzcd}
		X \arrow[hookrightarrow]{r}{{\rm{Kod}}_k} \arrow{rd}{\pi} & \mathbb{P}(E_k^*) \arrow{d}{p} \\
 		& B.
	\end{tikzcd}
	\end{equation}
	and the isomorphism ${\rm{Kod}}_k^* \mathscr{O}(1) \to L^{\otimes k}$, where $\mathscr{O}(1)$ is the relative hyperplane bundle on $\mathbb{P}(E_k^*)$.
	\par 
	Now, our construction of $\omega_{\epsilon}$ from Theorem \ref{thm_main2} will be done by \textit{dequantization} (i.e. an application of the Fubini-Study operator) to some sequence of metrics on $E_k$, which saturate the lower bound on the respective Hermitian Yang-Mills functionals.
	One of the main difficulties in our analysis lies in the fact that these Hermitian metrics on $E_k$ have a priori nothing to do with the Hermitian metrics constructed by the quantization (the $L^2$-metrics), which were used in (\ref{eq_wzw_hym}) to get the lower bounds for the Wess-Zumino-Witten functional.
	\par 
	We will now describe a specific choice of the minimizing sequence of metrics for the Hermitian Yang-Mills functional on $E_k$ one has to choose.
	For this, let us recall a definition of \textit{approximate critical Hermitian structures}.
	Roughly, an approximate critical Hermitian structure on a vector bundle $E$ over $B$ is a Hermitian metric on $E$, which is in some sense well-adapted to the Harder-Narasimhan filtration, $\mathcal{F}^{HN}$, of $E$. 
	\par 
	In order to state the definition precisely, we first define the weight operator of a filtration, which is another object playing a fundamental role in this paper.
	Remark that any (decreasing) filtration $\mathcal{F}$ on the Hermitian vector space $(V, H)$ induces the \textit{weight operator} $A(H, \mathcal{F}) \in {\textrm{End}}(V)$, as
	\begin{equation}\label{eq_weight_op_dd}
		A(H, \mathcal{F}) e_i = w_{\mathcal{F}}(e_i) \cdot e_i,
	\end{equation}
	where $e_1, \ldots, e_{r}$, $r := \dim V$, is an orthonormal basis of $(V, H)$ adapted to the filtration $\mathcal{F}$ in the same sense as described after (\ref{eq_l2_prod}).
	\par 
	The definition (\ref{eq_weight_op_dd}) also makes sense in the family setting: when a vector space is replaced by a vector bundle over a manifold, a Hermitian product is replaced by a Hermitian metric, and the filtration is replaced by filtrations by subsheaves over the manifold. 
	When the filtration is given by subbundles (and not by subsheaves), and the Hermitian metric is smooth, it is immediate to see that the weight operator becomes a smooth section of the respective endomorphism bundle.
	\par 
	Following Kobayashi \cite{KobaVB}, we say that a Hermitian metric $h^E$ on a holomorphic vector bundle $E$ over $B$ is a \textit{critical Hermitian structure} on $E$ if the curvature of it satisfies $\frac{\imun}{2 \pi}   R^{h^E} \wedge \omega_B^{m - 1} = A(h^E, \mathcal{F}^{HN}) \cdot \omega_B^m$, where $b \mapsto A(h^E, \mathcal{F}^{HN})(b) \in {\rm{End}}(E_b)$ is the weight operator associated with the Harder-Narasimhan filtration.
	Critical Hermitian structures correspond to the minimizers of the Hermitian-Yang-Mills functional. 
	Unfortunately, these do not exist on arbitrary vector bundles, see \cite{UhlYau}, \cite{DonaldASD} and \cite[Theorem 4.3.27]{KobaVB}, and so cannot be used for our purposes.
	\par 
	\begin{sloppypar}
	To circumvent this, following Daskalopoulos-Wentworth, \cite{DaskWent}, we say that $h^E$ is an \textit{$L^1$ $\delta$-approximate critical Hermitian structure} on $E$ if
	\begin{equation}\label{eq_appr_crhs}
		\Big\|
			\frac{\imun}{2 \pi} R^{h^E} \wedge \omega_B^{m - 1}
			-
			A(h^E, \mathcal{F}^{HN}) \cdot \omega_B^m
		\Big\|_{L^1(B, h^E)}
		\leq
		\delta.
	\end{equation}
	A result of Atiyah-Bott \cite[proof of Proposition 8.20]{AtiyahBott} (for $\dim B = 1$), Daskalopoulos-Wentworth, \cite[Theorem 3.11]{DaskWent} (for $\dim B = 2$) and Sibley \cite[Theorem 1.3]{SibleyYangMills} (for any dimension), says that, unlike critical Hermitian structures, $L^1$ $\delta$-approximate critical Hermitian structures exist on arbitrary holomorphic vector bundles over compact manifolds for any $\delta > 0$.
	It is then an easy verification that, as $\delta \to 0$, these metrics saturate the sharp lower bounds (as in (\ref{eq_low_bnd_he})) on the Hermitian Yang-Mills functional.
	Later, for brevity, we omit $L^1$ from the above notation.
	\end{sloppypar}
	\par 
	We establish in Theorem \ref{thm_ray_apprx} that the construction of geodesic rays associated with the Harder-Narasimhan filtration and $\delta$-approximate critical Hermitian structures are in certain sense compatible. 
	Motivated by this, our construction of $\omega_{\epsilon}$ from Theorem \ref{thm_main2} is given by the dequantization of these geodesic rays.
	However, as the Harder-Narasimhan filtrations are given by subsheaves and not by subbundles (unless $\dim B = 1$), the resulting sequence of metrics would not be smooth in general. 
	To overcome this issue, we need to resolve the singularities of the filtration first.
	\par 
	Using the resolution of indeterminacy of meromorphic maps, see Hironaka \cite{HironakaI}, \cite{HironakaII}, it is classical, cf. \cite[Proposition 4.3]{SibleyYangMills}, that for any filtration of a holomorphic vector bundle $E$ over $B$ by saturated subsheaves $E = \mathcal{F}_{\lambda_1} \supset \mathcal{F}_{\lambda_2} \supset \cdots \supset \mathcal{F}_{\lambda_q}$, there is a modification $\mu_0 : B_0 \to B$ of $B$ such that $\tilde{\mu}_0^* \mathcal{F}_{\lambda_i} := {\rm{Sat}}(\mu_0^* \mathcal{F}_{\lambda_i})$, $i = 1, \ldots, q$, are locally free, and form a filtration
	\begin{equation}\label{eq_resol_filtr}
		\mu_0^* E = \tilde{\mu}_0^* \mathcal{F}_{\lambda_1} \supset \tilde{\mu}_0^* \mathcal{F}_{\lambda_2} \supset \cdots \supset \tilde{\mu}_0^* \mathcal{F}_{\lambda_q},
	\end{equation}
	which we denote by $\tilde{\mu}_0^* \mathcal{F}$.
	\par 
	We denote by $\mu_k : B_k \to B$ a modification of $B$, corresponding to the resolution of the Harder-Narasimhan filtration, (\ref{eq_resol_hn_filt00}), (given by the saturated subsheaves, see \cite[Lemma 5.7.5]{KobaVB})
	\begin{equation}\label{eq_resol_hn_filt}
		\mu_k^* E_k = \tilde{\mu}_k^* \mathcal{F}^{HN, k}_{\lambda_1} \supset \tilde{\mu}_k^* \mathcal{F}^{HN, k}_{\lambda_2} \supset \cdots \supset \tilde{\mu}_k^* \mathcal{F}^{HN, k}_{\lambda_{q_k}},
	\end{equation}
	constructed as in (\ref{eq_resol_filtr}), i.e. $\tilde{\mu}_k^* \mathcal{F}^{HN, k}_{\lambda_i} = {\rm{Sat}}(\mu_k^* \mathcal{F}^{HN, k}_{\lambda_i})$. 
	Denote by $X_k$ the pull-back of $\mu_k$ and $\pi$, and by $\pi_k : X_k \to B_k$, $p_k : X_k \to X$ be the corresponding projection maps, i.e. such that the following diagram is commutative
	\begin{equation}\label{eq_sh_exct_seq}
	\begin{tikzcd}
		X_k \arrow[twoheadrightarrow]{r}{p_k} \arrow{d}{\pi_k} & X \arrow{d}{\pi} \\
 		B_k \arrow[twoheadrightarrow]{r}{\mu_k} & B.
	\end{tikzcd}
	\end{equation}
	For further purposes, we introduce the following subsets
	\begin{equation}\label{eq_sk_defn}
		S_k := \cup_{i = 1}^{q_k} {\rm{supp}}(\tilde{\mu}_k^* \mathcal{F}^{HN, k}_{\lambda_i} / \mu_k^* \mathcal{F}^{HN, k}_{\lambda_i} ).
	\end{equation}
	Clearly, $S_k$ are proper analytic subsets of $B_k$ and $\mu_k (S_k) \subset S_k^0$, where $S_k^0$ were defined in (\ref{eq_sk0_defn}).
	\par 
	Now, remark that for a smooth relatively Kähler $(1, 1)$-form $\omega$ in the class $c_1(L)$, the value of the Harder-Narasimhan potential ${\rm{HN}}_k(\omega)$, from (\ref{eq_hn_potent}), along a fixed fiber depends only on the restriction of $\omega$ to this fiber.
	In particular, if $\omega$ is now a smooth relatively Kähler form in the class $p_k^* c_1(L)$, one can still make sense of its Harder-Narasimhan potential as a function defined away from $\pi_k^{-1}(\mu_k^{-1}(\cup_{k = 0}^{+\infty} S_k^0))$.
	We denote the resulting function by $p_k^* {\rm{HN}}_k(\omega)$, and the uniform bound from (\ref{eq_geod_ray_bnd}) says that it is an element of $L^{\infty}(X_k)$.
	The remainder of the article, up to Section \ref{sect_mr}, is devoted to proving the following result.
	\begin{thm}\label{thm_expl_const1}
		For any $\epsilon > 0$, there are $\delta > 0$, $k \in \nat$, such that for any $\delta$-approximate critical Hermitian structure $H_{\delta, k}$ on $E_k$, there is $s \in [0, + \infty[$, such that for the geodesic ray of Hermitian metrics $H_{\delta, k, s}$ on $\mu_k^* E_k$, departing from $\mu_k^* H_{\delta, k}$ and associated with the resolution of the Harder-Narasimhan filtration (\ref{eq_resol_hn_filt}), the $(1, 1)$-form $\omega_{\delta, k, s} := c_1(p_k^* L, FS(H_{\delta, k, s})^{\frac{1}{k}})$ verifies
		\begin{equation}\label{eq_expl_const1}
			\int_{X_k}
			\Big|
			\omega_{\delta, k, s}^{n + 1} \wedge \pi_k^* \mu_k^* \omega_B^{m - 1}
			-
			p_k^* {\rm{HN}}(\omega_{\delta, k, s}) \cdot \omega_{\delta, k, s}^n \wedge  \pi_k^* \mu_k^* \omega_B^m \cdot (n + 1)
			\Big|
			\leq
			\epsilon.
		\end{equation}
	\end{thm}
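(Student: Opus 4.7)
The plan is to reduce the estimate on ${\rm{WZW}}(\omega_{\delta, k, s} - t \pi_k^* \mu_k^* \omega_B, \mu_k^* \omega_B)$ to an estimate on a Hermitian Yang-Mills functional on $\mu_k^* E_k$, by inverting the chain ${\rm{HYM}} \rightsquigarrow {\rm{WZW}}$ that was used in (\ref{eq_wzw_hym}) to prove the \emph{lower} bound in Proposition \ref{prop_low_bnd}. Namely, applying (\ref{eq_wzw_hym}) on the base $B_k$ to the relatively positive metric $FS(H_{\delta, k, s})^{1/k}$ on $p_k^* L$, the left-hand side of (\ref{eq_expl_const1}) is, up to an $o(1)$ error as $k \to \infty$, proportional to $\frac{1}{k N_k} {\rm{HYM}}_{tk}(\mu_k^* E_k, {\rm{Hilb}}_k^{\pi_k}(FS(H_{\delta,k,s})^{1/k}), \mu_k^* \omega_B)$. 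Using a Tian--Zelditch type asymptotic expansion for the composition ${\rm{Hilb}}_k \circ FS$ (this is the role of the forthcoming Sections \ref{sect_curv}--\ref{sect_geod_appr}), this quantity is in turn close to $\frac{1}{k N_k} {\rm{HYM}}_{tk}(\mu_k^* E_k, H_{\delta, k, s}, \mu_k^* \omega_B)$.

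Next, I would show that the HYM functional of $H_{\delta, k, s}$ on $\mu_k^* E_k$ saturates the lower bound (\ref{eq_low_bnd_he}). The starting point is that $H_{\delta, k}$ is $\delta$-approximate critical on $E_k$, so its curvature is close (in $L^1$) to the weight operator of the HN filtration times $\omega_B^m$. Pulling this back to $B_k$ and comparing with the curvature of $H_{\delta, k, s}$ along the geodesic ray associated with the resolved filtration (\ref{eq_resol_hn_filt}), one expects the difference to decay exponentially in $s$ on the subquotient blocks. Here the crucial input is a geodesic-ray approximation statement (Theorem \ref{thm_ray_apprx} referenced in the paper): off-diagonal contributions of the curvature relative to the filtration decay as $s \to \infty$, so that
\begin{equation*}
\Bigl\| \frac{\imun}{2\pi} R^{H_{\delta, k, s}} \wedge \mu_k^* \omega_B^{m-1} - A(H_{\delta, k, s}, \tilde{\mu}_k^* \mathscr{F}^{HN, k}) \cdot \mu_k^* \omega_B^m \Bigr\|_{L^1} \leq \delta + o_s(1).
\end{equation*}
Since the eigenvalues of $A(H_{\delta, k, s}, \tilde{\mu}_k^* \mathscr{F}^{HN, k})$ are exactly the HN slopes $\mu^k_i$ (with the right multiplicities, agreeing with those of $A(H_{\delta,k}, \mathscr{F}^{HN, k})$ away from $S_k$ by Proposition \ref{prop_bir_mod}-type birational invariance), a direct computation bounds the HYM functional by $\int |x - t| \, d\eta_k^{HN}(x) \cdot \int_B [\omega_B]^m \cdot k N_k$ plus lower order errors. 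Feeding this back and using the weak convergence $\eta_k^{HN} \to \eta^{HN}$ (via (\ref{eq_low_bnd_he2})) yields (\ref{eq_expl_const1}).

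The main difficulty I expect is twofold. First, the Tian--Zelditch step must be made uniform over the noncompact parameter $s$, because $FS(H_{\delta, k, s})^{1/k}$ degenerates transversally to $\mu_k^{-1}(S_k^0)$ as $s \to \infty$; one needs $L^1$-type estimates that absorb the degeneration into the $S_k$-support, exploiting the fact that $S_k$ has measure zero in $B_k$ and that the HYM functional is an $L^1$-quantity. Second, quantifying the rate at which the off-diagonal curvature decays along the geodesic ray requires a careful second-fundamental-form analysis of the resolved HN filtration, which is where the curvature computations of Sections \ref{sect_curv}--\ref{sect_geod_appr} enter. Once these two technical ingredients are in place, the order of quantifiers $\epsilon \to \delta \to k \to s$ in the theorem is the natural one: fix $\epsilon$, choose $\delta$ small enough to absorb the HYM error, choose $k$ large enough for the Tian--Zelditch approximation and convergence of $\eta_k^{HN}$, and finally choose $s$ large enough for the geodesic ray diagonalization.
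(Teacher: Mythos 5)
Your high-level strategy (relate WZW to HYM via quantization, use geodesic rays emanating from approximate critical Hermitian structures to saturate the HYM lower bound, resolve the Harder-Narasimhan filtration to gain smoothness) is the right shape, and your reading of the role of Theorem \ref{thm_ray_apprx} is correct. But there is a genuine gap in your central reduction step, and it is precisely the gap that forces the paper's actual argument to be structured differently.

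You propose to apply (\ref{eq_wzw_hym}) to $h^L := FS(H_{\delta,k,s})^{1/k}$ and then pass to the limit $k \to \infty$. The problem is that (\ref{eq_wzw_hym}) is a statement about a \emph{fixed} relatively positive metric $h^L$ on $L$, with the semiclassical parameter $k$ tending to infinity. In your proposal the metric $FS(H_{\delta,k,s})^{1/k}$ itself varies with $k$, so the Tian--Zelditch / Ma--Zhang asymptotics underlying (\ref{eq_wzw_hym}) do not apply. Relatedly, the ``Tian--Zelditch step'' you invoke, to pass from ${\rm{HYM}}_{tk}(\mu_k^* E_k, {\rm{Hilb}}_k^{\pi_k}(FS(H_{\delta,k,s})^{1/k}))$ to ${\rm{HYM}}_{tk}(\mu_k^* E_k, H_{\delta,k,s})$, is not a Tian--Zelditch statement and is false in general: the composition ${\rm{Hilb}}_k^\pi \circ FS$ need not be close to the identity on the space of Hermitian metrics on $E_k$ (not every Hermitian metric on $E_k$ is an $L^2$-metric of a metric on $L$, cf.\ the remark citing \cite{JingSunImage}). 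This is exactly the obstruction the paper highlights before Theorem \ref{thm_expl_const10001}.

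The paper avoids both issues by introducing a \emph{second} semiclassical parameter $l$ and working with $E_{kl}$ for fixed $k,\delta,s$ and $l \to \infty$. The fixed metric $FS(H_{\delta,k,s})^{1/k}$ on $p_k^* L$ then plays the role of $h^L$ in the Ma--Zhang and Boutet de Monvel--Guillemin asymptotics (giving (\ref{eq_monvel_guill_conv})), and the comparison ${\rm{Hilb}}_{kl}^\pi(FS(H_{\delta,k,s})^{1/k}) \leftrightarrow [{\rm{Sym}}^l H_{\delta,k,s}]$ is done via the semiclassical Ohsawa--Takegoshi theorem (Theorem \ref{thm_appl_ot}, Proposition \ref{thm_appl_ot2}), which compares a quotient metric to an $L^2$-metric rather than comparing $\mathrm{Hilb}\circ FS$ to the identity. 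The remaining work --- which your sketch glosses over as a ``direct computation'' --- is to relate the weight operator of $[{\rm{Sym}}^l H_{\delta,k,s}]$ with respect to the \emph{quotient} filtration $[{\rm{Sym}}^l \tilde\mu_k^* \mathscr{F}^{HN,k}]$ to the Harder--Narasimhan filtration of $E_{kl}$. This requires Theorem \ref{thm_compar_number} (an operator-monotonicity argument along the geodesic ray, based on Proposition \ref{prop_interpol}) together with the submultiplicativity of Harder--Narasimhan filtrations and the Boucksom--Jonsson approximation (Propositions \ref{prop_last_est_pf21} and \ref{thm_hn_appr_fin}), none of which can be bypassed by a pointwise eigenvalue computation. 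So while your intuition about the roles of Theorem \ref{thm_ray_apprx} and the convergence $\eta_k^{HN} \to \eta^{HN}$ is on target, the proposal as stated would break at the quantization step, and the two-parameter $(k, l)$ structure and the symmetric-power / submultiplicativity machinery are essential, not just technical.
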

	\par 
	Let us explain why Theorem \ref{thm_expl_const1} implies Theorem \ref{thm_main2}.
	For this, inspired by \cite[Proposition 2.1]{DemAndrGrau}, we show that the value of the Wess-Zumino-Witten functional is not affected by the birational modifications of the base. 
	More precisely, let $\mu_0 : B_0 \to B$ be a modification. 
	We define $X_0$ through the pull-back of $\mu$ and $\pi$, and let $\pi_0 : X_0 \to B_0$, $p_0 : X_0 \to X$ be the corresponding projection maps, i.e. for $k := 0$, the diagram (\ref{eq_sh_exct_seq}) is commutative.
	\begin{prop}\label{prop_bir_mod}
		For any $\epsilon > 0$, and a relatively Kähler $(1, 1)$-form $\omega_0$ on $X_0$ in the class $p_0^* c_1(L)$, there is a compact subset $K \subset B \setminus {\rm{Crit}}(\mu_0)$, such that for any compact subset $K' \subset B \setminus {\rm{Crit}}(\mu_0)$, verifying $K \subset K'$, there is a relatively Kähler $(1, 1)$-form $\omega$ on $X$ in the class $c_1(L)$, so that over $\pi_0^{-1}(\mu_0^{-1}(K'))$, the forms $\omega_0$ and $p_0^* \omega$ coincide, and 
		\begin{equation}\label{eq_prop_bir_mod}
			\int_{X_0 \setminus \pi_0^{-1}(\mu_0^{-1}(K'))} \big| \omega_0^{n + 1} \wedge \pi_0^* \mu_0^* \omega_B^{m - 1} \big| < \epsilon, 
			\qquad 
			\int_{X \setminus \pi^{-1}(K')} \big| \omega^{n + 1} \wedge \pi^* \omega_B^{m - 1} \big| < \epsilon.
		\end{equation}
	\end{prop}
	\begin{proof}
		We first note that the statement is nontrivial only in the case where $\dim B = m \geq 2$, which we assume from now on.
		We fix $\epsilon > 0$ and choose a compact subset $K \subset B \setminus {\rm{Crit}}(\mu_0)$ in such a way that the first bound from (\ref{eq_prop_bir_mod}) is satisfied for $K' := K$ with $\epsilon := \frac{\epsilon}{2}$.
		This is clearly always possible since $\pi_0^{-1}(\mu_0^{-1} ({\rm{Crit}}(\mu_0)))$ has Lebesgue measure zero.
		\par 
		We take an arbitrary compact subset $K' \subset B \setminus {\rm{Crit}}(\mu_0)$, verifying $K \subset K'$.
		We fix an arbitrary relatively Kähler $(1, 1)$-form $\omega'$ on $X$ in the class $c_1(L)$, and write $\omega_0 = p_0^* \omega' + \imun \partial \dbar \phi$, for a smooth function $\phi : X_0 \to \real$.
		Since ${\rm{Crit}}(\mu_0)$ is a complex-analytic subset of $B$ of codimension at least $2$, it has Hausdorff codimension at least $4$.
		From \cite[Theorem 5.1.9]{AdansHedberg}, it implies that the Sobolev $(2, \frac{2m}{m + 2})$-capacity of ${\rm{Crit}}(\mu_0)$ is zero, cf. \cite[Definition 2.2.1]{AdansHedberg} for the definition of the Sobolev capacity.
		By \cite[Definition 2.7.1, Corollary 3.3.4 and p. 208]{AdansHedberg}, this means that there is a sequence of smooth functions $\rho_k : B \to \real$, $k \in \nat$, taking value $1$ in a neighborhood of ${\rm{Crit}}(\mu_0)$ (with neighborhoods varying for different $k$) and with support away from $K'$, such that their $(2, \frac{2m}{m + 2})$-Sobolev norms tend to zero. 
		As a consequence, we have $\int_B |\Delta_{\omega_B} \rho_k| \omega_B^m \to 0$, as $k \to \infty$, and by Sobolev embedding theorem, we also have $\int_B |d \rho_k|^2 \omega_B^m \to 0$, as $k \to \infty$.
		\par 
		We now consider the form $\omega_k := \omega' + \imun \partial \dbar ((1 - \pi^* \rho_k) \cdot \phi)$.
		Remark that since $\rho_k$ takes value $1$ in a neighborhood of ${\rm{Crit}}(\mu_0)$, $\omega_k$ coincides with $\omega'$ in a neighborhood of ${\rm{Crit}}(\mu_0)$, and hence makes sense as a $(1, 1)$-form on $X$.
		Moreover, as $\rho_k$ has support away from $K'$, over $\pi_0^{-1}(\mu_0^{-1}(K'))$, the forms $\omega_0$ and $p_0^* \omega_k$ coincide.
		Directly from the above bounds on the derivatives of $\rho_k$, we deduce
		\begin{equation}
			\lim_{k \to \infty} \int_{X_0} \Big| \omega_0^{n + 1} \wedge \pi_0^* \mu_0^* \omega_B^{m - 1} - p^* \big( \omega_k^{n + 1} \wedge \pi^* \omega_B^{m - 1} \big) \Big| = 0, 
		\end{equation}
		which easily implies that for $k \in \nat$ big enough, $\omega := \omega_k$ will satisfy the second bound (\ref{eq_prop_bir_mod}).
	\end{proof}

	\begin{proof}[Proof of Theorem \ref{thm_main2}]
		We fix $\epsilon > 0$ and consider $\omega_{\delta, k, s}$ given by Theorem \ref{thm_expl_const1}.
		We fix a compact subset $K \subset B \setminus {\rm{Crit}}(\mu_k)$, given by Proposition \ref{prop_bir_mod}.
		We take a compact subset $K' \subset B \setminus {\rm{Crit}}(\mu_0)$, $K \subset K'$, which we specify later.
		Using Proposition \ref{prop_bir_mod}, we find a relatively Kähler $(1, 1)$-form $\omega_{\epsilon}$ on $X$ in the class $c_1(L)$, so that over $\pi_k^{-1} (\mu_k^{-1}(K'))$, the forms $\omega_{\delta, k, s}$ and $p_k^* \omega_{\epsilon}$ coincide, and
		\begin{equation}\label{eq_eps_away_from_soln}
			\int_{X_k \setminus \pi_k^{-1} (\mu_k^{-1}(K'))} \big| \omega_{\delta, k, s}^{n + 1} \wedge \pi_k^* \mu_k^* \omega_B^{m - 1} \big| < \epsilon, 
			\qquad 
			\int_{X \setminus \pi^{-1} (K')} \big| \omega_{\epsilon}^{n + 1} \wedge \pi^* \omega_B^{m - 1} \big| < \epsilon.
		\end{equation}
		Remark that due to coincidence of $\omega_{\delta, k, s}$ with $p_k^* \omega_{\epsilon}$ over $\pi_k^{-1} (\mu_k^{-1}(K'))$, we have $p_k^* {\rm{HN}}(\omega_{\delta, k, s}) = p_k^*( {\rm{HN}}(\omega_{\epsilon}))$ over $\pi_k^{-1} (\mu_k^{-1}(K'))$.
		From this and (\ref{eq_expl_const1}), the following bound is then immediate 
		\begin{equation}\label{eq_eps_away_from_soln2}
		\begin{aligned}
			\int_{X}
			\Big|
			\omega_{\epsilon}^{n + 1} \wedge & \pi^* \omega_B^{m - 1}
			-
			{\rm{HN}}(\omega_{\epsilon}) \cdot \omega_{\epsilon}^n \wedge  \pi^* \omega_B^m \cdot (n + 1)
			\Big|
			\\
			&
			\leq
			\int_{X_k \setminus \pi_k^{-1} (\mu_k^{-1}(K'))} \big| \omega_{\delta, k, s}^{n + 1} \wedge \pi_k^* \mu_k^* \omega_B^{m - 1} \big|
			+
			\int_{X \setminus \pi^{-1} (K')} \big| \omega_{\epsilon}^{n + 1} \wedge \pi^* \omega_B^{m - 1} \big|
			\\
			&
			{\phantom{\leq}}
			+
			(n + 1)
			\cdot
			\int_{X_k \setminus \pi_k^{-1} (\mu_k^{-1}(K'))}
			\Big|
			p_k^* {\rm{HN}}(\omega_{\delta, k, s}) \cdot \omega_{\delta, k, s}^n \wedge  \pi_k^* \mu_k^* \omega_B^m
			\Big|
			\\
			&
			{\phantom{\leq}}
			+
			(n + 1)
			\cdot
			\int_{X \setminus \pi^{-1} (K')}
			\Big|
			{\rm{HN}}(\omega_{\epsilon}) \cdot \omega_{\epsilon}^n \wedge  \pi^* \omega_B^m
			\Big| + \epsilon.
		\end{aligned}
		\end{equation}
		Remark, however, that by (\ref{eq_geod_ray_bnd}), there is $C > 0$, such that $|{\rm{HN}}(\omega_{\delta, k, s})| < C$, $|{\rm{HN}}(\omega_{\epsilon})| < C$.
		From this, (\ref{eq_eps_away_from_soln}) and (\ref{eq_eps_away_from_soln2}), we conclude that 
		\begin{multline}\label{eq_eps_away_from_soln3}
			\int_{X}
			\Big|
			\omega_{\epsilon}^{n + 1} \wedge \pi^* \omega_B^{m - 1}
			-
			{\rm{HN}}(\omega_{\epsilon}) \cdot \omega_{\epsilon}^n \wedge  \pi^* \omega_B^m \cdot (n + 1)
			\Big|
			\\
			\leq
			2 C \cdot (n + 1) \cdot
			\pi_* (c_1(L)^n)
			\cdot
			\int_{B \setminus K'} \omega_B^m 
			+
			3 \epsilon.
		\end{multline}
		The right-hand side of (\ref{eq_eps_away_from_soln3}) can be made smaller than $4 \epsilon$ by taking bigger $K'$.
		In particular, the forms $\omega_{\epsilon}$ satisfy Theorem \ref{thm_main2}, for $\epsilon := 4 \epsilon$.
		But as $\epsilon > 0$ was chosen in an arbitrary way, this finishes the proof.
	\end{proof}
	\begin{sloppypar}
	\begin{proof}[Proof of Theorem \ref{thm_main}]
		By Propositions \ref{prop_low_bnd}, it suffices to establish the upper bound on ${\rm{WZW}}(c_1(L) - t \pi^* [\omega_B], \omega_B)$. 
		As explained in Introduction, it follows from Theorem \ref{thm_main2}.
	\end{proof}
	\end{sloppypar}
	
	\section{Dequantization of approximate critical Hermitian structures}\label{sect_dequant}
	The main goal of this section is to prove Theorem \ref{thm_expl_const1} modulo a number of technical results which will be treated later in this article.
	We will conserve the notations from Introduction and Section \ref{sect_min_seq}.
	\par 
	Our proof of Theorem \ref{thm_expl_const1} is based on the well-known fact that section rings of polarized projective manifolds are finitely generated. 
	More precisely, the following family version of this result is used: there is $k_0 \in \nat^*$, such that for any $k \in \nat^*$, $k_0 | k$, $l \in \nat^*$, the multiplication map 
	\begin{equation}\label{eq_fin_gen}
		{\rm{Mult}}_{k, l} : {\rm{Sym}}^l E_k \to E_{kl},
	\end{equation}
	is surjective, see \cite[Example 2.1.30]{LazarBookI} or \cite[Proposition 3.1]{FinSecRing}, for a proof of a non-family version of this result, which easily adapts to the family setting considered here.
	To simplify further presentation, we shall assume that $k_0 = 1$.
	\par 
	The surjectivity of (\ref{eq_fin_gen}) allows us to apply the constructions of the quotient norms from (\ref{eq_defn_quot_norm}) for the map (\ref{eq_fin_gen}).
	For an arbitrary Hermitian metric $H_k$ on $E_k$ (resp. $\mu_k^* E_k$), it would yield the induced quotient metric on $E_{kl}$ (resp. $\mu_k^* E_{kl}$), which we denote by $[{\rm{Sym}}^l H_k]$. 
	Similar notations are used for the induced filtrations.
	Our proof of Theorem \ref{thm_expl_const1} is based on a detailed study of the metric $[{\rm{Sym}}^l H_{\delta, k, s}]$ on $E_{kl}$, as $l \to \infty$, where $H_{\delta, k, s}$ was defined in Theorem \ref{thm_expl_const1}.
	\par 
	To explain our proof, we need to recall the definition of a \textit{Toeplitz operator}.
	We recall first a non-family version of it, and for this, we fix a complex projective manifold $Y$ polarized by an ample line bundle $F$ endowed with a positive Hermitian metric $h^F$ on $F$.
	Recall that for any $f \in L^{\infty}(Y)$, the Toeplitz operator, $T_k^{h^F}(f) \in {\rm{End}}(H^0(Y, F^{\otimes k}))$, is defined as follows 
	\begin{equation}\label{eq_defn_toepl}
		T_k^{h^F}(f) := B_k \circ M_{f, k},
	\end{equation}
	where $B_k : L^{\infty}(Y, F^{\otimes k}) \to H^0(Y, F^{\otimes k})$ is the orthogonal (with respect to the associated $L^2$-norm ${\rm{Hilb}}_{k}(h^F)$) projection to $H^0(Y, F^{\otimes k})$, and $M_{f, k} : H^0(Y, F^{\otimes k}) \to L^{\infty}(Y, F^{\otimes k})$ is the multiplication map by $f$, acting as $s \mapsto f \cdot s$.
	We call $f \in L^{\infty}(Y)$ the \textit{symbol of the Toeplitz operator}.
	\par 
	For a smooth vector bundle $G$ on $B$, and $f \in L^{\infty}(X, \pi^* G)$, we define the family version of Toeplitz operator, $T_k^{\pi, h^L}(f) \in L^{\infty}(B, \enmr{E_k} \otimes G)$, by gluing (\ref{eq_defn_toepl}) fiberwise.
	\par 
	The proof of Theorem \ref{thm_expl_const1} then decomposes into several parts, which we roughly summarize as follows.
	First, we show that for any Hermitian metric $H_k$ on $E_k$, the metric $[{\rm{Sym}}^l H_k]$ is very close to the one provided by the quantization, ${\rm{Hilb}}_{kl}^{\pi}(FS(H_k)^{1/k})$.
	Most importantly, we show that the curvatures of these two Hermitian metrics are close enough.
	Second, we roughly show that for carefully chosen parameters $l, \delta, k, s$, the Hermitian metric $[{\rm{Sym}}^l H_{\delta, k, s}]$ on $\mu_k^* E_{kl}$ is very close to being an approximate critical Hermitian structure in the sense that its curvature is very close to the weight operator associated with Harder-Narasimhan filtration.
	Third, we show that the weight operator associated with the Harder-Narasimhan filtration is very close to the Toeplitz operator with the symbol given by the Harder-Narasimhan potential of the metric, defined in (\ref{eq_hn_potent}).
	\par 
	\begin{sloppypar}
	A combination of the second and the third arguments above show that the curvature of $[{\rm{Sym}}^l H_{\delta, k, s}]$ is close to the Toeplitz operator with the symbol given by the Harder-Narasimhan potential.
	On another hand, a result of Ma-Zhang \cite{MaZhangSuperconnBKPubl} says that for a relatively positive Hermitian metric $h^L$ on $L$, the curvature of $L^2$-metric ${\rm{Hilb}}_{k}^{\pi}(h^L)$ is also a Toeplitz operator, and the symbol is given by the horizontal component of $c_1(L, h^L)$. 
	However, since the curvatures of $[{\rm{Sym}}^l H_{\delta, k, s}]$ and ${\rm{Hilb}}_{kl}^{\pi}(FS(H_{\delta, k, s})^{1/k})$ are very close by the first argument, the two expressions above should essentially coincide. 
	As both expressions are given by Toeplitz operators, this means that the respective symbols should be close. 
	But this means precisely that the Harder-Narasimhan potential of $\omega_{\delta, k, s}$ essentially coincides with the horizontal curvature of $\omega_{\delta, k, s}$. 
	A simple manipulation shows that this is just a reformulation of Theorem \ref{thm_expl_const1}, which finishes its proof.
	\end{sloppypar}
	\par 
	We now provide a detailed description, starting with a more precise discussion of the key components involved in the proof.
	First, in Section \ref{sect_curv}, relying on the semiclassical Ohsawa-Takegoshi extension theorem from \cite{FinOTAs} and on subsequent works \cite{FinSecRing}, \cite{FinToeplImm}, we establish the following result.
	\par 
	\begin{thm}\label{thm_appl_ot}
		For any $k \in \nat$ and a Hermitian metric $H_k$ on $E_k$, there are $l_0 \in \nat$, $C > 0$, such that for any $l \geq l_0$, we have
		\begin{equation}\label{eq_appl_ot}
		\Big\|  
			\frac{\imun}{2 \pi} R^{[{\rm{Sym}}^l H_k]} - \frac{\imun}{2 \pi} R^{{\rm{Hilb}}_{kl}^{\pi}(FS(H_k)^{1/k})}
		\Big\| \leq C \sqrt{l},
		\end{equation}
		where the norm $\| \cdot \|$ is for a norm induced by a fixed metric on $TB$, and the subordinate norm on $\enmr{E_k}$ associated with ${\rm{Hilb}}_{kl}^{\pi}(FS(H_k)^{1/k})$.
	\end{thm}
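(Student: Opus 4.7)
The plan is to view both metrics through the relative Kodaira embedding ${\rm{Kod}}_k : X \hookrightarrow \mathbb{P}(E_k^*)$ and compare them via an intermediate $L^2$-metric on ${\rm{Sym}}^l E_k$. Under the isomorphism ${\rm{Kod}}_k^* \mathscr{O}(l) \cong L^{kl}$ induced by (\ref{eq_ev_kod_iso}), the multiplication map ${\rm{Mult}}_{k, l}$ is identified with the natural restriction of direct images $p_* \mathscr{O}(l) \to \pi_* L^{kl}$, where $p_* \mathscr{O}(l) = {\rm{Sym}}^l E_k$ by construction of $\mathbb{P}(E_k^*)$. Hence $[{\rm{Sym}}^l H_k]$ becomes the quotient norm of ${\rm{Sym}}^l H_k$ along this restriction.

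I would first introduce the intermediate $L^2$-metric $\widetilde{H}_l := {\rm{Hilb}}_l^p(FS(H_k))$ on $p_* \mathscr{O}(l)$, defined by fiberwise integration on $\mathbb{P}(E_k^*)$ against the Fubini-Study volume form, and compare it with ${\rm{Sym}}^l H_k$. Applying the asymptotic expansion of the Bergman kernel on $(\mathbb{P}(E_k^*)_b, \mathscr{O}(1)^{\otimes l}, FS(H_k)^{\otimes l})$ due to Tian, Zelditch, Catlin, Bouche, Dai-Liu-Ma, we have $\widetilde{H}_l = c_l \cdot {\rm{Sym}}^l H_k \cdot (1 + O(1/l))$ for an explicit constant $c_l$, with control in any $C^k$ norm on $B$; this passes to the quotients on $E_{kl}$. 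Next I would compare the quotient $[\widetilde{H}_l]$ with the direct $L^2$-metric ${\rm{Hilb}}_{kl}^\pi(FS(H_k)^{1/k})$: the restriction map from $\mathbb{P}(E_k^*)$ to $X$ is norm-decreasing up to the volume normalization coming from $c_1(\mathscr{O}(1), FS(H_k))^{N_k - 1}/(N_k - 1)!$, while the Ohsawa-Takegoshi $L^2$-extension theorem along the submanifold $X \hookrightarrow \mathbb{P}(E_k^*)$ provides the inverse bound with a multiplicative constant $1 + O(1/l)$, the rate being controlled by the Bergman kernel asymptotics of the fibers of $\pi$. Composing these two steps yields that $[{\rm{Sym}}^l H_k]$ and ${\rm{Hilb}}_{kl}^\pi(FS(H_k)^{1/k})$ are multiplicatively equivalent up to a universal rescaling and an error $1 + O(1/l)$, with full control of derivatives in the base directions.

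The passage to the curvature estimate (\ref{eq_appl_ot}) then follows by differentiation. Here one combines the second-fundamental-form formula $R^{[{\rm{Sym}}^l H_k]} = R^{{\rm{Sym}}^l H_k}|_{E_{kl}} - B^* \wedge B$, where $B$ is the second fundamental form of the quotient $p_* \mathscr{O}(l) \to \pi_* L^{kl}$ with respect to $\widetilde{H}_l$, with Berndtsson's curvature formula for the $L^2$-metric ${\rm{Hilb}}_{kl}^\pi(FS(H_k)^{1/k})$ in terms of the fiberwise Bergman kernels of $L^{kl}$. The curvatures themselves are naturally of size $O(l)$, and matching of their leading-order terms is guaranteed by the Bergman kernel expansion of step one and Berndtsson's formula.

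The main obstacle, and the source of the $\sqrt{l}$ rate, is to show that the operator-norm error in the curvature comparison improves from the trivial $O(l)$ (obtained by differentiating the $C^0$-equivalence $1 + O(1/l)$ twice against the naturally $O(l)$-sized curvatures) to the claimed $O(\sqrt{l})$. The improvement would come from exploiting the next-to-leading-order coefficients in the Bergman kernel expansions of both comparison steps, which match by a Riemann-Roch-type argument and cancel in the curvature difference, together with a precise analysis of the second fundamental form $B$ using the Ohsawa-Takegoshi extension to identify its dominant contribution in operator norm with respect to ${\rm{Hilb}}_{kl}^\pi(FS(H_k)^{1/k})$.
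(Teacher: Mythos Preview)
Your framework is the right one and matches the paper: reduce to the relative Kodaira embedding $X \hookrightarrow P := \mathbb{P}(E_k^*)$, identify ${\rm{Mult}}_{k,l}$ with $\res_l : K_l \to G_l$, and use that ${\rm{Hilb}}_l^p(FS(H_k))$ is an explicit constant multiple of ${\rm{Sym}}^l H_k$ (this is exact, not just $1+O(1/l)$, since the fibers are projective spaces). So the problem becomes exactly the curvature comparison between $[{\rm{Hilb}}_l^p(h^F)]$ and ${\rm{Hilb}}_l^{\pi}(\iota^* h^F)$, which is the content of Theorem~\ref{cor_quot_nm}.

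Where your proposal has a genuine gap is the passage to the $O(\sqrt{l})$ curvature bound. Your route via the second fundamental form of the quotient plus Berndtsson's formula for the $L^2$-curvature forces you to compare two independently computed curvature tensors, each of size $O(l)$, and you acknowledge you do not see how to extract the cancellation. Your suggested mechanism (matching next-to-leading Bergman coefficients via a Riemann--Roch argument) is not what actually produces the $\sqrt{l}$.

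The paper's proof avoids this entirely by working with the single \emph{transfer operator} $D_l \in \ccal^{\infty}(B,\enmr{G_l})$ defined by $\langle s_0,s_1\rangle_{[{\rm{Hilb}}_l^p(h^F)]} = \langle D_l s_0,s_1\rangle_{{\rm{Hilb}}_l^{\pi}(\iota^* h^F)}$. The curvature difference is then the \emph{exact} identity
\[
R^{[{\rm{Hilb}}_l^p(h^F)]} - R^{{\rm{Hilb}}_l^{\pi}(\iota^* h^F)} = \dbar\big((\nabla^{(1,0)} D_l)\,D_l^{-1}\big),
\]
so the whole problem reduces to $\ccal^r$-bounds on $D_l$. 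One has $D_l = (A_l^*)^{-1}$ with $A_l := \res_l \circ \res_l^*$, and the fiberwise Schwartz kernel of $A_l$ is nothing but the Bergman kernel of $(P_b,F^l)$ restricted to $X_b \times X_b$. The off-diagonal Bergman kernel asymptotics of Dai--Liu--Ma and the exponential decay of Ma--Marinescu then give
\[
\big\| D_l - l^{-(n'-n)}\,{\rm{Id}}_{G_l} \big\|_{\ccal^r(B)} \leq C\, l^{(r-1)/2 - (n'-n)},
\]
i.e.\ each base derivative costs a factor $\sqrt{l}$. Plugging $r=2$ into the curvature identity (two derivatives of $D_l$, times $D_l^{-1} \sim l^{n'-n}$) yields exactly $O(\sqrt{l})$. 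No term-by-term cancellation of curvature expansions is required; the $\sqrt{l}$ is simply the cost of two derivatives on the Schwartz kernel.
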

	\par 
	We will also need to compare the metrics themselves. 
	In this direction, the following easy consequence of \cite{FinToeplImm}, \cite{FinOTRed}, will be explained in details in Section \ref{sect_curv}.
	\begin{prop}\label{thm_appl_ot2}
		For any $k \in \nat$ and a Hermitian metric $H_k$ on $E_k$, there are $l_0 \in \nat$, $C > 0$, such that for any $l \geq l_0$, we have
		\begin{equation}
			1 - \frac{C}{\sqrt{l}}
			\leq
			\frac{[{\rm{Sym}}^l H_k]}{{\rm{Hilb}}_{kl}^{\pi}(FS(H_k)^{1/k})} \cdot \frac{1}{k^m l^n}
			\leq
			1 + \frac{C}{\sqrt{l}}.
		\end{equation}
	\end{prop}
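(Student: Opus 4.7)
The plan is to establish the comparison pointwise on $B$. Fix $b \in B$. Via the Kodaira embedding ${\rm{Kod}}_k : X_b \hookrightarrow \mathbb{P}(E_{k,b}^*)$, one identifies $L^{\otimes kl}|_{X_b}$ with $\mathscr{O}(l)|_{X_b}$ compatibly with the metric $FS(H_k)^l$, and realizes the multiplication map ${\rm{Sym}}^l E_{k,b} \to E_{kl, b}$ as the restriction $H^0(\mathbb{P}(E_{k,b}^*), \mathscr{O}(l)) \to H^0(X_b, \mathscr{O}(l)|_{X_b})$. The two inner products to be compared then both live on $H^0(X_b, \mathscr{O}(l)|_{X_b})$.

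First I would rewrite both inner products as $L^2$-integrals. By definition, one has ${\rm{Hilb}}_{kl}^\pi(FS(H_k)^{1/k})(f,f) = \int_{X_b} |f|^2_{FS(H_k)^l} dv_{X_b}$. By the classical unitary-invariant integration formula (Hua's formula) on $\mathbb{P}(E_{k,b}^*)$, for any $g \in {\rm{Sym}}^l E_{k,b}$,
\[
{\rm{Sym}}^l H_k(g, g) = \binom{N_k - 1 + l}{l} \cdot \int_{\mathbb{P}(E_{k,b}^*)} |g|^2_{FS(H_k)^l} \, d\mu_{FS},
\]
with $d\mu_{FS}$ the Fubini-Study volume of total mass $1$. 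It follows that $\|f\|^2_{[{\rm{Sym}}^l H_k]}$ equals $\binom{N_k - 1 + l}{l}$ times the infimum, over all holomorphic extensions $\tilde f \in {\rm{Sym}}^l E_{k,b}$ of $f$, of the $L^2$-norm of $\tilde f$ on $\mathbb{P}(E_{k,b}^*)$.

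The problem therefore reduces to the semiclassical Ohsawa-Takegoshi extension problem for $X_b \subset \mathbb{P}(E_{k,b}^*)$ with powers of $\mathscr{O}(1)$, as $l \to \infty$. Applying the sharp extension estimates of \cite{FinOTRed} (lower bound via an explicit almost-optimal extension) together with the Bergman-kernel/Toeplitz asymptotic of \cite{FinToeplImm} (matching upper bound comparing the optimal extension to the fiberwise Bergman projection) yields
\[
\inf_{\tilde f|_{X_b} = f} \|\tilde f\|^2_{L^2(\mathbb{P}(E_{k,b}^*), FS(H_k)^l)} = \frac{1 + O(1/l)}{l^{N_k - 1 - n} \cdot c_{k,b}} \cdot \|f\|^2_{L^2(X_b)},
\]
where $c_{k,b}$ is an explicit geometric factor arising from the normal bundle of $X_b \subset \mathbb{P}(E_{k,b}^*)$ with the Fubini-Study metric induced by $H_k$. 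Multiplying by $\binom{N_k - 1 + l}{l} \sim l^{N_k - 1}/(N_k - 1)!$, the powers of $l$ collapse to $l^n$, and a direct computation using $c_1(L, FS(H_k)^{1/k})|_{X_b} = c_1(L^k, FS(H_k))|_{X_b}/k$ identifies the surviving $k$-dependent constants with $k^m$.

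The main obstacle is the uniform $O(1/l)$ control on the extension constant, uniformly in both $b \in B$ and in $f \in H^0(X_b, \mathscr{O}(l)|_{X_b})$. Once this sharp semiclassical control is granted — it is essentially the content of \cite{FinOTRed} and \cite{FinToeplImm}, developed precisely for such applications in families — the remaining work amounts to a careful bookkeeping of the combinatorial and geometric factors described above, and the smoothness of the Kodaira embedding and of $FS(H_k)$ in $b$ ensures that the error terms depend continuously, hence uniformly, on $b \in B$.
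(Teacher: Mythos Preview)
Your proposal is correct and follows essentially the same route as the paper. The paper applies Theorem~\ref{thm_appl_ot2a} (the semiclassical extension comparison from \cite{FinToeplImm}, \cite{FinOTRed}) to the relative Kodaira embedding $X \hookrightarrow \mathbb{P}(E_k^*)$, uses the identity ${\rm{Hilb}}_l^p(h^F) = \frac{l!}{(l+N_k-1)!}\,{\rm{Sym}}^l H_k$ (your ``Hua formula''), and then tracks the factors $l^{N_k-1-n}$, $(l+N_k-1)!/l!\sim l^{N_k-1}$, and the $k$-rescaling of the fiber volume form exactly as you outline; the only cosmetic difference is that the paper invokes the family version of the extension theorem directly rather than arguing pointwise and then for uniformity in $b$.
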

	\par 
	We now fix a relatively positive Hermitian metric $h^L$ on $L$, and denote $\omega := c_1(L, h^L)$.
	We denote by $\omega_H \in \ccal^{\infty}(X, \wedge^{1, 1} \pi^* T^* B)$ the horizontal part of the curvature, $\omega$, defined as after Proposition \ref{prop_low_bnd}.
	The following result, which was already used in our proof of (\ref{eq_wzw_hym}) from \cite{FinHNII}, will continue to play a crucial role in our work.
	\begin{thm}[{ Ma-Zhang \cite[Theorem 0.4]{MaZhangSuperconnBKPubl} }]\label{thm_mazh}
		There are $C > 0$, $l_0 \in \nat$, such that for any $l \geq l_0$,
		\begin{equation}
			\Big\|
				\frac{\imun}{2 \pi}
				R^{{\rm{Hilb}}_l^{\pi}(h^L)}
				-
				l \cdot T_l^{\pi, h^L}(\omega_H)
			\Big\|
			\leq
			C,
		\end{equation}
		where the norm $\| \cdot \|$ here is as in (\ref{eq_appl_ot}), but with ${\rm{Hilb}}_{l}^{\pi}(h^F)$ instead of ${\rm{Hilb}}_{kl}^{\pi}(FS(H_k)^{1/k})$.
	\end{thm}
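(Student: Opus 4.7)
The plan is to prove this via Berndtsson's explicit curvature formula for direct images, combined with standard Bergman kernel asymptotics and the spectral gap of the fiberwise Kodaira Laplacian. Set $H_l := {\rm{Hilb}}_l^{\pi}(h^L)$ and, working locally over $B$, fix $b \in B$ together with horizontal lifts $\partial_t^H, \partial_{\bar s}^H$ of tangent vectors at $b$. For local holomorphic sections $u, v$ of $E_l$ near $b$, extended smoothly to $X$ by parallel transport along the horizontal distribution induced by $\omega = c_1(L, h^L)$, Berndtsson's formula expresses
\begin{equation*}
\scal{R^{H_l}(\partial_t, \partial_{\bar s}) u}{v}_{H_l}
=
l \int_{X_b} \omega_H(\partial_t, \partial_{\bar s}) \scal{u}{v}_{(h^L)^{\otimes l}} \, dv_{X_b}
\; - \;
\scal{(\laplcomp_l)^{-1} P_l^{\perp}(A_{\bar s} u)}{P_l^{\perp}(A_t v)}_{L^2},
\end{equation*}
where $A$ is the Kodaira-Spencer form of the family, $\laplcomp_l$ is the fiberwise $\dbar$-Laplacian on $L^{\otimes l}$-valued $(0,1)$-forms, and $P_l^{\perp}$ is the projection onto its image. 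The first term, by definition, equals $l \cdot \scal{T_l^{\pi, h^L}(\omega_H)(\partial_t, \partial_{\bar s}) u}{v}_{H_l}$, matching the claimed principal term exactly.

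The remaining analytic content is to show that the correction term contributes $O(1)$ in the subordinate operator norm. For this, I would invoke the fiberwise spectral gap: by Kodaira-Nakano and Andreotti-Vesentini, the smallest eigenvalue of $\laplcomp_l$ on positive-degree forms grows at least linearly in $l$, so $\|(\laplcomp_l)^{-1} P_l^{\perp}\| = O(1/l)$ uniformly in $b \in B$. On the other hand, the Kodaira-Spencer form $A$ is a fixed smooth tensor independent of $l$, hence $\|A \cdot u\|_{L^2(X_b)} = O(\|u\|_{H_l})$ with constants independent of $l$. Combining these two bounds yields a bound of order $O(1/l) \cdot O(1) \cdot O(1) = O(1/l)$ for the correction term pointwise on $B$, so in particular $O(1)$, which is exactly the error allowed by the theorem. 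Note that one could hope for a stronger $O(1/l)$ bound, but only $O(1)$ is needed and claimed.

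The main obstacle is the rigorous identification of the correction term with the resolvent expression above: one must carefully track the second fundamental form of the finite-dimensional subbundle $E_l \subset \pi_* L^{\otimes l}$ inside the infinite-dimensional Hilbert bundle and express the difference between the ambient and induced connections in terms of $A$ and $\laplcomp_l^{-1}$. This is exactly the bookkeeping carried out most cleanly by Ma-Zhang via the Bismut superconnection formalism, where the curvature of the $L^2$-metric arises as the degree-two component of the superconnection curvature and the rescaling $l \to \infty$ is handled through a uniform local-index-theoretic expansion. An alternative, more hands-on route is to follow Berndtsson's direct computation in his curvature-positivity paper: once the $l$-linear principal term has been isolated in the closed formula, the remaining pieces are manifestly controlled once the spectral gap on $\laplcomp_l$ is invoked. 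Either route reduces the matter to estimates that are uniform in $b \in B$ by compactness of $B$ and smoothness of $h^L$, yielding the claimed constant $C > 0$.
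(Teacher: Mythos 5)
The paper does not prove this theorem --- it imports it verbatim from Ma-Zhang, who establish it (together with a full asymptotic expansion, not merely the $O(1)$ remainder) via the Bismut superconnection formalism and local index theory. There is therefore no internal proof to compare against; what you propose is an alternative route.

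Your identification of the leading term $l\cdot T_l^{\pi, h^L}(\omega_H)$ as the "geodesic curvature" piece of a Berndtsson-type formula is correct: the Schur complement $c(l\phi)$ scales linearly in $l$, and compressing multiplication by $\omega_H$ to the Bergman space is precisely the Toeplitz operator. The gap is in the treatment of the correction term. The operator you put into the resolvent --- the Kodaira-Spencer form $A$ together with the bound "$A$ is a fixed tensor, hence $\|Au\|_{L^2} = O(\|u\|)$" --- is not the whole story: the $(0,1)$-form actually entering the Berndtsson formula also carries the $b$-variation of $(h^L)^{\otimes l}$, schematically $l\,\dbar_{X_b}\phi_t\wedge u$ where $\phi$ is a local potential of $h^L$, and this piece is $O(l)\,\|u\|$, not $O(\|u\|)$. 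One quick sanity check: for a product fibration the Kodaira-Spencer tensor vanishes, so your posited formula predicts no correction term whatsoever, yet $R^{{\rm Hilb}_l^{\pi}(h^L)}$ is certainly not equal to $l\cdot T_l^{\pi, h^L}(\omega_H)$ on the nose once $h^L$ varies genuinely in $b$. With an $O(l)$ input to the resolvent, the naive spectral-gap bound $\|\laplcomp_l^{-1}\| = O(1/l)$ yields only $O(l)$ for the correction, not the $O(1/l)$ you claim.

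To obtain the claimed $O(1)$ one must exploit that (i) the input form is $\dbar$-exact, (ii) on $L^{\otimes l}$-valued $(n,1)$-forms the bottom of the spectrum of $\laplcomp_l$ is $l(1+O(1/l))$ --- the sharp constant, not merely linear growth, is what produces cancellation against the $O(l)$ cross-terms sitting inside $c(l\phi)$ --- and/or (iii) the Hankel-type estimate $\|P_l^\perp\, M_f\|_{\ker\dbar\to L^2} = O(l^{-1/2})$ for a smooth symbol $f$. You flag "the rigorous identification of the correction term" as the main obstacle, which is an honest assessment, but the point is that this step is exactly where all the analytic content lives, and as written the bound would not come out. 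Given the delicacy of this bookkeeping, deferring to Ma-Zhang's local-index computation, as the paper does, is the cleanest course; your Berndtsson route is plausible but requires a genuinely sharper treatment of the resolvent term to close.
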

	\par 
	The following result is technically the most difficult part of this paper. 
	Roughly, it shows that approximate critical Hermitian structures can be constructed inductively.
	It is the only statement of the paper which depends truly on the fact that we are dealing with the Harder-Narasimhan filtrations on $\oplus_{k = 0}^{+ \infty} E_k$, and not with arbitrary bounded submultiplicative filtrations.
	\par 
	To state it, for any $\delta > 0$, $k \in \nat$, we denote by $H_{\delta, k}$ a $\delta$-approximate critical Hermitian structure on $E_k$.
	We use the notation $\tilde{\mu}_k^* \mathcal{F}^{HN, k}$ for the resolution of the Harder-Narasimhan filtration as in (\ref{eq_resol_hn_filt}).
	For any $s \in [0, + \infty[$, we denote by $H_{\delta, k, s}$ the geodesic ray of Hermitian metrics on $\mu_k^* E_k$ departing from $\mu_k^* H_{\delta, k}$ and associated with $\tilde{\mu}_k^* \mathcal{F}^{HN, k}$.
	We also use the notation for the weight operator from (\ref{eq_weight_op_dd}).
	\begin{thm}\label{thm_compat_approx_sol}
		For any $\epsilon > 0$, there is $k_0 \in \nat$, such that for any $k \geq k_0$, there are $l_0 \in \nat$, $\delta > 0$, $s > 0$, such that for any $l \geq l_0$, we have
		\begin{multline}\label{eq_thm_compat_approx_sol}
			\Big\|
				\frac{\imun}{2 \pi} R^{[{\rm{Sym}}^l H_{\delta, k, s}]}
				\wedge
				\mu_k^* \omega_B^{m - 1}
				-
				A([{\rm{Sym}}^l H_{\delta, k, s}], [{\rm{Sym}}^l \tilde{\mu}_k^* \mathcal{F}^{HN, k}])
				\cdot
				\mu_k^* \omega_B^m
			\Big\|_{L^1(B_k, [{\rm{Sym}}^l H_{\delta, k, s}])}^{{\rm{tr}}}
			\\
			\leq \epsilon l \cdot N_{kl}.
		\end{multline}
	\end{thm}
	\begin{rem}
		This result highlights the importance of considering geodesic rays in our construction from Theorem \ref{thm_expl_const1}: (\ref{eq_thm_compat_approx_sol}) can be made small only by taking $s$ big enough.	 
	\end{rem}
	\par 
	The most crucial part of the proof of Theorem \ref{thm_compat_approx_sol} is based on the precise study of the weight operators along geodesic rays.
	The full proof occupies Sections \ref{sect_induc_const}-\ref{sect_geod_appr}.
	\par 
	The next result shows that the number operator behaves well with respect to the change of the metric. 
	The proof will be based on the analysis from \cite{FinSubmToepl}, and is postponed until Section \ref{sect_weight_oper}.
	\begin{thm}\label{thm_compat_approx_sol1}
		For any $k \in \nat$, a Hermitian metric $H_k$ on $E_k$ and a filtration $\mathcal{F}^k$ on $E_k$, there are $l_0 \in \nat$, $C > 0$, such that for any $l \geq l_0$, we have
		\begin{equation}
			\Big\|
				A([{\rm{Sym}}^l H_k], [{\rm{Sym}}^l \mathcal{F}^k])
				-
				A({\rm{Hilb}}_{kl}^{\pi}(FS(H_k)^{1/k}), [{\rm{Sym}}^l \mathcal{F}^k])
			\Big\|
			\leq
			C \sqrt{l} \log(l),
		\end{equation}
		where the norm $\| \cdot \|$ here is as in (\ref{eq_appl_ot}).
	\end{thm}
	\par 
	The final essential step is to compare the weight operator to the Toeplitz operator.
	The following result, based on our recent work \cite{FinSubmToepl}, will be crucial for this.
	\begin{thm}\label{thm_compat_approx_sol2}
		For any bounded filtration $\mathcal{F} := \oplus_{k = 0}^{+ \infty} \mathcal{F}^{k}$ on $\oplus_{k = 0}^{+ \infty} E_k$, which is submultiplicative away from a Lebesgue negligible subset, a relatively positive Hermitian metric $h^L$ on $L$ and $\epsilon > 0$, there is $k_1 \in \nat$, such that for any $k \geq k_1$, there is $l_0 \in \nat$, such that for any $l \geq l_0$, we have
		\begin{equation}
			\Big\|
				A({\rm{Hilb}}_{kl}^{\pi}(h^L), [{\rm{Sym}}^l \mathcal{F}^{k}])
				\cdot \omega_B^m
				-
				kl
				\cdot
				T_{kl}^{\pi, h^L}(\phi^{\pi}(h^L, \mathcal{F}))
				\cdot \omega_B^m
			\Big\|_{L^1(B, {\rm{Hilb}}_{kl}^{\pi}(h^L))}^{{\rm{tr}}}
			\leq
			\epsilon \cdot kl \cdot N_{kl},
		\end{equation}
		where $\phi^{\pi}(h^L, \mathcal{F})$ is the fiberwise geodesic ray, defined in Proposition \ref{prop_conv_berg_family}.
	\end{thm}
	\par 
	
	We will also need the following basic statement.
	\begin{prop}\label{prop_l1_toepl_conv}
		For any relatively positive Hermitian metric $h^L$ on $L$, $\omega := c_1(L, h^L)$, $f \in L^{\infty}(X)$, we have 
		\begin{equation}
			\lim_{k \to \infty}
			\frac{\| 
				T^{\pi, h^L}_{k}(f)  \cdot \omega_B^m
			\|_{L^1(B, {\textrm{Hilb}}^{\pi}_k(h^L))}^{{\rm{tr}}}}{N_k \cdot \int_B [\omega_B]^m}
			=
			\frac{\int_X |f(x)| \omega^n \wedge \pi^* \omega_B^m}{\int_X c_1(L)^n \cdot \pi^* [\omega_B]^m}.
		\end{equation}
	\end{prop}
	\begin{proof}
		Let us describe first the analogous statement in the non-family setting.
		Recall that in \cite[Theorem 5.4]{FinSubmToepl}, we established that for non-smooth symbols, the asymptotic spectral theory of Boutet de Monvel-Guillemin \cite[Theorem 13.13]{BoutGuillSpecToepl} continues to hold.
		In particular, following the notations from (\ref{eq_defn_toepl}), for any $g \in L^{\infty}(Y)$, we have
		\begin{equation}\label{eq_spec_conv_toepl_nonnfam}
			\lim_{k \to \infty}
			\frac{ {\rm{Tr}}[| T^{h^F}_{k}(g)|]}{N_k}
			=
			\frac{\int_Y |g(x)| c_1(F, h^F)^n}{\int_Y c_1(F)^n}.
		\end{equation}
		Recall also, cf. \cite[Lemma 5.7]{FinSubmToepl}, that for the operator norm the following trivial bound holds
		\begin{equation}\label{eq_toepl_esssup}
			\| T^{h^F}_{k}(g) \| \leq {\rm{esssup}}_{x \in Y} |g(x)|.
		\end{equation}
		Directly from (\ref{eq_toepl_esssup}), we deduce that the sequence of functions $b \mapsto \frac{1}{N_k}  {\rm{Tr}}[| T^{\pi, h^L}_{k}(f)(b)|]$, $b \in B$, is uniformly bounded in $k \in \nat$.
		From (\ref{eq_spec_conv_toepl_nonnfam}), we also have the pointwise convergence. 
		Proposition \ref{prop_l1_toepl_conv} then follows by the Lebesgue dominated convergence theorem.
	\end{proof}

	\begin{proof}[Proof of Theorem \ref{thm_expl_const1}.]
		We fix an arbitrary $\epsilon > 0$.
		For simplicity of the notation, we let $h^L_{\delta, k, s} := FS(H_{\delta, k, s})^{1/k}$.
		Remark that a combination of Theorems \ref{thm_appl_ot}, \ref{thm_mazh}, \ref{thm_compat_approx_sol} and Proposition \ref{thm_appl_ot2}, yield that for the horizontal part, $\omega_{\delta, k, s, H}$, defined as in Theorem \ref{thm_mazh}, of the form $\omega_{\delta, k, s}$ from Theorem \ref{thm_expl_const1}, the following holds.
		There is $k_1 \in \nat$, such that for any $k \geq k_1$, there are $C_1 > 0$, $\delta > 0$, $s > 0$, $l_1 \in \nat$, so that for any $l \geq l_1$, we have
		\begin{multline}\label{eq_thm_compat_approx_sol_111}
			\Big\|
				kl T_l^{\pi, h^L_{\delta, k, s}}(\omega_{\delta, k, s, H})
				\wedge
				\mu_k^* \omega_B^{m - 1}
				-
				A([{\rm{Sym}}^l H_{\delta, k, s}], [{\rm{Sym}}^l \tilde{\mu}_k^* \mathcal{F}^{HN, k}])
				\cdot
				\mu_k^* \omega_B^m
			\Big\|_{L^1(B_k, {\textrm{Hilb}}^{\pi}_{kl}(h^L_{\delta, k, s}))}^{{\rm{tr}}}
			\\
			\leq
			\Big( C_1 \sqrt{l} + \epsilon l \Big) \cdot N_{kl}.
		\end{multline}
		\par 
		A combination of Theorems \ref{thm_HN_subm}, \ref{thm_compat_approx_sol1} and \ref{thm_compat_approx_sol2} yields that there is $k_2 \in \nat$, such that for any $k \geq k_2$, $\delta > 0$, $s > 0$, there are $l_2 \in \nat$, $C_2 > 0$, such that for any $l \geq l_2$, we have
		\begin{multline}\label{eq_thm_compat_approx_sol_222}
			\Big\|
				A([{\rm{Sym}}^l H_{\delta, k, s}], [{\rm{Sym}}^l \tilde{\mu}_k^* \mathcal{F}^{HN, k}])
				\cdot
				\mu_k^* \omega_B^m
				-
				kl
				\cdot
				T_{kl}^{\pi, h^L_{\delta, k, s}}(p_k^* {\rm{HN}}(\omega_{\delta, k, s}))
				\wedge
				\mu_k^* \omega_B^m
			\Big\|_{L^1(B_k, {\textrm{Hilb}}^{\pi}_{kl}(h^L_{\delta, k, s}))}^{{\rm{tr}}}
			\\
			\leq
			C_2 \sqrt{l} \log(l)  N_{kl} + \epsilon l N_{kl}.
		\end{multline}
		We now fix $k_3 := \max\{ k_1, k_2\}$. 
		Let $\delta_3 > 0$, $s_3 > 0$ be such that (\ref{eq_thm_compat_approx_sol_111}) holds.
		If we combine (\ref{eq_thm_compat_approx_sol_111}) with (\ref{eq_thm_compat_approx_sol_222}), we obtain that there is $l_3 \in \nat$, such that for any $l \geq l_3$, we have
		\begin{multline}\label{eq_thm_compat_approx_sol_333}
			\Big\|
				T_{k_3 l}^{\pi, h^L_{\delta_3, k_3, s_3}}(\omega_{\delta_3, k_3, s_3, H})	\wedge
				\mu_{k_3}^* \omega_B^{m - 1}
				- 
				T_{k_ 3l}^{\pi, h^L_{\delta_3, k_3, s_3}}(p_{k_3}^* {\rm{HN}}(\omega_{\delta_3, k_3, s_3})) \wedge
				\mu_{k_3}^* \omega_B^m
			\Big\|_{L^1(B_{k_3}, {\textrm{Hilb}}^{\pi}_{k_3 l}(h^L_{\delta_3, k_3, s_3}))}^{{\rm{tr}}}
			\\
			\leq
			4 \epsilon N_{k_3 l}.
		\end{multline}
		Directly from Proposition \ref{prop_l1_toepl_conv}, by dividing both sides of (\ref{eq_thm_compat_approx_sol_333}) by $N_{k_3 l}$ and taking $l \to \infty$, we get
		\begin{equation}\label{eq_thm_compat_approx_sol_444}
			\int_{X_{k_3}}
			\Big| \wedge_{\omega_B} \omega_{\delta_3, k_3, s_3, H} - p_{k_3}^* {\rm{HN}}(\omega_{\delta_3, k_3, s_3}) \Big| \cdot \omega_{\delta_3, k_3, s_3}^n \wedge \pi_{k_3}^* \mu_{k_3}^* \omega_B^m
			\leq
			\frac{4 \epsilon \cdot \int_X c_1(L)^n \cdot \pi^* [\omega_B]^m}{\int_B [\omega_B]^m}.
		\end{equation}
		Remark, however, that by (\ref{eq_omega_h_wed_defn}) and the definition of $\omega_{\delta_3, k_3, s_3, H}$, (\ref{eq_thm_compat_approx_sol_444}) yields
		\begin{multline}
			\int_{X_{k_3}}
			\Big|
			\omega_{\delta_3, k_3, s_3}^{n + 1} \wedge \pi_{k_3}^* \mu_{k_3}^* \omega_B^{m - 1}
			-
			p_k^* {\rm{HN}}(\omega_{\delta_3, k_3, s_3}) \cdot \omega_{\delta_3, k_3, s_3}^n \wedge  \pi_{k_3}^* \mu_{k_3}^* \omega_B^m \cdot (n + 1)
			\Big|
			\\
			\leq
			\frac{4 \epsilon \cdot \int_X c_1(L)^n \cdot \pi^* [\omega_B]^m}{(n + 1) \cdot \int_B [\omega_B]^m}.
		\end{multline}
		Since $\epsilon > 0$ is arbitrary, this establishes Theorem \ref{thm_expl_const1}.
	\end{proof}

	\section{Curvature of direct images and semiclassical extension theorem}\label{sect_curv}
	The main goal of this section is to prove Theorem \ref{thm_appl_ot}. 
	The proof primarily relies on the semiclassical version of the Ohsawa-Takegoshi extension theorem developed in \cite{FinOTAs}. 
	Recall that the classical version of the extension theorem \cite{OhsTak1} states that under certain positivity conditions on a vector bundle, any holomorphic section defined on a submanifold can be extended to the entire ambient manifold, with a specific control on the $L^2$-norm of the extension.
	In the semiclassical version, we consider not just a single vector bundle but a sequence of vector bundles given by high tensor powers of a fixed ample line bundle, and we examine not only the control of the $L^2$-norm but also an asymptotic formula for the optimal extension itself. 
	Previous results by Bost \cite{BostDwork} and Zhang \cite{ZhangPosLinBun} provided some results concerning subexponential bounds on the optimal constant of the norm comparison in this semiclassical setting, but the version required here is from \cite{FinOTAs} (see also \cite{FinOTRed} for an alternative proof), which not only determines the optimal constant but also the asymptotics of the optimal extension itself.
	\par 
	To put Theorem \ref{thm_appl_ot} in the framework of this theorem, consider a holomorphic submersion $\pi : X \to B$ between compact Kähler manifolds $X$ and $B$ of dimensions $n + m$ and $m$ respectively.
	We fix  a compact complex manifold $P$ of dimensions $n' + m$, $n' > n$, with a submersion $p : P \to B$ and a complex embedding $\iota : X \to P$, so that we have the following commutative diagram 
	\begin{equation}\label{eq_sh_exct_seq2}
	\begin{tikzcd}
		X \arrow[hookrightarrow]{r}{\iota} \arrow{rd}{\pi} & P \arrow{d}{p} \\
 		& B
	\end{tikzcd}
	\end{equation}
	Consider a relatively positive Hermitian line bundle $(F, h^F)$ over $P$.
	For any $l \in \nat$, we define $G_l := R^0 \pi_* (\iota^* F^{\otimes l})$ and $K_l := R^0 p_* F^{\otimes l}$. 
	It is a classical consequence of Serre's vanishing theorem that for $l \in \nat$ big enough, the restriction map 
	\begin{equation}\label{eq_res_oper}
		\res_l: K_l \to G_l
	\end{equation}
	is surjective.
	From now on, we only work with $l \in \nat$ verifying this, and such that both $G_l$ and $K_l$ are locally free.
	\par 
	We denote by ${\rm{Hilb}}_l^{\pi}(\iota^* h^F)$ (resp. ${\rm{Hilb}}_l^p(h^F)$) the $L^2$-product on $G_l$ (resp. $K_l$) induced by $h^F$ as in (\ref{eq_l2_prod}).
	Recall that in \cite{FinToeplImm}, \cite{FinOTRed}, we compared the Hermitian metrics $[{\rm{Hilb}}_l^p(h^F)]$ and ${\rm{Hilb}}_l^{\pi}(\iota^* h^F)$ on $G_l$.
	Let us recall this statement.
	\begin{thm}\label{thm_appl_ot2a}
		There are $C > 0$, $l_0 \in \nat$, such that for any $l \geq l_0$, we have
		\begin{equation}
			1 - \frac{C}{\sqrt{l}}
			\leq
			\frac{[{\rm{Hilb}}_l^p(h^F)]}{{\rm{Hilb}}_l^{\pi}(\iota^* h^F)} \cdot l^{n' - n}
			\leq
			1 + \frac{C}{\sqrt{l}}.
		\end{equation}
	\end{thm}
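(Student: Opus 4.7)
The plan is to prove both inequalities by viewing the quotient metric $[{\rm{Hilb}}_l^p(h^F)]$ through its variational definition: for $s \in G_{l,b}$, $b \in B$, the squared quotient norm of $s$ equals the infimum of $\|\tilde s\|^2_{{\rm{Hilb}}_l^p(h^F)}$ over all extensions $\tilde s \in K_{l,b}$ with $\res_l(\tilde s)=s$. The two sides of the claimed estimate correspond respectively to (i) the existence of an $L^2$-efficient extension, which controls the infimum from above, and (ii) a sharp trace inequality, which controls the infimum from below. The factor $l^{n'-n}$ is the codimension-dependent Gaussian volume arising from integrating the weight $e^{-l|w|^2}$ over the $(n'-n)$ normal directions to $\iota(X)\subset P$.

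For the upper bound, I would apply the semiclassical Ohsawa–Takegoshi extension theorem from \cite{FinOTAs}. Given $s \in G_{l,b}$, it produces an extension $\tilde s \in K_{l,b}$ satisfying
\begin{equation*}
\|\tilde s\|^2_{{\rm{Hilb}}_l^p(h^F)} \;\leq\; \bigl(1+C/l\bigr)\, l^{-(n'-n)}\, \|s\|^2_{{\rm{Hilb}}_l^{\pi}(\iota^* h^F)},
\end{equation*}
with an explicitly controlled $O(1/l)$ error coming from the curvature of $(F,h^F)$ and the second fundamental form of $\iota(X)$ in $P$. Taking the infimum over extensions yields $[{\rm{Hilb}}_l^p(h^F)] \leq (1+C/l)\, l^{-(n'-n)}\, {\rm{Hilb}}_l^{\pi}(\iota^* h^F)$.

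For the lower bound, I would prove the dual trace inequality: for any $\tilde s \in K_{l,b}$,
\begin{equation*}
\|\res_l(\tilde s)\|^2_{{\rm{Hilb}}_l^{\pi}(\iota^* h^F)} \;\leq\; \bigl(1+C/l\bigr)\, l^{n'-n}\, \|\tilde s\|^2_{{\rm{Hilb}}_l^p(h^F)}.
\end{equation*}
The argument is local near $\iota(X)$. In Fermi coordinates $(z,w)$ adapted to $\iota(X)\subset P$ with $w$ transverse, a trivialization of $F^{\otimes l}$ turns $h^{F^{\otimes l}}$ into a weight of the form $e^{-l|w|^2+O(l|w|^3)}$ times lower order factors. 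The submean value property for the holomorphic section $w\mapsto \tilde s(z,w)$ against the Gaussian weight then gives the fiberwise pointwise bound
\begin{equation*}
|\tilde s(z,0)|^2_{\iota^* h^{F^{\otimes l}}} \;\leq\; \bigl(1+C/l\bigr)\, l^{n'-n}\int |\tilde s(z,w)|^2_{h^{F^{\otimes l}}} \, dv(w),
\end{equation*}
with the tails $|w| \gg l^{-1/2}$ contributing $O(e^{-c l})$ by the Gaussian decay. Integrating in $z$ over $X_b$ and using Fubini yields the trace inequality, and combined with the variational definition of the quotient norm this gives the lower bound.

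The main technical obstacle is achieving the sharp multiplicative error $(1+C/l)$ rather than merely $1+o(1)$. This requires a precise control of the remainder terms both in the extension (which is where the semiclassical OT of \cite{FinOTAs} is essential, as the earlier versions of Bost and Zhang would only give a subexponential rate) and in the trace inequality, where one must carefully match the curvature of $\omega_B$, the second fundamental form of $\iota(X)\subset P$, and the weight of $h^F$; this detailed analysis is carried out in \cite{FinToeplImm} and \cite{FinOTRed}, from which the statement then follows.
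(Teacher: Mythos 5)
Your proposal gives a plausible direct argument, but it takes a genuinely different route from the one the paper (and the references \cite{FinToeplImm}, \cite{FinOTRed} it relies on) actually uses. You split the two-sided estimate into two separate inequalities: an Ohsawa–Takegoshi extension for the upper bound on the quotient norm and a submean-value trace inequality in Fermi coordinates for the lower bound. The paper's strategy, visible in the surrounding Theorem \ref{thm_dk_as}, is instead operator-theoretic and unified: one introduces the comparison endomorphism $D_l$ of (\ref{eq_defn_dk}) together with the multiplicative defect $A_l$, proves the algebraic identities $A_l = \res_l\circ\res_l^*$, $\ext_l = \res_l^*\circ A_l^{-1}$, $D_l=(A_l^*)^{-1}$, and then observes that the Schwartz kernel of $A_l$ is exactly the Bergman kernel $B_l^{p,h^F}$ of $P$ restricted to $\iota(X)\times\iota(X)$. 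The two-sided bound then follows in one stroke from the off-diagonal Bergman kernel expansion of Dai–Liu–Ma together with the exponential decay of Ma–Marinescu, rather than from two separate hard estimates. This route has two advantages over yours: it automatically produces the optimal extension $\ext_l$ (so the upper bound is not an OT-type \emph{suboptimal} extension but the exact minimizer), and it yields the $\mathscr{C}^r$-estimates on $D_l$ needed in Theorem \ref{thm_dk_as} and hence Theorem \ref{cor_quot_nm}, which your pointwise trace inequality would not give without substantial extra work.

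One soft spot in your sketch worth flagging: the lower bound. You invoke the submean-value property of holomorphic functions against the Gaussian weight $e^{-l|w|^2}$ to get a pointwise fiberwise bound with error $1+C/l$. For the exact Gaussian model this is clean (it is just the value of the Bargmann–Fock reproducing kernel at the diagonal), but once the weight is $e^{-l|w|^2+O(l|w|^3)}$ with curvature of $(F,h^F)$ and second fundamental form of $\iota(X)\subset P$ entering the cubic term, the ``submean value'' argument no longer gives the sharp $1+O(1/l)$ constant by a soft maximum-principle argument; getting it requires a genuine Taylor-expansion/Bergman-kernel localization estimate of the same depth as the extension step, which is what the references supply. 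So your outline is correct as a roadmap, but its second half is less elementary than the phrasing suggests, and it does not coincide with the route actually taken in the paper.
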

	\begin{rem}
		In \cite{FinToeplImm}, \cite{FinOTRed}, Theorem \ref{thm_appl_ot2a} appeared in the non-family version, i.e. for $B$ equal to a point.
		However, as the proof ultimately depends only on the off-diagonal asymptotic expansion of the Bergman kernel, which holds in families, it adapts to the version we need here.
		For details, consult the proof of Theorem \ref{thm_dk_as}, which establishes a stronger version of Theorem \ref{thm_appl_ot2a}.
	\end{rem}
	The main result of this section goes further and compares the curvatures of these metrics.
	\begin{thm}\label{cor_quot_nm}
		There are $l_0 \in \nat$, $C > 0$, such that we have
		\begin{equation}\label{eq_appl_ot2}
		\Big\|  
			\frac{\imun}{2 \pi} R^{[{\rm{Hilb}}_l^p(h^F)]} - \frac{\imun}{2 \pi} R^{{\rm{Hilb}}_l^{\pi}(\iota^* h^F)}
		\Big\| \leq C \sqrt{l},
		\end{equation}
		where the norm $\| \cdot \|$ is for a norm induced by a fixed metric on $TB$, and the subordinate norm on $\enmr{G_l}$ associated with ${\rm{Hilb}}_l^{\pi}(\iota^* h^F)$.
	\end{thm}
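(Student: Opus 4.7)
The strategy is to express the quotient curvature via the second fundamental form of an orthogonal splitting, and then compare the resulting terms with the $L^{2}$-curvature on $X$ by means of the semiclassical Ohsawa-Takegoshi theorem from \cite{FinOTAs}.

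First, I would equip $K_l$ with the metric $H_l := {\rm{Hilb}}_l^p(h^F)$ and split it orthogonally as $K_l = \ker(\res_l) \oplus \ker(\res_l)^{\perp_{H_l}}$. The surjection $\res_l$ identifies $\ker(\res_l)^{\perp_{H_l}}$ isometrically with $(G_l, [H_l])$. Applying the standard curvature formula for the holomorphic subbundle $\ker(\res_l) \subset K_l$, cf. \cite[Theorem V.14.5]{DemCompl}, gives
\begin{equation*}
	R^{[H_l]} \;=\; R^{H_l}\big|_{\ker(\res_l)^{\perp_{H_l}}} + A_l^* \wedge A_l,
\end{equation*}
where $A_l \in \Omega^{1, 0}(B, {\rm{Hom}}(\ker(\res_l), G_l))$ is the second fundamental form of this subbundle. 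The proof then splits into bounding the difference $R^{H_l}|_{\ker(\res_l)^{\perp_{H_l}}} - R^{{\rm{Hilb}}_l^{\pi}(\iota^* h^F)}$ and the error term $A_l^* \wedge A_l$, both by $O(\sqrt{l})$ in operator norm.

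For the first, I would apply Theorem \ref{thm_mazh} of Ma-Zhang to both fibrations $p:P\to B$ and $\pi:X\to B$, so that modulo $O(1)$ additive errors, each curvature is replaced by a Toeplitz operator: $l \cdot T_l^{p, h^F}(\omega_H^{(P)})$ on $K_l$ and $l \cdot T_l^{\pi, \iota^* h^F}(\omega_H^{(X)})$ on $G_l$, where $\omega_H^{(P)}$, $\omega_H^{(X)}$ denote the horizontal components of $c_1(F, h^F)$ and $\iota^* c_1(F, h^F)$. The semiclassical Ohsawa-Takegoshi theorem supplies an explicit local Bergman-kernel description of the $L^2$-adjoint $\res_l^*:G_l \to K_l$; combined with the off-diagonal Bergman kernel expansion of Dai-Liu-Ma, one can show that the restriction of the first Toeplitz operator to $\ker(\res_l)^{\perp_{H_l}}$ differs from the second in operator norm by $O(\sqrt{l})$. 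The gain over the naive $O(l)$ bound uses the identity $\iota^* \omega_H^{(P)} = \omega_H^{(X)}$ together with the Gaussian transverse concentration of OT extensions, which forces the first transverse normal moment to be of size $1/\sqrt{l}$ rather than $O(1)$.

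For the error term, the same OT description realises a local holomorphic frame of $\ker(\res_l)^{\perp_{H_l}}$ as the image under $\res_l^*$ of a local holomorphic frame of $G_l$, up to $L^2$-error $O(1/l)$. Taking $\bar\partial_B$ of such a lift and projecting onto $\ker(\res_l)$ expresses $A_l^*$ through the subleading terms of the semiclassical OT expansion, and a careful bookkeeping in the Bergman scale delivers $\|A_l^* \wedge A_l\| = O(\sqrt{l})$; summing the two bounds yields the theorem.

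\textbf{Main obstacle.} The subtle point is extracting the $O(\sqrt{l})$ bound, rather than the naively expected $O(l)$, both in the Toeplitz comparison and in the second fundamental form term. Base derivatives in the Bergman scale cost a factor of $\sqrt{l}$, so combining this with the $O(1/l)$ $L^2$-accuracy of Theorem \ref{thm_appl_ot2a} alone would only yield $O(l)$ for $\|A_l^* \wedge A_l\|$. Getting to $O(\sqrt{l})$ forces the use of the pointwise subleading terms of the semiclassical OT expansion, specifically the Gaussian normal profile of \cite{FinOTAs}, \cite{FinOTRed}; weaker Bost-Zhang type subexponential extension bounds would be insufficient here.
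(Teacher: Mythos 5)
Your decomposition through the second fundamental form of $\ker(\res_l)\subset (K_l, H_l)$ is a legitimate starting point, but it is a genuinely different route from the paper's, and as written it contains a gap in the key estimate. The paper instead introduces the endomorphism $D_l\in\ccal^\infty(B,\enmr{G_l})$ comparing the two metrics directly, writes $R^{[{\rm{Hilb}}_l^p(h^F)]}-R^{{\rm{Hilb}}_l^\pi(\iota^* h^F)}=\dbar\bigl((\nabla^{\enmr{G_l},(1,0)}D_l)D_l^{-1}\bigr)$, identifies $D_l=(A_l^*)^{-1}$ where $A_l=\res_l\circ\res_l^*$ is the multiplicative defect whose Schwartz kernel is simply the ambient Bergman kernel restricted to $X\times_B X$, and estimates that in $\ccal^r$-norm via the Dai-Liu-Ma off-diagonal expansion and Ma-Marinescu exponential decay. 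No use of Ma-Zhang's Toeplitz theorem or the second fundamental form is made, and all the analysis is concentrated in a single $\ccal^r$ estimate on $D_l$.

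The gap in your approach: you claim $\iota^*\omega_H^{(P)}=\omega_H^{(X)}$, and use this to argue that the Toeplitz comparison of $R^{H_l}|_{\ker(\res_l)^{\perp}}$ with $R^{{\rm{Hilb}}_l^{\pi}(\iota^*h^F)}$ gains a factor of $\sqrt{l}$ and that $\|A_l^*\wedge A_l\|=O(\sqrt{l})$. The identity is false in general. If $\xi\in T_bB$ and $u_X\in T^H_xX$, $u_P\in T^H_xP$ are its horizontal lifts for $\pi$ and $p$ respectively, then $w:=u_X-u_P\in T^V_xP$ and a direct calculation gives
\begin{equation*}
	\omega_H^{(X)}(\xi,\bar\eta)-\iota^*\omega_H^{(P)}(\xi,\bar\eta)=\omega\bigl(w,\overline{w'}\bigr),\qquad w,w'\in T^V_xP,
\end{equation*}
a positive semi-definite term that vanishes only when $T^H_xX\subset T^H_xP$. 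So the leading $O(l)$ parts of $R^{H_l}|_{\ker(\res_l)^{\perp}}$ and $R^{{\rm{Hilb}}_l^{\pi}}$ differ by $\sim l\,T_l^{\pi}\bigl(\omega_H^{(X)}-\iota^*\omega_H^{(P)}\bigr)$, an $O(l)$ negative term, and correspondingly the second fundamental form contribution must be $O(l)$ positive, not $O(\sqrt{l})$. What one would actually have to prove, following your decomposition, is that these two $O(l)$ terms cancel at leading order, leaving only the $O(\sqrt{l})$ remainder — a nontrivial additional cancellation that your outline does not establish. This is precisely the subtlety the paper sidesteps by going through $D_l$ and the Bergman kernel directly rather than through the curvature decomposition.
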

	Before providing the details of the proofs of these statements, let us show how they can be adapted to the setting required for Theorem \ref{thm_appl_ot}.
	\begin{proof}[Proof of Theorem \ref{thm_appl_ot}.]
		Proof of Theorem \ref{thm_appl_ot} will be based on the application of Theorem \ref{cor_quot_nm} to the relative Kodaira embedding defined in (\ref{eq_kod}).
		In other words, we take $P := \mathbb{P}(E_k^*)$, $\iota := {\rm{Kod}}_k$, $p : \mathbb{P}(E_k^*) \to B$ the usual projection, $F := \mathscr{O}(1)$ and $h^F$ the Fubini-Study metric induced by $H_k$. 
		Then, by the definition of the Fubini-Study metric, cf. (\ref{eq_kod}), the Hermitian vector bundle $(\iota^* F, \iota^* h^F)$ is isomorphic with $(L^k, FS(H_k))$.
		Also, $G_l$ is isomorphic with $E_{kl}$, and by the well-known calculation of the twisted structure sheaf cohomology on the projective space, $K_l$ is isomorphic with ${\rm{Sym}}^l E_k$.
		\par 
		A direct calculation, cf. \cite[Lemma 4.15]{FinTits}, shows the following relation
		\begin{equation}\label{eq_induc_1}
			{\rm{Hilb}}_l^p(h^F) = \frac{l!}{(l + N_k - 1)!} \cdot {\rm{Sym}}^l(H_k).
		\end{equation}
		Remark that in \cite[Lemma 4.15]{FinTits}, there was a square root in front of ${\rm{Sym}}^l(H_k)$ since we worked on the level of norms and not Hermitian products, as we do here.
		In particular, the identity (\ref{eq_induc_1}) shows that the norm ${\rm{Sym}}^l(H_k)$ in the statement can be replaced by ${\rm{Hilb}}_l^p(h^F)$.
		\par 
		We denote by $\res_{k, l}:K_l \to G_l$ the restriction operator associated with the Kodaira embedding.
		Recall that the multiplication map ${\rm{Mult}}_{k, l}$ was defined in (\ref{eq_fin_gen}).
		Then it is an easy verification, cf. \cite[(4.62)]{FinSecRing}, that the following diagram is commutative 
		\begin{equation}\label{eq_kod_res}
		\begin{tikzcd}
			{\rm{Sym}}^l(E_k) \arrow[equal]{r} \arrow{d}{{\rm{Mult}}_{k, l}} & K_l \arrow{d}{\res_{k, l}} \\
 			E_{kl} \arrow[equal]{r} & G_l.
		\end{tikzcd}
		\end{equation}
		Hence, the quotient of a norm with respect to ${\rm{Mult}}_{k, l}$ is identified with the quotient with respect to $\res_{k, l}$. 
		With these identifications, Theorem \ref{thm_appl_ot} then becomes a restatement of Theorem \ref{cor_quot_nm}.
	\end{proof}
	\begin{proof}[Proof of Proposition \ref{thm_appl_ot2}.]
		We use the same notations as in the proof of Theorem \ref{thm_appl_ot}.
		Directly from Theorem \ref{thm_appl_ot2a} and (\ref{eq_kod_res}), we obtain
		\begin{equation}\label{eq_appl_ot2111}
			1 - \frac{C}{\sqrt{l}}
			\leq
			\frac{[{\rm{Hilb}}_l^p(h^F)]}{{\rm{Hilb}}_{l}^{\pi}(FS(H_k))} \cdot l^{N_k - n - 1}
			\leq
			1 + \frac{C}{\sqrt{l}}.
		\end{equation}
		Remark now that ${\rm{Hilb}}_{l}^{\pi}(FS(H_k)) = k^m \cdot {\rm{Hilb}}_{kl}^{\pi}(FS(H_k)^{1/k})$, and that $l^{N_k - 1} \cdot l! \sim (l + N_k - 1)!$, as $l \to \infty$.
		The result now follows from this, (\ref{eq_induc_1}) and (\ref{eq_appl_ot2111}).
	\end{proof}
	\par 
	To prove Theorem \ref{cor_quot_nm}, we define the Hermitian section $D_l \in \ccal^{\infty}(B, \enmr{G_l})$, as follows
	\begin{equation}\label{eq_defn_dk}
		\scal{s_0}{s_1}_{[{\rm{Hilb}}_l^p(h^F)]}
		=
		\scal{D_l s_0}{s_1}_{{\rm{Hilb}}_l^{\pi}(\iota^* h^F)},
	\end{equation}
	for any $s_0, s_1 \in \ccal^{\infty}(B, G_l)$.
	The main technical result of this section goes as follows.
	\begin{thm}\label{thm_dk_as}
		For any $r \in \nat$, there are $C > 0$, $l_0 \in \nat$, such that for any $l \geq l_0$, we have
		\begin{equation}\label{eq_dk_as}
			\Big\| 
				D_l
				-
				\frac{1}{l^{n' - n}} {\rm{Id}}_{G_l}
			\Big\|_{\ccal^r(B)}
			\leq 
			\frac{C l^{r / 2}}{l^{n' - n + 1/2}},
		\end{equation}
		where the norm $\| \cdot \|_{\ccal^r(B)}$ is defined for $s \in \ccal^{\infty}(B, \enmr{G_l})$, as follows 
		\begin{equation}
			\| s \|_{\ccal^{l}(B)}
			:=
			\sup \sup_{b \in B} \| \nabla^{\enmr{G_l}}_{U_1} \cdots \nabla^{\enmr{G_l}}_{U_r}  s(b) \|,
		\end{equation}
		where the first supremum is taken over all possible vector fields $U_1, \ldots, U_r$ over $B$ of unit $\mathscr{C}^r$-norm (with respect to some fixed metric on $B$),  $\nabla^{\enmr{G_l}}_{\cdot}$ is the Chern connection associated with the norm ${\rm{Hilb}}_l^{\pi}(\iota^* h^F)$, and $\| \cdot \|$ is the operator norm subordinate to ${\rm{Hilb}}_l^{\pi}(\iota^* h^F)$.
	\end{thm}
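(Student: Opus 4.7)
The plan is to reduce the estimate on $D_l$ to an estimate on $\res_l \res_l^*$ and then control base derivatives using the off-diagonal Bergman kernel expansion. First I would show the algebraic identity $D_l = (\res_l \res_l^*)^{-1}$, where $\res_l^* : G_l \to K_l$ is the adjoint of the restriction operator with respect to ${\rm{Hilb}}_l^{\pi}(\iota^* h^F)$ on $G_l$ and ${\rm{Hilb}}_l^p(h^F)$ on $K_l$. This follows directly from the definition (\ref{eq_defn_quot_norm}) of the quotient norm: the minimum-$L^2$ lift of $s \in G_l$ equals $\res_l^*(\res_l \res_l^*)^{-1} s$ since it must lie in the image of $\res_l^*$ (the orthogonal complement of $\ker \res_l$), and substituting this lift into the defining identity of $D_l$ yields the claimed formula.

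Next I would describe $\res_l \res_l^*$ as a pointwise operator on $G_l$, fiber-by-fiber. For $s \in G_{l,b}$, the section $\res_l^*(s) \in K_{l,b} \subset H^0(P_b, F^{\otimes l})$ is the minimum $L^2$-norm extension across $X_b \subset P_b$, and $\res_l \res_l^*(s) = (\res_l^* s)|_{X_b}$. The semiclassical Ohsawa--Takegoshi theorem from \cite{FinOTAs}, \cite{FinOTRed} gives an explicit Gaussian model for this extension, transverse to $X_b$, and combined with the family version of the Dai--Liu--Ma off-diagonal Bergman kernel expansion this yields
\[
\res_l \res_l^* = l^{n'-n}\,\mathrm{Id}_{G_l} + R_l, \qquad \|R_l\| = O(l^{n'-n-1/2}),
\]
uniformly on $B$; the $C^0$ part is essentially the content of Theorem \ref{thm_appl_ot2a} rewritten on the dual side, but we actually only need the weaker $O(l^{-1/2})$ relative error.

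To upgrade to $\mathscr{C}^r$-bounds, I would differentiate $\res_l \res_l^*$ in $r$ horizontal directions $U_1,\ldots,U_r$ on $B$. Each derivative differentiates either the Bergman kernel of $K_l$ on $P$ or that of $G_l$ on $X$ in the base direction. Since both kernels are concentrated near the diagonal on a length scale of order $l^{-1/2}$, one horizontal derivative costs at most a factor $l^{1/2}$, so
\[
\bigl\|\nabla^{\enmr{G_l}}_{U_1}\cdots \nabla^{\enmr{G_l}}_{U_r}\bigl(\res_l \res_l^* - l^{n'-n}\,\mathrm{Id}_{G_l}\bigr)\bigr\|
\leq C\, l^{r/2}\, l^{n'-n-1/2}.
\]
A Neumann series inversion then converts this to a bound on $D_l - l^{-(n'-n)}\mathrm{Id}_{G_l}$: one factor of $l^{-(n'-n)}$ coming from inverting the leading term multiplies the right-hand side, and after dividing by the remaining $l^{n'-n}$ from $\res_l\res_l^*$ one arrives exactly at $C l^{r/2}/l^{n'-n+1/2}$, as claimed in (\ref{eq_dk_as}); Leibniz terms produced by differentiating the Neumann series are absorbed into $C$ because each additional factor of $\res_l\res_l^* - l^{n'-n}\mathrm{Id}$ is already small.

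The main obstacle is the family-parameter dependence in Step 3. The Dai--Liu--Ma expansion holds with smooth parameters, but to control the transverse Gaussian model from \cite{FinOTAs}, \cite{FinOTRed} uniformly in $b \in B$, one has to verify that the remainder estimates in that construction are $\mathscr{C}^r$-uniform with exactly the $l^{r/2}$ loss expected from the natural Bergman rescaling. This is largely a bookkeeping exercise in normal coordinates around the diagonal of $X \times_B X$ and $P \times_B P$, but it is where the structure of the proof really happens; once it is in place, the algebraic inversion in the last step is essentially automatic.
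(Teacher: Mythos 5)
Your proposal is correct and follows essentially the same route as the paper, with one cosmetic difference. You observe directly that $D_l = (\res_l\res_l^*)^{-1}$ via the pseudoinverse formula for the minimum-norm lift; the paper reaches the same conclusion more circuitously, introducing the extension operator $\ext_l$ with $D_l = \ext_l^*\ext_l$, the multiplicative defect $A_l$ defined by $\res_l^* = \ext_l\circ A_l$, then showing $A_l = \res_l\res_l^*$ and $D_l = (A_l^*)^{-1}$. Since $\res_l\res_l^*$ is self-adjoint for $\mathrm{Hilb}_l^\pi(\iota^*h^F)$, these coincide. From there the two arguments are identical: the Schwartz kernel of $\res_l\res_l^*$ is the Bergman kernel of $K_l$ on $P$ restricted to $\iota(X)\times_B \iota(X)$, the off-diagonal Dai--Liu--Ma expansion together with Ma--Marinescu exponential decay gives the kernel bound with the $l^{r/2}$ loss per base derivative, and Young's inequality (the paper's Lemma \ref{lem_young}) converts this to the $\mathscr{C}^r$ operator estimate before the Neumann-series inversion you describe. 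Your invocation of the Gaussian model from the semiclassical Ohsawa--Takegoshi theorem is slightly redundant at this point — once one works with $\res_l\res_l^*$ the comparison is purely between the two Bergman kernels — but that does not affect the correctness of the argument.
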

	\begin{rem}
		In particular, we see that there are $C > 0$, $l_0 \in \nat$, such that for any $l \geq l_0$, we have $\| D_l \| \leq l^{n - n'}(1 + C / \sqrt{l})$, $\| D_l^{-1} \| \leq l^{n' - n}(1 + C / \sqrt{l})$, which refines Theorem \ref{thm_appl_ot2a} due to (\ref{eq_defn_dk}).
	\end{rem}
	In order to prove Theorem \ref{thm_dk_as}, we define the extension operator
	\begin{equation}\label{eq_ext_op}
		\ext_l :  G_l \to K_l,
	\end{equation}
	so that for $g \in G_{l, b}$, $b \in B$, we put $\ext_l (g) = f$, $f \in K_{l, b}$, where $\res_l(f) = g$, and $f$ has the minimal norm with respect to ${\rm{Hilb}}_l^p(h^F)$ among all $\tilde{f}$, $\tilde{f} \in K_{l, b}$, verifying $\res_l(\tilde{f}) = g$. 
	It is immediate that $\ext_l$ is a linear map.
	We shall establish in the proof of Theorem \ref{thm_dk_as} that $\ext_l$ is smooth, i.e. $\ext_l \in \ccal^{\infty}(B, {\rm{Hom}}(G_l, K_l))$.
	\par 
	We will also need a technical lemma which makes a connection between the operator norm and the Schwartz kernel.
	We fix $R_l \in \ccal^{\infty}(B, \enmr{G_l})$ and for any $x_1, x_2 \in X$, verifying $\pi(x_1) = \pi(x_2) = b$, we denote its fiberwise Schwartz kernel by $R_l(x_1, x_2) \in F_{x_1}^{\otimes l} \otimes (F_{x_2}^{\otimes l})^*$.
	By the definition, for any $s \in \ccal^{\infty}(B, G_l)$, we have 
	\begin{equation}
		(R_l s) (x_1) 
		=
		\int_{x_2 \in X_b}
		R_l(x_1, x_2) \cdot s(x_2) dv_{X_b}(x_2),
	\end{equation}
	where $dv_{X_b}$ is the volume form on the fiber $X_b$ induced by the Kähler form $\iota^* c_1(F, h^F)|_{X_b}$.
	We assume that there are $c, C > 0, r \in \nat$, such that for any $l \in \nat^*$, the following bound holds
	\begin{equation}\label{eq_young0}
		\big\| R_l(x_1, x_2) \big\|_{\ccal^{r}(X \times_B X)} \leq C l^{n + r/2} \exp(- c \sqrt{l} {\rm{dist}}(x_1, x_2)),
	\end{equation}
	where ${\rm{dist}}(x_1, x_2)$ is the distance induced by $\iota^* c_1(F, h^F)|_{X_b}$ (in $P_b$) between $\iota(x_1)$ and $\iota(x_2)$.
	\begin{lem}\label{lem_young}
		Under the above assumptions, there is $C > 0$, such that $\| R_l \|_{\ccal^r(B)} \leq C l^{r / 2}$, where $\| \cdot \|_{\ccal^r(B)}$ is defined as in Theorem \ref{thm_dk_as}.
	\end{lem}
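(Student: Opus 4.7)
The plan is to deduce the lemma from the classical Schur test (Young's inequality for integral operators), applied fiberwise and adapted to the family setting. Recall that for an integral operator $T$ on $L^2(X_b)$ with kernel $T(x_1, x_2)$, the subordinate operator norm is bounded by the maximum of $\sup_{x_1} \int_{X_b} \| T(x_1, x_2) \| \, dv_{X_b}(x_2)$ and the analogous integral with the roles of $x_1, x_2$ swapped. Since the hypothesis (\ref{eq_young0}) is symmetric in $x_1$ and $x_2$, it suffices to bound one of these integrals for $R_l$ and, in the $\ccal^r$-case, for its covariant derivatives.

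First, in the case $r = 0$: combining (\ref{eq_young0}) with polar coordinates around $x_1$ in the real $2n$-dimensional fiber $X_b$ yields $\int_{X_b} \| R_l(x_1, x_2) \| \, dv_{X_b}(x_2) \leq C l^n \int_0^{\mathrm{diam}(X_b)} t^{2n - 1} \exp(-c \sqrt{l} \, t) \, dt$. The substitution $u = \sqrt{l} \, t$ absorbs the prefactor $l^n$ into a Gamma-type integral, producing a bound uniform in $l$ and in $b \in B$. Schur's test then yields $\|R_l\|_{\ccal^0(B)} \leq C$.

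For $r \geq 1$, I would express the covariant derivative $\nabla^{\mathrm{End}(G_l)}_{U_1} \cdots \nabla^{\mathrm{End}(G_l)}_{U_r} R_l$ as an integral operator on the fiber $L^2$ space whose kernel is a bounded, $l$-independent linear combination of horizontal derivatives of $R_l(x_1, x_2)$, applied through the horizontal lifts of the $U_i$ to both the $x_1$- and $x_2$-slots, together with lower-order contributions coming from the $b$-dependence of the relative volume form $dv_{X_b}$ and of the Chern connection on $\iota^* F^{\otimes l}$. Each such horizontal differentiation of the kernel respects the exponential decay, while the hypothesis (\ref{eq_young0}) allows a factor of at most $l^{1/2}$ per derivative. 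Applying Schur's test to this new kernel exactly as in the $r = 0$ case then gives the desired bound $\|R_l\|_{\ccal^r(B)} \leq C l^{r/2}$.

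The main technical obstacle is the identification of $\nabla^{\mathrm{End}(G_l)}$ acting on sections with horizontal differentiation of the Schwartz kernel. This is a standard but bookkeeping-heavy fact in the theory of direct image bundles: the Chern connection on $G_l$ is determined by the $L^2$-product ${\rm{Hilb}}_l^{\pi}(\iota^* h^F)$, which itself is built from $\iota^* h^F|_{X_b}^{\otimes l}$ and the fiberwise volume form, so differentiation in $b \in B$ on the endomorphism side corresponds, modulo Leibniz-type correction terms involving $l$-independent geometric quantities, to differentiating $R_l(x_1, x_2)$ along horizontal lifts. All such correction terms retain the exponential-decay factor $\exp(-c \sqrt{l} \, {\rm{dist}}(x_1, x_2))$ with at most a polynomial loss in $l$ already accounted for by the $\ccal^r(X \times_B X)$-norm hypothesis, so the scaling $l^{r/2}$ produced by Schur's test is sharp and matches the claim.
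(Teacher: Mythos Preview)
Your proposal is correct and follows essentially the same approach as the paper: bound the integrated kernel (the Schur quantities) using the exponential decay in (\ref{eq_young0}), then apply Young's inequality for integral operators. The paper's own proof is in fact terser than yours---it records the two integrated-kernel bounds $\int_{X_b} |\nabla^r R_l(x,z)|\, dv_{X_b}(z) \leq C l^{r/2}$ and their symmetric counterparts, then defers the passage from kernel bounds to the $\ccal^r(B)$ operator-norm bound (including the identification of $\nabla^{\enmr{G_l}}$ with horizontal differentiation of the kernel that you correctly flag as the main technical point) to \cite[Proposition 2.9 and Corollary 2.10]{FinToeplImm}.
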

	\begin{proof}
		Directly from (\ref{eq_young0}), there is $C > 0$, such that for any $x \in X$, $b := \pi(x)$, $l \in \nat^*$, we have
		\begin{equation}
		\begin{aligned}
			&
			\int_{z \in X_b}
			|\nabla^r R_l(x, z)|_{(h^F)^{\otimes l}} dv_{X_b}(z)
			\leq
			C l^{r / 2},
			\\
			&
			\int_{z \in X_b}
			|\nabla^r R_l(z, x)|_{(h^F)^{\otimes l}} dv_{X_b}(z)
			\leq
			C l^{r / 2}.
		\end{aligned}
		\end{equation}
		The result is then a direct consequence of Young's inequality for integral operators, cf. \cite[Proposition 2.9 and Corollary 2.10]{FinToeplImm}.
	\end{proof}
	\begin{proof}[Proof of Theorem \ref{thm_dk_as}]
		By the definition of the quotient metric and $\ext_l$, we have
		\begin{equation}\label{eq_ext_dk}
			\ext_l^* \circ \ext_l = D_l.
		\end{equation}
		Let us recall that the \textit{multiplicative defect}, $A_l$, is a section of  $\enmr{G_l}$, defined in \cite[Theorem 4.3]{FinToeplImm}, as the only operator verifying
		\begin{equation}\label{eq_def_ak}
			\res_l^* = \ext_l \circ A_l.
		\end{equation}
		As it is explained in \cite[Theorem 4.3]{FinToeplImm}, the existence of $A_l$ is a consequence of the surjectivity of (\ref{eq_res_oper}).
		Let us now establish that there are $l_0 \in \nat$, $C > 0$, so that for any $l \geq l_0$, $r \in \nat$, we have
		\begin{equation}\label{eq_ak_as}
			\big\| 
				A_l
				-
				l^{n' - n} {\rm{Id}}_{G_l}
			\big\|_{\ccal^r(B)}
			\leq 
			C l^{n' - n + (r - 1)/2}.
		\end{equation}
		Remark that (\ref{eq_ak_as}) appeared in \cite[Theorem 4.3]{FinToeplImm} in the non-family version (i.e. for $B$ equal to a point). As we explain below, the proof generalizes to our setting here.
		\par 
		Directly from (\ref{eq_def_ak}), we see that 
		\begin{equation}\label{eq_al_res}
			A_l = \res_l \circ \res_l^*.
		\end{equation}
		Now, for any $y_1, y_2 \in P$, verifying $p(y_1) = p(y_2) = b$, we denote by $B_l^{p, h^F}(y_1, y_2) \in F_{y_1}^{\otimes l} \otimes (F_{y_2}^{\otimes l})^*$ the fiberwise Bergman kernel, i.e. the section such that in the notations of (\ref{eq_bnd_hn1}), for any $s \in \ccal^{\infty}(B, p_* F^{\otimes l})$, the following identity is satisfied
		\begin{equation}
			(B_l^{p, h^F} s) (y_1) 
			=
			\int_{y_2 \in P_b}
			B_l^{p, h^F}(y_1, y_2) \cdot s(y_2) dv_{P_b}(y_2),
		\end{equation}
		where $dv_{P_b}$ is the volume form on the fiber $P_b$ induced by the Kähler form $c_1(F, h^F)|_{P_b}$.
		Similarly, for any $x_1, x_2 \in X$, verifying $\pi(x_1) = \pi(x_2) = b$, we define $B_l^{\pi, \iota^* h^F}(x_1, x_2) \in F_{x_1}^{\otimes l} \otimes (F_{x_2}^{\otimes l})^*$.
		From (\ref{eq_al_res}), we see that the Schwartz kernel, $A_l(x_1, x_2) \in F_{x_1}^{\otimes l} \otimes (F_{x_2}^{\otimes l})^*$, of $A_l$, is given by
		\begin{equation}\label{eq_bound_bk0}
			A_l(x_1, x_2) = B_l^{p, h^F}(\iota(x_1), \iota(x_2)).
		\end{equation}
		Now, as the Bergman kernel in smooth families is smooth, see \cite{DaiLiuMa} and \cite{MaZhangSuperconnBKPubl}, we conclude that the section $A_l(x_1, x_2)$ is also smooth.
		As a consequence, we get $A_l \in \ccal^{\infty}(B, {\enmr{G_l}})$.
		\par 
		As $B_l^{\pi, \iota^* h^F}$ acts as identity on $K_l$, the estimate (\ref{eq_ak_as}) follows directly from Lemma \ref{lem_young} and the following bound: there are $c, C > 0$, such that for any $l \in \nat$, we have 
		\begin{multline}\label{eq_bound_bk}
			\Big|
			B_l^{p, h^F}(\iota(x_1), \iota(x_2)) - l^{n' - n} \cdot B_l^{\pi, \iota^* h^F}(x_1, x_2)
			\Big|_{\ccal^r(X \times_B X)}
			\\
			\leq
			C l^{n' + (r - 1)/2} \exp(- c \sqrt{l} {\rm{dist}}(x_1, x_2)).
		\end{multline}
		The estimate (\ref{eq_bound_bk}) for $r = 0$ was deduced in \cite[proof of Theorem 4.3]{FinToeplImm} from the off-diagonal expansion of the Bergman kernel due to Dai-Liu-Ma \cite[Theorem 4.18]{DaiLiuMa} and the exponential decay of the Bergman kernel of Ma-Marinescu \cite[Theorem 1]{MaMarOffDiag}.
		Both of the latter bounds are established for $\mathscr{C}^r$-norm, cf. \cite[Theorem 1.6]{MaZhangSuperconnBKPubl}, and so the estimate (\ref{eq_bound_bk}) is valid for an arbitrary $r \in \nat$.
		\par 
		In particular, by (\ref{eq_ak_as}), we see that there is $l_1 \in \nat$, so that for $l \geq l_1$, the section $A_l$ has an inverse $A_l^{-1}$, defined using the infinite sum. 
		Using the fact that the space of operators with exponential decay as in (\ref{eq_bound_bk}), is an algebra, see \cite[\S 3]{FinOTAs}, cf. also \cite{MaMarToepl}, we deduce, following the lines of \cite[\S 5.4]{FinSecRing}, that the Schwartz kernel of $A_l^{-1}$ is smooth, which implies that $A_l^{-1} \in \ccal^{\infty}(B, \enmr{G_l})$.
		Then it is a direct consequence of (\ref{eq_ext_dk}) and (\ref{eq_def_ak}), cf. \cite[(5.6)]{FinToeplImm}, that the following relation holds
		\begin{equation}\label{eq_dl_al_rel}
			D_l = (A_l^*)^{-1}.
		\end{equation}		
		The statement (\ref{eq_dk_as}) now follows directly from (\ref{eq_ak_as}) and (\ref{eq_dl_al_rel}).
		\par 
		Remark also that from (\ref{eq_def_ak}), we have
		\begin{equation}\label{eq_def_ak202}
			\ext_l = \res_l^* \circ A_l^{-1}.
		\end{equation}
		From the smoothness of the Schwartz kernel of $A_l^{-1}$ and of the Bergman kernel, we deduce that the Schwartz kernel of $\ext_l$ is smooth, which implies that $\ext_l \in \ccal^{\infty}(B, {\rm{Hom}}(G_l, K_l))$.
	\end{proof}
	\begin{proof}[Proof of Theorem \ref{cor_quot_nm}]
		From the definition (\ref{eq_defn_dk}) of $D_l$, we conclude, cf. \cite[(1.9.1)]{SiuLectures}, that we have
		\begin{equation}\label{eq_siu_ident}
			R^{[{\rm{Hilb}}_l^p(h^F)]} - R^{{\rm{Hilb}}_l^{\pi}(\iota^* h^F)}
			=
			\dbar( (\nabla^{\enmr{G_l}, (1, 0)} D_l) D_l^{-1}).
		\end{equation}		 
		The result now follows directly from Theorem \ref{thm_dk_as} and (\ref{eq_siu_ident}).
	\end{proof}

	\section{Weight operators of submultiplicative filtrations}\label{sect_weight_oper}
	This section aims to study the weight operators associated with submultiplicative filtrations and to prove Theorems \ref{thm_compat_approx_sol1} and \ref{thm_compat_approx_sol2}. 
	The proofs of these theorems will build on the analysis recently developed by the author in \cite{FinSubmToepl}.
	\par 
	We fix a complex projective manifold $Y$ polarized by an ample line bundle $F$ endowed with a positive Hermitian metric $h^F$.
	Let $\mathcal{F}$ be a bounded submultiplicative filtration on $R(Y, F)$, and $\phi(h^F, \mathcal{F}) \in L^{\infty}(Y)$ be the speed of the associated geodesic ray, defined in (\ref{eq_geod_ray_bnd}).
	Recall that Toeplitz operators were defined in (\ref{eq_defn_toepl}) and weight operators in (\ref{eq_weight_op_dd}).
	The following result says that asymptotically weight operators of bounded submultiplicative filtrations are Toeplitz.
	\begin{sloppypar}
	\begin{thm}[{\cite[Theorem 1.6]{FinSubmToepl}}]\label{thm_main2prev}
		As $k \to \infty$, we have
		\begin{equation}
			\frac{1}{N_k}
			{\rm{Tr}} 
			\Big[
			\Big|
				\frac{1}{k} A({\rm{Hilb}}_k(h^F), \mathcal{F}^k)
				-
				T^{h^F}_{k}(\phi(h^F, \mathcal{F}))
			\Big|
			\Big]
			\to
			0.
		\end{equation}
	\end{thm}
	\end{sloppypar}
	\par 
	Later on, we will approximate arbitrary filtrations by the finitely-generated filtrations induced by the truncations of the original filtration.
	To understand this process, originally suggested in \cite{SzekeTestConf}, in detail, let $k \in \nat$ be such that the multiplication maps ${\rm{Sym}}^l H^0(Y, F^{\otimes k}) \to H^0(Y, F^{\otimes k l})$ are surjective for any $l \in \nat$. 
	We denote by $[{\rm{Sym}}^l \mathcal{F}^k]$ the filtration on $H^0(Y, F^{\otimes k l})$ induced by a filtration $\mathcal{F}^k$ on $H^0(Y, F^{\otimes k})$ and the above map.
	Clearly, the induced filtration $[{\rm{Sym}} \mathcal{F}^k] := \oplus_{l = 0}^{\infty} [{\rm{Sym}}^l \mathcal{F}^k]$ on $R(Y, F^{\otimes k})$ is submultiplicative.
	We denote by $\phi(h^{L^{\otimes k}}, [{\rm{Sym}} \mathcal{F}^k])$ the speed of the induced geodesic ray emanating from $h^{L^{\otimes k}}$, as constructed in (\ref{eq_geod_ray_bnd}).
	\begin{prop}[{\cite[(3.6)]{FinSubmToepl}}]\label{prop_incr_geod_rays}
		As $k \to \infty$, the sequence of functions $x \mapsto \frac{1}{k} \phi(h^{L^{\otimes k}}, [{\rm{Sym}} \mathcal{F}^k])$, $x \in Y$, increases almost everywhere towards $\phi(h^L, \mathcal{F})$, when we restrict over a multiplicative sequence in $k$ (as for example $k = 2^q$, $q \in \nat$).
	\end{prop}
	\begin{rem}
		In \cite{FinSubmToepl}, we deal with slightly different approximations: instead of considering filtrations induced by $\mathcal{F}^k$, we consider those induced by $\oplus_{l = 0}^{k} \mathcal{F}^l$.
		But our methods carry over in an identical manner.
		Indeed, as it is explained in \cite[(3.14)]{FinSubmToepl}, it suffices to establish the convergence of the respective geodesic rays.
		This was done in \cite[Theorem 5.10]{FinNarSim}.
	\end{rem}
	\par 
	To establish Theorem \ref{thm_compat_approx_sol2}, we need family versions of the above results.
	Consider a holomorphic submersion $\pi : X \to B$ between compact Kähler manifolds $X$ and $B$ of dimensions $n + m$ and $m$ respectively.
	For a relatively ample line bundle $L$ over $X$, we denote by $E_k := R^0 \pi_* L^{\otimes k}$.
	For $k \in \nat$, we consider a filtration $\mathcal{F} := \oplus_{k = 0}^{+\infty} \mathcal{F}^k$ of $\oplus_{k = 0}^{+\infty} E_k$ as in (\ref{eq_resol_hn_filt001}), which is bounded and submultiplicative away from a negligible set $S \subset B$.
	We fix a relatively positive Hermitian metric $h^L$ on $L$, $\omega := c_1(L, h^L)$, and use the notations from Proposition \ref{prop_conv_berg_family}.
	\begin{prop}\label{prop_toepl_schat_family}
		As $k \to \infty$, we have
		\begin{equation}
			\frac{1}{N_k}
			\Big\| 
				\frac{1}{k} A^{\pi}({\textrm{Hilb}}^{\pi}_k(h^L), \mathcal{F}^k) \cdot \omega_B^m
				-
				T^{\pi, h^L}_{k}(\phi^{\pi}(h^L, \mathcal{F}))  \cdot \omega_B^m
			\Big\|_{L^1(B, {\textrm{Hilb}}^{\pi}_k(h^L))}^{{\rm{tr}}}
			\to
			0.
		\end{equation}
	\end{prop}
	\begin{sloppypar}
	\begin{proof}
		By (\ref{eq_geod_ray_bnd}) and (\ref{eq_toepl_esssup}), we have $\| T^{\pi, h^L}_{k}(\phi^{\pi}(h^L, \mathcal{F}))(b) \| \leq \| \mathcal{F} \|$, where $\| \cdot \|$ is the operator norm associated with ${\textrm{Hilb}}^{\pi}_k(h^L)$.
		Moreover, directly from the definitions, $\| A^{\pi}({\textrm{Hilb}}^{\pi}_k(h^L), \mathcal{F}^k)(b) \| \leq k \| \mathcal{F} \|$, for any $k \in \nat$.
		In particular, the function $b \mapsto \frac{1}{N_k}
			{\rm{Tr}}[|
				\frac{1}{k} A^{\pi}({\textrm{Hilb}}^{\pi}_k(h^L), \mathcal{F}^k)(b)
				-
				T^{\pi, h^L}_{k}(\phi^{\pi}(h^L, \mathcal{F}))(b)|]$ is uniformly bounded.
		It also converges pointwise to $0$ away from $S$ by Theorem \ref{thm_main2prev}.
		Proposition \ref{prop_toepl_schat_family} follows directly from this, Lebesgue dominated convergence theorem and the fact that $S$ is negligible.
	\end{proof}
	\end{sloppypar}
	We also need to know the relation between convergence of functions and convergence of Toeplitz operators.
	\begin{prop}\label{prop_approx_toepl}
		Let $f_i \in L^{\infty}(X)$ be a uniformly bounded sequence of functions, converging in $L^1(X)$ towards $f \in L^{\infty}(X)$.
		Then for any $\epsilon > 0$, there is $k_0, l_0 \in \nat$, such that for any $k \geq k_0$, $l \geq l_0$, we have
		\begin{equation}
			\Big\| 
				T^{\pi, h^L}_{l}(f_k)  \cdot \omega_B^m
				-
				T^{\pi, h^L}_{l}(f)  \cdot \omega_B^m
			\Big\|_{L^1(B, {\textrm{Hilb}}^{\pi}_l(h^L))}^{{\rm{tr}}}
			\leq
			\epsilon N_l.
		\end{equation}
	\end{prop}
	\begin{proof}
		By linearity, it suffices to establish the above statement for $f := 0$.
		Remark that for any $f \in L^{\infty}(X)$, we have $\| T^{\pi, h^L}_{k}(f)  \cdot \omega_B^m \|_{L^1(B, {\textrm{Hilb}}^{\pi}_k(h^L))}^{{\rm{tr}}} \leq \frac{1}{n!} \int_{X} | T_k^{\pi, h^L}(f)(x) | c_1(L, h^L)^n \wedge \pi^* \omega_B^m$, where $T_k^{\pi, h^L}(f)(x) \in \real$ is the diagonal of the associated Schwartz kernel.
		Indeed, it is immediate if $f$ is positive, as we have an equality instead of inequality above.
		For general $f$, the result is obtained by a decomposition into the positive and negative components.
		\par 
		Directly from \cite[(5.14)]{FinSubmToepl}, we have
		\begin{equation}
			\int_{X} | T_l^{\pi, h^L}(f_k)(x) | c_1(L, h^L)^n \wedge \pi^* \omega_B^m
			\leq
			\int_{X} |f_k(x)| B_l^{\pi}(x) c_1(L, h^L)^n \wedge \pi^* \omega_B^m,
		\end{equation}
		where $B_l^{\pi}(x)$ is the fiberwise Bergman kernel, see (\ref{eq_thm_tian_fam}).
		The result now follows directly from this, (\ref{eq_thm_tian_fam}) and the asymptotic Riemann-Roch-Hirzebruch theorem.
	\end{proof}
	\par 
	Let $k \in \nat$ be such that the multiplication maps (\ref{eq_fin_gen}) are surjective for any $l \in \nat$. 
	We denote by $[{\rm{Sym}}^l \mathcal{F}^k]$ the filtration on $E_{kl}$ induced by $\mathcal{F}^k$ and (\ref{eq_fin_gen}).
	Clearly, the induced filtration $[{\rm{Sym}} \mathcal{F}^k] := \oplus_{l = 0}^{\infty} [{\rm{Sym}}^l \mathcal{F}^k]$ on $\oplus_{k = 0}^{\infty} E_{kl}$ is bounded and submultiplicative.
	We denote by $\phi^{\pi}(h^{L^{\otimes k}}, [{\rm{Sym}} \mathcal{F}^k])$ the speed of the induced geodesic ray emanating from $h^L$, defined as in (\ref{eq_geod_ray_bnd}).
	\begin{prop}\label{prop_incr_geod_rays_fam}
		As $k \to \infty$, $\frac{1}{k} \phi^{\pi}(h^{L^{\otimes k}}, [{\rm{Sym}} \mathcal{F}^k])$, increases almost everywhere to $\phi^{\pi}(h^L, \mathcal{F})$.
	\end{prop}
	\begin{proof}
		It follows from Proposition \ref{prop_incr_geod_rays} in the same way as Proposition \ref{prop_toepl_schat_family} follows from Theorem \ref{thm_main2prev}. The details are left to the reader.
	\end{proof}
	
	\begin{proof}[Proof of Theorem \ref{thm_compat_approx_sol2}]
		It follows directly from Theorem \ref{thm_HN_subm} and Propositions \ref{prop_toepl_schat_family}, \ref{prop_approx_toepl}, \ref{prop_incr_geod_rays_fam}.
	\end{proof}
	
	\par 
	Now, in order to establish Theorem \ref{thm_compat_approx_sol1}, we need to recall the stability estimates which estimate how much the weight operators vary if one varies the Hermitian metric.
	To state it, we fix a complex vector bundle $V$, $\dim V = r$, two Hermitian products $H_0$, $H_1$ and a filtration $\mathcal{F}$ on $V$.
	\begin{sloppypar}
	\begin{thm}[{\cite[Theorem 6.5]{FinSubmToepl}}]\label{thm_cholesky}
		Assume that for $C > 0$, we have
		\begin{equation}\label{eqthm_cholesky}
			1 - C
			\leq 
			\frac{H_1}{H_0} 
			\leq
			1 + C,
		\end{equation}
		where $(1 + 2 \lceil \log_2 r \rceil)^2 C < 1$.
		Then the following bound is satisfied 
		\begin{equation}
			\Big\| 
				A(H_0, \mathcal{F}) - A(H_1, \mathcal{F})
			\Big\|
			\leq
			16  C  (1 + 2 \lceil \log_2 r \rceil) \| \mathcal{F} \|,
		\end{equation}
		where $\| \cdot \|$ is the operator norm subordinate with $H_0$, and $\| \mathcal{F} \| := \sup_{v \in V \setminus \{0\}} |w_{\mathcal{F}}(v)|$.
	\end{thm}
	\end{sloppypar}
	
	\begin{proof}[Proof of Theorem \ref{thm_compat_approx_sol1}.]
		Follows directly from Theorems \ref{thm_HN_subm}, \ref{thm_cholesky}, Proposition \ref{thm_appl_ot2} and the obvious fact that the number operator verifies the property $A(H, \mathcal{F}) = A(c \cdot H, \mathcal{F})$ for $c > 0$.
	\end{proof}
	
	\section{Inductive construction of approximate critical Hermitian structures}\label{sect_induc_const}
	The primary objective of this section is to establish Theorem \ref{thm_compat_approx_sol}, which, in essence, asserts that approximate critical Hermitian structures can be constructed through an inductive process. 
	The core of our proof focuses on analyzing the behavior of the weight operator along a geodesic ray.
	\par 
	To describe this, we consider a holomorphic submersion $\pi : X \to B$ between compact Kähler manifolds $X$ and $B$ of dimensions $n + m$ and $m$ respectively.
	Consider a relatively very ample line bundle $L$ over $X$.
	For any $l \in \nat$, we define $E_l := R^0 \pi_* (L^{\otimes l})$. 
	For simplicity, assume that for any $l \in \nat^*$, the multiplication map ${\rm{Sym}}^l E_1 \to E_l$ is surjective.
	By (\ref{eq_fin_gen}), this can always be achieved by replacing $L$ by its sufficiently big power. 
	\par 
	Consider a filtration of $E_1$ by vector subbundles (it is important in this section that the filtration is given by vector subbundles and not by subsheaves)
	\begin{equation}\label{eq_g_filt}
		E_1 = \mathcal{F}_{\lambda_1} \supset \mathcal{F}_{\lambda_2} \supset \cdots \supset \mathcal{F}_{\lambda_q}.
	\end{equation}
	We fix a Hermitian metric $H$ on $E_1$, and denote by $H_s$ the geodesic ray departing from $H$ and associated with the filtration (\ref{eq_g_filt}).
	We denote by $[{\rm{Sym}}^l H_s]$ the quotient norm on $E_l$, induced by $H_s$.
	We denote by $[{\rm{Sym}}^l \mathcal{F}]$ the quotient filtration on $E_l$, defined as before Proposition \ref{prop_incr_geod_rays} (note that the latter filtration is defined by subsheaves and not subbundles in general).
	The following theorem, which we establish in Section \ref{sect_numb_op}, says that the restriction of weight operators along a geodesic ray is very close to the weight operator.
	\begin{thm}\label{thm_rest_weight_l1_comp}
		For any $\epsilon > 0$, there are $l_0 \in \nat$, $s_0 > 0$, such that for any $l \geq l_0$, $s \geq s_0$, we have
		\begin{equation}
			\Big\|  
				\Big( {\rm{Sym}}^l A(H_s, \mathcal{F})|_{E_l}
				-
				A([{\rm{Sym}}^l H_s], [{\rm{Sym}}^l \mathcal{F}]) \Big) \cdot\omega_B^m
			\Big\|_{L^1(B, [{\rm{Sym}}^l H_s ])}^{{\rm{tr}}}
			\leq
			\epsilon l
			\cdot
			\rk{E_l}.
		\end{equation}
	\end{thm}
	\par 
	Our proof will also draw on certain general results about finitely generated approximations of filtrations. 
	For this, let us recall the concept of the volume of a filtration.
	For a complex vector space $V$, $\dim V = r$, endowed with a filtration $\mathcal{F}$ with the jumping numbers $\mu_1, \ldots, \mu_r \in \real$, we define the volume, ${\rm{vol}}(\mathcal{F})$, of $\mathcal{F}$ as
	\begin{equation}
		{\rm{vol}}(\mathcal{F}) := \mu_1 + \cdots + \mu_r.
	\end{equation}
	This concept extends naturally to a family setting. 
	In this context, rather than a filtration by vector subspaces of a single vector space, we consider a filtration of a vector bundle by subsheaves. 
	The volume is then defined as the volume of the induced filtration on the generic fiber -- specifically, away from the singular set of the subsheaves.
	\par 
	We fix a complex projective manifold $Y$ polarized by an ample line bundle $F$ endowed with a positive Hermitian metric $h^F$.
	Let $\mathcal{F}$ be a bounded submultiplicative filtration on $R(Y, F)$.
	Recall that the filtration $[ {\rm{Sym}}^l \mathcal{F}^k ]$, $l \in \nat$, was defined before Proposition \ref{prop_incr_geod_rays}. 
	Recall the following result of Boucksom-Jonsson \cite[Theorem 3.18]{BouckJohn21}, cf. \cite[Proposition 2.8]{FinTits} for a proof relying on complex pluripotential theory.
	\begin{thm}\label{thm_bj_appr}
		For any bounded submultiplicative filtration $\mathcal{F}$ on $R(Y, F)$ and any $\epsilon > 0$, there is $k_0 \in \nat$, such that for any $k \geq k_0$, $l \in \nat$, we have
		\begin{equation}
			{\rm{vol}}([ {\rm{Sym}}^l \mathcal{F}^k ]) \geq {\rm{vol}}(\mathcal{F}^{kl}) - \epsilon l \dim H^0(Y, F^{\otimes kl}).
		\end{equation}
	\end{thm}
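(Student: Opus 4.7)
The plan is to derive the statement from the Boucksom-Jonsson Fubini-Study approximation theorem \cite[Theorem 3.18]{BouckJohn21} (cf.\ also the pluripotential-theoretic proof in \cite[Proposition 2.8]{FinTits}), applied to the ``level $k$'' truncation of $\mathscr{F}$. For brevity, I write $d_m := \dim H^0(Y, F^m)$.

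For each $k \in \nat^*$, I introduce the finitely generated submultiplicative filtration $\mathscr{F}^{(k)}$ on the Veronese subring $R(Y, F^k) = \bigoplus_{l \geq 0} H^0(Y, F^{kl})$, defined as the filtration generated in degree one by $\mathscr{F}_k$. By construction, its degree-$l$ piece is $[{\rm{Sym}}^l \mathscr{F}_k]$. Lemma \ref{lem_rest_subm} gives the domination $\mathscr{F}^{(k)} \geq \mathscr{F}|_{R(Y, F^k)}$, which translates into the easy inequality ${\rm{vol}}([{\rm{Sym}}^l \mathscr{F}_k]) \leq {\rm{vol}}(\mathscr{F}_{kl})$; the theorem asserts a quantitative reverse inequality.

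Dividing the desired estimate by $kl \, d_{kl}$, it becomes
\[
\frac{{\rm{vol}}([{\rm{Sym}}^l \mathscr{F}_k])}{kl \, d_{kl}} \geq \frac{{\rm{vol}}(\mathscr{F}_{kl})}{kl \, d_{kl}} - \frac{\epsilon}{k}.
\]
The right-hand ratio converges, as $kl \to \infty$, to the energy $E := \int_{\real} t\, d\nu(t)$, where $\nu$ is the limit spectral measure of $\mathscr{F}$. For fixed $k$, the left-hand ratio converges, as $l \to \infty$, to the analogous energy $E^{(k)}$ of the finitely generated filtration $\mathscr{F}^{(k)}$. The domination above gives $E^{(k)} \leq E$, and the Boucksom-Jonsson theorem asserts $E - E^{(k)} \to 0$ as $k \to \infty$; applied with parameter $\epsilon/(3k)$ in place of $\epsilon$, it provides some $k_0$ such that $E - E^{(k)} \leq \epsilon/(3k)$ for every $k \geq k_0$.

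It remains to upgrade these asymptotic convergences into estimates uniform in $l \in \nat^*$. Since $\mathscr{F}^{(k)}$ is finitely generated on $R(Y, F^k)$, I will invoke the Newton-Okounkov body / Hilbert-Samuel expansion for filtered finitely generated graded algebras to obtain
\[
{\rm{vol}}([{\rm{Sym}}^l \mathscr{F}_k]) \geq kl \, d_{kl} \cdot E^{(k)} - C_k \, d_{kl}, \quad l \geq 1,
\]
with $C_k$ a constant depending on $k$, together with the analogous asymptotic upper bound for ${\rm{vol}}(\mathscr{F}_{kl})$. The main obstacle is to ensure that the Hilbert-Samuel remainder $C_k / l$ is absorbed into $\epsilon/(3k)$ for every $l \geq 1$; I expect this to follow either from a refinement of the Boucksom-Jonsson argument controlling the Fubini-Study defect at every degree simultaneously, or by treating small values of $l$ separately (starting from the trivial equality at $l = 1$, where $[{\rm{Sym}}^1 \mathscr{F}_k] = \mathscr{F}_k$) and enlarging $k_0$ accordingly.
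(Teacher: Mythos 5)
There is a genuine gap here, which you partially acknowledge yourself (``I expect this to follow\ldots''). After passing to normalized energies $E^{(k)}$ and $E$, your intermediate bound ${\rm{vol}}([{\rm{Sym}}^l\mathscr{F}_k]) \geq kl\, d_{kl}\, E^{(k)} - C_k\, d_{kl}$ is useless for small $l$: combined with $E^{(k)} \geq E - \epsilon/(3k)$ and a matching upper bound ${\rm{vol}}(\mathscr{F}_{kl}) \leq kl\, d_{kl}\, E + C'_k\, d_{kl}$, it only yields the target inequality for $l$ larger than a threshold of order $(C_k + C'_k)/\epsilon$, a threshold that blows up as $\epsilon \to 0$ and that may also grow with $k$. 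Your two proposed repairs --- a ``refinement of the Boucksom-Jonsson argument'' controlling the Fubini-Study defect at every degree, or treating small $l$ separately and enlarging $k_0$ --- are left unverified, and neither is routine: convergence of energies alone does not control ${\rm{vol}}(\mathscr{F}_{kl}) - {\rm{vol}}([{\rm{Sym}}^l \mathscr{F}_k])$ by $\epsilon l\, d_{kl}$ at intermediate $l$, and the constants $C_k$ are not obviously tamed by enlarging $k_0$.

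For context: the paper does not derive this statement at all. It records Theorem \ref{thm_bj_appr} as a version of \cite[Theorem 3.18]{BouckJohn21} in which the uniformity in $l$ is already part of the conclusion, and cites \cite[Proposition 2.8]{FinTits} for a pluripotential-theoretic proof. The uniformity is an output of those arguments (concave transforms on Okounkov bodies, or envelope techniques applied at every degree simultaneously), not a corollary of the asymptotic energy statement you invoke; that is precisely the step your proposal leaves open.
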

	
	Finally, we will need an additional result regarding the compatibility between the construction of geodesic rays and approximate critical Hermitian structures.
	We fix a holomorphic vector bundle $E$ over $B$.
	We denote by $\mu_0 : B_0 \to B$ a modification which resolves the Harder-Narasimhan filtration of $E$ in the sense as described in (\ref{eq_resol_filtr}).
	We denote by $\tilde{\mu}_0^* \mathcal{F}^{HN}$ the resolution of the Harder-Narasimhan filtration of $E$.
	We establish the following result in Section \ref{sect_geod_appr}.
	\begin{thm}\label{thm_ray_apprx}
		For any $\delta$-approximate critical Hermitian structure $H$ on $E$, the geodesic ray, $H_s$, $s \in [0, +\infty[$, of Hermitian metrics on $\mu_0^* E$, departing from $\mu_0^* H$ associated with the resolution of the Harder-Narasimhan filtration, $\tilde{\mu}_0^* \mathcal{F}^{HN}$, of $E$, satisfies
		\begin{equation}\label{eq_appr_crhs_resol}
		\Big\|
			\frac{\imun}{2 \pi} R^{H_s} \wedge \mu_0^* \omega_B^{m - 1}
			-
			A(H_s, \tilde{\mu}_0^* \mathcal{F}^{HN}) \cdot \mu_0^* \omega_B^m
		\Big\|_{L^1(B_0, H_s)}
		\leq
		\delta \rk{E}^3 8^{\rk{E} + 4}.
	\end{equation}
	\end{thm}
	\begin{rem}
		In particular, if the Harder-Narasimhan filtration of $E$ is given by the vector subbundles (and not by subsheaves), then the geodesic ray associated with the Harder-Narasimhan filtration departing from an $\delta$-approximate critical Hermitian structure consists of $\delta \rk{E}^3 8^{\rk{E} + 4}$-approximate critical Hermitian structures on $E$.
	\end{rem}

	\begin{proof}[Proof of Theorem \ref{thm_compat_approx_sol}.]
		Remark first that the curvature of a Hermitian vector bundle only increases under taking quotients, cf. \cite[Theorem V.14.5]{DemCompl}, so we have 
		\begin{equation}\label{eq_curv_incccc}
			\frac{\imun}{2 \pi} R^{[{\rm{Sym}}^l H_{\delta, k, s}]} \wedge \mu_k^* \omega_B^{m - 1}
			\geq_{[{\rm{Sym}}^l H_{\delta, k, s}]}
			\frac{\imun}{2 \pi} R^{{\rm{Sym}}^l H_{\delta, k, s}}|_{E_{kl}} \wedge \mu_k^* \omega_B^{m - 1},
		\end{equation}
		where we used the notation for the restriction endomorphism from Introduction.
		\par 
		By Theorem \ref{thm_ray_apprx} and the formula for the curvature of symmetric powers, for any $l \in \nat^*$, we conclude that the curvature of the Hermitian metric ${\rm{Sym}}^l H_{\delta, k, s}$ on ${\rm{Sym}}^l E_k$ satisfies
		\begin{multline}\label{eq_sym_appr}
		\Big\|  
			 \frac{\imun}{2 \pi} R^{{\rm{Sym}}^l H_{\delta, k, s}} \wedge \mu_k^* \omega_B^{m - 1} 
			 \\
			 - {\rm{Sym}}^l A(H_{\delta, k, s}, \tilde{\mu}_k^* \mathcal{F}^{HN, k}) \cdot \mu_k^* \omega_B^m
		\Big\|_{L^1(B_k, {\rm{Sym}}^l H_{\delta, k, s})} \leq \delta l N_k^3 8^{N_k + 4}.
		\end{multline}
		Since the operator's norm only decreases under restriction, by (\ref{eq_sym_appr}), we conclude that
		\begin{multline}\label{eq_sym_appr0101}
		\Big\|  
			 \frac{\imun}{2 \pi} R^{{\rm{Sym}}^l H_{\delta, k, s}}|_{E_{kl}} \wedge \mu_k^* \omega_B^{m - 1} 
			 \\
			 - {\rm{Sym}}^l A(H_{\delta, k, s}, \tilde{\mu}_k^* \mathcal{F}^{HN, k})|_{E_{kl}} \cdot \mu_k^* \omega_B^m
		\Big\|_{L^1(B_k, [{\rm{Sym}}^l H_{\delta, k, s}])} \leq \delta l N_k^3 8^{N_k + 4}.
		\end{multline}
		\par 
		From Theorem \ref{thm_rest_weight_l1_comp}, we conclude that for any $\delta, \epsilon > 0$, $k \in \nat^*$, there are $l_0 \in \nat$, $s_0 > 0$, such that for any $l \geq l_0$, $s > s_0$, we have
		\begin{multline}\label{eq_sym_appr01012}
			\Big\|  
				\Big(
			 {\rm{Sym}}^l A(H_{\delta, k, s}, \tilde{\mu}_k^* \mathcal{F}^{HN, k})|_{E_{kl}}
			 \\
			 - A( [{\rm{Sym}}^l H_{\delta, k, s}], [ {\rm{Sym}}^l \tilde{\mu}_k^* \mathcal{F}^{HN, k}]) \Big) \cdot \mu_k^* \omega_B^m
			\Big\|_{L^1(B_k, [{\rm{Sym}}^l H_{\delta, k, s}])}^{{\rm{tr}}}
			\leq
			\epsilon l
			\cdot
			N_{kl}.
		\end{multline}
		\par 
		Then from (\ref{eq_curv_incccc}), (\ref{eq_sym_appr0101}), and (\ref{eq_sym_appr01012}), we see that for
		\begin{equation}
			g_{\delta, k, l, s} :=
			\frac{\imun}{2 \pi} R^{{\rm{Sym}}^l H_{\delta, k, s}}|_{E_{kl}}
			-
			A( [{\rm{Sym}}^l H_{\delta, k, s}], [ {\rm{Sym}}^l \tilde{\mu}_k^* \mathcal{F}^{HN, k}]) \Big),
		\end{equation}
		the following two bounds are satisfied
		\begin{align}
			& \big\| g_{\delta, k, l, s} \cdot \mu_k^* \omega_B^m \big\|_{L^1(B_k, [{\rm{Sym}}^l H_{\delta, k, s}])}^{{\rm{tr}}} \leq \delta l N_k^3 8^{N_k + 4} \cdot N_{kl} + \epsilon l \cdot N_{kl},
			\label{eq_incr_quot2000}
			\\
			\nonumber
			& \frac{\imun}{2 \pi} R^{[{\rm{Sym}}^l H_{\delta, k, s}]} \wedge \mu_k^* \omega_B^{m - 1}
			\\
			& 
			\qquad \qquad \qquad \qquad
			\label{eq_incr_quot2}
			\geq_{[{\rm{Sym}}^l H_{\delta, k, s}]}
			\Big(
			A( [{\rm{Sym}}^l H_{\delta, k, s}], [ {\rm{Sym}}^l \tilde{\mu}_k^* \mathcal{F}^{HN, k}])
			+ 
			g_{\delta, k, s}  \Big) \cdot \mu_k^* \omega_B^m.
		\end{align}
		We claim that for any $\epsilon > 0$, there are $k_0, l_0 \in \nat$, such that for any $s > 0$, $\delta > 0$, $k \geq k_0$, $l \geq l_0$, we have
		\begin{multline}\label{eq_last_est_pf16}
			\int_B {\rm{Tr}} \Big[ \frac{\imun}{2 \pi} R^{[{\rm{Sym}}^l H_{\delta, k, s}]} \wedge \mu_k^* \omega_B^{m - 1} \Big] 
			\\
			\leq
			\int_B {\rm{Tr}} \Big[ A( [{\rm{Sym}}^l H_{\delta, k, s}], [ {\rm{Sym}}^l \tilde{\mu}_k^* \mathcal{F}^{HN, k}]) \Big] \cdot \mu_k^* \omega_B^m 
			+
			\epsilon l N_{kl}.
		\end{multline}
		Remark that once (\ref{eq_last_est_pf16}) is established, Theorem \ref{thm_compat_approx_sol} follows immediately from (\ref{eq_incr_quot2000}) and (\ref{eq_incr_quot2}).
		\par 
		To establish (\ref{eq_last_est_pf16}), remark that by Chern-Weil theory, we have
		\begin{equation}\label{eq_last_est_pf12}
			\int_{B_k} {\rm{Tr}} \Big[ \frac{\imun}{2 \pi} R^{[{\rm{Sym}}^l H_{\delta, k, s}]} \wedge \mu_k^* \omega_B^{m - 1} \Big]
			=
			\int_B c_1(E_{kl}) \cdot [\omega_B]^{m - 1}. 
		\end{equation}
		By the definition of the Harder-Narasimhan slopes and the fact that the first Chern class is additive, we conclude that
		\begin{equation}\label{eq_last_est_pf13}
			\int_B c_1(E_{kl}) \cdot \omega_B^{m - 1}
			=
			{\rm{vol}}(\mathcal{F}^{HN, kl}) \cdot \int_B [\omega_B]^m.
		\end{equation}
		However, directly from the definition of the weight operator, we have
		\begin{equation}\label{eq_last_est_pf1311112}
			\int_B {\rm{Tr}} \Big[ A( [{\rm{Sym}}^l H_{\delta, k, s}], [ {\rm{Sym}}^l \tilde{\mu}_k^* \mathcal{F}^{HN, k}]) \Big] \cdot \mu_k^* \omega_B^m 
			=
			{\rm{vol}}([ {\rm{Sym}}^l \tilde{\mu}_k^* \mathcal{F}^{HN, k}]) \cdot \int_B [\omega_B]^m.
		\end{equation}
		We now see that (\ref{eq_last_est_pf16}) is just a consequence of Theorem \ref{thm_bj_appr}, (\ref{eq_last_est_pf12}), (\ref{eq_last_est_pf13}) and (\ref{eq_last_est_pf1311112}).
		This finishes the proof.
	\end{proof}

	\section{Restriction of the weight operator over the geodesic ray}\label{sect_numb_op}
	The main objective of this section is to analyze the behavior of the weight operator along a geodesic ray and to establish Theorem \ref{thm_rest_weight_l1_comp}. 
	This will be achieved by studying how the construction of geodesic rays relates to the construction of quotient metrics and filtrations.
	The proof will be based on the following assertion, which, setting aside some technical statements, will be established at the end of this section. 
	We retain the notation from Theorem \ref{thm_rest_weight_l1_comp}.
	\begin{thm}\label{thm_compar_number2}
		For any $\epsilon > 0$, there is a uniformly bounded sequence of measurable functions $h_i : B \to \real$, $i \in \nat$, which converge pointwise, as $i \to \infty$, to $0$, and $l_0 \in \nat$, $s_0 \geq 1$, such that for any $l \geq l_0$, $s \geq s_0$, there is $B_{l, s} \in \ccal^{\infty}(B, \enmr{E_l})$, which is Hermitian with respect to both $[{\rm{Sym}}^l H_s]$ and $[{\rm{Sym}}^l H_0]$, so that
		\begin{equation}\label{eq_compar_number2}
		\begin{aligned}
			&
			{\rm{Sym}}^l A(H_s, \mathcal{F})|_{E_l}
			\geq_{[{\rm{Sym}}^l H_s]}
			B_{l, s},
			\\
			&
			B_{l, s}
			\geq_{[{\rm{Sym}}^l H_0]}
			A([{\rm{Sym}}^l H_0], [{\rm{Sym}}^l \mathcal{F}])
			-
			( h_{\lfloor s \rfloor}(b) + \epsilon ) \cdot l \cdot {\rm{Id}}_{E_l}.
		\end{aligned}
		\end{equation}
		Moreover, for any $s \in [0, +\infty[$, $l \in \nat^*$, we have
		\begin{equation}\label{eq_compar_number22}
			{\rm{Sym}}^l A(H_s, \mathcal{F})|_{E_l}
			\leq_{[{\rm{Sym}}^l H_s]}
			A([{\rm{Sym}}^l H_s], [{\rm{Sym}}^l \mathcal{F}]).
		\end{equation}
	\end{thm}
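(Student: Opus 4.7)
The proof splits naturally into the easy inequality (\ref{eq_compar_number22}) and the sandwich estimate (\ref{eq_compar_number2}), which contains the main content of the theorem.

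For (\ref{eq_compar_number22}), the plan is to apply Proposition \ref{prop_interpol} to the surjection $p : {\rm{Sym}}^l G_1 \to G_l$ equipped with the Hermitian metric ${\rm{Sym}}^l H_s$ and filtration ${\rm{Sym}}^l F$, whose respective quotients are $[{\rm{Sym}}^l H_s]$ and $[{\rm{Sym}}^l F]$. The conclusion of that proposition states that for every auxiliary parameter $t \geq 0$, the quotient of the geodesic ray on ${\rm{Sym}}^l G_1$ departing from ${\rm{Sym}}^l H_s$ and associated with ${\rm{Sym}}^l F$ dominates the geodesic ray on $G_l$ departing from $[{\rm{Sym}}^l H_s]$ and associated with $[{\rm{Sym}}^l F]$. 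Differentiating this domination at $t=0$ and invoking Remark \ref{rem_matr_mon} to convert the inequality of exponentials into an inequality of weight operators then yields (\ref{eq_compar_number22}).

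For the sandwich (\ref{eq_compar_number2}), first fix a smooth frame $\{e_i\}$ of $G_1$ adapted to $F$ and orthonormal for $H_0$. The induced basis $\{e^{\odot\alpha}\}$ of ${\rm{Sym}}^l G_1$ is simultaneously orthogonal for every metric ${\rm{Sym}}^l H_t$ in the ray, satisfying $\|e^{\odot\alpha}\|^2_{{\rm{Sym}}^l H_t} = (\alpha!/l!)\exp(-t\,\alpha\cdot\lambda)$, while ${\rm{Sym}}^l A(H_t, F)$ acts diagonally as $e^{\odot\alpha} \mapsto (\alpha\cdot\lambda)\,e^{\odot\alpha}$. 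Define $B_{l,s} \in \enmr{G_l}$ as the diagonal operator in a frame of $G_l$ obtained from a suitable subset of $\{p(e^{\odot\alpha})\}$, bi-orthogonalised by Gram--Schmidt within the spectral subspaces shared by $[{\rm{Sym}}^l H_0]$ and $[{\rm{Sym}}^l H_s]$ (possible because both quotient metrics are diagonalised by the same ambient basis), with eigenvalues given by the symmetric weights $\alpha\cdot\lambda$. The Hermitian property of $B_{l,s}$ with respect to both metrics is built into the construction, and the upper bound $B_{l,s} \leq_{[{\rm{Sym}}^l H_s]} {\rm{Sym}}^l A(H_s,F)|_{G_l}$ reduces to the fact that a diagonal compression of a diagonal positive operator agrees with its restriction.

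The main obstacle is the lower bound $B_{l,s} \geq_{[{\rm{Sym}}^l H_0]} A([{\rm{Sym}}^l H_0], [{\rm{Sym}}^l F]) - \bigl(Cl/s + C + \epsilon l\bigr) {\rm{Id}}_{G_l}$, which demands two quantitative approximations carried out compatibly. First, one must show that the orthogonal lift $p^* : G_l \to {\rm{Sym}}^l G_1$ computed with respect to ${\rm{Sym}}^l H_s$ asymptotically concentrates on the weight-maximising preimage under ${\rm{Sym}}^l F$, with convergence rate of order $l/s$; this is the quantitative reflection of the fact that $H_s$ exponentially suppresses low-weight directions, so the minimum-norm lift is forced into deep pieces of the filtration. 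Second, the quotient filtration $[{\rm{Sym}}^l F]$, which is in general defined only by subsheaves, must be approximated by a filtration by subbundles, with the weight operator tracked to within $\epsilon l$; this is where the submultiplicative-filtration approximation machinery from \cite{FinNarSim} and \cite{FinTits}, combined with the regularisation scheme of Phong--Sturm \cite{PhongSturmRegul}, provides the necessary control. The delicate point is interleaving the two approximations so that the choice of frame rendering $B_{l,s}$ bi-Hermitian does not spoil either error estimate, and ensuring the $s$-dependence in the second step is compatible with the $l/s$-rate in the first; this bookkeeping is what dictates the precise form of the error term in (\ref{eq_compar_number2}).
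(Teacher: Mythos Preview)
Your argument for (\ref{eq_compar_number22}) is essentially the paper's: apply Proposition~\ref{prop_interpol} to the surjection ${\rm{Sym}}^l G_1 \to G_l$ with base metric ${\rm{Sym}}^l H_s$ and filtration ${\rm{Sym}}^l F$, then differentiate the resulting ray inequality at $t=0$ using Lemma~\ref{lem_der_quot_geod}. That part is fine.

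Your approach to (\ref{eq_compar_number2}), however, has a genuine gap in the construction of $B_{l,s}$. You assert that $[{\rm{Sym}}^l H_0]$ and $[{\rm{Sym}}^l H_s]$ are ``diagonalised by the same ambient basis'' so that a Gram--Schmidt procedure can produce a common orthonormal frame. This is false in general: while the $e^{\odot\alpha}$ are simultaneously orthogonal for every ${\rm{Sym}}^l H_t$ on ${\rm{Sym}}^l G_1$, passing to the quotient $G_l$ destroys this. The minimum-norm lift $p^*$ depends on the metric, so the images $p(e^{\odot\alpha})$ have no reason to be orthogonal for both quotient metrics, and there is no common spectral decomposition to Gram--Schmidt within. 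Without this, neither the bi-Hermitian property nor the claimed upper bound ${\rm{Sym}}^l A(H_s,F)|_{G_l} \geq B_{l,s}$ follows.

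The paper sidesteps this entirely. It defines $B_{l,s_0}$ as (minus) the velocity of the \emph{geodesic segment} $H'_t$, $t\in[0,s_0]$, in the space of Hermitian metrics on $G_l$ joining $[{\rm{Sym}}^l H_0]$ to $[{\rm{Sym}}^l H_{s_0}]$. By Lemma~\ref{lem_der_quot_geod}, the velocity of a geodesic is Hermitian with respect to every metric along it, so bi-Hermiticity is automatic. The first inequality of (\ref{eq_compar_number2}) then comes from Proposition~\ref{prop_interp2} (Stein--Weiss interpolation), which gives $[{\rm{Sym}}^l H_t] \geq H'_t$ for $t\in[0,s_0]$; differentiating at $t=s_0$ (where the two curves meet) yields the desired operator inequality via Lemma~\ref{lem_der_quot_geod}. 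The second inequality is obtained by comparing $H'_{s_0}=[{\rm{Sym}}^l H_{s_0}]$ with $[{\rm{Sym}}^l H]_{s_0}$ via Theorem~\ref{thm_compar_number3}, rewriting both sides as exponentials of weight operators, and applying the matrix monotonicity of the logarithm. The Phong--Sturm and submultiplicative machinery you mention is used, but only inside the proof of Theorem~\ref{thm_compar_number3}, not here; once that theorem is in hand, the present proof is a short interpolation-and-differentiation argument with no frame computations at all.
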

	\begin{proof}[Proof of Theorem \ref{thm_rest_weight_l1_comp}]
		Directly from (\ref{eq_compar_number22}), we deduce that 
		\begin{multline}\label{eq_cor_rest_weight1}
			{\rm{Tr}}\Big[
				\Big|
				{\rm{Sym}}^l A(H_s, \mathcal{F})|_{E_l}
				-
				A([{\rm{Sym}}^l H_s], [{\rm{Sym}}^l \mathcal{F}])
				\Big|
			\Big]
			\\
			=
			{\rm{Tr}} \big[
			A([{\rm{Sym}}^l H_s], [{\rm{Sym}}^l \mathcal{F}]) \big]
			-
			{\rm{Tr}} \big[
			{\rm{Sym}}^l A(H_s, \mathcal{F})|_{E_l}
			\big].
		\end{multline}
		Remark, however, that by (\ref{eq_compar_number2}), we have
		\begin{equation}\label{eq_cor_rest_weight2}
			{\rm{Tr}} \big[
			{\rm{Sym}}^l A(H_s, \mathcal{F})|_{E_l}
			\big]
			\geq
			{\rm{Tr}} \big[
			A([{\rm{Sym}}^l H_0], [{\rm{Sym}}^l \mathcal{F}])\big]
			-
			( h_{\lfloor s \rfloor}(b) + \epsilon ) \cdot l
			\cdot
			\rk{E_l}.
		\end{equation}
		Now, by dominated convergence theorem (it is at this stage that the measurability of $h_i$, along with all the other assumptions on this sequence, is being utilized), there $i_0 \in \nat$, such that for any $i \geq i_0$, we have
		\begin{equation}
			\int_B h_i(b) \cdot \omega_B^m(b) < \epsilon.
		\end{equation}
		We conclude by this, (\ref{eq_cor_rest_weight1}) and (\ref{eq_cor_rest_weight2}).
	\end{proof}
	The proof of Theorem \ref{thm_compar_number2} is based on the comparison of the related geodesic rays.
	More precisely, we denote by $[{\rm{Sym}}^l H]_s$ the geodesic ray of Hermitian metrics on $E_l$ departing from $[{\rm{Sym}}^l H]$ and associated with the filtration $[{\rm{Sym}}^l \mathcal{F}]$.
	The following result will be established in Sections \ref{sect_test_conf}, and it lies at the heart of our approach to Theorem \ref{thm_compar_number2}.
	\begin{thm}\label{thm_compar_number3}
		For any $\epsilon > 0$, there are $h_i : B \to \real$, $i \in \nat$, as in Theorem \ref{thm_compar_number2}, and $l_0 \in \nat$, $s_0 > 1$, such that for any $l \geq l_0$, $s \geq s_0$, we have
		\begin{equation}
			[{\rm{Sym}}^l H_s]
			\leq 
			[{\rm{Sym}}^l H]_s
			\cdot
			\exp((h_{\lfloor s \rfloor}(b) + \epsilon ) \cdot l s).
		\end{equation}
		Moreover, for any $l \in \nat$, $s \geq 0$, we also have $[{\rm{Sym}}^l H]_s \leq [{\rm{Sym}}^l H_s]$.
	\end{thm}
	\begin{rem}
		In the non-family version, a related result appeared in \cite[Theorem 4.8]{FinTits}.
	\end{rem}

	\par 
	Now, to establish Theorem \ref{thm_compar_number2} from Theorem \ref{thm_compar_number3}, we recall some preliminaries.
	We say that a filtration $\mathcal{F}^1$ dominates ($\geq$) $\mathcal{F}^2$ if on the level of associated non-Archimedean norms, see (\ref{eq_na_norm}), we have $\chi_{\mathcal{F}^1} \geq \chi_{\mathcal{F}^2}$.
	The following consequence of submultiplicativity, cf. \cite[\S 3.1]{FinNarSim}, will play a crucial role in what follows.
	\begin{lem}\label{lem_rest_subm}
		For any submultiplicative filtration $\mathcal{F}$ on the section ring $R(Y, F)$ of a complex projective manifold $Y$ polarized by an ample line bundle $F$, and any $k \in \nat$, $l \in \nat^*$, we have
		\begin{equation}
			[ {\rm{Sym}}^l \mathcal{F}^k ] 
			\geq 
			\mathcal{F}^{kl}.
		\end{equation}
	\end{lem}
	\par 
	To make a connection between the order on the space of filtrations and the natural order on the weight operators, we need the following result.
	\begin{lem}\label{lem_mon_numb_oper}
		For any Hermitian product $H$ on a vector space $V$ and an ordered pair of filtrations, $\mathcal{F}^1 \geq \mathcal{F}^2$, on $V$, the associated weight operators relate as $A(H, \mathcal{F}^1) \leq A(H, \mathcal{F}^2)$.
	\end{lem}
	\par
	For the proof of Lemma \ref{lem_mon_numb_oper} and further use, let us recall the following result from \cite[Proposition 4.12]{FinTits}.
	We consider a surjection $p : V \to Q$ between two finitely dimensional vector spaces $V$, $Q$.
	We fix a Hermitian metric $H^V$ on $V$ and a filtration $\mathcal{F}$. 
	We denote by $H_s^V$ the geodesic ray of Hermitian metrics on $V$ departing from $H^V$ and associated with $\mathcal{F}$.
	We denote by $[H_s^V]$ the induced quotient metric on $Q$.
	\begin{prop}\label{prop_interpol}
		Let $H_0$ (resp. $H_1$) be a fixed Hermitian metric on $V$ (resp. $Q$) and $\mathcal{F}$ (resp. $\mathcal{G}$) is a filtration on $V$ (resp. $Q$).
		We assume that $[H_0] \geq H_1$ and $[\mathcal{F}] \geq \mathcal{G}$.
		Then the geodesic ray $H_s^V$, $s \in [0, +\infty[$, of Hermitian metrics on $V$ associated with $\mathcal{F}$ and emanating from $H_0$ compares to the geodesic ray $H_s^Q$ of Hermitian metrics on $Q$ associated with $\mathcal{G}$ and emanating from $H_1$ as
		\begin{equation}\label{eq_interpol}
			[H_s^V]
			\geq
			H_s^Q.
		\end{equation}
		Moreover, for any $s_0 \geq 0$, the following identities take place
		\begin{equation}
			\begin{aligned}
			&
			([H_s^V]^{-1} \frac{d}{ds} [H_s^V])|_{s = 0}
			=
			- A([H_0], \mathcal{F})|_Q, 
			\\
			& 
			((H_s^V)^{-1} \frac{d}{ds} H_s^V)|_{s = 0} = ((H_s^V)^{-1} \frac{d}{ds} H_s^V)|_{s = s_0}, 
			\end{aligned}
		\end{equation}
		And $((H_s^V)^{-1} \frac{d}{ds} H_s^V)|_{s = 0}$ is Hermitian with respect to $H_s^V$ for any $s \in [0, +\infty[$. 
	\end{prop}
	\begin{proof}[Proof of Lemma \ref{lem_mon_numb_oper}]
		Follows directly from comparing the associated geodesic rays using Proposition \ref{prop_interpol} and taking derivative at $t = 0$. 
	\end{proof}
	\par 
	Finally, we need to compare the geodesics associated between two endpoints, and for this, the following result will play a crucial role.
	\begin{prop}[{cf. \cite[Theorem 5.4.1]{InterpSp} or \cite[Corollary 4.22]{FinSecRing}}]\label{prop_interp2}
		Let $H^V_0$, $H^V_1$ be two Hermitian metrics on $V$ and $H^Q_0$, $H^Q_1$ be the induced quotient Hermitian metrics on $Q$.
		For $s \in [0, 1]$, we denote by $H^V_s$ the geodesics between $H^V_0$ and $H^V_1$, and by $H^Q_s$ the geodesics between $H^Q_0$ and $H^Q_1$.
		Then for any $s \in [0, 1]$, we have
		\begin{equation}
			[H^V_s]
			\geq
			H^Q_s.
		\end{equation}
	\end{prop}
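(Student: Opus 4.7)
My plan is to derive Proposition \ref{prop_interp2} from the finite-dimensional Calderón--Lions complex interpolation theorem applied to the quotient projection $p : V \to Q$. The key identification I will need is that the geodesic $H_s$ between two Hermitian inner products $H_0, H_1$ on a finite-dimensional complex vector space, in the sense of the $SL(V)/SU(V)$-homogeneous metric recalled in Section \ref{sect_min_seq}, coincides with the complex interpolation norm at parameter $s \in [0, 1]$ between $(V, H_0)$ and $(V, H_1)$. This identification is classical and can be checked directly by simultaneously diagonalizing $H_0$ and $H_1$ in a basis $e_1, \ldots, e_{\dim V}$ with $\|e_i\|_{H_0} = 1$ and $\|e_i\|_{H_1} = e^{\lambda_i / 2}$: both the geodesic and the complex interpolant are then characterized by the formula $\|e_i\|_{H_s} = e^{s \lambda_i / 2}$.

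Granted this identification, the argument has two short steps. First, by the defining infimum in (\ref{eq_defn_quot_norm}), the map $p : (V, H^V_t) \to (Q, [H^V_t])$ is a contraction for every $t$; applied at the endpoints $t = 0$ and $t = 1$, where $[H^V_0] = H^Q_0$ and $[H^V_1] = H^Q_1$ by hypothesis, this gives that $p : (V, H^V_0) \to (Q, H^Q_0)$ and $p : (V, H^V_1) \to (Q, H^Q_1)$ are contractions. Second, the Calderón--Lions theorem in its Hilbert-space form then yields that $p : (V, H^V_s) \to (Q, H^Q_s)$ is likewise a contraction for every intermediate $s \in [0, 1]$. Unwinding the definitions, this says $\|p(v)\|_{H^Q_s} \leq \|v\|_{H^V_s}$ for all $v \in V$; taking the infimum over all $v$ with $p(v) = f$, for arbitrary $f \in Q$, yields $\|f\|_{H^Q_s} \leq \|f\|_{[H^V_s]}$, which is exactly the desired inequality $[H^V_s] \geq H^Q_s$.

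I do not expect any serious obstacle here; the only slightly delicate point is the identification of the $SL(V)/SU(V)$-geodesic with the Calderón complex interpolant, but this is well known and follows from the diagonalization above or, alternatively, from verifying that both constructions satisfy the same defining ODE $H_s^{-1} \dot H_s = \textrm{const}$. If one preferred a completely self-contained route avoiding interpolation theory, one could fix a splitting $V = \ker p \oplus (\ker p)^{\perp_{H^V_0}}$, express $H^V_s$ as an explicit block matrix using the polar decomposition of $H^V_1$ relative to $H^V_0$, and then compare the Schur complement representing $[H^V_s]$ with the geodesic $H^Q_s$ of the Schur complements at $s = 0, 1$; this, however, trades a clean conceptual proof for a longer algebraic computation, and I would favor the interpolation-theoretic argument outlined above.
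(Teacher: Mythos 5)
Your proposal is correct and takes essentially the same route as the paper: the paper proves this by appealing to the Stein--Weiss interpolation theorem (citing Bergh--Löfström, Theorem 5.4.1, with details in \cite[Corollary 4.22]{FinSecRing}), which in the finite-dimensional Hilbertian setting is precisely the identification of the $SL(V)/SU(V)$-geodesic with the complex interpolation scale that you use, combined with the boundedness of the quotient map at the endpoints. Your invocation of Calderón--Lions is just the abstract packaging of the same two ingredients, so the argument is the same in substance.
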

	\begin{proof}[Proof of Theorem \ref{thm_compar_number2}]
		We will establish that Theorem \ref{thm_compar_number2} holds for $l_0 \in \nat, C > 0$ as in Theorem \ref{thm_compar_number3}.
		We fix $s_0 \in [1, +\infty[$, and consider the geodesic $H'_s$, $s \in [0, s_0]$, of Hermitian metrics on $E_l$, such that $H'_0 = [{\rm{Sym}}^l H]$ and $H'_{s_0} = [{\rm{Sym}}^l H_{s_0}]$.
		By Proposition \ref{prop_interp2}, for any $s \in [0, s_0]$, we have 
		\begin{equation}\label{eq_comp_quot_norm}
			[{\rm{Sym}}^l H_s] \geq H'_s.
		\end{equation}
		We let $B_{l, s_0} := - ((H'_s)^{-1} \frac{d}{ds} H'_s)|_{s = 0}$ and take the derivative of (\ref{eq_comp_quot_norm}) at $s = s_0$ to get
		\begin{equation}\label{eq_comp_quot_norm1}
			[{\rm{Sym}}^l H_{s_0}]^{-1} \frac{d}{ds} [{\rm{Sym}}^l H_s]|_{s = s_0}
			\leq_{[{\rm{Sym}}^l H_{s_0}]}
			- B_{l, s_0},
		\end{equation}
		where we implicitly used the second equation from Proposition \ref{prop_interpol}.
		The first inequality of (\ref{eq_compar_number2}) then follows directly from (\ref{eq_comp_quot_norm1}) and Proposition \ref{prop_interpol}.
		\par 
		Remark now that by Theorem \ref{thm_compar_number3}, we have the following bound
		\begin{equation}\label{eq_comp_quot_norm1002}
			\exp(- s_0 B_{l, s_0})
			\leq_{[{\rm{Sym}}^l H_0]}
			\exp(
			-s_0
			A([{\rm{Sym}}^l H], [{\rm{Sym}}^l \mathcal{F}]) )
			\exp( ( h_{\lfloor s_0 \rfloor}(b) + \epsilon ) \cdot s_0 l ).
		\end{equation}
		From (\ref{eq_comp_quot_norm1002}), and the fact that the logarithm is a matrix monotone function, i.e. if two positive definite Hermitian operators, $A, B \in {\rm{End}}(V)$ on $(V, H)$ are related as $A \geq B$, then $\ln(A) \geq \ln(B)$, cf. \cite{LoewnerArt} and \cite{SimonBarryLoewn}, we have
		\begin{equation}\label{eq_comp_quot_norm2}
			- B_{l, s_0}
			\leq_{[{\rm{Sym}}^l H_0]}
			-
			A([{\rm{Sym}}^l H], [{\rm{Sym}}^l \mathcal{F}])
			+ ( h_{\lfloor s_0 \rfloor}(b) + \epsilon ) \cdot l {\rm{Id}}_{E_l},
		\end{equation}
		which gives us the second inequality of (\ref{eq_compar_number2}).
		\par 
		We fix $s_0 > 0$ and consider the geodesic ray $H''_s$, $s \in [0, +\infty[$, of Hermitian metrics on $E_l$, departing from $H''_0 = [{\rm{Sym}}^l H_{s_0}]$ and associated with $[{\rm{Sym}}^l \mathcal{F}]$.
		Then by Lemma \ref{lem_rest_subm} and Proposition \ref{prop_interpol}, we have
		\begin{equation}\label{eq_comp_quot_norm22}
			[{\rm{Sym}}^l H_{s + s_0}] \geq H''_s.
		\end{equation}
		By taking the derivatives of (\ref{eq_comp_quot_norm22}) at $s = 0$, and using Proposition \ref{prop_interpol}, we get (\ref{eq_compar_number22}) for $s := s_0$.
	\end{proof}
	\begin{rem}
		While the geometric situation here is very different, our argument on comparing the derivatives from (\ref{eq_comp_quot_norm1}) was inspired by Berndtsson \cite[(3.2) and (3.3)]{BerndtProb}.
	\end{rem}

	\section{Symmetric powers of geodesic rays and holomorphic extension theorem}\label{sect_test_conf}
	The main goal of this section is to establish Theorem \ref{thm_compar_number3}. 
	We decompose this statement into two parts, and show that one part follows from the holomorphic extension theorem, and another one from the previous work of Phong-Sturm \cite{PhongSturmRegul}.
	We conserve the notations from Section \ref{sect_numb_op}.
	\par 
	We fix a $(n, n)$-form $\eta \in \ccal^{\infty}(X, \wedge^{n, n} T^* X)$ so that its restriction to each fiber, $\eta|_{X_b}$, $b \in B$, gives a positive volume form normalized so that the volume of each fiber equals to $1$.
	For any relatively positive Hermitian metric $h^F$ on $F$, we denote by ${\rm{Hilb}}_l^{\pi}(h^F, \eta)$ a Hermitian metric on $E_l$, $l \in \nat$, defined as in (\ref{eq_l2_prod}), but instead of the standard volume form, we use $\eta|_{X_b}$.
	\par 
	We will use the following elementary lemma.
	\begin{lem}\label{lem_norm_linf}
		For any Hermitian metric $H_l$ on $E_l$, we have $H_l \geq {\rm{Hilb}}_l^{\pi}(FS(H_l)^{\frac{1}{l}}, \eta)$.
	\end{lem}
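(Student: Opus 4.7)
The plan is to establish the inequality fiberwise over $B$ by combining the extremal property of the Fubini--Study construction with Cauchy--Schwarz. Fix $b \in B$, and let $s \in G_{l,b}$ be arbitrary. Choose a basis $e_1, \ldots, e_{N_l}$ of $G_{l,b}$ which is orthonormal with respect to $H_{l,b}$, and expand $s = \sum_{i=1}^{N_l} \alpha_i e_i$, so that $\|s\|_{H_l}^2 = \sum_i |\alpha_i|^2$.

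Next, I would invoke the defining identity of the relative Fubini--Study metric, recorded in (\ref{eq_fs_alt_defn}) and transcribed to our setting with $E_k$ replaced by $G_l$ and $L^k$ by $F^{\otimes l}$: for every $x \in X_b$,
\begin{equation*}
    \sum_{i=1}^{N_l} |e_i(x)|^2_{FS(H_l)} = 1.
\end{equation*}
Applying the Cauchy--Schwarz inequality pointwise gives, for each $x \in X_b$,
\begin{equation*}
    |s(x)|^2_{FS(H_l)} = \Big| \sum_i \alpha_i e_i(x) \Big|^2_{FS(H_l)} \leq \Big(\sum_i |\alpha_i|^2\Big) \Big(\sum_i |e_i(x)|^2_{FS(H_l)}\Big) = \|s\|_{H_l}^2.
\end{equation*}

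Finally, I would integrate this pointwise bound over the fiber $X_b$ against the volume form $\eta|_{X_b}$, which by the choice of $\eta$ has total mass equal to $1$, to obtain
\begin{equation*}
    \|s\|^2_{{\rm{Hilb}}_l^{\pi}(FS(H_l)^{1/l}, \eta)} = \int_{X_b} |s(x)|^2_{FS(H_l)} \, \eta|_{X_b}(x) \leq \|s\|_{H_l}^2,
\end{equation*}
which is the desired inequality $H_l \geq {\rm{Hilb}}_l^{\pi}(FS(H_l)^{1/l}, \eta)$ in the convention of Section \ref{sect_intro}. There is no real obstacle here: the lemma simply crystallizes the fact that the Fubini--Study construction produces the smallest pointwise norm on $F^{\otimes l}$ compatible with the given Hermitian structure on global sections, and that the ${\rm{Hilb}}$ functor with a mass-one fiber volume form can only decrease the norm further.
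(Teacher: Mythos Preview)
Your proof is correct and essentially the same as the paper's: both establish the pointwise bound $|s(x)|^2_{FS(H_l)} \leq \|s\|_{H_l}^2$ and then integrate against the unit-mass fiber volume form $\eta|_{X_b}$. The only cosmetic difference is that the paper obtains the pointwise bound via the extremal characterization $|e|_{FS(H_l)} = \inf\{\|s\|_{H_l} : s(x)=e\}$, while you derive it from Cauchy--Schwarz together with the orthonormal-basis identity $\sum_i |e_i(x)|^2_{FS(H_l)} = 1$; these are two phrasings of the same fact.
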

	\begin{proof}
		First of all, directly from (\ref{eq_fs_alt_defn}), we obtain, cf. \cite[Lemma 2.1]{FinNarSim}, that for any $x \in X$, $e \in F^{\otimes l}_x$, $b = \pi(x)$, the following identity takes place $
			|e|_{FS(H_l)}
			=
			\inf
			\| s \|_{H_l}$,
		where the infimum is taken over all $s \in G_{l, b}$, verifying the constraint $s(x) = e$.
		In particular, for any $f \in G_{l, b}$, we get $|f(x)|_{FS(H_l)}^2 \leq \| f \|_{H_l}^2$.
		By integrating this inequality over the whole fiber, $X_b$, with respect to the volume form $\eta|_{X_b}$, and using the fact that $\eta|_{X_b}$ is of unit volume, we obtain the result.
	\end{proof}
	\par 
	The other two ingredients in the proof of Theorem \ref{thm_compar_number3} are given below
	\begin{thm}\label{thm_ot_numb_comp}
		There are $l_0 \in \nat, C > 0$, such that for any $l \geq l_0$, $s \geq 1$, we have
		\begin{equation}\label{eq_ot_numb_comp}
			[{\rm{Sym}}^l H_s]
			\leq 
			{\rm{Hilb}}_{l}^{\pi}(FS(H_s), \eta)
			\cdot
			\exp(Cl + Cs).
		\end{equation}
	\end{thm}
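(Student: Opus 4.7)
The plan is to factor the desired inequality through the relative Kodaira picture and invoke the semiclassical Ohsawa-Takegoshi extension theorem of \cite{FinOTAs}. Under the Kodaira embedding $\iota : X \hookrightarrow \mathbb{P}(G_1^*)$ induced by $F$, the multiplication map ${\rm{Sym}}^l G_1 \to G_l$ is identified with the restriction map $\res_l : K_l \to G_l$ of sections of $\mathscr{O}(l)$, analogously to (\ref{eq_kod_res}). Thus, for any $f \in G_{l,b}$, the definition of the quotient norm gives
\begin{equation*}
    \|f\|_{[{\rm{Sym}}^l H_s]} \leq \|\ext_l(f)\|_{{\rm{Sym}}^l H_s},
\end{equation*}
where $\ext_l : G_l \to K_l$ is the extension operator from (\ref{eq_ext_op}) associated with the $L^2$-product ${\rm{Hilb}}_l^p(FS(H_s))$ on $K_l$.

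To bound the right-hand side I would combine two ingredients. First, the Fubini-Study identification ${\rm{Hilb}}_l^p(FS(H_s)) = \frac{l!}{(l+N-1)!} {\rm{Sym}}^l H_s$ on $K_l$ from \cite[Lemma 4.15]{FinTits}, with $N := \rk{G_1}$, which costs a polynomial-in-$l$ factor $\leq e^{Cl}$. Second, the semiclassical Ohsawa-Takegoshi theorem of \cite{FinOTAs}, applied with the line bundle $(\mathscr{O}(1), FS(H_s))$ over the fixed ambient $\mathbb{P}(G_1^*)$, which yields a bound of the form $\|\ext_l(f)\|^2_{{\rm{Hilb}}_l^p(FS(H_s))} \leq e^{Cs} \cdot {\rm{poly}}(l) \cdot \|f\|^2_{{\rm{Hilb}}_l^{\pi}(FS(H_s))}$; the $e^{Cs}$ prefactor absorbs the $s$-dependent curvature of the varying metric $FS(H_s)$, while the geometry of the pair $(\mathbb{P}(G_1^*), \mathscr{O}(1))$ and of $\iota$ is fixed. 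Together, these give
\begin{equation*}
    \|f\|^2_{[{\rm{Sym}}^l H_s]} \leq e^{C_1(l+s)} \|f\|^2_{{\rm{Hilb}}_l^{\pi}(FS(H_s))},
\end{equation*}
where the right-hand side is computed with the natural fibrewise volume form $c_1(F, FS(H_s))^n/n!$.

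The final step is to replace this natural volume form by $\eta$, reducing to the pointwise estimate $c_1(F, FS(H_s))^n|_{X_b} \leq n! \cdot e^{C_2 s} \cdot \eta|_{X_b}$, uniformly in $b \in B$. I would prove this by realizing $c_1(\mathscr{O}(1), FS(H_s))$ on $\mathbb{P}(G_1^*)$ as the pullback, under the explicit diagonal projective automorphism $[z_1 : \cdots : z_N] \mapsto [e^{-s\lambda_1/2} z_1 : \cdots : e^{-s\lambda_N/2} z_N]$ in coordinates dual to an $H$-orthonormal basis of $G_1$ adapted to the filtration, of the fixed Fubini-Study form $c_1(\mathscr{O}(1), FS(H))$. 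Computing the Jacobian of this automorphism in affine charts bounds its $L^\infty$-norm by $e^{Cs}$ with exponent controlled by $\max_i \lambda_i - \min_i \lambda_i$; restriction to the Kodaira image of $X_b$, combined with the uniform comparability of $c_1(F, FS(H))^n/n!$ with $\eta$ on the compact $X_b$, yields the required bound. Taking $C := C_1 + C_2 + 1$ concludes.

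The main obstacle will be this last volume-form comparison. Although the potential $\psi_s := \log(FS(H)/FS(H_s)) = \log \sum_i p_i e^{s\lambda_i}$, for the partition of unity $p_i := |s_i|^2_{FS(H)}/\sum_j |s_j|^2_{FS(H)}$ coming from the $H$-orthonormal basis, is uniformly $L^\infty$-bounded by $Cs$, its Monge-Amp\`ere has no uniform bound and the densities $c_1(F, FS(H_s))^n$ genuinely concentrate as $s \to \infty$; it is the explicit diagonal form of the projective automorphism identifying the two Fubini-Study forms that pins the growth rate of the volume form down to exactly $e^{C_2 s}$ — linear in $s$ and independent of $l$ — which is precisely what the theorem demands.
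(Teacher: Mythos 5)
Your outline matches the paper's proof in all of its main ingredients: the relative Kodaira picture, the identification ${\rm{Hilb}}_l^p(FS(H_s)) = \tfrac{l!}{(l+N-1)!}{\rm{Sym}}^l H_s$ from \cite[Lemma 4.15]{FinTits}, an Ohsawa--Takegoshi extension estimate whose constant is controlled via the uniform bound $e^{-Cs}FS(H_0)\leq FS(H_s)\leq e^{Cs}FS(H_0)$, and a volume-form comparison contributing an $e^{Cs}$ factor. The organizational difference is where the volume form is normalized. The paper converts the $FS(H_s)$-symplectic volume form to the fixed $\eta_P$ \emph{on the ambient fiber} $\mathbb{P}(G_1^*)_b$ before applying the extension estimate, and then invokes Theorem~\ref{thm_appl_ot2a2}, which is phrased with the fixed $\eta_P,\eta_X$ and where the metric dependence is entirely in the factor $\bigl(\sup\max(h^F/h^F_0,\,h^F_0/h^F)\bigr)^C$. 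You instead run the extension estimate with the varying $FS(H_s)$-volume forms and then normalize on $X_b$ at the end. Both can be made to work; the paper's order is cleaner because a Jacobian-type bound on the ambient fiber is all it ever needs, and the extension theorem it cites already comes with fixed volume forms.

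The one genuine gap is in your last step. Bounding the Jacobian \emph{determinant} of the diagonal automorphism $\phi_s$ by $e^{Cs}$ on the ambient fiber is correct, but such a bound does not pass to submanifolds: for example $(z_1,z_2)\mapsto(e^{s}z_1,e^{-s}z_2)$ has Jacobian identically $1$ but stretches the line $\{z_2=0\}$ by $e^{2s}$, so a bound on the ambient volume Jacobian gives no control on the induced volume form along an arbitrary $X_b$. What is actually available, and what makes both routes close, is the pointwise comparison of Fubini--Study \emph{Riemannian} metrics
\begin{equation*}
e^{-Cs}\,g_{H_0}\ \leq\ g_{H_s}\ \leq\ e^{Cs}\,g_{H_0}\qquad\text{on } T\mathbb{P}(G_1^*)_b.
\end{equation*}
This follows directly from $e^{-Cs}H_0\leq H_s\leq e^{Cs}H_0$, since $g_H$ at a point $[v]$ is the quotient Hermitian product induced by $H$ on $V/\comp v$ divided by $H(v,v)$, and both the quotient norm and the denominator are monotone in $H$. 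Unlike the Jacobian-determinant bound, this metric bound restricts to $TX_b$ and immediately yields $c_1(\mathscr{O}(1),FS(H_s))^n|_{X_b}\leq e^{Cns}\,c_1(\mathscr{O}(1),FS(H_0))^n|_{X_b}$, which, combined with the uniform comparability of $c_1(\mathscr{O}(1),FS(H_0))^n|_{X_b}$ with $\eta|_{X_b}$, is the estimate you want. (The paper's terse phrase ``coincides with the Riemannian volume form $\ldots$ by (\ref{eq_ot_numb_comp11})'' is encoding exactly this Riemannian-metric comparison.)
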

	\begin{lem}\label{thm_fs_numb_comp}
		For any $\epsilon > 0$, there are $h_i : B \to \real$, $i \in \nat$, as in Theorem \ref{thm_compar_number2}, such that for any $l \geq l_0$, $s \geq 1$, we have
		\begin{equation}
			FS(H_s)
			\leq 
			FS([{\rm{Sym}}^l H]_s)^{\frac{1}{l}}
			\cdot
			\exp(( h_{\lfloor s \rfloor}(b) + \epsilon ) \cdot s).
		\end{equation}
	\end{lem}
	The proof of Theorem \ref{thm_ot_numb_comp} and Lemma \ref{thm_fs_numb_comp} is deferred until the end of this section.
	\begin{sloppypar}
	\begin{proof}[Proof of Theorem \ref{thm_compar_number3}]
		From Lemma \ref{thm_fs_numb_comp}, for any $\epsilon > 0$, there is $l_0 \in \nat$, such that for any $l \geq l_0$, $s \geq 1$, we have
		\begin{equation}\label{eq_compar_number3_1}
			{\rm{Hilb}}_{l}^{\pi}(FS(H_s), \eta)
			\leq 
			{\rm{Hilb}}_{l}^{\pi}(FS([{\rm{Sym}}^l H]_s)^{\frac{1}{l}}, \eta)
			\cdot
			\exp( ( h_{\lfloor s \rfloor}(b) + \epsilon) \cdot ls).
		\end{equation}
		Now, by Lemma \ref{lem_norm_linf}, we have 
		\begin{equation}\label{eq_compar_number3_2}
			{\rm{Hilb}}_{l}^{\pi}(FS([{\rm{Sym}}^l H]_s)^{\frac{1}{l}}, \eta)
			\leq
			[{\rm{Sym}}^l H]_s.
		\end{equation}
		The first part of Theorem \ref{thm_compar_number3} now follows directly by a combination of Theorem \ref{thm_ot_numb_comp} and (\ref{eq_compar_number3_1}), (\ref{eq_compar_number3_2}).
		The second part is a direct consequence of Proposition \ref{prop_interpol}.
	\end{proof}
	\end{sloppypar}
	\par
	Let us now prove Theorem \ref{thm_ot_numb_comp}.
	A related result in a non-family version has already appeared in \cite[Theorem 4.5]{FinTits}.
	We will use now the notations introduced in Section \ref{sect_curv}.
	\par 
	We fix a relatively positive Hermitian metric $h^F_0$ on $F$ over $P$, an $(n, n)$-form $\eta_X$ over $X$ and an $(n', n')$-form $\eta_P$ over $P$, verifying similar hypotheses as the form $\eta$ from the beginning of this section. 
	We need the following result, which we suggest to compare with Theorem \ref{thm_appl_ot2a}, from which we borrow the notations.
	\par 
	\begin{thm}\label{thm_appl_ot2a2}
		There are $C > 0$, $l_0 \in \nat$, such that for any $l \geq l_0$, and an arbitrary relatively positive Hermitian metric $h^F$ on $F$, we have 
		\begin{equation}
			[{\rm{Hilb}}_l^p(h^F, \eta_P)]
			\leq
			{\rm{Hilb}}_l^{\pi}(\iota^* h^F, \eta_X)
			\cdot
			\Big(\sup \max \Big( \frac{h^F}{h^F_0}, \frac{h^F_0}{h^F} \Big) \Big)^C
			\cdot
			\exp(C).
		\end{equation}
	\end{thm}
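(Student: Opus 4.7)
The plan is to construct, for every fiber $b \in B$ and every $s \in G_{l,b}$, an extension $\tilde s \in K_{l,b}$ with $\tilde s|_{X_b} = s$ whose $L^2$-norm with respect to ${\rm{Hilb}}_l^p(h^F,\eta_P)$ is suitably controlled by the $L^2$-norm of $s$ with respect to ${\rm{Hilb}}_l^{\pi}(\iota^* h^F,\eta_X)$; by the defining formula (\ref{eq_defn_quot_norm}) of the quotient metric, any such family of extensions immediately yields the desired bound on $[{\rm{Hilb}}_l^p(h^F,\eta_P)]$. The tool is, as usual in this circle of results, the Ohsawa--Takegoshi extension theorem, but applied with variable weights so as to decouple the dependence on $h^F$ from the tensor power.

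First I would treat the base case $h^F = h^F_0$. Here Theorem \ref{thm_appl_ot2a} yields precisely the desired inequality for the natural Kähler volume forms on the fibers, namely $c_1(F,h^F_0)^{n'}|_{P_b}/(n')!$ and $c_1(\iota^* F,\iota^* h^F_0)^{n}|_{X_b}/n!$. Replacing these by the fixed (positive) forms $\eta_P|_{P_b}$ and $\eta_X|_{X_b}$ only costs bounded multiplicative factors, since the ratios are smooth positive functions on the compact manifolds $P$ and $X$. This establishes the theorem when $h^F = h^F_0$.

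Next, for arbitrary relatively positive $h^F$, write $h^F = e^{-\phi} h^F_0$, so that $A := \sup\max(h^F/h^F_0, h^F_0/h^F) = e^{\sup|\phi|}$. The strategy is to apply the semiclassical extension operator of \cite{FinOTAs} and \cite{FinOTRed} fiberwise with a weight of the form $h^F \cdot (h^F_0)^{l-1}$ on $F^{\otimes l}$. The bulk of the positivity needed for OT is then carried by $(h^F_0)^{l-1}$, which is uniformly positive and gives an OT-constant depending only on the fixed data $(P,X,F,h^F_0,\eta_P,\eta_X)$ (in particular, independent of $l \ge l_0$ by the asymptotic sharp form of semiclassical OT). Passing from this mixed weight back to $(h^F)^l$ on the $P$-side, and similarly relating $(\iota^* h^F)^{l}$ to $(\iota^* h^F_0)^{l-1} \cdot \iota^* h^F$ on the $X$-side, uses only one-factor changes of weight and therefore costs factors of $A^{O(1)}$; together with the invariance of the inequality under the multiplicative rescalings $h^F \mapsto c \cdot h^F$ (which scale both sides by $c^l$ and allow us to normalize $\phi$ so that its supremum and infimum are comparable), one can collect all the $A$-dependent factors into a single power $A^C$.

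The main obstacle is precisely the last paragraph: a straightforward application of OT with the weight $h^F_0$ and a subsequent pointwise conversion $e^{-l\phi} \le A^l$ yields only the far weaker bound $A^{2l}$, which is useless for the target application (the factor of $\exp(Cs)$ in Theorem \ref{thm_ot_numb_comp} comes from $A^C$ with $A = e^{O(s)}$). The key is therefore to exploit the flexibility of the semiclassical extension operator -- whose asymptotics are explicit enough from \cite{FinOTAs, FinOTRed} that its constant can be tracked as a function of the $F$-weight -- in order to place all but a bounded number of tensor factors against the fixed positive metric $h^F_0$, so that the $h^F$-dependence enters only through an $O(1)$ number of factors and hence only as a fixed power of $A$.
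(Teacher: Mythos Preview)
Your overall architecture is right --- rephrase the inequality as an extension problem on each fiber and apply an Ohsawa--Takegoshi type theorem with a split weight so that the dependence on $h^F$ enters through only finitely many tensor factors --- but you have the split backwards, and this is a genuine gap, not a cosmetic issue.

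You propose the weight $h^F \cdot (h^F_0)^{\,l-1}$ and then claim that ``passing from this mixed weight back to $(h^F)^l$ \ldots uses only one-factor changes of weight and therefore costs factors of $A^{O(1)}$.'' This is false: the ratio $(h^F)^l / \big(h^F \cdot (h^F_0)^{\,l-1}\big) = (h^F/h^F_0)^{\,l-1} = e^{-(l-1)\phi}$ involves $l-1$ factors, so the pointwise conversion costs $A^{\,l-1}$ on each side, i.e.\ exactly the $A^{O(l)}$ loss you correctly identify as fatal when using the pure weight $(h^F_0)^l$. Having a single copy of $h^F$ in the mixed weight is not the same as a single-factor change when comparing to $(h^F)^l$. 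Your concluding sentence (``place all but a bounded number of tensor factors against the fixed positive metric $h^F_0$'') repeats the same reversal.

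The paper's argument (following \cite[Theorem 2.5 and (2.11)]{FinSecRing}, \cite[Theorem 4.14]{FinTits}, and Demailly's singular OT \cite{DemExtRed}) does the opposite: one fixes a uniform $p_0 \in \nat$, depending only on $(P, X, F, h^F_0, \eta_P)$, such that $p_0\, c_1(F, h^F_0)$ dominates the canonical twist and the quasi-psh function $\delta_{X_b}$ encoding the submanifold. One then applies OT with the weight $(h^F_0)^{p_0} \otimes (h^F)^{\,l-p_0}$. The point is that $h^F$ is assumed relatively positive, so $(h^F)^{\,l-p_0}$ has nonnegative curvature along each fiber and can serve as the ``free'' semipositive twist in OT; the entire geometric burden is absorbed by the fixed $(h^F_0)^{p_0}$, giving an OT constant independent of both $l$ and $h^F$. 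Converting $(h^F_0)^{p_0} \otimes (h^F)^{\,l-p_0}$ to $(h^F)^l$ now changes only $p_0$ factors, so costs at most $A^{p_0}$ on each side, yielding the stated bound with $C = 2p_0$. In short: the bulk of the weight should be carried by the \emph{variable} metric $h^F$ (whose positivity you are assuming), and only a bounded number of factors by the fixed reference $h^F_0$.
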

	\begin{proof}
		The statement can be rephrased in the following manner: for any $b \in B$, $f \in G_{l, b}$, there is $\tilde{f} \in K_{l, b}$, so that $\res_l(\tilde{f}) = f$, and we have
		\begin{equation}
			\| \tilde{f} \|_{{\rm{Hilb}}_l^p(h^F, \eta_P)}
			\leq
			\| f \|_{{\rm{Hilb}}_l^{\pi}(\iota^* h^F, \eta_X)}
			\cdot
			\Big(\sup \max \Big( \frac{h^F}{h^F_0}, \frac{h^F_0}{h^F} \Big) \Big)^C
			\cdot
			\exp(C).
		\end{equation}
		In the non-family version, such a statement appeared in \cite[Theorem 4.14]{FinTits} and \cite[Theorem 2.5]{FinSecRing} as an easy consequence of a version of Ohsawa-Takegoshi extension theorem from \cite{DemExtRed}.
		In the end, the only thing which was used there was the inequality \cite[(2.11)]{FinSecRing} which says that there is a uniform constant $p_0 \in \nat$, so that $p_0 c_1(F, h^F_0) - c_1(\wedge^{n'} T^{(1, 0)*} P_b, h^{\eta_P}) + \imun \alpha \partial \dbar \delta_{X_b} / 2 \pi$ is a non-negative current over each fiber, for any $\alpha \in [1, 2]$, where $h^{\eta_P}$ is the Hermitian metric associated with $\eta_P$, $\delta_{X_b}$ is a quasi-psh function with logarithmic singularities along $X_b$, and $h^F_0$ is an arbitrary relatively positive metric on $F$.
		But as the latter bound clearly holds in families, the whole estimate continues to hold as well.
	\end{proof}
	\begin{proof}[Proof of Theorem \ref{thm_ot_numb_comp}]
		We follow the same line of though as in the proof of Theorem \ref{thm_appl_ot} in Section \ref{sect_curv}, from which we borrow the notations.
		Directly from (\ref{eq_induc_1}), we see that in the left-hand side of (\ref{eq_ot_numb_comp}), we can replace ${\rm{Sym}}^l H_s$ by ${\rm{Hilb}}_{l}^{p}(h^{H_s})$, where $h^{H_s}$ is the Fubini-Study metric on $F := \mathscr{O}(1)$ over $\mathbb{P}(G_1^*)$ induced by $H_s$.
		From the definition of the geodesic ray, there is $C > 0$, such that
		\begin{equation}\label{eq_ot_numb_comp11}
			 h^{H_0} \cdot \exp(-Cs) \leq h^{H_s} \leq h^{H_0} \cdot \exp(Cs).
		\end{equation}
		Moreover, as the symplectic volume form on the fibers $P_b = \mathbb{P}(G_1^*)$ of $\pi : \mathbb{P}(G_1^*) \to B$, induced by $c_1(F, h^{H_s})|_{P_b}$, coincides with the Riemannian volume form induced by the Fubini-Study metric, we conclude by (\ref{eq_ot_numb_comp11}) that there is a constant $C > 0$, such that for any $l \in \nat$, $s > 0$, we have
		\begin{equation}\label{eq_ot_numb_comp12}
			{\rm{Hilb}}_{l}^{p}(h^{H_s})
			\leq
			\exp(C + C s)
			\cdot
			{\rm{Hilb}}_{l}^{p}(h^{H_s}, \eta_P),
		\end{equation}
		where $\eta_P$ is an arbitrary relative volume form chosen as in the beginning of this section.
		The result now follows directly from Theorem \ref{thm_appl_ot2a2} and (\ref{eq_ot_numb_comp11}), (\ref{eq_ot_numb_comp12}).
	\end{proof}
	\begin{proof}[Proof of Lemma \ref{thm_fs_numb_comp}]
		Let us establish that if the filtration has integer weights, Lemma \ref{thm_fs_numb_comp} holds for $\epsilon = 0$.
		\par 
		Let us fix $b \in B$ and consider the induced filtration on the section ring $R(X_b, L|_{X_b})$.
		By definition, it is a finitely generated submultiplicative filtration with integral weights.
		Hence, it corresponds to a test configuration, \cite{NystOkounTest}, \cite{SzekeTestConf}.
		Phong-Sturm in \cite[Lemma 4]{PhongSturmRegul} established that for filtrations associated with test configurations, there is $C(b) > 0$, such that for any $l \in \nat^*$, over $X_b$, we have
		\begin{equation}\label{eq_ps_stat_expl}
			FS(H_s)
			\leq 
			FS([{\rm{Sym}}^l H]_s)^{\frac{1}{l}}
			\cdot
			\exp(C(b)).
		\end{equation}
		Unfortunately, it is not very clear to the author of the current article that the constant $C(b)$ produced in \cite{PhongSturmRegul} can be chosen to depend measurably on the parameter $b$.
		\par 
		Due to this, we introduce the following workaround.
		First of all, for any $s > 0$, $l \in \nat^*$, consider the following functions
		\begin{equation}
			g_{l, s}(b) := \frac{1}{s} \sup_{x \in X_b} \log \Big( \frac{FS(H_s)(s)}{FS([{\rm{Sym}}^l H]_s)^{\frac{1}{l}}(x)} \Big).
		\end{equation}
		Note that for any $s > 0$, $l \in \nat^*$, the function $g_{l, s}(b)$ is continuous (hence measurable).
		Moreover, by definition, we have
		\begin{equation}\label{eq_fs_bnd_triv_000}
			FS(H_s)
			\leq 
			FS([{\rm{Sym}}^l H]_s)^{\frac{1}{l}}
			\cdot
			\exp(g_{l, s}(b) \cdot s).
		\end{equation}
		Note that the boundedness of the weights of the filtration implies that there is a constant $C > 0$, so that for any $s_1, s_2 > 0$, $l \in \nat^*$, we have
		\begin{equation}\label{eq_fs_bnd_triv_111}
			\big| s_1 \cdot g_{l, s_1}(b) - s_2 \cdot g_{l, s_2}(b) \big| \leq |s_1 - s_2| \cdot C.
		\end{equation}
		Now, for a given $i \in \nat^*$, we introduce the following functions
		\begin{equation}
			h_i(b) := \max\Big\{ \sup_{l \in \nat^*} g_{l, i}(b), 0 \Big\} + \frac{2 C}{i + 1}.
		\end{equation}
		Then immediately from (\ref{eq_fs_bnd_triv_000}) and (\ref{eq_fs_bnd_triv_111}), we have
		\begin{equation}
			FS(H_s)
			\leq 
			FS([{\rm{Sym}}^l H]_s)^{\frac{1}{l}}
			\cdot
			\exp(h_{\lfloor s \rfloor}(b) \cdot s).
		\end{equation}
		We claim that the functions $h_i$, $i \in \nat^*$, verify the stated assumptions.
		Indeed, they are uniformly bounded by (\ref{eq_fs_bnd_triv_111}).
		Moreover, $h_i$ are measurable, being a supremum of a countable number of measurable uniformly bounded functions.
		Finally, pointwise $h_i$ converge to $0$ by  (\ref{eq_ps_stat_expl}).
		This establishes Lemma \ref{thm_fs_numb_comp} for integer weights.
		\par 
		Note that Lemma \ref{thm_fs_numb_comp} remains valid if all the weights are changed by a multiplicative factor.
		Hence, the result holds true for filtrations with rational weights.
		By approximation (which introduces the additional $\epsilon > 0$ in Lemma \ref{thm_fs_numb_comp}), it holds for any weights.
	\end{proof}

	\section{Approximate critical Hermitian structures and geodesic rays}\label{sect_geod_appr}
	The main goal of this section is to establish a result concerning the compatibility of the construction of geodesic rays and approximate critical Hermitian structures, i.e. Theorem \ref{thm_ray_apprx}.
	\par 
	The proof is based on some local calculations of the curvature of geodesic rays.
	More precisely, for a holomorphic vector bundle $E$ over $B$ with a filtration by subbundles $E = \mathcal{F}_{\lambda_1} \supset \mathcal{F}_{\lambda_2} \supset \cdots \supset \mathcal{F}_{\lambda_q} \supset \mathcal{F}_{\lambda_{q + 1}} = \{ 0 \}$, we consider a geodesic ray $H_s$, $s \in [0, +\infty[$, associated with the filtration and departing from a Hermitian metric $H$ on $E$.
	We denote $G_i := \mathcal{F}_{\lambda_i} / \mathcal{F}_{\lambda_{i + 1}}$, $q_i = \rk{\mathcal{F}_{\lambda_i}}$, $i = 1, \ldots, q$, and assume that the filtration is decreasing, i.e. $\lambda_1 < \lambda_2 < \cdots < \lambda_q$.
	\par 
	Consider a local holomorphic frame $e_1, \ldots, e_r$ of $E$ adapted to the filtration in the sense that $e_{r - q_i + 1}, \ldots, e_r$ form a holomorphic frame $\mathcal{F}_{\lambda_i}$ for any $i = 1, \ldots, q$.
	We now construct a (non-holomorphic) frame $f_1, \ldots, f_r$ by projecting orthogonaly $e_{r - q_i + j}$, $j = 1, \ldots, q_i - q_{i + 1}$, onto the orthogonal complement of $\mathcal{F}_{\lambda_{i + 1}}$.
	It is then immediate that
	\begin{equation}\label{eq_f_hs_calc}
		\| f_{r - q_i + j} \|_{H_s} = \exp(- s \lambda_i / 2) \cdot \| f_{r - q_i + j} \|_{H_0}, \qquad \text{for } i = 1, \ldots, q \text{ and } j = 1, \ldots, q_i - q_{i + 1}.
	\end{equation}
	\par 
	We denote by $\alpha_s$, $s \in [0, +\infty[$, the connection form of the Chern connection on $(E, H_s)$ with respect to the frame $f_1, \ldots, f_r$.
	We write it in a matrix form
	\begin{equation}\label{eq_alph_beta_gen_form}
	\alpha_s 
	=
	\begin{bmatrix}
    	\beta_{11, s} & \beta_{12, s} &  \dots  & \beta_{1q, s} \\
  		\beta_{21, s} & \beta_{22, s} &  \dots  & \beta_{2q, s} \\
  		\vdots & \vdots & \ddots & \vdots \\
  		\beta_{q1, s} & \beta_{q2, s} &  \dots  & \beta_{qq, s}
	\end{bmatrix}
	\end{equation}
	where $\beta_{ij, s}$, $i, j = 1, \ldots, q$, are differential forms of degree $1$ with values in ${\rm{Hom}}(G_j, G_i)$. 
	Recall the following well-known calculation, cf. \cite[Propositions 1.6.4 - 1.6.6]{KobaVB}, \cite[Lemmas 3.1, 3.2]{JonsMcClShiva}.
	\begin{lem}\label{prop_beta_form}
		For $i > j$, $\beta_{ij, s}$ has only $(0, 1)$-differential form components, and for $i < j$, $\beta_{ij, s}$ has only $(1, 0)$-differential form components.
		Moreover, $(\beta_{ij, 0})^* = - \beta_{ji, 0}$, and we have
		\begin{equation}
			\beta_{ij, s}=
			\begin{cases}
				\beta_{ij, 0}, & \text{if } i > j \\
				\exp(-s (\lambda_j - \lambda_i)) \cdot \beta_{ij, 0}, & \text{if } i \leq j.
			\end{cases}
		\end{equation}
	\end{lem}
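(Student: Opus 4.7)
The plan is to exploit the fact that $f_1, \ldots, f_r$ is constructed by $H_0$-orthogonal projection, which makes the matrix of $H_s$ in the $f$-frame block-diagonal with a simple $s$-scaling, and then to extract $\alpha_s$ from the two defining properties of the Chern connection: compatibility with the holomorphic structure (which fixes $\alpha_s^{(0,1)}$) and metric compatibility (which fixes $\alpha_s^{(1,0)}$). Specifically, since $f_{r-q_i+j}$ is the $H_0$-orthogonal projection of $e_{r-q_i+j}$ onto the orthogonal complement of $F_{\lambda_{i+1}}$, the graded pieces $G_i$, identified with these complements, are mutually $H_0$-orthogonal; and since the geodesic ray acts by scaling each $G_i$ by $\exp(-s\lambda_i)$, as reflected in (\ref{eq_f_hs_calc}), the matrix of $H_s$ in the $f$-frame is block-diagonal with $i$-th block equal to $\exp(-s\lambda_i)$ times the $i$-th block of $H_0$.

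To pin down $\alpha_s^{(0,1)}$, I would use $\nabla^{(0,1)} f_a = \dbar f_a$. Writing $f_a = e_a + \sum c_a^b e_b$ with $b$ indexing a holomorphic frame of $F_{\lambda_{i+1}}$ (for $a$ in block $i$), and using $\dbar e_b = 0$, one finds $\dbar f_a \in F_{\lambda_{i+1}} = \bigoplus_{k > i} G_k$. Hence $\alpha_s^{(0,1)}$ is block-strictly-lower-triangular, giving both the type statement (for $i \leq j$, $\beta_{ij,s}$ has no $(0,1)$-part) and, because the coefficients $c_a^b$ are defined via $H_0$ and thus $s$-independent, the identity $\beta_{ij, s}^{(0,1)} = \beta_{ij, 0}^{(0,1)}$ for $i > j$.

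The $(1,0)$-part and the $s$-evolution are then extracted from metric compatibility
\[
\partial H_s(f_a, f_b) = H_s(\nabla^{(1,0)} f_a, f_b) + H_s(f_a, \nabla^{(0,1)} f_b),
\]
analyzed block-by-block. For $a$ in block $i$, $b$ in block $j$ with $i < j$, only one term on the right survives and forces the block-strictly-lower-triangular part of $\alpha_s^{(1,0)}$ to vanish; combined with the previous paragraph this yields $\beta_{ij, s} = \beta_{ij, 0}$ for $i > j$. For $a, b$ in the same block $i$, the identity reduces, after dividing by $\exp(-s\lambda_i)$, to the metric compatibility of $H_0$ on block $i$, so $\beta_{ii, s}$ is $s$-independent. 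For $a$ in block $i$, $b$ in block $j$ with $i > j$, the identity relates the block-upper-triangular $(1,0)$-part $\beta_{ji, s}^{(1,0)}$ to $\beta_{ij, 0}^{(0,1)}$, and the mismatch between the factors $\exp(-s\lambda_i)$ and $\exp(-s\lambda_j)$ on the two sides produces exactly $\exp(-s(\lambda_i - \lambda_j))$, which after relabeling gives $\beta_{ij, s} = \exp(-s(\lambda_j - \lambda_i)) \beta_{ij, 0}$ for $i < j$. Setting $s = 0$ in the last case yields the adjunction $(\beta_{ij, 0})^* = -\beta_{ji, 0}$. The main technical obstacle is the careful bookkeeping of block indices, Hom-directions, and $(p,q)$-types needed to convert the abstract metric-compatibility identity into the precise algebraic formulas in the statement.
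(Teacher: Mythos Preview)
Your proposal is correct and follows essentially the standard argument. The paper does not give its own proof of this lemma; it simply cites \cite[Propositions 1.6.4--1.6.6]{KobaVB} and \cite[Lemmas 3.1, 3.2]{JonsMcClShiva} as references for this well-known calculation, and your write-up is precisely the computation behind those references: determine $\alpha_s^{(0,1)}$ from $\dbar f_a \in F_{\lambda_{i+1}}$ (which is metric-independent, hence $s$-independent), and then read off $\alpha_s^{(1,0)}$ block-by-block from metric compatibility using the block-diagonal form $H_s|_{G_i} = \exp(-s\lambda_i) H_0|_{G_i}$.
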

	We will now apply Lemma \ref{prop_beta_form} for the study of geodesic rays associated with the Harder-Narasimhan filtrations.
	More precisely, consider the Harder-Narasimhan filtration of a vector bundle $E$ as in (\ref{eq_HN_filt}).
	We use the notations (\ref{eq_resol_filtr}) for a resolution of the Harder-Narasimhan filtration, and let $G_i^{HN} = \tilde{\mu}_0^* \mathcal{F}_{\lambda_i} / \tilde{\mu}_0^* \mathcal{F}_{\lambda_{i + 1}}$, $i = 1, \ldots, q$.
	We fix a $\delta$-approximate critical Hermitian structure $H$ on $E$ and construct the geodesic ray, $H_s$, $s \in [0, +\infty[$, associated with the resolution of the Harder-Narasimhan filtration and departing from $H$, as in Theorem \ref{thm_ray_apprx}.
	\par 
	We choose a cover of $B_0$ by open subsets $U_u$, $u = 1, \ldots, v$, with a subordinate partition of unity $\rho_u : X \to [0, 1]$, and local holomorphic frames $e_1^u, \ldots, e_r^u$ of $\mu_0^* E$ over $U_u$ adapted to the filtration $\tilde{\mu}_0^* \mathcal{F}$ as before (\ref{eq_alph_beta_gen_form}).
	We denote by $\alpha_s^{HN, u}$, $s \in [0, +\infty[$, $u = 1, \ldots, v$, the connection form of the Chern connection on $(E, H_s)$ with respect to the frame $f_1^u, \ldots, f_r^u$ associated with $e_1^u, \ldots, e_r^u$ in the same way as $f_1, \ldots, f_r$ was associated with $e_1, \ldots, e_r$ in (\ref{eq_alph_beta_gen_form}).
	We denote by $\beta_{i j, s}^{HN, u}$, $i, j = 1, \ldots, q$, the components of the associated connection form as in (\ref{eq_alph_beta_gen_form}).
	\par 
	We denote by $R^{G_i^{HN}}$, $i = 1, \ldots, q$, the curvature of the Chern connection on $G_i^{HN}$ with the metric induced by $H$.
	For $b \in U_u$, we denote by $dz_l(b) \in T_b^{1, 0 *} B_0$, $l = 1, \ldots, m$, some orthogonal frame (with respect to $\mu_0^* \omega_B$).
	For $i, j = 1, \ldots, q$, $i < j$, $u = 1, \ldots, v$, $l = 1, \ldots, m$, we define the section $A_{i j l}^u$ of ${\rm{Hom}}(G_j^{HN}, G_i^{HN})$ as follows
	\begin{equation}\label{eq_beta_dec_1}
		\beta_{i j, 0}^{HN, u}
		=
		\sum_{l = 1}^m
		d z_l \cdot A_{i j l}^u.
	\end{equation}
	Then by Lemma \ref{prop_beta_form}, for any $i > j$, we have 
	\begin{equation}\label{eq_beta_dec_2}
		\beta_{i j, 0}^{HN, u}
		=
		- \sum_{l = 1}^m
		d \overline{z}_l \cdot A_{j i l}^{u *}.
	\end{equation}
	We also denote by $R^E_{i j, s} \in \ccal^{\infty}(B, \wedge^{1, 1} T^* B \otimes {\rm{Hom}}(G_j^{HN}, G_i^{HN}))$ the components of the curvature tensor of the Chern connection on $(\mu_0^* E, H_s)$.
	\begin{lem}\label{lem_bnd_off_diag_curv}
		For any $i, j = 1, \ldots, q$, $i \neq j$, the following estimates hold 
		\begin{align}\label{eq_bnd_off_diag_curv}
			&
			\sum_{u = 1}^{v}
			\big\| 
			\rho_u \cdot d \beta_{i j, 0}^{HN, u} \wedge \mu_0^* \omega_B^{m - 1} 
			\big\|_{L^1(U_u, H_0)}
			\leq
			2 \pi \delta r q 8^{q + 3},
			\\
			\label{eq_bnd_off_diag_curv00}
			& 
			\sum_{u = 1}^{v}
			\big\| 
			\rho_u \cdot \beta_{i j, 0}^{HN, u} \wedge \beta_{j i, 0}^{HN, u}  \wedge \mu_0^* \omega_B^{m - 1} 
			\big\|_{L^1(U_u, H_0)}
			\leq
			2 \pi \delta r 8^{q - \max(i, j) + 1}.
		\end{align}
		Moreover, we have
		\begin{equation}\label{eq_bnd_off_diag_curv0}
			\big\| 
			R^{G_i^{HN}}  \wedge \mu_0^* \omega_B^{m - 1} 
			-
			\lambda_i {\rm{Id}}_{G_i^{HN}} \cdot \mu_0^* \omega_B^m 
			\big\|_{L^1(B_0, H_0)}
			\leq
			2 \pi \delta r 8^{q - i + 1}.
		\end{equation}
	\end{lem}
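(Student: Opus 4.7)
The plan is to express the approximate critical condition pulled back to $B_0$ in terms of the block decomposition of the Chern curvature in the frame $f_1^u, \ldots, f_r^u$, and then to run a reverse induction on the filtration index to separate the second fundamental forms from the quotient curvatures. First I would pull back the $\delta$-approximate critical condition from $(E, H)$ to $(\mu_0^* E, H_0)$ over $B_0$, using that $\mu_0$ is a biholomorphism outside an exceptional divisor, that $\mu_0^* \omega_B^m$ vanishes there, and that $\tilde{\mu}_0^* \mathscr{F}^{HN}$ coincides with $\mu_0^* \mathscr{F}^{HN}$ generically. In the frame $f^u$ the weight operator is block diagonal with $i$-th block $\lambda_i {\rm{Id}}_{G_i^{HN}}$, so since the operator norm of a block matrix dominates that of each block, the pulled back approximate condition yields $L^1(H_0)$-norm at most $\delta$ for each diagonal error $\frac{\imun}{2\pi} R^E_{ii, 0} \wedge \mu_0^* \omega_B^{m-1} - \lambda_i {\rm{Id}}_{G_i^{HN}} \mu_0^* \omega_B^m$ and for each off-diagonal block $\frac{\imun}{2\pi} R^E_{ij, 0} \wedge \mu_0^* \omega_B^{m-1}$.

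Writing $R^E_{ii, 0} = R^{G_i^{HN}} + \sum_{k \neq i} \beta_{ik, 0}^{HN, u} \wedge \beta_{ki, 0}^{HN, u}$ via the standard curvature-block formula, Lemma \ref{prop_beta_form} implies that $\frac{\imun}{2\pi} \beta_{ik, 0}^{HN, u} \wedge \beta_{ki, 0}^{HN, u}$ is a semi-positive ${\rm{End}}(G_i^{HN})$-valued $(1, 1)$-form when $k < i$ and semi-negative when $k > i$. I would split $C_i := \sum_{k \neq i} \beta_{ik, 0}^{HN, u} \wedge \beta_{ki, 0}^{HN, u}$ as $P_i + N_i$ accordingly. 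Taking the fiberwise trace of the diagonal block estimate and using the Harder-Narasimhan identity $\int_B {\rm{Tr}}\big[\frac{\imun}{2\pi} R^{G_i^{HN}} \wedge \omega_B^{m-1}\big] = \lambda_i \rk{G_i^{HN}} \int_B [\omega_B]^m$ then gives $\big|\int_{B_0} {\rm{Tr}}[\frac{\imun}{2\pi}(P_i + N_i) \wedge \mu_0^* \omega_B^{m-1}]\big| \leq \delta r$.

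The main step is a reverse induction on $i$ producing bounds $\int {\rm{Tr}}[\frac{\imun}{2\pi} P_i \wedge \mu_0^* \omega_B^{m-1}] \leq c_i$ with geometric growth in $q - i$. The base case $i = q$ is immediate since $N_q = 0$. For the induction step, the cyclicity identity ${\rm{Tr}}_{G_i^{HN}}[\beta_{ik, 0}^{HN, u} \wedge \beta_{ki, 0}^{HN, u}] = - {\rm{Tr}}_{G_k^{HN}}[\beta_{ki, 0}^{HN, u} \wedge \beta_{ik, 0}^{HN, u}]$ shows that $|\int {\rm{Tr}}[\frac{\imun}{2\pi} N_i \wedge \mu_0^* \omega_B^{m-1}]|$ is dominated by the contributions to $\int {\rm{Tr}}[\frac{\imun}{2\pi} P_k \wedge \mu_0^* \omega_B^{m-1}]$ for $k > i$ already controlled by induction; combined with the trace bound above, this yields a recursive estimate of the form $c_i \leq 2 c_{i+1} + \delta r + \cdots$. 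The claimed factor $8^{q - i + 1}$ in the statement arises by combining this geometric growth with the slack introduced when passing from trace to operator norm for semi-definite matrices and with the analysis of mixed wedges below.

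Finally I would deduce the three inequalities as follows. For the second bound with $i > j$, $\frac{\imun}{2\pi} \beta_{ij, 0}^{HN, u} \wedge \beta_{ji, 0}^{HN, u}$ is a semi-positive summand of $P_i$, so its pointwise operator norm is bounded by its trace and its $L^1$ norm by $c_{\max(i, j)}$; the case $i < j$ is symmetric via cyclicity. For the third bound, the identity $R^{G_i^{HN}} = R^E_{ii, 0} - C_i$ reduces the estimate to the approximate condition on the diagonal block together with trace bounds on $P_i$ and $N_i$ individually. For the first bound, the identity $d \beta_{ij, 0}^{HN, u} = R^E_{ij, 0} - \sum_k \beta_{ik, 0}^{HN, u} \wedge \beta_{kj, 0}^{HN, u}$ reduces the problem to bounding the mixed wedges, for which a pointwise Cauchy-Schwarz or AM-GM inequality controls $\beta_{ik, 0}^{HN, u} \wedge \beta_{kj, 0}^{HN, u}$ in terms of $\beta_{ik, 0}^{HN, u} \wedge \beta_{ki, 0}^{HN, u}$ and $\beta_{jk, 0}^{HN, u} \wedge \beta_{kj, 0}^{HN, u}$. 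The principal technical hurdle is the reverse induction and the bookkeeping of constants required to reach the explicit factors $8^{q - \max(i, j) + 1}$ and $8^{q - i + 1}$ in the statement; the rest amounts to elementary matrix inequalities and the sign analysis encoded in Lemma \ref{prop_beta_form}.
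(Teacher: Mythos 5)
The overall architecture of your argument is the same as the paper's: block‑decompose the curvature in the adapted frame, use the sign information from Lemma \ref{prop_beta_form} to identify which second‑fundamental‑form wedges are semi‑positive and which are semi‑negative, integrate the trace of the diagonal blocks via Chern--Weil, run a reverse induction in $i$, and finish the mixed term with Cauchy--Schwarz. However, there is one genuine gap in a step you treat as an identity.

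You assert a ``Harder--Narasimhan identity''
\[
\int_{B_0} {\rm{Tr}} \Big[\tfrac{\imun}{2\pi} R^{G_i^{HN}} \wedge \mu_0^*\omega_B^{m-1}\Big] \;=\; \lambda_i \,\rk{G_i^{HN}} \int_B [\omega_B]^m
\]
and from it deduce the two‑sided bound $\big|\int_{B_0} {\rm{Tr}}[\tfrac{\imun}{2\pi}(P_i + N_i)\wedge\mu_0^*\omega_B^{m-1}]\big| \leq \delta r$. This identity is false in general: the bundle $G_i^{HN} = \tilde{\mu}_0^*\mathscr{F}^{HN}_{\lambda_i}/\tilde{\mu}_0^*\mathscr{F}^{HN}_{\lambda_{i+1}}$ is built from \emph{saturations} on the modification $B_0$, and saturation can only \emph{increase} the first Chern class paired with $[\omega_B]^{m-1}$ (this is \cite[Lemma 5.7.5]{KobaVB}, used in (\ref{eq_bnd_off_diag_curv6}) of the paper). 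Combined with Chern--Weil, this gives a one‑sided inequality
\[
\int_{B_0} {\rm{Tr}}\Big[\tfrac{\imun}{2\pi} R^{G_i^{HN}} \wedge \mu_0^*\omega_B^{m-1}\Big] \;\geq\; \lambda_i\,\rk{G_i^{HN}} \int_B [\omega_B]^m,
\]
hence only an upper bound $\int_{B_0}{\rm{Tr}}[\tfrac{\imun}{2\pi}(P_i+N_i)\wedge\mu_0^*\omega_B^{m-1}] \lesssim \delta r$; there is no lower bound. Your proposal does not identify the saturation as the source of one‑sidedness, and your absolute‑value claim is strictly stronger than what can be established. The fix is available: since each $P_i$ is a sum of semi‑positive operators, you only ever need the upper bound when running the reverse induction, and the $(\ref{eq_triv_pr})$‑style argument in the paper shows precisely how to convert a one‑sided integral bound, together with the pointwise semi‑definiteness from Lemma \ref{prop_beta_form}, into a genuine $L^1$‑norm bound. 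But as written your argument rests on an incorrect equality, and the essential mechanism — that saturation in the resolution can only raise the degree — is absent.

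Two smaller remarks. First, when you take the trace of the diagonal block estimate, you should be careful whether the $\delta$‑condition bounds the operator norm or the trace norm; the factor $r$ enters at this point, and the paper tracks it explicitly in (\ref{eq_bnd_off_diag_curv5}) (trace norm of an $r\times r$ block is at most $r$ times its operator norm). Second, the off‑diagonal blocks of the weight operator vanish, so the $\delta$‑condition directly controls $\|R^E_{ij,0}\wedge\mu_0^*\omega_B^{m-1}\|_{L^1}$ for $i \neq j$ without any factor of $r$ (see (\ref{eq_bnd_off_diag_curv8})); make sure the constants in your Cauchy--Schwarz step are calibrated against that bound and not the trace‑norm one.
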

	\begin{proof}
		Remark first that by Lemma \ref{prop_beta_form}, in order to prove (\ref{eq_bnd_off_diag_curv}), (\ref{eq_bnd_off_diag_curv00}), in their full generality, it is enough to establish them for $i < j$.
		By the definition of $\beta_{i j, 0}^{HN, u}$, cf. \cite[(V.14.6)]{DemCompl}, we have
		\begin{equation}\label{eq_bnd_off_diag_curv2}
			\imun R^E_{i i, 0} =  \imun R^{G_i^{HN}} + \sum_{\substack{j = 1 \\ j \neq i}}^{q} \sum_{u = 1}^{v} \rho_u \imun \beta_{i j, 0}^{HN, u} \wedge \beta_{j i, 0}^{HN, u}.
		\end{equation}
		However, directly from (\ref{eq_beta_dec_1}) and (\ref{eq_beta_dec_2}), we see that 
		\begin{multline}\label{eq_bnd_off_diag_curv3}
			 \beta_{i j, 0}^{HN, u} \wedge \beta_{j i, 0}^{HN, u} \wedge \mu_0^* \omega_B^{m - 1}
			 =
			 \begin{cases}
			 	 &
			 	 - \sum_{l = 1}^m dz_l \wedge d\overline{z}_l \cdot A_{i j l}^{u} A_{i j l}^{u *}  \wedge \mu_0^* \omega_B^{m - 1}, \quad \text{if } i < j,
				 \\
				 &
				 \sum_{l = 1}^m dz_l \wedge d\overline{z}_l \cdot A_{j i l}^{u *} A_{j i l}^{u} \wedge \mu_0^* \omega_B^{m - 1}, \quad \text{if } i > j.
			 \end{cases}
		\end{multline}
		Also, by the Chern-Weil theory, we have
		\begin{equation}\label{eq_bnd_off_diag_curv4}
			\int_B \tr{\imun R^{G_i^{HN}}} \wedge \mu_0^* \omega_B^{m - 1}
			=
			2 \pi
			\int_B c_1(G_i^{HN}) \mu_0^* [\omega_B]^{m - 1}.
		\end{equation}
		Since $H$ is an $\delta$-approximate critical Hermitian structure, we have
		\begin{equation}\label{eq_bnd_off_diag_curv5}
			\Big\| 
			\imun R^E_{i i, 0} \wedge \mu_0^* \omega_B^{m - 1}
			-
			2 \pi \lambda_i
			{\rm{Id}}_{G_i^{HN}}
		 	\mu_0^* \omega_B^m
		 	\Big\|_{L^1(B_0, H_0)}^{{\rm{tr}}}
			\leq 
			2 \pi \delta r.
		\end{equation}
		And by the definition of $\lambda_i$, we have
		\begin{equation}
			\lambda_i \int_B \omega_B^m
			=
			\int_B c_1(\mathcal{F}_{\lambda_i} / \mathcal{F}_{\lambda_{i + 1}}) [\omega_B]^{m - 1}.
		\end{equation}
		Remark, however, that the difference between $c_1(\mu_0^* \mathcal{F}_{\lambda_i})$ and $c_1(\tilde{\mu}_0^* \mathcal{F}_{\lambda_i})$ is supported over the exceptional locus of $\mu_0$.
		Hence, we have
		\begin{equation}\label{eq_bnd_off_diag_curv6}
			\int_{B_0} c_1(G_i^{HN}) \mu_0^* [\omega_B]^{m - 1}
			=
			\int_B c_1(\mathcal{F}_{\lambda_i} / \mathcal{F}_{\lambda_{i + 1}}) [\omega_B]^{m - 1}.
		\end{equation}
		A combination of (\ref{eq_bnd_off_diag_curv4})-(\ref{eq_bnd_off_diag_curv6}) gives us 
		\begin{equation}\label{eq_bnd_off_diag_curv66}
			\int_B \tr{\imun R^{G_i^{HN}}} \wedge \mu_0^* \omega_B^{m - 1}
			\geq 
			\int_B \tr{\imun R^E_{ii, 0}} \wedge \mu_0^* \omega_B^{m - 1} - 2 \pi \delta r.
		\end{equation}
		\par Remark now that by (\ref{eq_bnd_off_diag_curv3}), in the sum (\ref{eq_bnd_off_diag_curv2}), each summand is negative for $i = q$ (it goes in line with the fact that the the curvature of a Hermitian vector bundle decreases under taking subbundles, cf. \cite[Theorem V.14.5]{DemCompl}).
		Hence, we have
		\begin{equation}\label{eq_bnd_off_diag_curv666}
			\imun R^{G_q^{HN}} \wedge \mu_0^* \omega_B^{m - 1}
			\leq
			\imun R^E_{qq, 0} \wedge \mu_0^* \omega_B^{m - 1}.
		\end{equation}
		From (\ref{eq_bnd_off_diag_curv66}) and (\ref{eq_bnd_off_diag_curv666}), we obtain 
		\begin{equation}\label{eq_bnd_off_diag_curv6666}
			\Big\|
				R^E_{qq, 0} \wedge \mu_0^* \omega_B^{m - 1}
				-
				R^{G_q^{HN}} \wedge \mu_0^* \omega_B^{m - 1}
			\Big\|_{L^1(U_u, H_0)}^{{\rm{tr}}}
			\leq
			2 \pi \delta r.
		\end{equation}
		From (\ref{eq_bnd_off_diag_curv5}) and (\ref{eq_bnd_off_diag_curv6666}), we obtain (\ref{eq_bnd_off_diag_curv0}) for $i = q$.
		Moreover, from (\ref{eq_bnd_off_diag_curv2}) and (\ref{eq_bnd_off_diag_curv6666}), we get
		\begin{equation}\label{eq_bnd_off_diag_curv7}
			\sum_{u = 1}^{v}
			\Big\| \rho_u \cdot \sum_{j = 0}^{q - 1} \imun \beta_{q j, 0}^{HN, u} \wedge \beta_{j q, 0}^{HN, u}  \wedge \mu_0^* \omega_B^{m - 1} \Big\|_{L^1(U_u, H_0)}^{{\rm{tr}}}
			\leq
			2 \pi \delta r.
		\end{equation}
		By the linearity of the trace and again the fact that in the sum under the norm (\ref{eq_bnd_off_diag_curv7}), each summand is positive by (\ref{eq_bnd_off_diag_curv3}), we see that (\ref{eq_bnd_off_diag_curv7}) implies (\ref{eq_bnd_off_diag_curv00}) for $i = q$.
		\par 
		Let us now describe the first step of induction, i.e. establish (\ref{eq_bnd_off_diag_curv00}) and (\ref{eq_bnd_off_diag_curv0}) for $i = q - 1$.
		First, by the similar argument as in (\ref{eq_bnd_off_diag_curv666}), we get
		\begin{equation}\label{eq_bnd_off_diag_curv667}
			\imun R^{G_{q-1}^{HN}} \wedge \mu_0^* \omega_B^{m - 1}
			+
			\sum_{u = 1}^{v} \rho_u \imun \beta_{q-1 q, 0}^{HN, u} \wedge \beta_{q q-1, 0}^{HN, u}
			\leq
			\imun R^E_{q-1 q-1, 0} \wedge \mu_0^* \omega_B^{m - 1}.
		\end{equation}
		But then we already know that (\ref{eq_bnd_off_diag_curv00}) holds for $i = q$, and since the statement is symmetric in $i, j$, it also holds for $j = q$.
		From this, (\ref{eq_bnd_off_diag_curv66}) and (\ref{eq_bnd_off_diag_curv667}), we obtain 
		\begin{equation}\label{eq_bnd_off_diag_curv6677}
			\Big\|
				R^E_{q-1 q-1, 0} \wedge \mu_0^* \omega_B^{m - 1}
				-
				R^{G_{q-1}^{HN}} \wedge \mu_0^* \omega_B^{m - 1}
			\Big\|_{L^1(B_0, H_0)}
			\leq
			2 \pi \delta r 9.			
		\end{equation}
		From (\ref{eq_bnd_off_diag_curv5}) and (\ref{eq_bnd_off_diag_curv6677}), we get (\ref{eq_bnd_off_diag_curv0}) for $i = q - 1$.
		From (\ref{eq_bnd_off_diag_curv2}), the fact that the estimate (\ref{eq_bnd_off_diag_curv00}) holds for $i = q$ and (\ref{eq_bnd_off_diag_curv6677}), we get (\ref{eq_bnd_off_diag_curv00}) for $i = q - 1$.
		The rest of induction is done similarly.
		\par 
		Let us establish (\ref{eq_bnd_off_diag_curv}).
		Since $H$ is an $\delta$-approximate critical Hermitian structure, for $i \neq j$,
		\begin{equation}\label{eq_bnd_off_diag_curv8}
			\sum_{u = 1}^{v}
			\Big\| \rho_u \cdot \Big( d \beta_{i j, 0}^{HN, u} + \sum_{k = 1}^q \beta_{i k, 0}^{HN, u} \wedge \beta_{k j, 0}^{HN, u} \Big)  \wedge \mu_0^* \omega_B^{m - 1} \Big\|_{L^1(U_u, H_0)}
			\leq
			2 \pi \delta.
		\end{equation}
		However, by Cauchy-Schwarz inequality, we have
		\begin{multline}\label{eq_bnd_off_diag_curv9}
			\Big( \sum_{u = 1}^{v} \Big\| \rho_v \cdot \beta_{i k, 0}^{HN, u} \wedge \beta_{k j, 0}^{HN, u}  \wedge \mu_0^* \omega_B^{m - 1} \Big\|_{L^1(U_u, H_0)}^{{\rm{tr}}} \Big)^2
			\\
			\leq
			\Big( \sum_{u = 1}^{v} 
			\Big\| \rho_v \cdot \beta_{i k, 0}^{HN, u} \wedge \beta_{k i, 0}^{HN, u} \wedge \mu_0^* \omega_B^{m - 1} \Big\|_{L^1(U_u, H_0)}^{{\rm{tr}}}
			\Big)
			\cdot
			\\
			\cdot
			\Big( \sum_{u = 1}^{v} 
			\Big\| \rho_v \cdot \beta_{j k, 0}^{HN, u} \wedge \beta_{k j, 0}^{HN, u} \wedge \mu_0^* \omega_B^{m - 1} \Big\|_{L^1(U_u, H_0)}^{{\rm{tr}}}
			\Big).
		\end{multline}
		We obtain (\ref{eq_bnd_off_diag_curv}) from (\ref{eq_bnd_off_diag_curv00}), (\ref{eq_bnd_off_diag_curv8}) and (\ref{eq_bnd_off_diag_curv9}).
	\end{proof}
	As an application of Lemmas \ref{prop_beta_form} and \ref{lem_bnd_off_diag_curv}, we establish the following result.
	\begin{lem}\label{lem_bnd_curv_offdi}
		For any $i \neq j$, $s > 0$, the following estimates hold 
		\begin{equation}\label{eq_bnd_curv_offdi}
		\begin{aligned}
			&
			\big\| 
			\imun R^E_{i j, s} \wedge \mu_0^* \omega_B^{m - 1} 
			\big\|_{L^1(B_0, H_s)}
			\leq
			2 \pi \delta r q 8^{q + 4},
			\\
			&
			\big\| 
			\imun R^E_{i i, s} \wedge \mu_0^* \omega_B^{m - 1} 
			-
			2 \pi \lambda_i \cdot {\rm{Id}}_{G_i^{HN}} \wedge \mu_0^* \omega_B^m
			\big\|_{L^1(B_0, H_s)}
			\leq
			2 \pi \delta r q 8^{q + 3},
		\end{aligned}
		\end{equation}
	\end{lem}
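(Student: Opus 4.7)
The plan is to work in the local frames $f_1^u, \ldots, f_r^u$ from the setup of Lemma~\ref{lem_bnd_off_diag_curv} and to expand the $(i,j)$-block of the curvature via the structure equation
\begin{equation*}
	R^E_{ij,s} = d \beta_{ij,s}^{HN,u} + \sum_{k = 1}^{q} \beta_{ik,s}^{HN,u} \wedge \beta_{kj,s}^{HN,u},
\end{equation*}
using the explicit $s$-dependence $\beta_{ij,s}^{HN,u} = \sigma_{ij}(s) \cdot \beta_{ij,0}^{HN,u}$ from Lemma~\ref{prop_beta_form}, where $\sigma_{ij}(s) := \exp(-s \max(\lambda_j - \lambda_i, 0))$. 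On the other hand, by (\ref{eq_f_hs_calc}), the frame-change factor multiplying the $(i,j)$-block when one passes from $f^u$ to an $H_s$-orthonormal frame is $c_{ij}(s) := \exp(s(\lambda_j - \lambda_i)/2)$. Since $\sigma_{ij}(s)$ is independent of $b \in B_0$, the exterior derivative commutes with it.

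For the diagonal case $i = j$, both scalings are trivial ($c_{ii}(s) = \sigma_{ii}(s) = 1$), so the $H_s$-operator norm on $G_i^{HN}$ coincides with the $H_0$-operator norm. Combining the decomposition (\ref{eq_bnd_off_diag_curv2}) at $s = 0$ with $\sigma_{ik}(s)\sigma_{ki}(s) = \exp(-s|\lambda_i - \lambda_k|)$ yields
\begin{equation*}
	R^E_{ii,s} = R^{G_i^{HN}} + \sum_{k \neq i} \exp(-s|\lambda_k - \lambda_i|) \sum_u \rho_u \, \beta_{ik,0}^{HN,u} \wedge \beta_{ki,0}^{HN,u}.
\end{equation*}
The second estimate of (\ref{eq_bnd_curv_offdi}) then follows by the triangle inequality, applying (\ref{eq_bnd_off_diag_curv0}) to the first summand and (\ref{eq_bnd_off_diag_curv00}) to each of the remaining $q - 1$ summands (and using $\exp(-s|\lambda_k - \lambda_i|) \leq 1$ for $s \geq 0$).

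For the off-diagonal case $i \neq j$, the key elementary estimate is that for every $k \in \{1, \ldots, q\}$,
\begin{equation*}
	c_{ij}(s) \, \sigma_{ik}(s) \, \sigma_{kj}(s) \leq \exp(-s|\lambda_i - \lambda_j|/2) \leq 1,
\end{equation*}
with the analogous bound holding for the $d\beta_{ij,s}^{HN,u}$-term. This follows by case analysis on the position of $\lambda_k$ relative to $\lambda_i, \lambda_j$: when $\lambda_k$ lies between them, equality holds, and when $\lambda_k$ falls outside the interval, the decay is strictly faster by the triangle inequality on $|\cdot|$. Consequently the $H_s$-operator norm of each summand of $R^E_{ij,s}$ is bounded by the corresponding $H_0$-quantity, and the first estimate of (\ref{eq_bnd_curv_offdi}) follows by applying (\ref{eq_bnd_off_diag_curv}) to the exterior derivative term, and the Cauchy-Schwarz argument (\ref{eq_bnd_off_diag_curv9}) combined with (\ref{eq_bnd_off_diag_curv00}) to the wedge-sum terms.

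The main obstacle is the verification of the triple product bound $c_{ij}(s)\sigma_{ik}(s)\sigma_{kj}(s) \leq \exp(-s|\lambda_i - \lambda_j|/2)$ uniformly in $k$, which amounts to an elementary but careful case analysis. A secondary subtlety is that the boundary terms $k \in \{i, j\}$ of the off-diagonal expansion involve the diagonal Chern connection forms $\beta_{ii,0}^{HN,u}$, which are not controlled pointwise, so these must be absorbed using the same Cauchy-Schwarz mechanism as in the proof of Lemma~\ref{lem_bnd_off_diag_curv} combined with the off-diagonal approximate critical estimate (\ref{eq_bnd_off_diag_curv5}).
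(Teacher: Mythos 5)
Your proposal takes essentially the same route as the paper: expand $R^E_{ij,s}$ via the Chern structure equation in the frame $f^u$, read off the $s$-dependence of each block from Lemma~\ref{prop_beta_form}, translate $L^1(B_0,H_s)$-norms to $L^1(B_0,H_0)$-norms using the block-scaling (\ref{eq_f_hs_calc}) as in (\ref{eq_bnd_curv_offdi4}), check that the combined exponential factor is $\le 1$, and conclude with Lemma~\ref{lem_bnd_off_diag_curv} together with the Cauchy--Schwarz step (\ref{eq_bnd_off_diag_curv9}). The paper writes the combined factor out case by case in (\ref{eq_bnd_curv_offdi3})--(\ref{eq_bnd_curv_offdi5}) rather than isolating your tidy intermediate bound $c_{ij}(s)\sigma_{ik}(s)\sigma_{kj}(s)\le\exp(-s|\lambda_i-\lambda_j|/2)$, but the arithmetic is identical, and your treatment of the diagonal case via the analogue of (\ref{eq_bnd_curv_offdi1}) also matches.

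One correction to your closing paragraph: the ``secondary subtlety'' about $k\in\{i,j\}$ is not a genuine issue, and the remedy you sketch would not actually produce the bound. For $i<j$ the diagonal block $\beta_{ii,0}^{HN,u}$ is pure $(1,0)$ (it is the Chern connection form of the holomorphic sub-quotient $(G_i^{HN},H_0)$ in the induced holomorphic local frame), while $\beta_{ij,0}^{HN,u}$ is also $(1,0)$ by Lemma~\ref{prop_beta_form}; hence $\beta_{ii,0}^{HN,u}\wedge\beta_{ij,0}^{HN,u}\wedge\mu_0^*\omega_B^{m-1}$ has bidegree $(m+1,m-1)$ on the $m$-dimensional base $B_0$ and vanishes identically, and the same holds for $k=j$ and for every $i<k<j$. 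So those terms simply drop out; the only surviving terms in the structure equation are $k<\min(i,j)$ and $k>\max(i,j)$, and these are exactly the ones to which (\ref{eq_bnd_off_diag_curv9}) and (\ref{eq_bnd_off_diag_curv00}) apply. Trying to Cauchy--Schwarz $\beta_{ii,0}^{HN,u}$ against itself would only have ``worked'' because that pairing is zero for the same bidegree reason, which is a roundabout way of saying the term was never there.
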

	\begin{proof} 
		We will establish the first bound of (\ref{eq_bnd_curv_offdi}) under the assumption $i < j$, as the other case is completely analogous.
		By Lemma \ref{prop_beta_form}, for $i \leq j$, we can write
		\begin{equation}\label{eq_bnd_curv_offdi3}
		\begin{aligned}
			R^E_{i j, s}
			=
			d \beta_{i j, 0}^{HN, u} \cdot \exp(- s (\lambda_j - \lambda_i))
			& +
			\sum_{k = 1}^i
			\beta_{i k, 0}^{HN, u} \wedge \beta_{k j, 0}^{HN, u}  \cdot \exp(- s (\lambda_j - \lambda_k))
			\\
			& +
			\sum_{k = i + 1}^j
			\beta_{i k, 0}^{HN, u} \wedge \beta_{k j, 0}^{HN, u}  \cdot \exp(- s (\lambda_j - \lambda_i))
			\\
			& +
			\sum_{k = j + 1}^q
			\beta_{i k, 0}^{HN, u} \wedge \beta_{k j, 0}^{HN, u}  \cdot \exp(- s (\lambda_k - \lambda_i)).
		\end{aligned}
		\end{equation}
		Remark now that by (\ref{eq_f_hs_calc}) for a section $A$ of ${\rm{Hom}}(G_j^{HN}, G_i^{HN})$, we have
		\begin{equation}\label{eq_bnd_curv_offdi4}
			\big\| 
			A \mu_0^* \omega_B^m
			\big\|_{L^1(U_u, H_s)}
			=
			\big\| 
			A \mu_0^* \omega_B^m
			\big\|_{L^1(U_u, H_0)}
			\cdot
			\exp(s (\lambda_j - \lambda_i) / 2).
		\end{equation}
		Then by (\ref{eq_bnd_curv_offdi3}) and (\ref{eq_bnd_curv_offdi4}) it is clear that we have
		\begin{multline}\label{eq_bnd_curv_offdi5}
			\big\| 
			\imun R^E_{i j, s} \wedge \mu_0^* \omega_B^{m - 1} 
			\big\|_{L^1(B_0, H_s)}
			\leq
			\big\| 
			d \beta_{i j, 0}^{HN, u} \wedge \mu_0^* \omega_B^{m - 1} 
			\big\|_{L^1(B_0, H_0)}
			\\
			+
			\sum_{k = 1}^q
			\big\| 
			\beta_{i k, 0}^{HN, u} \wedge \beta_{k j, 0}^{HN, u}  \wedge \mu_0^* \omega_B^{m - 1} 
			\big\|_{L^1(B_0, H_0)}.
		\end{multline}
		The first bound of (\ref{eq_bnd_curv_offdi}) for $i < j$ then follows directly from Lemma \ref{lem_bnd_off_diag_curv}, (\ref{eq_bnd_off_diag_curv9}) and (\ref{eq_bnd_curv_offdi5}). 
		\par 
		Similarly, by (\ref{eq_bnd_curv_offdi3}), we have
		\begin{equation}\label{eq_bnd_curv_offdi1}
			R^E_{i i, s}
			=
			R^{G_i^{HN}}
			+
			\sum_{\substack{k = 1 \\ k \neq i}}^q
			\beta_{i k, 0}^{HN, u} \wedge \beta_{k i, 0}^{HN, u}  \cdot \exp(- s |\lambda_i - \lambda_k|).
		\end{equation}
		The second bound from (\ref{eq_bnd_curv_offdi}) then follows from Lemma \ref{lem_bnd_off_diag_curv} and (\ref{eq_bnd_curv_offdi1}).
	\end{proof}
	\begin{sloppypar}
	\begin{proof}[Proof of Theorem \ref{thm_ray_apprx}.]
		Remark that the components of the weight operator, $A(H_s, \tilde{\mu}_0^* \mathcal{F}^{HN})_{i j} \in {\rm{Hom}}(G_j^{HN}, G_i^{HN})$, in the frame $f_1^u, \ldots, f_r^u$ are given by $
			A(H_s, \tilde{\mu}_0^* \mathcal{F}^{HN})_{i j}
			=
			\lambda_i \delta_{i j} {\rm{Id}}_{G_i^{HN}}$,
		where $\delta_{i j}$ is the Kronecker delta.
		The result now follows directly from this and Lemma \ref{lem_bnd_curv_offdi}.
	\end{proof}
	\end{sloppypar}

	\section{Mehta-Ramanathan type formula for the Wess-Zumino-Witten functional}\label{sect_mr}
	\begin{sloppypar}
	The main goal of this section is to describe an application of Theorem \ref{thm_main} giving a Mehta-Ramanathan type formula for the Wess-Zumino-Witten functional.
	Roughly, this formula says that for projective families, the value of the Wess-Zumino-Witten functional can be determined from the values of the respective functionals on the restriction of our family to generic curves.
	\end{sloppypar}
	\par 
	To state our result precisely, we assume that $[\omega_B] \in H^2(X, \mathbb{Z})$ (in particular, $B$ is projective) and $m \geq 2$.
	By Bertini's theorem, there is $l_0 \in \nat$, such that for any $l \geq l_0$, a generic curve $C_l$, $\iota_l: C_l \hookrightarrow B$ obtained as an intersection of $m - 1$ generic divisors in the class $l [\omega_B]$, is regular. 
	Denote by $Y_l$ the pull-back of $\iota_l$ and $\pi$, and by $\pi_l : Y_l \to C_l$, $i_l : Y_k \hookrightarrow X$ the natural corresponding maps, verifying the following commutative diagram
	\begin{equation}\label{eq_sh_exct_seq0}
	\begin{tikzcd}
		Y_l \arrow[hookrightarrow]{r}{i_l} \arrow{d}{\pi_l} & X \arrow{d}{\pi} \\
 		C_l \arrow[hookrightarrow]{r}{\iota_l} & B.
	\end{tikzcd}
	\end{equation}
	The main result of this section goes as follows.
	\begin{thm}\label{thm_mr_wzw}
		For any $l \geq l_0$, $t \in \real$, the value $\inf \int_{Y_l} |\beta^{n + 1}|$, where the infimum is taken over all smooth closed $(1, 1)$-forms $\beta$ in the class $i_l^*( c_1(L) -  t \pi^* [\omega_B])$, which are positive along the fibers of $\pi_l$, is independent on the choice of a generic curve $C_l$.
		Moreover, the limit below exists, and the following formula holds
		\begin{equation}\label{eq_mr_wzw}
			{\rm{WZW}}(c_1(L) - t \pi^* [\omega_B], \omega_B) 
			=
			\lim_{l \to \infty} \frac{1}{l^{m - 1}} \inf \int_{Y_l} |\beta^{n + 1}|.
		\end{equation}
	\end{thm}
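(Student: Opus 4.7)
The plan is to apply Theorem~\ref{thm_main} to the family $\pi_l : Y_l \to C_l$, reduce the claim to a single convergence statement for Harder--Narasimhan measures $\eta^{HN, l} := \eta^{HN}(Y_l / C_l)$, and establish this convergence through the Mehta--Ramanathan theorem for the HN filtration combined with a diagonal argument.

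First, for $k$ sufficiently large, cohomology and base change give $(\pi_l)_*(i_l^* L^{\otimes k}) = \iota_l^* E_k$, so the direct image bundles of $\pi_l$ are pull-backs of $E_k$. Applying Theorem~\ref{thm_main} to $\pi_l : Y_l \to C_l$ (which is a family over a curve, so one takes $m = 1$ there) with polarization $\iota_l^* [\omega_B]$, and using the projection formula with $[Y_l] = l^{m-1} \pi^* [\omega_B]^{m-1}$ to compute $\int_{Y_l} i_l^* c_1(L)^n \wedge \pi_l^* \iota_l^* [\omega_B] = l^{m-1} \int_X c_1(L)^n \wedge \pi^* [\omega_B]^m$, I obtain
\begin{equation*}
	\frac{1}{l^{m-1}} \inf \int_{Y_l} |\beta^{n+1}| = \Big( \int_{x \in \real} |x - t|\, d \eta^{HN, l}(x) \Big) \cdot (n+1) \cdot \int_X c_1(L)^n \wedge \pi^* [\omega_B]^m.
\end{equation*}
Combined with Theorem~\ref{thm_main} for $\pi : X \to B$, this reduces the formula (\ref{eq_mr_wzw}) to the single convergence $\int |x - t| \, d \eta^{HN, l} \to \int |x - t| \, d \eta^{HN}$ as $l \to \infty$. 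The asserted independence of the generic curve $C_l$ then becomes an immediate consequence of this same formula, since the right-hand side is intrinsic to the choice of $l$ only.

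For the convergence of measures, I will use the Mehta--Ramanathan theorem in its strong form (due to Flenner, and to Langer for the entire HN filtration): for each fixed $k$, there is $l_1(k)$ such that for $l \geq l_1(k)$ and generic $C_l$, the HN filtration of $\iota_l^* E_k$ on $C_l$ coincides with the pull-back of that of $E_k$ on $B$. Under our normalization, the slopes on $C_l$ and on $B$ scale by the same factor $l^{m-1}$, so $\mu^l(\iota_l^* \mathscr{F}) = \mu^B(\mathscr{F})$ for any coherent sheaf $\mathscr{F}$ on $B$; consequently $\eta_k^{HN, l} = \eta_k^{HN}$ for $l \geq l_1(k)$, where $\eta_k^{HN, l}$ is defined analogously to $\eta_k^{HN}$ for $Y_l / C_l$. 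Together with the weak convergence $\eta_k^{HN} \to \eta^{HN}$ recalled in Section~\ref{sect_min_seq}, this will yield the required convergence via a diagonal argument: given $\epsilon > 0$, choose $k_0$ with $\eta_{k_0}^{HN}$ within $\epsilon$ of $\eta^{HN}$ tested against $|x - t|$, then for $l \geq l_1(k_0)$ exploit the identity $\eta_{k_0}^{HN, l} = \eta_{k_0}^{HN}$.

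The main obstacle is the interchange of the limits $k \to \infty$ and $l \to \infty$, i.e.\ the uniform-in-$l$ control of the convergence $\eta_k^{HN, l} \to \eta^{HN, l}$ as $k \to \infty$. I plan to circumvent this by exploiting the intrinsic description of both limit measures through the submultiplicative filtration on the section ring $R(X_b, L_b)$ of a generic fiber, in the spirit of Theorem~\ref{prop_HN_subm} and the Boucksom--Jonsson approximation of Theorem~\ref{thm_bj_appr}. Since the generic fiber of $\pi_l$ coincides with a generic fiber of $\pi$, and Mehta--Ramanathan ensures that the filtrations at level $k$ on $H^0(X_b, L_b^k)$ induced by $E_k$ and by $\iota_l^* E_k$ agree whenever $l \geq l_1(k)$, the two submultiplicative filtrations on $R(X_b, L_b)$ coincide on arbitrarily large initial truncations for $l$ large; together with the uniform boundedness of these filtrations, this supplies the required uniform tail estimate.
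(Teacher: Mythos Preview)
Your reduction is exactly the paper's: apply Theorem~\ref{thm_main} to each $\pi_l : Y_l \to C_l$, use the projection formula to extract the factor $l^{m-1}$, and reduce everything to the weak convergence $\eta^{HN,l} \to \eta^{HN}$. The paper does not prove this convergence here; it quotes it as Theorem~\ref{thm_mr_HN}, established in the companion paper \cite{FinHNI}, and likewise cites \cite[Corollary 3.8]{FinHNI} for the independence of $\eta^{HN,l}$ on the choice of generic $C_l$.

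Where you diverge is in attempting to prove this convergence yourself. Your diagnosis of the obstacle (the interchange of the $k$- and $l$-limits) is correct, and the ingredients you list---Mehta--Ramanathan for the full HN filtration, the submultiplicative filtration on $R(X_b,L_b)$, and Boucksom--Jonsson approximation---are precisely the ones used in \cite{FinHNI}. However, your final paragraph does not close the gap. You need a statement of the form: the distance between $\eta^{HN,l}$ and $\eta_k^{HN,l}$ (tested against $|x-t|$, say) is bounded by a quantity depending only on $k$ and on the filtration at level $k$ together with the uniform bound on the weights, not on the entire filtration. Theorem~\ref{thm_bj_appr} as stated in the paper controls only the \emph{volume} (the first moment), not the full measure; to get what you need one has to invoke the Okounkov-body/concave-transform description of the limit measure and argue that agreement of the filtrations on a truncation $\leq k$ forces the limit measures to be close in a quantitative way depending only on $k$. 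This is correct and is essentially how \cite{FinHNI} proceeds, but it is a genuine piece of work that your sketch has not carried out. As written, the phrase ``this supplies the required uniform tail estimate'' is an assertion, not an argument.
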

	\par 
	The proof of Theorem \ref{thm_mr_wzw} is based on a combination of Theorem \ref{thm_main} and Mehta-Ramanathan type theorem for the measures $\eta^{HN}$.
	Recall that the classical Mehta-Ramanathan theorem from \cite{MehtaRamMathAnn}, \cite{MehtaRama} says that if a vector bundle $E$ is (semi)stable, then for $l \in \nat$ big enough, the vector bundle $i_l^* E$ is (semi)stable over a generic curve $C_l$.
	In particular, the Harder-Narasimhan slopes of an arbitrary vector bundle can be recovered from the Harder-Narasimhan slopes of the restriction of this vector bundle to generic curves of sufficiently large degree.
	\par 
	The main result of \cite{FinHNI} roughly says that a weak uniform version of the Mehta-Ramanathan theorem holds if instead of a single vector bundle $E$, we consider a sequence of vector bundles $E_k$, $k \in \nat$, given by direct images.
	More precisely, we denote by $\eta^{HN, l}$, $l \in \nat^*$, the measure constructed similarly to $\eta^{HN}$, but associated with the family $\pi_l: Y_l \to C_l$, $i_l^* L$, $[\omega_B]|_{C_l}$, where $C_l$ is the generic curve as above (it is standard that the Harder-Narasimhan slopes of the restrictions of a vector bundle to curves, given by complete intersections, are independent on the choice of \textit{generic} curve, cf. \cite[Corollary 3.8]{FinHNI}).
	The main result of \cite{FinHNI} goes as follows.
	\begin{thm}[{\cite[Theorem 1.2]{FinHNI}}]\label{thm_mr_HN}
		The measures $\eta^{HN, l}$ converge weakly to $\eta^{HN}$, as $l \to \infty$.
	\end{thm}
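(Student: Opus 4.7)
The plan is to apply Theorem \ref{thm_main} to each restricted family $\pi_l : Y_l \to C_l$ to obtain an explicit formula for $\inf \int_{Y_l}|\beta^{n+1}|$, deduce the independence statement from this formula, and then pass to the limit using Theorem \ref{thm_mr_HN}.

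First I would observe that, since $\dim C_l = 1$, the WZW functional from (\ref{defn_wzwfunc}) applied to $\pi_l$ with base form $\iota_l^* \omega_B$ reduces to $\mathrm{WZW}([\beta], \iota_l^* \omega_B) = \inf \int_{Y_l}|\beta^{n+1}|$, because the factor $\pi_l^* \iota_l^* \omega_B^{m'-1}$ with base dimension $m'=1$ is the constant $1$. Applying Theorem \ref{thm_main} to the polarized family $(\pi_l, i_l^* L, \iota_l^* \omega_B)$ with class $i_l^*(c_1(L) - t\pi^*[\omega_B])$, and using the cycle-class identity $[Y_l] = l^{m-1} \pi^*[\omega_B]^{m-1}$ on $X$ to rewrite the relevant intersection number, yields
\begin{equation*}
\inf \int_{Y_l}|\beta^{n+1}| \;=\; \Big(\int_\real |x-t|\, d\eta^{HN, l}(x)\Big) \cdot (n+1) \cdot l^{m-1} \cdot \int_X c_1(L)^n \pi^*[\omega_B]^m.
\end{equation*}
The first assertion of Theorem \ref{thm_mr_wzw} is immediate from this identity, since the right-hand side depends on the choice of $C_l$ only through $\eta^{HN, l}$, and the latter is classically independent of the generic choice of curve by Mehta-Ramanathan, as already noted in \cite[Corollary 3.8]{FinHNI}.

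For the limit formula, dividing by $l^{m-1}$ and comparing with Theorem \ref{thm_main} applied to the original family $\pi : X \to B$ reduces the claim to showing $\int_\real |x-t|\, d\eta^{HN, l}(x) \to \int_\real |x-t|\, d\eta^{HN}(x)$ as $l \to \infty$. Theorem \ref{thm_mr_HN} provides the weak convergence $\eta^{HN, l} \to \eta^{HN}$, but since $x \mapsto |x - t|$ is unbounded, one must supplement weak convergence with a uniform support bound $\mathrm{supp}(\eta^{HN, l}) \subset [-M, M]$ with $M$ independent of $l$. Granted such a bound, truncating $|x - t|$ to a compactly supported continuous function and invoking a Portmanteau-type argument yields the required convergence.

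The main obstacle is the uniform support bound. This should be accessible from uniform asymptotic control of the extremal Harder-Narasimhan slopes $\mu_{\min}^k/k$ and $\mu_{\max}^k/k$ of $i_l^* E_k$ as both $k$ and $l$ grow. Such control is built from degrees of $\det(i_l^* E_k)$ restricted to $C_l$ and from minimal degrees of saturated subsheaves of $i_l^* E_k$; both behave polynomially in $k$ with a controlled $l$-dependence. The arguments underlying \cite{FinHNI} provide precisely this type of uniformity, which closes the argument.
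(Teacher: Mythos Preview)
You have proved the wrong statement. The theorem labeled \texttt{thm\_mr\_HN} asserts the weak convergence $\eta^{HN,l}\to\eta^{HN}$; this is a result quoted from \cite[Theorem 1.2]{FinHNI}, and the present paper gives no proof of it whatsoever. Your write-up is instead a proof of Theorem~\ref{thm_mr_wzw} (the Mehta--Ramanathan formula for the Wess--Zumino--Witten functional), and in the course of that argument you \emph{invoke} Theorem~\ref{thm_mr_HN} as a black box. So as a proof of Theorem~\ref{thm_mr_HN} your proposal is circular: it assumes exactly what is to be shown.

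If one reads your proposal as an attempted proof of Theorem~\ref{thm_mr_wzw}, then it matches the paper's own argument almost verbatim: apply Theorem~\ref{thm_main} fiberwise to $\pi_l:Y_l\to C_l$, use the intersection identity $\int_{Y_l} c_1(i_l^*L)^n\,\pi_l^*\iota_l^*[\omega_B]=l^{m-1}\int_X c_1(L)^n\,\pi^*[\omega_B]^m$, and combine with Theorem~\ref{thm_mr_HN}. Your extra remark about needing a uniform support bound for the $\eta^{HN,l}$ in order to integrate the unbounded function $|x-t|$ against them is a legitimate technical point that the paper elides; such a bound is indeed available from the arguments in \cite{FinHNI}. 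But none of this touches the actual content of Theorem~\ref{thm_mr_HN}, whose proof lives entirely in the cited reference.
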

	\par 
	\begin{sloppypar}
	\begin{proof}[Proof of Theorem \ref{thm_mr_wzw}]
		By applying Theorem \ref{thm_main} to $\pi_l: Y_l \to C_l$, $i_l^* L$ and $[\omega_B]|_{C_l}$, we obtain
		\begin{equation}\label{eq_main_2}
			\inf \int_{Y_l} |\beta^{n + 1}| = \int_{x \in \real} |x - t| d \eta^{HN, l}(x)
			\cdot
			\int_{Y_l} c_1(i_l^* L)^n \pi_l^* \iota_l^* [\omega_B]
			\cdot
			(n + 1),
		\end{equation}
		where the infimum is taken over $\beta$ as in (\ref{eq_mr_wzw}).
		This formula and the fact that $\eta^{HN, l}$ doesn't depend on the choice of generic curve implies that the value $\inf \int_{Y_l} |\beta^{n + 1}|$ doesn't depend on the choice of generic curve either.
		Remark that by the definition of $C_l$, we have $\int_{Y_l} c_1(i_l^* L)^n \pi_l^* \iota_l^* [\omega_B] = l^{m - 1} \cdot \int_X c_1(L)^n \pi^* [\omega_B]^m$.
		The result now follows from this, Theorem \ref{thm_mr_HN}, (\ref{eq_main}) and (\ref{eq_main_2}).
	\end{proof}		
	\end{sloppypar}

	\section{Asymptotic cohomology and the absolute Monge-Ampère functional}\label{sect_dem_conj}
	The primary objective of this section is to prove Corollary \ref{thm_andgr}. 
	We achieve this by interpreting Conjecture 1 in terms of a related conjecture on the sharp lower bound for the absolute Monge-Ampère functional. 
	Then, we apply Theorem \ref{thm_main} alongside calculations involving the Harder-Narasimhan measures.
	Let us introduce some notations first. 
	We fix a compact complex manifold $Y$ of dimension $n + 1$.
	For an arbitrary class $[\alpha] \in H^{1, 1}(Y)$ and a smooth closed $(1, 1)$-differential form $\alpha$, we introduce the absolute Monge-Ampère functional as follows
	\begin{equation}\label{defn_ama_func}
		|{\rm{MA}}|(\alpha) := \int_Y |\alpha^{n + 1}|, \qquad |{\rm{MA}}|([\alpha]) = \inf |{\rm{MA}}|(\alpha),
	\end{equation}
	where the infimum is taken over all smooth closed $(1, 1)$-forms $\alpha$ in the class $[\alpha]$.
	We fix a holomorphic line bundle $F$.
	\begin{prop}\label{prop_ma_low_bnd}
		For an arbitrary smooth closed $(1, 1)$-form $\alpha$ in the class $c_1(F)$, we have
		\begin{equation}\label{eq_ma_abs_sum}
			|{\rm{MA}}|(\alpha) \geq \sum_{i = 0}^{n + 1} \hat{h}^q(Y, F).
		\end{equation}
	\end{prop}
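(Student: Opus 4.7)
The plan is to recognize that Proposition \ref{prop_ma_low_bnd} follows by summing Demailly's holomorphic Morse inequalities (\ref{eq_holmi}) over all degrees, using the elementary fact that on each open piece $Y(\alpha, q)$ the $(n+1)$-fold wedge $\alpha^{n+1}$ has a definite sign determined by the signature of $\alpha$.

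First I would fix a smooth closed $(1,1)$-form $\alpha$ in the class $c_1(F)$ and decompose
\[
	Y = \bigsqcup_{q = 0}^{n+1} Y(\alpha, q) \,\sqcup\, Y^{\rm{deg}},
\]
where $Y^{\rm{deg}}$ is the (closed) locus where $\alpha$ is degenerate; this is a set of Lebesgue measure zero, so it contributes nothing to $\int_Y |\alpha^{n+1}|$. On $Y(\alpha, q)$, by definition $\alpha$ has signature $(n+1-q, q)$, and a direct linear algebra computation (diagonalising $\alpha$ pointwise with respect to the complex structure) shows that the top exterior power $\alpha^{n+1}$ has sign $(-1)^q$ with respect to the natural orientation of $Y$. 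Consequently, pointwise on $Y(\alpha, q)$,
\[
	|\alpha^{n+1}| = (-1)^q \alpha^{n+1}.
\]

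Next I would split the integral defining $|{\rm{MA}}|(\alpha)$ according to this decomposition and apply the holomorphic Morse inequalities (\ref{eq_holmi}) term by term (adapted to the present dimension $\dim Y = n+1$):
\begin{equation*}
	|{\rm{MA}}|(\alpha) = \int_Y |\alpha^{n+1}| = \sum_{q = 0}^{n+1} \int_{Y(\alpha, q)} (-1)^q \alpha^{n+1} \geq \sum_{q = 0}^{n+1} \hat{h}^q(Y, F),
\end{equation*}
which is exactly the asserted bound (interpreting the sum on the right-hand side of (\ref{eq_ma_abs_sum}) as indexed by $q$ from $0$ to $n+1$).

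There is no real obstacle here: once one observes that $|\alpha^{n+1}|$ is literally $(-1)^q \alpha^{n+1}$ on $Y(\alpha, q)$, the proposition reduces to a single application of Demailly's inequalities on each signature stratum. The only point worth stressing, and the only place where one might wish to be careful, is the justification that the degeneracy locus $Y^{\rm{deg}} = \{x \in Y : \alpha(x) \text{ is degenerate}\}$ has measure zero, which follows because its characteristic function vanishes wherever $\alpha^{n+1} \neq 0$, and the contribution to $\int_Y |\alpha^{n+1}|$ of the complement of $\bigsqcup_q Y(\alpha, q)$ is therefore trivially zero.
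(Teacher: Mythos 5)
Your proof is correct and is essentially the same as the paper's: both decompose $\int_Y |\alpha^{n+1}|$ as $\sum_q \int_{Y(\alpha,q)} (-1)^q \alpha^{n+1}$ and then apply Demailly's holomorphic Morse inequalities (\ref{eq_holmi}) term by term. The paper states this more tersely, whereas you spell out the pointwise sign computation and the measure-zero degeneracy locus, but the argument is identical.
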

	\begin{proof}
		By the definition of the sets $Y(\alpha, q)$ from (\ref{eq_holmi}), we can rewrite 
		\begin{equation}
			|{\rm{MA}}|(\alpha) = \sum_{i = 0}^{n + 1} \int_{Y(\alpha, q)} (-1)^q \alpha^{n + 1}.
		\end{equation}
		The result now follows directly from (\ref{eq_holmi}).
	\end{proof}
	\par 
	\noindent \textbf{Conjecture 2.} We have 
	$|{\rm{MA}}|(c_1(F)) = \sum_{i = 0}^{n + 1} \hat{h}^q(Y, F)$.
	\vspace*{0.3cm}	
	\par 
	Remark that from (\ref{eq_holmi}) and (\ref{eq_ma_abs_sum}), Conjecture 2 refines Conjecture 1. Moreover, Conjecture 2 is equivalent to the following statement: for any $\epsilon > 0$, there is a smooth closed $(1, 1)$-form $\alpha_{\epsilon}$ in the class $c_1(F)$, such that $\int_{Y(\alpha_{\epsilon}, q)} (-1)^q \alpha_{\epsilon}^n \leq \hat{h}^q(Y, F) + \epsilon$, for any $q = 0, \ldots, n + 1$.
	\par 
	We argue that Conjecture 2 refines the trivial lower bound given by the triangle inequality, i.e.	
	\begin{equation}\label{eq_rrh1001}
		\sum_{q = 0}^{n + 1} \hat{h}^q(Y, F) 
		\geq
		\Big| \int_Y c_1(F)^{n + 1} \Big|.
	\end{equation}
	Indeed, by Riemann-Roch-Hirzebruch theorem, we have
	\begin{equation}\label{eq_rrh1}
		\sum_{q = 0}^{n + 1} (-1)^q \dim H^q(Y, F^{\otimes k})
		=
		\int_Y {\rm{Td}}(TY) \cdot {\rm{ch}}(F^{\otimes k}),
	\end{equation}
	where ${\rm{Td}}$ and ${\rm{ch}}$ are Todd and Chern classes.
	We take the absolute value from each side of (\ref{eq_rrh1}), apply the triangle inequality on the left-hand side, divide by $k^{n + 1}$ and take a limit $k \to \infty$ to get (\ref{eq_rrh1001}).
	Remark also that if we knew that the $\limsup$ in (\ref{defn_as_coh}) is actually a limit, then from (\ref{eq_rrh1}), we would get immediately
	\begin{equation}\label{eq_rrh0}
		\sum_{q = 0}^{n + 1} (-1)^q \hat{h}^q(Y, F) = \int_Y c_1(F)^{n + 1}.
	\end{equation}	
	\par 
	We can now state the main result of this section, refining Corollary \ref{thm_andgr}.
	\begin{thm}\label{thm_maabs_conj}
		In the setting of Corollary \ref{thm_andgr}, Conjecture 2 holds.
		Moreover, in this setting, it is enough to consider relatively positive $\alpha$ in the definition of $|{\rm{MA}}|([\alpha])$, i.e. for an arbitrary Kähler form $\omega_B$ on $Y$, we have $|{\rm{MA}}|([\alpha]) = {\rm{WZW}}([\alpha], \omega_B)$.
	\end{thm}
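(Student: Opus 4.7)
The plan is a sandwich argument. For $m = \dim C = 1$ the factor $\pi^* \omega_B^{m-1}$ is trivial, so for any relatively positive smooth closed $(1, 1)$-form $\alpha$ in $c_1(F)$ one has ${\rm{WZW}}(\alpha, \omega_B) = \int_Y |\alpha^{n+1}| = |{\rm{MA}}|(\alpha)$. Since the infimum defining ${\rm{WZW}}([\alpha], \omega_B)$ ranges only over relatively positive representatives while the one defining $|{\rm{MA}}|([\alpha])$ is over all smooth closed $(1, 1)$-forms, this immediately yields the trivial bound $|{\rm{MA}}|(c_1(F)) \leq {\rm{WZW}}(c_1(F), \omega_B)$. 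Combined with Proposition \ref{prop_ma_low_bnd}, which gives $|{\rm{MA}}|(c_1(F)) \geq \sum_q \hat{h}^q(Y, F)$, Theorem \ref{thm_maabs_conj} reduces to establishing the identity
\[
    \sum_{q = 0}^{n + 1} \hat{h}^q(Y, F) = {\rm{WZW}}(c_1(F), \omega_B).
\]

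To evaluate the left-hand side, relative ampleness of $F$ together with Serre vanishing gives $R^j p_* F^{\otimes k} = 0$ for $j > 0$ and $k \gg 0$, so the Leray spectral sequence degenerates to $H^q(Y, F^{\otimes k}) \cong H^q(C, E_k)$ with $E_k := p_* F^{\otimes k}$. In particular $\hat{h}^q(Y, F) = 0$ for $q \geq 2$, and only $q = 0, 1$ remain. Let $Q_i^k$ denote the semistable subquotients of the Harder-Narasimhan filtration of $E_k$, with degrees $d_i^k$ and ranks $r_i^k$; Serre duality on the curve $C$ of genus $g$ gives $h^1(Q_i^k) = 0$ whenever $d_i^k / r_i^k > 2g - 2$ and $h^0(Q_i^k) = 0$ whenever $d_i^k / r_i^k < 0$. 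Splitting the HN filtration at the subsheaves consisting of pieces with slope $> 2g - 2$ and with slope $\geq 0$, and chasing the long exact sequences in cohomology, these vanishings yield
\[
    h^0(E_k) + h^1(E_k) = \sum_i |d_i^k| + O(N_k),
\]
where the error absorbs the HN pieces whose slopes lie in the exceptional range $[0, 2g - 2]$.

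By the paper's slope normalization, $\sum_i |d_i^k| = k N_k \cdot \int_C \omega_B \cdot \int_{\real} |x|\, d\eta_k^{HN}(x)$. Combined with the weak convergence $\eta_k^{HN} \to \eta^{HN}$ of \cite{FinHNII}, \cite{ChenHNolyg} and the Hilbert-Samuel asymptotic $N_k / k^n \to (n! \int_C \omega_B)^{-1} \cdot \int_Y c_1(F)^n \cdot p^*[\omega_B]$ on a generic fiber, this gives
\[
    \sum_{q = 0}^{n + 1} \hat{h}^q(Y, F) = (n + 1) \int_{\real} |x|\, d\eta^{HN}(x) \cdot \int_Y c_1(F)^n \cdot p^*[\omega_B],
\]
which is exactly ${\rm{WZW}}(c_1(F), \omega_B)$ by Theorem \ref{thm_main} applied with $t = 0$ and $m = 1$. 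The main obstacle is controlling the HN pieces whose slopes fall in $[0, 2g - 2]$, where neither $h^0$ nor $h^1$ vanishes automatically; the needed bound is that their total contribution to $h^0(E_k) + h^1(E_k)$ is $O(N_k) = O(k^n)$, which becomes negligible after dividing by $k^{n + 1}$. This follows because the total rank of such pieces is bounded by $N_k$ and each contributes at most $O(g \cdot r_i^k)$ to $h^0 + h^1$ by standard bounds for semistable sheaves on curves.
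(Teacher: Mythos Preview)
Your sandwich argument and the vanishing $\hat{h}^q(Y,F)=0$ for $q\geq 2$ match the paper exactly; so does the conclusion via Theorem~\ref{thm_main} at $t=0$. The difference is in how you obtain
\[
\hat{h}^0(Y,F)+\hat{h}^1(Y,F)=(n+1)\int_{\real}|x|\,d\eta^{HN}(x)\cdot\int_Y c_1(F)^n\, p^*[\omega_B].
\]
The paper does not compute $h^0(E_k)+h^1(E_k)$ directly. Instead it invokes Chen's volume formula \cite[Theorem 1.1]{ChenVolume},
\[
\hat{h}^0(Y,F)=(n+1)\int_{x\geq 0}x\,d\eta^{HN}(x)\cdot\int_Y c_1(F)^n\, p^*[\omega_B],
\]
then combines this with the asymptotic Riemann--Roch identity $\hat{h}^0-\hat{h}^1=\int_Y c_1(F)^{n+1}$ (which it derives after noting, via \cite[Proposition 1.7]{DemaillyRiemannHOLM}, that the $\limsup$ in $\hat{h}^q$ is a limit) and the elementary relation $\int|x|=2\int_{x\geq 0}x-\int x$. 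Your direct computation of $h^0(E_k)+h^1(E_k)$ via the Harder--Narasimhan filtration and vanishing/Riemann--Roch for semistable bundles on $C$ is essentially Chen's argument carried out inline, together with its Serre-dual counterpart for $h^1$; this is more self-contained but not really a different mechanism. One small point you should make explicit: since $\hat{h}^q$ is defined via $\limsup$, one needs to know that the limits exist before summing them; your argument actually gives this for free, because you show $(h^0(E_k)+h^1(E_k))/k^{n+1}$ converges and $\chi(E_k)/k^{n+1}$ converges by Riemann--Roch, hence each $h^q(E_k)/k^{n+1}$ converges individually.
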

	\begin{proof}
		We conserve the notations from Corollary \ref{thm_andgr}.
		We first argue that
		\begin{equation}\label{eq_hhat_van}
			\hat{h}^q(Y, F) = 0, \text{ for any } q > 1.
		\end{equation}
		By Serre vanishing theorem, it is immediate to see that Leray spectral sequence associated with $F^{\otimes k}$ and $\pi$ degenerates at the second page for $k \in \nat$ big enough.
		In particular, for any $q = 0, \ldots, n + 1$, and $k$ large enough, we have
		\begin{equation}\label{eq_leray}
			H^q(Y, F^{\otimes k}) =  H^q(C, E_k). 
		\end{equation}		 
		However, since $C$ is a Riemann surface, we clearly have $H^q(C, E_k) = 0$ for any $q > 1$. 
		Together with (\ref{eq_leray}), this implies (\ref{eq_hhat_van}).
		\par 
		Now, since the base of the fibration is $1$-dimensional, the value under the integral of the Wess-Zumino-Witten functional coincides with the absolute Monge-Ampère functional, and Theorem \ref{thm_main} for the class $[\omega_C] \in H^{1, 1}(C, \real)$ and a Kähler form $\omega_C \in [\omega_C]$, verifying $\int_C [\omega_C] = 1$, gives us
		\begin{equation}\label{eq_MAVol_opt}
			|{\rm{MA}}|(c_1(F))
			\leq
			{\rm{WZW}}(c_1(F), \omega_C)
			=
			\int_{x \in \real} |x| d \eta^{HN}(x)
			\cdot
			\int c_1(F)^n \pi^* [\omega_C]
			\cdot
			(n + 1).
		\end{equation}
		It is one of the two crucial places in the proof where we use the fact that $\dim B = 1$.
		\par 
		We argue that 
		\begin{equation}\label{eq_HN_asymp_coh}
			\int_{x \in \real} |x| d \eta^{HN}(x)
			\cdot
			\int c_1(F)^n \pi^* [\omega_C]
			\cdot
			(n + 1)
			=
			\hat{h}^0(Y, F) + \hat{h}^1(Y, F).
		\end{equation}
		Once (\ref{eq_HN_asymp_coh}) is established, the proof of Theorem \ref{thm_maabs_conj} follows by Proposition \ref{prop_ma_low_bnd}, (\ref{eq_hhat_van}), (\ref{eq_MAVol_opt}) and (\ref{eq_HN_asymp_coh}).
		\par 
		Let us now establish (\ref{eq_HN_asymp_coh}).
		We first show that 
		\begin{equation}\label{eq_x_HN}
			\int_{x \in \real} x d \eta^{HN}(x) \cdot
			\int c_1(F)^n \pi^* [\omega_C]
			\cdot
			(n + 1)
			=
			\int_Y c_1(F)^{n + 1}.
		\end{equation}
		Recall that $\eta^{HN}$ is the weak limit of $\eta^{HN}_k$ (the latter measures were defined in (\ref{eq_eta_defn})), as $k \to \infty$. In particular, we have
		\begin{equation}\label{eq_x_HN1}
			\int_{x \in \real} x d \eta^{HN}(x)
			=
			\lim_{k \to \infty}
			\frac{1}{k \cdot N_k} \sum_{i = 1}^{N_k}  \mu_i^k,
		\end{equation}
		where $N_k$ and $\mu_i^k$ were defined before (\ref{eq_eta_defn}).
		However, by the additivity of the degree, we obtain 
		\begin{equation}\label{eq_x_HN2}
			\sum_{i = 1}^{N_k}  \mu_i^k
			=
			\frac{\deg(E_k)}{\int_C [\omega_C]}.
		\end{equation}
		By Riemann-Roch-Grothendieck theorem, similarly to (\ref{eq_rrh0}) and (\ref{eq_rrh1}), it is easy to see that
		\begin{equation}\label{eq_x_HN3}
			\lim_{k \to \infty} \frac{\deg(E_k)}{k \cdot N_k}
			=
			\frac{\int_Y c_1(F)^{n + 1}}{(n + 1) \cdot p_* c_1(F)^n}
			.
		\end{equation}
		A combination of (\ref{eq_x_HN1}), (\ref{eq_x_HN2}) and (\ref{eq_x_HN3}) implies (\ref{eq_x_HN}).
		\par 
		Now, from (\ref{eq_x_HN}), we deduce
		\begin{multline}\label{eq_HN_asymp_coh2}
			\int_{x \in \real} |x| d \eta^{HN}(x)
			\cdot
			\int_Y c_1(F)^n \pi^* [\omega_C]
			\cdot
			(n + 1)
			\\
			=
			2
			\int_{x \geq 0} x d \eta^{HN}(x)
			\cdot
			\int_Y c_1(F)^n \pi^* [\omega_C]
			\cdot
			(n + 1)
			-
			\int_Y c_1(F)^{n + 1}.
		\end{multline}
		Recall, however, that Chen in \cite[Theorem 1.1]{ChenVolume} established that there is the following relation
		\begin{equation}\label{eq_chen_vol}
			\hat{h}^0(Y, F)
			=
			\int_{x \geq 0} x d \eta^{HN}(x)
			\cdot
			\int_Y c_1(F)^n \pi^* [\omega_C]
			\cdot
			(n + 1).
		\end{equation}
		This is the second crucial place in the proof where we use the fact that $\dim B = 1$.
		We argue that
		\begin{equation}\label{eq_rrh}
			\hat{h}^0(Y, F) - \hat{h}^1(Y, F) = \int_Y c_1(F)^{n + 1}.
		\end{equation}
		Once we get (\ref{eq_rrh}), a combination of it, (\ref{eq_HN_asymp_coh2}) and (\ref{eq_chen_vol}) would imply (\ref{eq_HN_asymp_coh}), which finishes the proof. 
		To establish (\ref{eq_rrh}).
		Recall that by (\ref{eq_rrh0}), it suffices to establish that $\limsup$ in (\ref{defn_as_coh}) is actually a limit.
		For $q = 0$, a famous Fujita's theorem, \cite{Fujita}, cf. \cite{DemEinLaz}, states that $\limsup$ in (\ref{defn_as_coh}) is actually a limit.
		For $q > 1$, there is nothing to prove due to (\ref{eq_hhat_van}).
		The only case which is left, $q = 1$, then follows from (\ref{eq_rrh1}) and the validity of the statement for $q \neq 1$.
	\end{proof}
	\par 
	In conclusion to this section, let us point out a relation -- originally described in \cite{DemAndrGrau} -- between Corollary \ref{thm_andgr} and the Andreotti-Grauert theorem. 
	Recall that the Andreotti-Grauert theorem asserts the vanishing of some cohomology groups, associated with high tensor powers of a line bundle, carrying a metric with curvature having enough of positive eigenvalues at every point.
	The converse of this statement asks for the existence of metrics on the line bundle with certain positivity constraints on the curvature, provided that some cohomology groups vanish.
	It is known to hold in some special cases, cf. \cite{YangAndrGra}, but it fails in general, see \cite{Ottem}.
	What Conjecture 1 asks is a version of this converse statement, saying that \textit{asymptotic} vanishing of the cohomology implies the existence of a \textit{sequence of metrics} with suitable curvature.
	
	\section{Kobayashi-Hitchin correspondence and Hessian quotient equations}\label{sect_j_eq}
	The goal of this section is to describe a connection between the Wess-Zumino-Witten equation, the Hermite-Einstein and the Hessian quotient equations.
	\par 
	Let us first explain the relation with Hermite-Einstein equation and show that Corollary \ref{cor_wzw} can be seen as a generalization of the Kobayashi-Hitchin correspondence to fibrations which are not necessarily associated with vector bundles.
	\par 
	More precisely, let $F$ be a holomorphic vector bundle of rank $r$ over $B$. 
	Let $L := \mathcal{O}(1)$ be the hyperplane bundle over $X := \mathbb{P}(F^*)$, and let $\pi : \mathbb{P}(F^*) \to B$ be the natural projection.
	\par 
	Let us recall how to calculate the limiting Harder-Narasimhan measure $\eta^{HN}$ for this family.
	First, it is classical that the vector bundles $E_k = R^0 \pi_* L^{\otimes k}$ are isomorphic with ${\rm{Sym}}^k F$.
	It is also easy to see that for any $k \in \nat^*$, the Harder-Narasimhan slopes of ${\rm{Sym}}^k F$ can be easily expressed in terms of the Harder-Narasimhan slopes, $\mu_1, \ldots, \mu_k$ of $F$.
	To describe the limit of this relation, we denote by $\Delta = \{(x_1, \ldots, x_r) : x_1 + \cdots + x_r = 1, x_i \geq 0 \}$ the $r - 1$-simplex, and by $d \lambda$ the Lebesgue measure on $\Delta$, normalized so that $\int_{\Delta} d \lambda = 1$.
	We denote by $\phi : \Delta \to \real$ the map $(x_1, \ldots, x_r) \mapsto x_1 \mu_1 + \cdots + x_r \mu_r$.
	Then according to \cite[Proposition 3.5]{ChenVolume}, we have 
	\begin{equation}\label{eq_chen_volume_calccc}
		\eta^{HN} = \phi_* d \lambda.
	\end{equation}
	In particular, the measure $\eta^{HN}$ is a singleton if and only if $\mu_1 = \cdots = \mu_k$, i.e. $F$ is semistable.
	\par 
	Let us now describe the geometric side of Theorem \ref{thm_main} in this specific setting.
	We fix a Hermitian metric $h^F$ on $F$.
	We endow $L$ with the metric $h^L$ induced by $h^F$.
	It is then a classical calculation, cf. \cite{KobFinEins}, \cite[Lemma 3.3 and Remark 3.4]{FinHNII}, that the metric $h^F$ solves the Hermite-Einstein equation $\frac{\imun}{2 \pi} R^{h^F} \wedge \omega_B^{m-1} = \lambda \omega_B^m$  if and only if $\omega := c_1(L, h^L)$ solves
	\begin{equation}\label{eq_cor13_eq}
		\omega^{n + 1} \wedge \pi^* \omega_B^{m - 1}
		=
		\lambda (n + 1) \cdot
		\omega^n \wedge \pi^* \omega_B^m.
	\end{equation}
	Moreover, following a question raised by Kobayashi \cite{KobFinEins}, it was established by Feng-Liu-Wan in \cite[Proposition 3.5]{FengLiuWang} that if there is a metric $h^L$ on $L$ such that $\omega := c_1(L, h^L)$ solves (\ref{eq_cor13_eq}), then one can cook up from it a Hermitian metric on $F$, solving the Hermite-Einstein equation (in particular, $F$ is then polystable).
	\par 
	What Corollary \ref{cor_wzw} tells us in this specific setting is that if the vector bundle $F$ is semistable, then (\ref{eq_cor13_eq}) can be solved approximately. Due to the explanations above, this gives us a weak version of Kobayashi-Hitchin correspondence, and Corollary \ref{cor_wzw} in its full generality is a generalization of this correspondence to more general fibrations.
	We say “weak version" for two reasons: first, Kobayashi-Hitchin correspondence asserts existence of a Hermitian metric on $F$.
	As it is not clear what is the fibered analogue of a Hermitian metric on $F$, Corollary \ref{cor_wzw} can only provide the existence of a Finsler metric (a metric on $\mathscr{O}(1)$).
	Second, Corollary \ref{cor_wzw} proves that $L^1$-approximate solutions exists for semistable vector bundles, but Kobayashi-Hitchin correspondence provides $L^{\infty}$-approximate solutions, cf. \cite[Theorem 6.10.13]{KobaVB}.
	\par 
	Let us make an additional comment on the auxiliary equation (\ref{eq_aux_ma}).
	Explicit calculations along the lines of (\ref{eq_chen_volume_calccc}) would yield that for a fixed metric $h^F$ on $F$, in the orthonormal basis of $F_b$, $b \in B$, adapted to the Harder-Narasimhan filtration, in homogeneous coordinates $[x_1 : \cdots : x_r]$ on $\mathbb{P}(F^*_b)$, for the Fubini-Study metric $\omega$ induced by $h^F$, we have
	\begin{equation}
		HN(\omega)[x_1 : \cdots : x_r]
		=
		\frac{\sum_{i = 1}^{r} \mu_i \cdot |x_i|^2}{\sum_{i = 1}^{r} |x_i|^2}.
	\end{equation}
	By repeating the calculation leading to (\ref{eq_cor13_eq}), one sees that approximate solutions to (\ref{eq_aux_ma}) in this particular case can be constructed from $\delta$-approximate critical Hermitian structures on $F$.
	\begin{proof}[Proof of Corollary \ref{cor_wzw}]
		We argue that Theorem \ref{thm_main} establishes the equivalence between statements a) and c). 
		Specifically, condition a) corresponds to the vanishing of the right-hand side of (\ref{eq_main}), while condition c) corresponds to the vanishing of the left-hand side of (\ref{eq_main}).
		But as Theorem \ref{thm_main} asserts the identity between the two sides, a) and c) are equivalent.
		\par 
		Let us establish the equivalence between a) and b).
		Clearly, in the notations of (\ref{eq_eta_defn}), we have
		\begin{equation}\label{eq_max_slope_1}
			\mu_{N_k}^k
			=
			\sup
			\Big\{
				\mu(\mathcal{F}_k) : \mathcal{F}_k \text{ is a coherent subsheaf of } E_k, \, \rk{\mathcal{F}_k} > 0
			\Big\}.
		\end{equation}
		Moreover, by (\ref{eq_x_HN1}), we have
		\begin{equation}\label{eq_max_slope_2}
			\lim_{k \to \infty} \frac{\mu(E_k)}{k}
			=
			\int_{x \in \real} x d \eta^{HN}(x)
		\end{equation}
		However, by \cite[Theorem 1.1]{FinHNI}, we have 
		\begin{equation}\label{eq_max_slope_3}
			\lim_{k \to \infty} \frac{\mu_{N_k}^k}{k} =  \esssup \eta^{HN}.
		\end{equation}
		A combination of (\ref{eq_max_slope_1}), (\ref{eq_max_slope_2}) and (\ref{eq_max_slope_3}), yields that the inequality from b) is equivalent to the fact that $\eta^{HN}$ is the Dirac mass.
		By calculations as in (\ref{eq_x_HN3}), we see that
		\begin{equation}\label{eq_max_slope_4}
			\lim_{k \to \infty} \frac{\mu(E_k)}{k}
			=
			\frac{\int_X c_1(L)^{n + 1} \cdot \pi^* [\omega_B]^{m - 1} }{(n + 1) \cdot \int_X c_1(L)^n \cdot \pi^* [\omega_B]^m}.
		\end{equation}
		A combination of (\ref{eq_max_slope_2}) and (\ref{eq_max_slope_4}) then determines the support of $\eta^{HN}$.
		Since it is a probability measure, which is a Dirac mass, the support fully determines it. This finishes the proof of the equivalence between a) and b).
		\par 
		Lastly, let us establish the equivalence between a) and d) under an additional assumption $\dim B = 1$.
		Recall that in \cite{FinHNII}, we defined asymptotic semistability of $E_k := R^0 \pi_* L^k$ as follows: for any $\epsilon > 0$, there is $k_0 \in \nat$, such that for any $k \geq k_0$, for any quotient sheave $\mathcal{Q}_k$ of $E_k$, $\rk{\mathcal{Q}_k} > 0$, we have $\mu(\mathcal{Q}_k) \geq \mu(E_k) - \epsilon k$.
		In \cite[Theorem 1.3]{FinHNII}, by relying on the assumption $\dim B = 1$, we established that asymptotic semistability is equivalent to the validity of the inequality from condition d) for all irreducible complex analytic subspaces $Y \subset X$ projecting surjectively over $B$.
		Remark, however, that if $Y \subset X$ doesn't project surjectively over $B$, by the Grauert's theorem, the image is a proper analytic subset of $B$ (a point). 
		The term on the right-hand side then vanishes, but the term on the left-hand side is positive by the relative ampleness of $L$.
		Hence the inequality part from condition d) is satisfied in full.
		\par   
		Moreover, in the proof of this result, see \cite[after (4.18)]{FinHNII}, we established that asymptotic semistability is equivalent in the notations of (\ref{eq_eta_defn}) to
		\begin{equation}\label{eq_cond_pt_mass111}
			\lim_{k \to \infty} \frac{\mu_1^k}{k} 
			=
			\lim_{k \to \infty} \frac{\mu_{N_k}^k}{k}.
		\end{equation}
		However, by \cite[Proposition 6.1]{FinHNI}, if $\dim B = 1$, then $\lim_{k \to \infty} \mu_1^k / k = \essinf \eta^{HN}$.
		From this and (\ref{eq_max_slope_3}), the condition (\ref{eq_cond_pt_mass111}) is equivalent to $\essinf \eta^{HN} = \esssup \eta^{HN}$, which means precisely that $\eta^{HN}$ is a Dirac mass.
		As described after (\ref{eq_max_slope_4}), when $\eta^{HN}$ is a Dirac mass, its support is given by the $\lambda$ giving the identity from d), which finishes the proof.
	\end{proof}
	\par 
	Let us now describe a connection with the so-called \textit{Hessian quotient equation}.
	We fix a Kähler form $\chi$ and a Kähler class $[\omega]$ on $X$. 
	The Hessian quotient equation, introduced by Sz{\'e}kelyhidi in \cite[(185)]{SzekQuotHess}, is then given by 
	\begin{equation}\label{eq_quot_hess}
		\omega^{n + 1} \wedge \chi^{m - 1} = \lambda (n + 1) \cdot \omega^n \wedge \chi^m,
	\end{equation}
	where $\lambda$ is a certain (topological) constant, and $\omega$ is the unknown Kähler form from $[\omega]$.
	\par 
	Remark that (\ref{eq_cor13_eq}) has the same form as (\ref{eq_quot_hess}) with only one difference that instead of the positive $(1, 1)$-form $\chi$, we have a semi-positive form $\pi^* \omega_B$. 
	This -- seemingly minor -- change breaks down many of the known techniques for the study of (\ref{eq_quot_hess}), as the linearization of (\ref{eq_cor13_eq}), unlike (\ref{eq_quot_hess}), in not elliptic.
	Also, the notion of subsolution from \cite[(12)]{SzekQuotHess} collapses in this degenerate setting, as it is easy to see there is no relatively positive $(1, 1)$-form $\omega$, which verifies the inequality $\omega^n \wedge \pi^* \omega_B^{m - 1} - \lambda n \omega^{n - 1} \wedge \pi^* \omega_B^m > 0$ (and if we replace the sign $>$ with $\geq$, then any form verifying this bound automatically solves (\ref{eq_cor13_eq})).
	\par 
	Another difference between (\ref{eq_cor13_eq}) and (\ref{eq_quot_hess}) is that we only assume that $[\omega]$ is relatively Kähler in Corollary \ref{cor_wzw} instead of the Kähler assumption from (\ref{eq_quot_hess}). 
	However, this issue is a minor one, as (\ref{eq_cor13_eq}) doesn't change much if one changes $\omega$ to $\omega + T \pi^* \omega_B$ for some $T > 0$ big enough, and $[\omega] + T \pi^* [\omega_B]$ becomes Kähler for $T$ big enough.
	\par 
	We will now show that, a least when $\dim B = 1$, when $L$ is ample, the forms $\omega_{\epsilon}$ appearing in Corollary \ref{cor_wzw} can be obtained as solutions to the Hessian quotient equation, (\ref{eq_quot_hess}).
	\par 
	More precisely, for $\epsilon > 0$, we consider a $(1, 1)$-form $\chi_{\epsilon} := \pi^* \omega_B + \epsilon \omega_0$, where $\omega_0$ is an arbitrary Kähler form from $[\omega]$.
	Clearly, the form $\chi_{\epsilon}$ is Kähler for $\epsilon > 0$.
	We denote $\lambda_{\epsilon} := \int_X [\omega]^{n + 1} / ( \int_X [\omega]^n  [\chi_{\epsilon}] (n + 1) )$.
	\begin{prop}\label{prop_dp_cor}
		Assume that $\dim B = 1$, and $\eta^{HN}$ is the Dirac mass. Then for any $\epsilon > 0$, the $J$-equation, given by
		\begin{equation}\label{eq_dp_cor}
			\omega^{n + 1} = \lambda_{\epsilon} (n + 1) \cdot \omega^n \wedge \chi_{\epsilon},
		\end{equation}
		admits a solution, $\omega := \omega_{\epsilon, 0}$, from the class $[c_1(L)]$.
		Moreover, the $(1, 1)$-form $\omega_{\epsilon}$ from Corollary \ref{cor_wzw} can be taken as $\omega_{\epsilon', 0}$ for some $\epsilon' > 0$ small enough.
	\end{prop}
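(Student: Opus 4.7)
The plan is to split the argument into two parts: existence of $\omega_\epsilon$ solving (\ref{eq_dp_cor}) for each fixed $\epsilon > 0$, and verification that $\omega_{\epsilon'}$ serves as the form $\alpha$ in Corollary \ref{cor_wzw} once $\epsilon'$ is small. Since $L$ is ample, $[c_1(L)]$ is a K\"ahler class, and $\chi_\epsilon$ is a K\"ahler form for every $\epsilon > 0$, so both parts take place in the strictly positive K\"ahler regime and classical solvability theory is available.

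For existence, I appeal to the Lejmi-Sz\'ekelyhidi conjecture on J-equation solvability, proved by G.~Chen: a solution of (\ref{eq_dp_cor}) in the K\"ahler class $[c_1(L)]$ exists if and only if for every irreducible analytic subvariety $V \subset X$ of dimension $p$, $1 \leq p \leq n$,
\begin{equation*}
	\int_V [c_1(L)]^p \, > \, p \lambda_\epsilon \int_V [c_1(L)]^{p-1} \wedge [\chi_\epsilon].
\end{equation*}
To verify this I would distinguish two cases. If $V$ is contained in a single fiber of $\pi$, then $\pi^* [\omega_B]$ restricts trivially to $V$, and the right-hand side reduces to $\epsilon p \lambda_\epsilon \int_V [c_1(L)]^{p-1} \wedge [\omega_0]$, which is $O(\epsilon)$, while the left-hand side is a positive intersection number by ampleness of $L|_V$. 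If $V$ surjects onto $B$, the Mehta-Ramanathan-type formula of Theorem \ref{thm_mr_wzw}, combined with the hypothesis that $\eta^{HN}$ is a Dirac mass, should furnish the required inequality, uniformly for $\epsilon > 0$ small.

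For the second part, I would compute directly from the equation solved by $\omega_{\epsilon'}$. Writing $\chi_{\epsilon'} = \pi^* \omega_B + \epsilon' \omega_0$, equation (\ref{eq_dp_cor}) and a short rearrangement give
\begin{equation*}
	\omega_{\epsilon'}^{n+1} - \lambda (n+1) \omega_{\epsilon'}^n \wedge \pi^* \omega_B
	=
	(n+1)(\lambda_{\epsilon'} - \lambda) \omega_{\epsilon'}^n \wedge \pi^* \omega_B
	+
	(n+1) \lambda_{\epsilon'} \epsilon' \omega_{\epsilon'}^n \wedge \omega_0.
\end{equation*}
Both terms on the right have $L^1$-norm controlled by the cohomological pairings $|\lambda_{\epsilon'} - \lambda| \cdot \int_X [c_1(L)]^n \wedge \pi^*[\omega_B]$ and $\lambda_{\epsilon'} \epsilon' \cdot \int_X [c_1(L)]^n \wedge [\omega_0]$, both of which vanish as $\epsilon' \to 0$. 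That $\lambda_{\epsilon'} \to \lambda$ follows by continuity of intersection numbers together with the identity (\ref{eq_x_HN}) applied to the Dirac mass $\eta^{HN}$, which yields $\lambda = \int_X c_1(L)^{n+1} / ((n+1) \int_X c_1(L)^n \wedge \pi^*[\omega_B])$, matching the limit value of $\lambda_{\epsilon'}$.

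The main obstacle is the verification of the Lejmi-Sz\'ekelyhidi criterion for subvarieties $V$ surjecting onto $B$, since this is the only place where the hypothesis on $\eta^{HN}$ enters essentially. The Dirac property translates, via the Mehta-Ramanathan-type result of \cite{FinHNI} and Theorem \ref{thm_mr_wzw}, to a cohomological inequality on generic complete intersection curves inside the base. Promoting this input to the required inequality for an arbitrary multisection $V$, and doing so with estimates uniform in $\epsilon > 0$ small, is the most delicate step in the plan.
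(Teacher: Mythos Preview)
Your overall structure is right and matches the paper's: invoke a numerical criterion for solvability of the $J$-equation, split subvarieties into the fiber-contained and surjecting cases, and then for the second part rearrange the equation and bound the two error terms cohomologically. Your second part is essentially identical to the paper's argument.

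The gap is in the surjecting-subvariety case, and you have correctly identified it as the crux. However, your proposed route through Theorem~\ref{thm_mr_wzw} cannot work: that result assumes $\dim B \geq 2$ and concerns restriction to generic complete-intersection curves in the base, whereas here $\dim B = 1$ so there is nothing to restrict to. More importantly, Theorem~\ref{thm_mr_wzw} computes the value of the WZW functional, not intersection inequalities on subvarieties of $X$; it gives no mechanism for ``promoting'' anything to an inequality on an arbitrary multisection $V$. The tool you are missing is Theorem~\ref{thm_hn_numeric}, stated immediately before the proposition: it says precisely that $\eta^{HN}$ being a Dirac mass at $\lambda$ is \emph{equivalent} to the inequality $\int_Y c_1(L)^{k+1} \geq \lambda(k+1)\int_Y c_1(L)^k \pi^*[\omega_B]$ for every irreducible $Y \subset X$ surjecting onto $B$. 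This handles the surjecting case at $\epsilon = 0$ directly, with no further work.

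Once you have the $\epsilon = 0$ inequality (with $\geq$) for all $Y$, the paper observes that it automatically becomes strict for every $\epsilon > 0$ when $\dim Y \leq n$: writing out both sides in terms of $\epsilon$, the difference is a nonnegative constant (from the $\epsilon = 0$ case) plus a strictly positive term $\epsilon(n-k)$ times positive intersection numbers. This is a one-line computation and replaces your ``$O(\epsilon)$'' argument, which as written only gives the inequality for small $\epsilon$. The paper cites Datar--Pingali (Theorem~\ref{thm_datar_ping}) rather than G.~Chen, but the numerical criteria are of the same form.
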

	This result will be established as a consequence of the numerical criteria for the existence of the solutions to (\ref{eq_dp_cor}).
	Recall the following statement proved by Datar-Pingali \cite[Theorem 1.1]{DatarPingali}, see also Székelyhidi \cite{SzekQuotHess} and G. Chen \cite{ChenGao} for related results.
	\begin{thm}\label{thm_datar_ping}
		For any Kähler form $\chi$ on $X$ from the class $[\chi]$ and a Kähler class $[\omega]$, the following conditions are equivalent.
		\par 
		a) For $\lambda = \int_X [\omega]^{n + 1} / (\int_X [\omega]^n [\chi] (n + 1))$, there is a unique Kähler form $\omega$ from $[\omega]$, verifying $\omega^{n + 1} = \lambda (n + 1) \cdot \omega^n \wedge \chi$, and so that the $(n, n)$-form $\omega^n - \lambda n \omega^{n - 1} \wedge \chi$ is positive.
		\par 
		b) 	For any irreducible complex analytic subspace $Y \subset X$ of dimension $k + 1$, $k \in \nat$, $k < n$, we have 
	$\int_Y [\omega]^{k + 1} > \lambda (k + 1) \cdot \int_Y [\omega]^k [\chi]$.
	\end{thm}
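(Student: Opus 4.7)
The plan is to prove the two implications by rather different techniques: $(a)\Rightarrow(b)$ by a direct pointwise-positivity argument, and $(b)\Rightarrow(a)$ via the continuity method, whose decisive step is the construction of a smooth \emph{subsolution} out of the numerical hypothesis. The statement is the $J$-equation numerical criterion in dimension $n+1$, and the proof strategy follows the Song--Weinkove / G.~Chen paradigm with the Hessian-quotient refinements of Sz\'ekelyhidi.

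For $(a)\Rightarrow(b)$, work in an $\omega$-orthonormal frame diagonalising $\chi$ with eigenvalues $\mu_1,\ldots,\mu_{n+1}$. The equation $\omega^{n+1}=\lambda(n+1)\omega^n\wedge\chi$ reads $\sum_i\mu_i=1/\lambda$, and the positivity of $\omega^n-\lambda n\omega^{n-1}\wedge\chi$ forces $1-\lambda\sum_{k\neq i}\mu_k>0$ for every $i$, i.e.\ $\mu_i>0$ strictly. For any complex $p$-plane $V\subset T_xX$ with $p=k+1\leq n$, the eigenvalues $\nu_1,\ldots,\nu_p$ of $\chi|_V$ with respect to $\omega|_V$ satisfy the Ky Fan inequality $\sum_j\nu_j\leq \max_{|I|=p}\sum_{k\in I}\mu_k\leq 1/\lambda-\min_k\mu_k<1/\lambda$, so the restriction $(\omega^p-\lambda p\omega^{p-1}\wedge\chi)|_V=(1-\lambda\sum_j\nu_j)(\omega^p)|_V$ is strictly positive. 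Integrating the pull-back to a resolution of singularities $\widetilde Y\to Y$ of any irreducible subvariety $Y\subset X$ of dimension $p$ then yields $\int_Y([\omega]^p-\lambda p[\omega]^{p-1}[\chi])>0$, which is $(b)$.

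For $(b)\Rightarrow(a)$, run a continuity method. Fix $\omega_0\in[\omega]$ Kähler, a large constant $C>0$, and set $\chi_t=(1-t)C\omega_0+t\chi$ with $\lambda_t$ determined by the cohomological identity; for $C$ sufficiently large, $\omega_0$ solves the $t=0$ equation together with the pointwise positivity $\omega_0^n-\lambda_0n\omega_0^{n-1}\wedge\chi_0>0$. Writing the unknown as $\omega_t=\omega_0+\imun\partial\dbar\varphi_t$ with $\sup\varphi_t=0$, let $T\subset[0,1]$ be the set of parameters for which a smooth solution satisfying the positivity condition exists. Openness of $T$ follows from the implicit function theorem applied to $F(\omega):=\log(\omega^{n+1}/(\omega^n\wedge\chi_t))-\log(\lambda_t(n+1))$, whose linearisation is a second-order uniformly elliptic operator precisely under the cone condition. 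Closedness reduces to uniform $C^{2,\alpha}$ a priori bounds; Evans--Krylov together with a Hou--Ma--Wu / Sz\'ekelyhidi second-order estimate (exploiting the concavity of $F$ in the eigenvalues) further reduces the problem to a uniform $C^0$ bound on $\varphi_t$.

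The principal obstacle is this uniform $C^0$ estimate, and it is here that $(b)$ is used essentially. The plan is to prove that $(b)$ is equivalent to the existence of a smooth \emph{subsolution}, i.e.\ a Kähler form $\underline\omega\in[\omega]$ with $\underline\omega^n-\lambda n\underline\omega^{n-1}\wedge\chi>0$ pointwise. The direction ``subsolution $\Rightarrow(b)$'' is a copy of the pointwise argument above applied to $\underline\omega$; the converse is adapted from G.~Chen's mass-concentration technique. Specifically, if no smooth subsolution exists, a Hahn--Banach separation in the space of closed real $(1,1)$-currents produces a positive closed $(n,n)$-current $T$ with $\int T\wedge\omega^n-\lambda n\int T\wedge\omega^{n-1}\wedge\chi\leq 0$ for every Kähler representative in $[\omega]$; Siu semi-continuity combined with Demailly regularisation then localises $T$ along a proper irreducible analytic subvariety $Y\subset X$, and integration over $Y$ contradicts $(b)$. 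Once a subsolution $\underline\omega$ is secured, the uniform $C^0$-bound follows from Sz\'ekelyhidi's Alexandrov--Bakelman--Pucci / Moser iteration, closing the continuity path. Uniqueness in $(a)$ is an immediate consequence of the concavity of $F$: the maximum principle applied to the difference $\varphi-\varphi'$ of two solutions forces $\omega=\omega'$.
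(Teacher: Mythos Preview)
The paper does not prove this theorem; it is quoted verbatim from Datar--Pingali \cite[Theorem 1.1]{DatarPingali} (with related versions attributed to Sz\'ekelyhidi \cite{SzekQuotHess} and G.~Chen \cite{ChenGao}) and used as a black box in the proof of Proposition~\ref{prop_dp_cor}. There is therefore no proof in the paper to compare your proposal against.

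Your sketch is a reasonable outline of the argument as it appears in the cited literature: the implication $(a)\Rightarrow(b)$ via the pointwise eigenvalue inequality and restriction to subvarieties, and $(b)\Rightarrow(a)$ via the continuity method with the decisive input being the equivalence of the numerical condition with the existence of a subsolution. One caveat: the ``Hahn--Banach plus mass-concentration'' step you describe for extracting a destabilising subvariety from the failure of a subsolution is a considerable simplification of what actually happens in G.~Chen's argument, which is rather more delicate and does not proceed by a direct separation in the space of currents. If you intend this as a genuine proof rather than a literature summary, that step would need substantial expansion. But for the purposes of this paper, the theorem is an external input and no proof is expected.
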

	\begin{proof}[Proof of Proposition \ref{prop_dp_cor}]
		In order to show that the equation (\ref{eq_dp_cor}) has solutions, we will verify that the second condition from Theorem \ref{thm_datar_ping} is satisfied. 
		\par
		Let us show that if the inequality $\int_Y c_1(L)^{k + 1} \geq \lambda_{\epsilon} (k + 1) \cdot \int_Y c_1(L)^k \cdot [\chi_{\epsilon}]$ holds for $\epsilon = 0$, then it holds (with a sign $>$ instead of $\geq$ if $k < n$) for any $\epsilon > 0$.
		For this, we introduce
		\begin{equation}\label{eq_a_b_calc_num_cr}
			a := \frac{\int_Y [\omega]^{k + 1}}{(k + 1) \cdot \int_Y [\omega]^k [\chi]},
			\qquad
			b := \frac{\int_X [\omega]^{n + 1}}{(n + 1) \cdot \int_X [\omega]^n [\chi]}.
		\end{equation}
		Then an easy manipulation shows that the inequality $\int_Y c_1(L)^{k + 1} > \lambda_{\epsilon} (k + 1) \cdot \int_Y c_1(L)^k \cdot [\chi_{\epsilon}]$ is equivalent to $a (1 + \epsilon (n + 1) b) > b (1 + \epsilon (k + 1) a)$, which clearly holds for any $\epsilon > 0$ as long as $a \geq b > 0$, $k \neq n$, which is true by our assumption.
		\par 
		Hence, the second condition in Theorem \ref{thm_datar_ping} holds for any $\epsilon > 0$.
		By Theorem \ref{thm_datar_ping}, for any $\epsilon > 0$, there is a Kähler form $\omega_{\epsilon, 0}$, verifying $\omega_{\epsilon, 0}^{n + 1} = \lambda_{\epsilon} (n + 1) \omega_{\epsilon, 0}^n \wedge \chi_{\epsilon}$.
		\par 
		Now, by the triangle inequality, we have
		\begin{equation}
			\big| \omega_{\epsilon, 0}^{n + 1} - \lambda (n + 1) \omega_{\epsilon, 0}^n \wedge \pi^* \omega_B \big|
			\leq
			 (\lambda - \lambda_{\epsilon}) (n + 1) \omega_{\epsilon, 0}^n \wedge \chi_{\epsilon}
			+
			\epsilon \lambda (n + 1) \omega_{\epsilon, 0}^n \wedge \omega_0,
		\end{equation}
		which immediately implies the result, as by Chern-Weil theory, we have $\int_X \omega_{\epsilon, 0}^n \wedge \chi_{\epsilon} = \int_X [\omega]^n [\chi_{\epsilon}]$, $\int_X \omega_{\epsilon, 0}^n \wedge \omega_0 = \int_X [\omega]^n [\omega_0]$, and these quantities remain bounded, as $\epsilon \to 0$.
	\end{proof}
	\par
	It will be interesting to know if for $\dim B \geq 2$ the forms from Corollary \ref{cor_wzw} can also be taken as solutions of some auxiliary differential equations.
	\par
	Also, taken into account the fact that the continuity method and the method of geometric flows plays a crucial role in Kobayashi-Hitchin correspondence and the study of Hessian quotient equations, it is interesting to know if these methods can be used to give alternative proofs for the results from this paper, obtained through quantization. 
	This is particularly relevant as in the situations analogous to the one of Theorem \ref{thm_main}, cf. \cite{DonLowCalabi}, \cite{XiaCalabi} \cite{HisamCalabi}, \cite{DerSzek}, the minimizing sequences for other functionals were obtained through solutions of some geometric flows.
	\par 
	As a concluding remark, we would like to mention some related works concerning the equation (\ref{eq_wzw}) in the setting when $B$ has a boundary.
	First, when $B$ is an annuli in $\comp$, weak solutions to the Dirichlet problem associated with (\ref{eq_wzw}) correspond to the Mabuchi geodesics in the space of all Kähler potentials, \cite{Semmes} \cite{DonaldSymSp}, and they always exist, see \cite{ChenGeodMab} (cf. also \cite{WuKRWZW} for a related result on pseudoconvex domains in $\comp^m$).
	Moreover, the results \cite{PhongSturm}, \cite{RubinZeld}, \cite{SongZeldToric}, \cite{WuKRWZW} show that the solution to this equation can be obtained as a dequantization of solutions to a Dirichlet problem associated with the Hermite-Einstein equations on $E_k$, as $k \to \infty$.
	\par 
	Of course in the setting of a manifold with boundary, the solution to (\ref{eq_wzw}) minimizes the Wess-Zumino-Witten functional, and the solutions to the Hermite-Einstein equations minimize the respective Hermitian Yang-Mills functionals (both miniminal values are zero).
	The major difference between these results and the ones from this article are due to the fact in the boundaryless setting considered here, neither (\ref{eq_wzw}), nor Hermite-Einstein equation have solutions.

\bibliography{bibliography}

		\bibliographystyle{abbrv}

\Addresses

\end{document}